\newcommand{\cred}{\color{black}}
\newcommand{\cblue}{\color{black}}
\newcommand{\mk}{\mbox{$m\big(k_{\mu_t}\big)\!(x)$}}
\newcommand{\mhk}{\mbox{$m^h\big(k_{\mu_t}\big)\!(x)$}}
\newcommand{\vk}{\mbox{$v\big(k_{\mu_t}\big)\!(x)$}}
\newcommand{\vhk}{\mbox{$v^h\big(k_{\mu_t}\big)\!(x)$}}
\newcommand{\RR}{\mathbb{R}}
\newcommand{\RRD}{{\mathbb{R}^d}}
\newcommand{\CC}{\mathcal{C}}
\newcommand{\CCA}{\mathcal{C}^{1+\alpha}}
\newcommand{\ww}{\overline{w}}
\newtheorem{definition}{Definition}[section]
\newtheorem{theorem}{Theorem}[section]
\newtheorem{lemma}[theorem]{Lemma}
\newtheorem{proposition}[theorem]{Proposition}
\newtheorem{corollary}[theorem]{Corollary}
\newtheorem{example}{Example}
\theoremstyle{remark}
\newtheorem*{remark}{Remark}
\newcommand{\nc}{\newcommand}
\nc{\PP}{{\cal P}} 
\nc{\V}{{\cal V}} 
\nc{\M}{{\cal M}} 
\nc{\T}{{{\mathbb R}^n}}  
\nc{\D}{{\cal D}} 
\nc{\R}{{\mathbb R}} 
\nc{\N}{{\mathbb N}}
\nc{\Thh}{T_h(\mu_t^h)}
\nc{\Th}{T_h(\mu_t)}
\newcommand{\dd}{{\,\mathrm{d}}}
\newcommand{\ZZZ}{\mathcal{Z}}
\newcommand{\NN}{\mathbb{N}}
\newcommand{\MM}{\mathcal{M}}
\newcommand{\MMR}{\mathcal{M}^+(\mathbb{R}^d)}
\newcommand{\CCAS}{\left(\mathcal{C}^{1+\alpha}(\mathbb{R}^d)\right)^*}
\newcommand{\TV}{\mathrm{TV}}
\newcommand\footnoteref[1]{\protected@xdef\@thefnmark{\ref{#1}}\@footnotemark}
\nc{\weak}{\rightharpoonup}
\nc{\weakstar}{\stackrel{\ast}{\rightharpoonup}} 
\renewcommand{\div}{{\mathrm{div}_x}\,}
\def\vec#1{\boldsymbol{#1}}
\def\dd#1#2{\frac{\d #1}{\d #2}}
\def\b0{\vec{0}}
\renewcommand{\div}{{\rm div}}
\newcommand{\Eehz}{e^{\int_0^t \overline{w}(s, y; 0)\dd s}}
\def\XXint#1#2#3{{\setbox0=\hbox{$#1{#2#3}{\int}$ }
\vcenter{\hbox{$#2#3$ }}\kern-.6\wd0}}
\def\dd{{\rm d}}
\title[Differentiability of measure solutions to perturbed transport equation]{Existence and differentiability in parameter of the measure solution to a~perturbed non-linear transport equation 
}
\author{Piotr Gwiazda$^1$}
\author{Sander C. Hille$^2$}
\author{Kamila \L yczek$^3$}
\address{$^1$Institute of Mathematics, Polish Academy of Sciences, \'Sniadeckich 8, 00-656 Warszawa, Poland} 
\address{$^2$Mathematical Institute, Leiden University P.O. Box 9512, 2300 RA Leiden, The Netherlands} 
\address{$^3$Institute of Applied Mathematics and Mechanics, University of Warsaw, Banacha 2,\newline 02-097 Warszawa, Poland} 
\begin{document}
\maketitle

\begin{abstract}
We consider a perturbation in the non-linear transport equation on measures i.e. both initial condition $\mu_0$ and the solution $\mu_t^h$ are bounded Radon measures $\MM(\RRD)$. The perturbations occur in the velocity field and also in the right-hand side scalar function. It is shown that the solution is differentiable with respect to the perturbation parameter $h$ i.e. that derivative is an element of a proper  Banach space. This result extends~\cite{lycz:2018} which considered the linear transport equation.  The proof exploits approximation of the~non-linear problem which is based on the study of  the linear equation. 
\end{abstract}

\section{Introduction}
Dependence of  the solution of a partial differential equation (PDE) on coefficients of the equation is a natural question to consider which is commonly investigated when a~solution remains unknown.  
Lipschitz dependence on initial conditions in the space of measures were investigated in {\it structured population models}~\cite{Carillo:2015, Czochra:2010,Czochra:2010_2,carillo:2012,Rosi:2016}, {\it dynamics of
system}~\cite{Carillo:2011,Gango:2013,MHerty:2014,Fornasier:2018a,Fornasier:2018} and {\it vehicular traffic 
flow}~\cite{Degond:2008,Evers:2016,Goatin:2016,Goatin:2017}.  All of mentioned applications involve a transport equation on measures, and have been widely investigated. 

 Differentiability with respect to perturbation parameters in PDE systems is important in the context of e.g. application in
optimal control theory or linear stability -- Lipschitz dependence is not enough.
Previous considerations concerning the transport equation in the space of measures did not allow to analyze the differentiability of solutions with respect to a perturbation of the~system~\cite{Ambrosio_lect,Thieme:2003,Picc:2014,Picc:2017}, see Example~\ref{example}.

In this paper we consider the initial value problem for the non-linear transport equation in conservative form
\begin{equation}\label{def:prob_init}
\left\{ \begin{array}{l}
\partial_t\mu_t + \textrm{div}_x\Big(v(\mu_t)\: \mu_t\Big) = m(\mu_t)\:\mu_t \quad \textrm{in }(\CC_c^1([0,+\infty) \times \mathbb{R}^d))^{\ast}, \\
\mu_{t=0}= \mu_0 \in \mathcal{M}(\mathbb{R}^d), 
\end{array} \right.
\end{equation}
where $t\mapsto\mu_t$ is unknown. 

{\bf Notation:} By $\mathcal{M}(\RRD)$ we denote the vector space of bounded signed Radon measures on $\RRD$ and by $\MM^+(\RRD)$ the space of non-negative  bounded Radon measures.
By $(\cdot)^{\ast}$ we denote the topological dual to~$(\cdot)$, when the latter is equipped with a~suitable locally convex or norm topology. The space~$\mathcal{C}_c^1$ consists of continuously differentiable functions with compact support.

Cristiani, Piccoli and Tosin considered in~\cite{MR2769993} the homogeneous version of~\eqref{def:prob_init} ($m(\mu_t)\equiv 0$) on measures   to model {\it crowd dynamics}, which can be applied to evacuation simulation. Measure $\mu_t(x)$ was interpreted as a number of individuals at moment $t$ at position $x$.

In~\cite{MR2769993} the authors consider the following form of the velocity field
\begin{equation}\label{eq:crowd}
v(\mu_t)(x)=\int_{\RRD\setminus \{x\}} K(|y-x|)\:g(\alpha_{xy})\:\frac{y-x}{|y-x|}\: \dd\mu_t(y),
\end{equation}
 where $K:\RR_+\to\RR$. It was assumed that $K$ is compactly supported and describes how the individuals at $x$ position reacts to his neighbours, which are at $y$ position. The value $\alpha_{xy}$ is an angle between vector $y-x$ and a vector describing the direction of individual at $x$. A fact that when a human does not have eyes around the head and reacts only on surroundings in the field of vision, is described by the function $g:[-\pi,\pi]\to[0,1]$.
It is a known phenomenon that a strategically placed obstacle near an exit can speed evacuations~\cite{PhysRevE}. It~means that wide-open exits are not always the most efficient at speeding pedestrians through. This is because a wide-open space in front of exit is that individuals can approach from all sides, allowing the maximum number of pedestrians to enter into conflict at the exit. Reducing exit access with an~obstacle can pare down the severity of those conflicts. 
  One can introduce perturbation into the velocity field and consider to minimize time needed to evacuate all pedestrians. For studying such optimization problem it is highly convenient to have differentiable dependence of the (measure) solution on the perturbation parameter. It is one of the main results of this paper, next to existence of solution, that such differentiability holds indeed, in a~suitable H\"older space, see Theorem~\ref{main_hille}.
 
The list of references on optimal control problems concerning transport equation is steadily growing \cite{Gango:2013,Bongini:2017,Fornaster:2017,Degong:2017,Herty:2018a,Bonnet:2018a}.
 Solving optimal control problem when a solution is not differentiable with respect
to the control parameter, we can proceed twofold. The first approach is to
use non-smooth analysis. The other way to go is to concentrate on regularity of
solutions and discuss what regularity of coefficients can provide sufficient regularity
of solutions to use smooth analysis methods anyway. In particular, in this case
there are developed significantly more tools, which are less numerically complex
than non-smooth methods.

At the problem~\eqref{def:prob_init} we can expect that $\mu_t$ (solution at fixed moment $t$) is a measure. This suspicion will prove to be true; see Corollary~\ref{cry:measure}.
By the velocity field  $v$ and scalar right-hand side function $m$ we mean, similar to~\eqref{eq:crowd}:
\begin{equation}\label{eq:v_K_form}
x\mapsto v\!\left(\int_\RRD K_v(y,x)\:\dd \mu_t(y)\right), \qquad  x\mapsto m\!\left(\int_\RRD K_m(y,x)\:\dd \mu_t(y)\right),
\end{equation}
where $K_v(y, x), K_m(y, x):\RRD\times\RRD\to\RR$ and $v:\RR\to\RRD$, $m:\RR\to\RR$. Let us denote
\begin{equation}\label{eq:kernel_k}
k_{\mu}(x):=\int_\RRD K_{u}(y,x)\:\dd\mu(y), \qquad \textrm{for }u\in\{v,m\}.
\end{equation}
Since assumptions on $K_v$, $K_m$ will be the same, we will not distinguish them in the notation~$k_{\mu}$. Instead of $v(\mu)$ and $m(\mu)$ we will write $v(k_\mu)(x)$ and $m(k_\mu)(x)$, if the structure~\eqref{eq:v_K_form} need to be emphasized.


{\bf Notation:}  Space $\CC_b$ means continuous and bounded functions. 
 Space
$\CC^{m+\alpha}$ consists of bounded functions whose partial derivatives up to order~$m$ are bounded and continuous and additionally the derivative of order $m$ is also H\"older continuous with exponent $\alpha$, where $0 < \alpha \leq 1$.
 
Recall that a function $f:\RRD\to\RR$ is H\"older continuous if the H\"older supremum
\[|f|_\alpha:=\sup_{\substack{x_1 \neq x_2\\ x_1, x_2 \in \RRD}}
 \frac{|f(x_1) - f(x_2)|}{|x_1-x_2|^\alpha}<\infty.\]
The H\"older space $\CCA(\RRD)$ is a~Banach space with the norm
\begin{equation*}
\|f\|_{\CCA(\RRD)} := \sup_{x\in\RRD}|f(x)| +\ \sup_{x\in\RRD} |\nabla_x f(x)| \ +\
|\nabla_x f|_\alpha.
\end{equation*}
Moreover, one has
\begin{align}
\label{eqn:CCA_fg}
\textrm{if } f, g\in\CCA(\RRD),&\textrm{ then }\|f\cdot g\|_{\CCA(\RRD)}\leq \|f\|_{\CCA(\RRD)}\|g\|_{\CCA(\RRD)};\\
\label{eqn:alpha_infty}
\textrm{if } f, g\in\CC^\alpha(\RRD),&\textrm{ then }|f\cdot g|_\alpha \leq |f|_\alpha \|g\|_\infty + \|f\|_\infty |g|_\alpha;\\
\label{eqn:alpha}
\textrm{if }f\in\CC^\alpha(\RRD), g\in\CC^1(\RRD;\RRD), &\textrm{ then } 
|f\circ g|_\alpha \leq |f|_\alpha \: \|\nabla g\|_\infty^\alpha. 
\end{align}

A~mapping $ t \mapsto \mu_t:[0,+\infty)\to\MM(\RRD)$ is  {\it narrowly continuous} if 
$t \mapsto \int_{\RRD} f \dd \mu_t$ is a~continuous function for all $f\in\CC_b(\RRD)$.

\begin{definition}[Weak solution to non-linear problem]\label{def:weak_solu_22}
 We say that a narrowly continuous curve  $[0,+\infty)\to\MM(\RRD): t \mapsto \mu_t$  is a weak solution to~\eqref{def:prob_init}  if 
\begin{equation*}\label{eqn:weak_solu}
\begin{split}
&\int_0^{\infty} \int_\RRD \partial_t\varphi(t,x) +\vk\:\nabla_x \varphi(t,x)\: \dd\mu_t(x)\dd t  + \int_\RRD\varphi(0,\cdot)\: \dd \mu_0(x)=\\
=&\int_0^{\infty} \int_\RRD \mk\:\varphi(t,x) \:\dd\mu_t(x) \dd t,\\
\end{split}
\end{equation*}
holds for all test functions $\varphi \in \CC_c^1([0,+\infty) \times \RRD)$.
\end{definition}
Let us consider the following perturbation of the coefficients, for the real perturbation parameter~$h$:
\begin{equation}
\begin{split}
\vhk\colonequals& v_0 \left(\int_{\RRD} K_{v_0}(y, x)\:\dd{\mu_t}(y)\right)+h \: v_1\left(\int_{\RRD} K_{v_1}(y, x)\:\dd{\mu_t}(y)\right)\label{pert_non},
\\
\mhk\colonequals& m_0 \left(\int_{\RRD} K_{m_0}(y, x)\:\dd{\mu_t}(y)\right)+h \: m_1\left(\int_{\RRD} K_{m_1}(y, x)\:\dd{\mu_t}(y)\right).
\end{split}
\end{equation}
%
%
The perturbed problem corresponding to~\eqref{def:prob_init} has the form\begin{equation}\label{def:prob_init2}
\left\{ \begin{array}{l}
\partial_t\mu_t^h + \textrm{div}_x\Big( v^h\!\left(k_{\mu_t^h}\right)\!\!(x)\;\mu_t^h \Big) = m^h\!\left(k_{\mu_t^h}\right)\!\!(x)\;\mu_t^h \quad \textrm{in }(\CC_c^1([0,+\infty) \times \RRD))^{\ast}, \\
\mu_{t=0}^h= \mu_0 \in \mathcal{M}(\RRD) .
\end{array} \right.
\end{equation}
In this paper we investigate existence of weak solution~$t\mapsto \mu_t^h$ and then  differentiability of the mapping $h\mapsto\mu_t^h$. Notice that the initial condition in~\eqref{def:prob_init2} is not perturbed.

The duality of $\MM(\RRD)$ with $W^{1,\infty}(\RRD)$ is a~natural setting when considering the transport equation in the space of bounded Radon measures  \cite{Czochra:2010}, \cite{Czochra:2010_2}. The metric $\rho_F$ on $\MM(\RRD)$, called the~{\it flat metric} (or {\it bounded Lipschitz distance}), is defined as follows:
\[\rho_F(\mu, \nu):= \sup_{f \in W^{1,\infty}, \Vert f \Vert_{W^{1, \infty}}\leq 1}\left\lbrace \int_{\RRD} f\:\dd(\mu-\nu)\right\rbrace.\]
Below we present a counterexample, that motivates why differentiability in parameter $h$ of the~weak solution cannot be obtained in $(\MM(\RRD),\rho_F)$. 

\begin{example}\label{example}\cite{Skrz:2018}
Consider one-dimensional linear transport equation, i.e. $x\in \RR:$
\begin{equation}\label{counter_example}
\left\{ \begin{array}{l}
\partial_t \mu_t^h + \partial_x((1+h)\mu_t^h)=0,\\ \mu_0^h=\delta_0\in\MM(\RR),
\end{array}\right.
\end{equation}
where by $\delta_x$ we denote the Dirac measure concentrated at~point~$x$.
It can be easily checked that $\mu_t^h=\delta_{(1+h)t}$ is a weak solution to ~\eqref{counter_example} in the sense of Definition~\ref{def:weak_solu_22}. Note that the map $h\mapsto \mu_t^h$ is Lipschitz continuous i.e.
\[\rho_F(\mu_t^h,\mu_t^{h'})=\rho_F(\delta_{(1+h)t}, \delta_{(1+h')t})\leq |h-h'|t.\]
However,  it is not differentiable for $\rho_F$.
If $\frac{\mu_t^h-\mu_t^0}{h}$ were convergent as $h\to0$, it would satisfy the Cauchy condition with respect to $\rho_F$. One obtains
\begin{equation*}
\begin{split}
&\int_{\RR}f(x)\Big(\frac{\dd\mu_t^{h_1}(x)-\dd\mu_t^0(x)}{h_1} - \frac{\dd\mu_t^{h_2}(x)-\dd\mu_t^0(x)}{h_2} \Big)=\\
&=\frac{f((1+h_1)t)-f(t)}{h_1}-\frac{f((1+h_2)t)-f(t)}{h_2}.
\end{split}
\end{equation*}
If we choose a test function from $W^{1,\infty}(\RR)$ such that
\[f(x)=
\begin{cases}
|x-t|-1, & \textrm{if } |x-t| \leq 1,\\
0,	& \textrm{if } |x-t|>1,
\end{cases}\]
then for $h_1>0$ and $h_2<0$, we get $\rho_F\left(\frac{\mu_t^{h_1}-\mu_t^0}{h_1}, \frac{\mu_t^{h_2}-\mu_t^0}{h_2} \right) \geq 2t$.
Thus $\frac{\mu_t^h-\mu_t^0}{h}$ does not converge.
Therefore, differentiability in parameter requires a topology defined by test functions that must be more regular than $W^{1,\infty}(\RR)$.
\end{example}
A class of functions more regular than $W^{1,\infty}$, that suffices for our needs is $\CCA$. Now, we need to introduce a space which contains the derivative of $\mu_t^h$ at fixed $h$ and $t$.
Following~\cite{lycz:2018} let us define a space $\ZZZ$ as a~predual to $\CCA(\RRD)$. It will turn out that $\partial_h\mu_t^h|_{h=h_0}$ exits in $\ZZZ$. 
The space of Radon measures $\mathcal{M}(\RRD)$ inherits the dual norm of $(\CCA(\RRD))^*$ by means of~embedding the former into the latter, where a measure is identified with the functional defined by  integration against the measure. Throughout we identify the former with the subspace of~$(\CCA(\RRD))^*$. Let then
\begin{equation}\label{def:space_Z}
\ZZZ\colonequals \overline{{\mathcal{M}}(\RRD)}^{(\CCA(\RRD))^*},
\end{equation}
which is a Banach space equipped with the dual norm $\Vert \cdot \Vert_{(\CCA(\RRD))^*}$. Unless stated otherwise we equip $\MM(\RRD)$ with the topology induced by $(\CCA(\RRD))^*$.

The main result of this paper concerns the regularity of mapping $h\mapsto(t\mapsto\mu_t^h)$ and for this we need to introduce a proper class of regularity.
 It will turn out that the solution $t\mapsto\mu_t^h$ is of a class $\CC([0,\infty);\MM(\RRD))$, where $\MM(\RRD)$ is equipped with a relative topology of $(\CCA(\RRD))^*$. The boundedness of the mapping $t\mapsto\mu_t^h$ will be necessary. Unfortunately $t\mapsto\|\mu_t^h\|_\ZZZ$ is not bounded. However, the growth of $\mu_t^h$ can be controlled in time by an exponential function; see Proposition~\ref{prop:mu_0_TV}. It means that $\|\mu_t\|_\ZZZ\leq \exp(gt)\|\mu_0\|_\TV$ for some constant $g>0$, and hence the value $\sup_{t\geq 0}\exp(-gt)\|\mu_t\|_\ZZZ$ is bounded.

For every $f:[0,+\infty)\to X$, where $(X, \|\cdot\|_X)$ is normed vector space, we define the $\omega$-weighted norm~as
\begin{equation}\label{eq:bielec_norm_I}
\Vert f \Vert_{X_\omega}:=\sup_{t\geq 0}\omega(t)\Vert f(t)\Vert_X,
\end{equation}
where $\omega(t)> 0$ is called a {\it weight} (this condition is necessary for~\eqref{eq:bielec_norm_I} to be a norm). By $\CC_{\omega}([0,+\infty);X)$ we denote the space of continuous functions valued in $X$ on $[0,\infty)$, such that the norm $\|f\|_{X_\omega}<\infty$. When $\omega(t)=e^{-gt}$ and $g>0$ is a constant, then~\eqref{eq:bielec_norm_I} is called the {\it Bielecki norm}~\cite{Kwap:1991}. The norm involved in the space $\CC([0,\infty);\ZZZ)$ will not be the Bielecki norm, but it is closely related; see weight~\eqref{eq:final_weig_final_3}.

The appropriate weight (which will be necessary not only for controlling growth of $\mu_t$ in time) is
\begin{equation}\label{eq:final_weig_final}
\tilde{\omega}(t):=\min\big\{\omega_1(t),\;\omega_2(t)\big\},\qquad \textrm{where}
\end{equation}
\begin{equation}\label{eq:final_weig_final_3}
\omega_1(t)=\left\{ \begin{array}{ll}
e^{-g_1 t}& \textrm{for }  0\leq t< 1, \\
e^{-g_2 t}& \textrm{for } 1\leq t< 2,\\
\cdots\\
e^{-g_N t} &\textrm{for } N-1\leq t<N,\\
\cdots\\
\end{array} \right.\qquad\textrm{and}
\qquad
\omega_2(t)=\mathcal{O}\left(\frac{1}{t^2}\exp(-Ct)\right),
\end{equation}
where the constant $g_N$ are positive for all $N\in\NN\setminus\{0\}$. The reason for such a form of $\omega_1$ follows from Proposition~\ref{prop:T_H_0_infty_00} and Proposition~\ref{prop:T_H_0_infty}. The weight $\omega_2$ results from Theorem~
\ref{lem:main_2}.

Now, we are ready to state the main result of this paper.

\begin{theorem}\label{main_hille}
Assume that for $i=0,1$
\begin{description}
\item[A1] $v_i(\cdot)\in\CC^{3+\alpha}(\RR;\RRD)$,
\item[A2] $m_i(\cdot)\in\CC^{3+\alpha}(\RR;\RR)$,
\item[A3] $\left(x \mapsto K_{v_i}(\cdot, x)\right)$, $\left(x \mapsto K_{m_i}(\cdot, x)\right)$ $\in \CCA(\RRD; \CCA(\RRD))$,
\item[A4] $y \mapsto K_{v_i}(y,\cdot)$, $y \mapsto K_{m_i}(y,\cdot): \RRD\to\CCA(\RRD)$ are weakly continuous and bounded.
\end{description} 
Let the coefficients be defined by~\eqref{pert_non}. 
Then problem~\eqref{def:prob_init2} has a unique weak solution $t\mapsto\mu_t^h$
and the mapping 
\[\big(h\mapsto(t\mapsto\mu_t^h)\big)\in\CC^1\big((-\tfrac{1}{2},\tfrac{1}{2});\CC_{\tilde{\omega}}([0,\infty);\ZZZ)\big),
\]
where $\tilde{\omega}(t)$ is defined by~\eqref{eq:final_weig_final}. In particular, for fixed $t$ the derivative $\partial_h \mu_t^{h}|_{h=h_0} \in \ZZZ.
$
\end{theorem}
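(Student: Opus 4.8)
The plan is to reduce the non-linear problem to the linear theory of~\cite{lycz:2018} by a fixed-point construction, and then to bootstrap differentiability from the linear case through that fixed point. First I would set up the linearization: given a fixed narrowly continuous curve $t\mapsto\nu_t$ (playing the role of the "frozen" argument of the non-local coefficients), the map $x\mapsto v^h(k_{\nu_t})(x)$ and $x\mapsto m^h(k_{\nu_t})(x)$ are, by assumptions \textbf{A1}--\textbf{A4} together with the algebra and composition estimates~\eqref{eqn:CCA_fg}--\eqref{eqn:alpha}, time-dependent coefficients lying in $\CCA(\RRD)$ with norms bounded uniformly for $h\in(-\tfrac12,\tfrac12)$ and for $\nu_t$ in a ball of $\MM(\RRD)$. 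This places us exactly in the framework where the linear transport equation has a unique measure solution given by push-forward along the characteristic flow with an exponential Radon--Nikodym weight; I would then define the solution operator $\Phi^h\colon \nu\mapsto\mu$ sending the frozen curve to the solution of the corresponding linear equation with that frozen curve in the coefficients, and seek a fixed point $\mu^h=\Phi^h(\mu^h)$.

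The second step is existence and uniqueness of the fixed point. I would equip the space of curves $\CC_{\tilde\omega}([0,\infty);\ZZZ)$ (restricted to an invariant ball, using the a priori exponential bound $\|\mu_t\|_\ZZZ\le e^{gt}\|\mu_0\|_\TV$ from Proposition~\ref{prop:mu_0_TV} to choose the weight $\omega_1$ so that the ball is mapped into itself) with the weighted metric, and show $\Phi^h$ is a contraction there. The key estimate is a stability bound for the linear problem: if two frozen curves $\nu,\tilde\nu$ differ by $\varepsilon$ in the flat metric (hence in $\ZZZ$-norm, pointwise in $t$), then the corresponding characteristic flows, exponential weights, and push-forward measures differ by $C\,t\,\varepsilon$-type quantities — the Gronwall argument produces a factor that grows in $t$, which is precisely why the piecewise-exponential weight $\omega_1$ in~\eqref{eq:final_weig_final_3} (with the $g_N$ chosen large enough on each unit interval) is needed to absorb the growth and make $\Phi^h$ a strict contraction on each $[N-1,N]$ and hence globally. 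This yields the unique weak solution $t\mapsto\mu_t^h$ and, by construction, $t\mapsto\mu_t^h\in\CC([0,\infty);\MM(\RRD))$ with the claimed weighted growth.

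For differentiability in $h$, the plan is to differentiate the fixed-point relation. The linear theory of~\cite{lycz:2018} gives that, for a fixed frozen curve, $h\mapsto(\text{linear solution})$ is $\CC^1$ into $\CC_{\tilde\omega}([0,\infty);\ZZZ)$; combined with Theorem~\ref{lem:main_2} (which supplies the second weight $\omega_2$ and the needed Lipschitz/differentiable dependence of the linear solution on the coefficients, including the frozen curve appearing through $k_{\nu_t}$), the map $(h,\nu)\mapsto\Phi^h(\nu)$ is $\CC^1$ jointly. One then wants to invoke the implicit function theorem on $F(h,\mu)\colonequals\mu-\Phi^h(\mu)=0$ in the Banach space $\CC_{\tilde\omega}([0,\infty);\ZZZ)$: the partial derivative $\partial_\mu F = \mathrm{Id}-\partial_\nu\Phi^h$ is invertible because $\partial_\nu\Phi^h$ has operator norm $<1$ on the weighted space (same contraction estimate as above, now at the linearized level), so $h\mapsto\mu^h$ is $\CC^1$ and $\partial_h\mu^h$ solves the linearized (variational) transport equation; restricting to fixed $t$ gives $\partial_h\mu_t^h|_{h=h_0}\in\ZZZ$.

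I expect the main obstacle to be the bookkeeping of weights: one must verify that \emph{one} weight $\tilde\omega=\min\{\omega_1,\omega_2\}$ simultaneously (i) makes the invariant ball stable under $\Phi^h$, (ii) makes $\Phi^h$ and its $h$- and $\nu$-derivatives contractions (or at least bounded with $\nu$-derivative norm $<1$) on the weighted space, and (iii) is compatible with the polynomial-times-exponential decay $\omega_2$ forced by the linear differentiability result of Theorem~\ref{lem:main_2} (the $1/t^2$ factor there comes from two successive Gronwall-type losses when differentiating the flow). Reconciling the interval-wise blow-up controlled by $\omega_1$ with the global estimate controlled by $\omega_2$, and checking that the composition $v_i\circ k_{(\cdot)}$ retains enough regularity ($\CC^{3+\alpha}$ on the scalar side is there precisely to leave room for the two derivatives needed in the flow estimates plus the $\CCA$ target), is where the real work lies; the fixed-point and implicit-function mechanics are then routine.
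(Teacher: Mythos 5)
Your existence/uniqueness step mirrors the paper exactly: the operator $\Phi^h$ you describe is precisely the paper's $\mathcal{T}_h=S\circ B_h$, the invariant ball is $\widehat{\mathcal{B}}_{\omega_1}^{\mu_0}$, and the piecewise-exponential weight $\omega_1$ absorbing the Gronwall-in-$t$ growth to make the operator a contraction on each unit interval is the same device (Propositions~\ref{prop:unique_00} and~\ref{prop:T_H_0_infty_00}--\ref{prop:T_H_0_infty}). That part is fine.

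The differentiability step, however, is where you deviate, and the deviation contains a real gap. You propose to apply the implicit function theorem to $F(h,\mu)=\mu-\Phi^h(\mu)$ in $\CC_{\tilde\omega}([0,\infty);\ZZZ)$, and for that you need $\Phi^h=S\circ B_h$ to be jointly Fr\'echet-$\CC^1$ in $(h,\nu)$. The inner map $B_h$ is indeed $\CC^1$ by Lemma~\ref{tw:B_C1}, but the outer map $S$ --- the linear solution operator sending $(b,w)\in\CC_\omega([0,T];\CCA(\RRD;\RR^{d+1}))$ to the measure-valued solution --- is \emph{not} shown to be Fr\'echet differentiable anywhere in the paper, and your citation of Theorem~\ref{lem:main_2} does not supply it. Theorems~\ref{th:main}, \ref{tw:ciagla_pocho} and~\ref{lem:main_2} establish differentiability of $h\mapsto\nu_t^h$ only for \emph{one-parameter affine families} of coefficients, $b_0+hb_1$ (possibly with a controlled $C(t)\mathcal{O}(|h|)$ remainder), i.e.\ a Gateaux-type statement along a fixed ray, together with weight bookkeeping. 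Promoting this to Fr\'echet differentiability of $S$ in the coefficient variable --- with a remainder $o(\|(\delta b,\delta w)\|)$ uniform over all directions in the $\CC_\omega([0,T];\CCA)$ unit ball, in the $\ZZZ_{\tilde\omega}$ norm --- is a genuinely stronger statement, and the H\"older-remainder estimates in Appendix~\ref{apdx:lin}--\ref{apdx:ciagla_pocho} are not obviously uniform in that sense. Without it, $\partial_\nu\Phi^h$ is not even defined as a bounded operator, and the IFT cannot be invoked.

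The paper's actual argument is engineered precisely to sidestep this. It runs the Picard scheme $\nu^{\lambda,n}=\mathcal{T}_\lambda(\nu^{\lambda,n-1})$, observes that at each step $\ell$ the passage from $\nu^{\lambda,\ell-1}$ to $\nu^{\lambda,\ell}$ is a \emph{linear} transport problem whose coefficients, after Taylor-expanding $u(k_{\nu^{\lambda,\ell-1}})$ around $k_{\nu^{\lambda,\ell-2}}$, are exactly of the special form $b_{0,\ell}+\lambda b_{1,\ell}+C(t)\mathcal{O}(|\lambda|)$ covered by Theorem~\ref{lem:main_2}, proves existence of $\lim_{\lambda\to 0}\frac{\nu^{\lambda,\ell}-\nu^{\lambda,\ell-1}}{\lambda}$ term by term (Proposition~\ref{prop:lim_final}), shows the telescoping tail is $O(c^m)$ uniformly in $\lambda$ via the contraction estimate (Proposition~\ref{lem:estim_z_teleskop}), and then interchanges $\lambda\to 0$ with $n\to\infty$ by a Moore--Osgood argument (Theorem~\ref{lem:rudin}). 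In effect the paper builds the Neumann series $\sum_n(\partial_\nu\Phi^h)^n\partial_h\Phi^h$ one term at a time, always reducing to a scalar-parameter perturbation, so that only the weak one-ray differentiability result for the linear equation is needed. Your IFT plan is morally the same object but requires the missing Fr\'echet differentiability of $S$; to make it rigorous you would either have to prove that Fr\'echet differentiability directly (not done in the paper and nontrivial in the $\ZZZ_{\tilde\omega}$ setting), or retreat to the term-by-term construction the paper actually uses.
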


\begin{remark}
In fact, the parameter $h$ can be considered in any bounded interval and Theorem~\ref{main_hille} remains true. The reason why only bounded intervals can be taken into account is that in the proof of Theorem~\ref{main_hille} we exploit the supremum norm of $m^h(k_{\mu_t^h})$. If the parameter~$h$ is not bounded,
then for fixed $h$ the scalar function $m^h(k_{\mu_t^h})$ is bounded if $m_0,m_1$ satisfy assumption~{\bf A2}, but the family of these functions is not uniformly bounded. This uniformly boundedness is necessary later. 
\end{remark}
Above theorem can be naturally extended for the velocity field explicitly dependent also on $x$ and $t$ i.e. $v(t,x,\mu_t)$, analogously $m(t,x,\mu_t)$. We investigate a dependency only on the solution~$\mu_t$, in order to clarify the line of reasoning and the way estimates are made.

The proof of Theorem~\ref{main_hille} is based on a~scheme approximating the solution $t\mapsto\mu_t^h$. As we will see in Section~\ref{sec:well_00} the approximating scheme involves a linear transport equation. Therefore, in Section~\ref{sec:linear} some results concerning a linear transport equation are recalled. We consider
\begin{equation}\label{eq:def_linear}
\left\{ \begin{array}{l}
\partial_t\nu_t + \textrm{div}_x\Big(b(t,x)\: \nu_t\Big) = w(t,x)\:\nu_t \quad \textrm{in }(\CC_c^1([0,+\infty) \times \mathbb{R}^d))^{\ast}, \\
\nu_{t=0}= \nu_0 \in \mathcal{M}(\mathbb{R}^d),
\end{array} \right.
\end{equation}
where $b:[0,\infty)\times\RRD\to\RRD$ and $w:[0,\infty)\times\RRD\to\RR$.
By $\nu_t$ we always denote the solution to the linear problem, in contrast to $\mu_t$ which refers to the non-linear equation. In~Section~\ref{sec:linear_perturb}, two types of perturbation are studied. The first one is linear $b_0(t,x)+hb_1(t,x)$ and $w_0(t,x)+hw_1(t,x)$. And the second one is non-linear 
\begin{equation}\label{def:pertur_non_lin}
b_0(t,x)+hb_1(t,x)+C(t)\mathcal{O}(h)\quad\textrm{ and }\quad w_0(t,x)+hw_1(t,x)+C(t)\mathcal{O}(h).
\end{equation} Differentiability of the solution $t\mapsto\nu_t$ with respect to the perturbing parameter~$h$ is discussed (in both types of the perturbation). In Section~\ref{sec:ZZZ} some properties of the space $\ZZZ$ are presented.

In Section~\ref{sec:nemy_0}, the regularity of a mapping 
\[(t\mapsto\mu_t) \mapsto\! \left(t\mapsto u\!\left(\int_\RRD K_u(y,x)\:\dd\mu(y)\right)(\cdot)\right)\]
is investigated, where $u\in\lbrace v,m\rbrace$. It is concluded by stating that under assumptions {\bf A1, A2, A3} of Theorem~\ref{main_hille} this mapping is of a~class $\CC^1\left(\CC_\omega([0,\infty);\ZZZ);\: \CC_\omega([0,\infty); \CCA(\RRD;\RRD))\right)$ for $u\in\{v_0,v_1\}$ and analogously for $m_0$, $m_1$. This is necessary further on and allows us treating perturbations~\eqref{pert_non} as a~much simpler case. Using an approximating scheme, the non-linear problem is essentially reduced to the linear equation with perturbation~\eqref{def:pertur_non_lin}.

Later on, in Section~\ref{sec:well_00}, the scheme  approximating the weak solution $\mu_t^h$ is constructed. By contraction argument we conclude that~\eqref{def:prob_init2} has a weak solution.
It is also argued that if $\mu_0\in\MM(\RRD)$, then the solution $\mu_t^h$  is also a signed Radon measure -- this is not obvious, since $\MM(\RRD)$ is not complete with the norm $\|\cdot\|_\ZZZ$. 
In Section~\ref{sec:well} a simple modification of the approximating scheme introduced in Section~\ref{sec:well_00} is described, which is convenient to investigate quotients $\frac{\mu_t^h-\mu_t^0}{h}$.

Finally, in Section~\ref{sec:linear_tr} all elements are combined to complete the proof of Theorem~\ref{main_hille}.

\section{Preparation}\label{sec:linear}
This section is a brief overview of results concerning {\bf differentiability with respect to parameter of the solution to linear transport equation} on measures. By $\nu_t$ we denote the solution to linear transport equation, in contrast to $\mu_t$ which is a solution to non-linear problem.

First, we state some facts about the non-perturbed linear transport equation, in particular the explicit formula for its solution.
Then, a brief review of \cite{lycz:2018} is made i.e. linear perturbation in the linear transport equation. 
In contrast to~\cite{lycz:2018} we present the dependence of the derivative $\partial_h\nu_t^h$ on time. We also discuss some properties of the~space $\ZZZ$ and the~weighted space $\CC_\omega([0,\infty);\ZZZ)$. At the end of this section, a~{\bf non-linear perturbation} in the transport equation is discussed, see Theorem~\ref{lem:main_2}, which has not been studied in~\cite{lycz:2018}.

\subsection{Linear equation with no perturbation}
Consider the linear transport equation~\eqref{eq:def_linear}.
\begin{definition}[Weak solution to linear problem]\label{def:weak_solu}
 We say that the narrowly continuous curve $[0,+\infty)\to\MM(\RRD): t \mapsto \mu_t$ is a weak solution to~\eqref{eq:def_linear}~if 
\begin{equation*}\label{eqn:weak_solu}
\begin{split}
&\int_0^{\infty} \int_{\RRD} \big(\partial_t\varphi(t,x) +b(t,x)\nabla_x \varphi(t,x)\big)\: \dd \nu_t(x) \dd t  + \int_{\RRD} \varphi(0,\cdot) \:\dd \nu_0(x)= \\
=&\int_0^{\infty} \int_{\RRD}w(t,x)\varphi(t,x)\:\dd \nu_t(x) \dd t,\\
\end{split}
\end{equation*}
holds for all test functions $\varphi \in \CC_c^\infty([0,\infty) \times \RRD)$.
\end{definition}

A standard method to solve the transport equation in a classical sense is the method of characteristics. When the initial condition $\nu_0\in\MMR$, and if the velocity field $b$ and right-hand side $w$ are {\it sufficiently regular}, then the method of characteristics  can be applied in the measure setting too. The characteristic system associated to equation~\eqref{eq:def_linear} has the form
\begin{equation}\label{def:diff_init0}
\left\{ \begin{array}{ll}
{\dot X_b}(t,x)&=b\left(t, X_b(t,x)\right),\\
X_b(t_0, x)&=x \in \RRD.
\end{array} \right.
\end{equation}
A solution to (\ref{def:diff_init0}), $X_b$ is called a {\it flow map}. Note that if $b$ is Lipschitz continuous, the flow maps $X_b$ are defined for all $t \in \mathbb{R}$ and thus $x \mapsto X_b(t, x)$ is a one-parameter family of diffeomorphisms on $\RRD$ (dependent on the~variable~$b$). 

Let us define the {\it push-forward operator}~\cite{Ambrosio_lect}, which is needed to formulate an explicit formula for solution to~\eqref{eq:def_linear}. If $Y_1$, $Y_2$ are separable metric spaces, $\nu \in \mathcal{M}^+(Y_1)$, and $r\colon  Y_1 \to Y_2$ is a $\nu$-measurable map, we denote by $\nu \mapsto  r\# \nu \in \mathcal{M}^+(Y_2)$ the~push-forward of $\nu$ through $r$, defined by
\begin{equation}\label{eq:push}
r\# \nu(\textrm{B})\colonequals  \nu(r^{-1}(\textrm{B})), \qquad \textrm{ for all }  \textrm{B}\in \mathcal{B}(Y_2) 
.\end{equation}

 Consequently, one can also consider signed measures. Since Hahn-Jordan Decomposition Theorem push-forward can work separately on positive and negative parts of a signed measure and gives as a result sum of positive and negative measures.

The following lemma yields existence and uniqueness of a weak solution to~\eqref{eq:def_linear}.

\begin{lemma}[A representation formula for the linear equation] \label{lem:repr_form}\cite[Proposition 3.6]{Mani:2007}
 Let the mapping $t\mapsto b(t,\cdot)$ be a Borel velocity field in $L^1([0,T]; W^{1, \infty}(\RRD;\RRD))$, $w(t,x)$ be a Borel measurable function, bounded and for fixed $t$ locally Lipschitz continuous with respect to $x$. Let $\nu_0\in\mathcal{M}(\RRD)$. Then there exists a unique, narrowly continuous family of finite Borel  measures solving (in the distributional sense) the initial value problem~\eqref{eq:def_linear} and it is given by the explicit formula
\begin{equation}\label{repr:form}
\nu_t=X_b(t,\cdot)\#( e^{\int_0^t w(s, X_b(s,\cdot))\:\dd s}  \:\nu_0), \qquad  \textrm{ for all } t \in[0,T].
\end{equation}
Moreover,
\begin{equation}\label{eq:man_2}
\nu_t=X(t,0,\cdot)\#\nu_0 +\int_0^t X(t,s,\cdot)\#(w(s,\cdot)\nu_s)\:\dd s\qquad \textrm{ for all } t \in[0,T].
\end{equation}
provides an implicit formula for $\nu_t$.
\end{lemma}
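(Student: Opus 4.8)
The plan is to obtain existence by verifying that the curve defined by \eqref{repr:form} solves Definition~\ref{def:weak_solu}, to read off \eqref{eq:man_2} from it, and to prove uniqueness by an adjoint (duality) argument.

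\emph{Existence and the implicit formula.} Since $t\mapsto b(t,\cdot)\in L^1([0,T];W^{1,\infty}(\RRD;\RRD))$, Carath\'eodory theory provides, for every starting time, a unique absolutely continuous solution of \eqref{def:diff_init0}; write $X(t,s,\cdot)$ for the flow from time $s$ to time $t$, so that $X_b(t,\cdot)=X(t,0,\cdot)$, each $x\mapsto X(t,s,x)$ is a bi-Lipschitz homeomorphism of $\RRD$, and the cocycle identity $X(t,s,X(s,r,x))=X(t,r,x)$ holds. Put $E(t,x):=\exp\!\big(\int_0^t w(s,X_b(s,x))\,\dd s\big)$; since $w$ is bounded, $E$ is bounded above and below by positive constants on $[0,T]$, and $\nu_t:=X_b(t,\cdot)\#(E(t,\cdot)\,\nu_0)$ is a finite Borel measure. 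From $\int\varphi\,\dd\nu_t=\int\varphi(X_b(t,x))\,E(t,x)\,\dd\nu_0(x)$, continuity of $t\mapsto X_b(t,x)$, and dominated convergence one gets narrow continuity of $t\mapsto\nu_t$. For $\varphi\in\CC_c^\infty([0,\infty)\times\RRD)$ let $g(t,x):=\varphi(t,X_b(t,x))\,E(t,x)$; using $\tfrac{\dd}{\dd t}X_b(t,x)=b(t,X_b(t,x))$ and $\tfrac{\dd}{\dd t}E(t,x)=w(t,X_b(t,x))E(t,x)$, the map $t\mapsto g(t,x)$ is absolutely continuous with
\[
\tfrac{\dd}{\dd t}g(t,x)=\big[(\partial_t\varphi+b\cdot\nabla_x\varphi)(t,X_b(t,x))+w(t,X_b(t,x))\,\varphi(t,X_b(t,x))\big]E(t,x),
\]
and $|\tfrac{\dd}{\dd t}g(t,\cdot)|$ is dominated on $\mathrm{supp}\,\varphi$ by an $L^1([0,T])$ function of $t$ alone. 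Integrating this identity in $t$ (note $g(0,x)=\varphi(0,x)$ and $g(t,x)=0$ for large $t$), applying Fubini, and using the push-forward identity $\int h(X_b(t,x))E(t,x)\,\dd\nu_0(x)=\int h\,\dd\nu_t$ yields, after rearrangement, the weak formulation of Definition~\ref{def:weak_solu}. Finally, substituting $E(t,x)=1+\int_0^t w(s,X_b(s,x))\,E(s,x)\,\dd s$ into $\int\varphi\,\dd\nu_t$ and using the cocycle identity together with $\nu_s=X_b(s,\cdot)\#(E(s,\cdot)\,\nu_0)$ applied to $y\mapsto\varphi(X(t,s,y))\,w(s,y)$ gives \eqref{eq:man_2}.

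\emph{Uniqueness.} Let $\sigma_t$ be the difference of two narrowly continuous weak solutions with the same initial datum $\nu_0$. Then $\sigma_0=0$; for each $f\in\CC_b(\RRD)$ the map $t\mapsto\int f\,\dd\sigma_t$ is continuous, hence bounded on $[0,T]$, so Banach--Steinhaus gives $M_T:=\sup_{t\in[0,T]}\|\sigma_t\|_{\TV}<\infty$; and $\int_0^\infty\!\int_\RRD(\partial_t\varphi+b\cdot\nabla_x\varphi-w\,\varphi)\,\dd\sigma_t\,\dd t=0$ for all $\varphi\in\CC_c^\infty$. Mollifying in $(t,x)$ extends this to $\varphi\in\CC_c^1([0,T]\times\RRD)$, and a time cutoff near $t=T$ extends it to bounded $\varphi\in\CC^1([0,T]\times\RRD)$, with the boundary term $\int\varphi(T,\cdot)\,\dd\sigma_T$ now appearing on the right. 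Fix $T>0$ and $\psi\in\CC_c^\infty(\RRD)$; the backward problem $\partial_t\varphi+b\cdot\nabla_x\varphi=w\,\varphi$ on $[0,T]$, $\varphi(T,\cdot)=\psi$, is solved by $\varphi(t,x)=\psi(X(T,t,x))\exp\!\big(-\int_t^T w(\tau,X(\tau,t,x))\,\dd\tau\big)$, whose characteristics issuing from $\mathrm{supp}\,\psi$ remain in a fixed compact $K\subset\RRD$ on $[0,T]$ since $b\in L^1_tL^\infty_x$. Because $b,w$ are only $L^1_tW^{1,\infty}_x$ and bounded/locally Lipschitz, this $\varphi$ need not be $\CC^1$, so one approximates $(b,w)$ by smooth $(b_n,w_n)$ with $b_n\to b$ in $L^1([0,T];W^{1,\infty})$, $w_n\to w$ uniformly on $K$, uniformly bounded, solves the backward problem for $(b_n,w_n)$ to obtain $\varphi_n\in\CC^1$ with $x$-support inside $K$, and inserts $\varphi_n$ into the extended identity for $\sigma$ written with the genuine $(b,w)$: since $\partial_t\varphi_n+b_n\cdot\nabla_x\varphi_n-w_n\varphi_n=0$, this becomes $\int_0^T\!\int_\RRD\big[(b-b_n)\cdot\nabla_x\varphi_n+(w_n-w)\,\varphi_n\big]\,\dd\sigma_t\,\dd t=\int\psi\,\dd\sigma_T$. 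Stability of the flow under $b_n\to b$ gives $\|\nabla_x\varphi_n\|_\infty$ bounded uniformly in $n$ and the bracket $\to0$ uniformly on $[0,T]\times K$, so using $M_T<\infty$ the left-hand side tends to $0$; hence $\int\psi\,\dd\sigma_T=0$, and since $\psi$ and $T$ are arbitrary, $\sigma\equiv0$.

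The decisive difficulty is this uniqueness step: under the stated (non-smooth) hypotheses the natural adjoint test function is merely Lipschitz, so the regularization and the limit $n\to\infty$ must be controlled quantitatively, via stability of the characteristic flow with respect to $b$ in $L^1_tW^{1,\infty}_x$ and the uniform total-variation bound $M_T$; the existence half, by contrast, is a direct — if slightly technical — computation with the flow map.
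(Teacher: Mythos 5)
The paper does not actually prove this lemma: it cites Maniglia~\cite[Proposition~3.6]{Mani:2007} and, in the remark immediately after, notes only that Maniglia's statement is for $\nu_0\in\mathcal{P}(\RRD)$ and that "the proof remains essentially the same" when $\nu_0$ is a signed measure (pass to Hahn--Jordan parts). So you have written a self-contained proof where the paper defers entirely to a reference.

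Your route is the standard one and, as far as I can tell, the one Maniglia also follows: existence and the implicit formula~\eqref{eq:man_2} are obtained by a direct computation along the flow $X_b$, narrow continuity by dominated convergence, and uniqueness by a duality argument solving the backward adjoint equation and regularizing the coefficients. The existence half is sound; the Banach--Steinhaus observation that narrow continuity on $[0,T]$ forces $\sup_t\|\sigma_t\|_{\TV}<\infty$ is correct and is the right input for the duality step. Two small points in the uniqueness step should be tightened. First, you assert that the bracket $(b-b_n)\cdot\nabla_x\varphi_n+(w_n-w)\varphi_n$ tends to $0$ \emph{uniformly} on $[0,T]\times K$; this cannot be right for the $b$-term, since $b_n\to b$ only in $L^1([0,T];W^{1,\infty})$ and $b$ need not be continuous in $t$. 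What you actually need (and have) is
$\int_0^T\|(b-b_n)(t,\cdot)\|_{L^\infty(K)}\,\dd t\to0$
together with the uniform bound on $\|\nabla_x\varphi_n\|_\infty$; combined with $M_T<\infty$ this still kills the integral, so the conclusion stands, but the mode of convergence should be stated correctly. Second, the approximation $w_n\to w$ with uniform convergence on $K$ needs a word of justification, since $w$ is only assumed Borel in $t$ and locally Lipschitz in $x$: mollification in $t$ gives $L^1_t(L^\infty_x(K))$ convergence via Lebesgue points rather than uniform convergence, which is again enough for your integral estimate. With those two clarifications your argument is complete and is the natural proof of the cited result, giving in effect the verification that the paper leaves implicit.
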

Lemma~\ref{lem:repr_form} states in particular that the solution $\nu_t$ is an element of $\MM(\RRD)$. Notice that the representation formula yields that sign is preserved i.e. if $\nu_0\in\MM^+(\RRD)$  then $\nu_t\in\MM^+(\RRD)$.

\begin{remark}
Representation formula proposed in~\cite[Proposition 3.6]{Mani:2007} has identical form as~\eqref{repr:form} but it was originally formulated for a problem when initial condition $\nu_0$ is an element of $\mathcal{P}(\RRD)$. The proof of Lemma~\ref{lem:repr_form} remains essentially the same as in~\cite[Proposition 3.6]{Mani:2007}.
\end{remark}

\begin{corollary}\label{coro:conti_time}
If $\nu_\bullet$ is a weak solution to the linear transport equation~\eqref{eq:def_linear} with $b$ and $w$ as in Lemma~\ref{lem:repr_form}, then $\nu_\bullet:[0,\infty)\to\ZZZ$ is  a~locally Lipschitz mapping. In particular, it is continuous.
\end{corollary}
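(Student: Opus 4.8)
The plan is to read the estimate straight off the explicit representation formula \eqref{repr:form}. By Lemma~\ref{lem:repr_form} the (unique) weak solution satisfies, for every $\tau\ge 0$ and every bounded Borel $\varphi$,
\[
\int_{\RRD}\varphi\,\dd\nu_\tau=\int_{\RRD}\varphi\bigl(X_b(\tau,x)\bigr)\,E_\tau(x)\,\dd\nu_0(x),\qquad E_\tau(x):=e^{\int_0^\tau w(s,X_b(s,x))\,\dd s},
\]
the push-forward acting on the positive and negative parts of $\nu_0$ separately. Since $\nu_t-\nu_{t'}\in\MM(\RRD)\subset\ZZZ$ and $\MM(\RRD)$ is identified with a subspace of $(\CCA(\RRD))^*$ by integration, one has $\|\nu_t-\nu_{t'}\|_{\ZZZ}=\sup\bigl\{\bigl|\int_{\RRD}\varphi\,\dd(\nu_t-\nu_{t'})\bigr|:\varphi\in\CCA(\RRD),\ \|\varphi\|_{\CCA(\RRD)}\le1\bigr\}$, and for such $\varphi$ one in particular has $\|\varphi\|_\infty\le1$ and $\|\nabla\varphi\|_\infty\le1$. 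So the task reduces to a pointwise-in-$x$ bound on the integrand of $\int_{\RRD}\varphi\,\dd(\nu_t-\nu_{t'})$.

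Fix $T>0$ and $0\le t'\le t\le T$. Splitting $\varphi(X_b(t,x))E_t(x)-\varphi(X_b(t',x))E_{t'}(x)$ as $\bigl(\varphi(X_b(t,x))-\varphi(X_b(t',x))\bigr)E_t(x)+\varphi(X_b(t',x))\bigl(E_t(x)-E_{t'}(x)\bigr)$, I would use: (i) $|\varphi(X_b(t,x))-\varphi(X_b(t',x))|\le\|\nabla\varphi\|_\infty\,|X_b(t,x)-X_b(t',x)|\le\int_{t'}^t\|b(s,\cdot)\|_\infty\,\dd s$, since $\tau\mapsto X_b(\tau,x)$ is absolutely continuous with $\dot X_b(\tau,x)=b(\tau,X_b(\tau,x))$; (ii) $0<E_\tau(x)\le e^{\|w\|_\infty T}$; and (iii) $|E_t(x)-E_{t'}(x)|\le e^{\|w\|_\infty T}\,\bigl|\int_{t'}^t w(s,X_b(s,x))\,\dd s\bigr|\le e^{\|w\|_\infty T}\|w\|_\infty(t-t')$, via $|e^a-e^b|\le e^{\max(a,b)}|a-b|$. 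Integrating $\dd|\nu_0|$ and taking the supremum over $\varphi$ gives
\[
\|\nu_t-\nu_{t'}\|_{\ZZZ}\ \le\ e^{\|w\|_\infty T}\,\|\nu_0\|_{\TV}\left(\int_{t'}^t\|b(s,\cdot)\|_\infty\,\dd s+\|w\|_\infty(t-t')\right).
\]
Because $s\mapsto\|b(s,\cdot)\|_\infty\le\|b(s,\cdot)\|_{W^{1,\infty}(\RRD;\RRD)}\in L^1([0,T])$, the right-hand side tends to $0$ as $t-t'\to0$, uniformly on $[0,T]$, so $\nu_\bullet$ is uniformly continuous on every bounded interval, hence continuous on $[0,\infty)$; and whenever $b$ is in addition essentially bounded in time on bounded intervals — which is always the situation when this corollary is invoked later, since there $b$ depends continuously on $t$ — the first integral is bounded by $(t-t')\,\mathrm{esssup}_{s\le T}\|b(s,\cdot)\|_\infty$, and the displayed inequality becomes a local Lipschitz estimate $\|\nu_t-\nu_{t'}\|_{\ZZZ}\le L_T|t-t'|$ with $L_T:=e^{\|w\|_\infty T}\|\nu_0\|_{\TV}\bigl(\mathrm{esssup}_{s\le T}\|b(s,\cdot)\|_\infty+\|w\|_\infty\bigr)$.

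There is no serious obstacle here: the proof is the elementary chain of inequalities above, and the only points needing care are bookkeeping ones — that the $(\CCA(\RRD))^*$-norm of the finite signed measure $\nu_t-\nu_{t'}$ is computed by testing against unit-ball elements of $\CCA(\RRD)$ (immediate from the definition \eqref{def:space_Z} of $\ZZZ$ and the embedding $\MM(\RRD)\hookrightarrow(\CCA(\RRD))^*$), and that the flow $X_b(\tau,\cdot)$ is well defined with $\tau\mapsto X_b(\tau,x)$ absolutely continuous, which is exactly the Lipschitz-in-space, $L^1$-in-time regularity of $b$ assumed in Lemma~\ref{lem:repr_form}. I would also remark that the implicit formula \eqref{eq:man_2} gives an alternative route: it yields the increment $\nu_t-\nu_{t'}$ as a difference of push-forwards of $\nu_0$ and of a time-integral of push-forwards of $w(s,\cdot)\nu_s$, and bounding each piece in $(\CCA(\RRD))^*$ using the uniform bound $\sup_{s\le T}\|\nu_s\|_{\TV}\le e^{\|w\|_\infty T}\|\nu_0\|_{\TV}$ produces the same type of estimate; but the direct computation from \eqref{repr:form} is shorter.
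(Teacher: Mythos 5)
Your proof is essentially the paper's: same decomposition of $\varphi(X_b(t,x))E_t(x)-\varphi(X_b(t',x))E_{t'}(x)$ into a flow increment and an exponential-weight increment, the same use of $\|\nabla\varphi\|_\infty$ and $|e^a-e^b|\le e^{\max(a,b)}|a-b|$, and the same final estimate. You are actually slightly more careful than the paper on one point: under Lemma~\ref{lem:repr_form}'s hypothesis that $b$ is only $L^1$ in time, the flow increment is $\int_{t'}^t\|b(s,\cdot)\|_\infty\,\dd s$, which gives (uniform) continuity on bounded intervals but not a genuine Lipschitz bound unless $b$ is essentially bounded in time — as you note is the case whenever the corollary is invoked (cf.\ assumption~\eqref{eq:assum_b}); the paper's written inequality $\|X_b(t,\cdot)-X_b(\bar t,\cdot)\|_\infty\le\int_0^T\|b\|_\infty\,\dd s\,|t-\bar t|$ elides this distinction.
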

\begin{proof}
The mapping $\nu_\bullet$, as a weak solution to the linear problem, has the explicit representation~\eqref{repr:form}. Let us denote
$\psi(t,\cdot):=\exp(\int_0^tw(s,X_b(s,\cdot))\dd s)$. 
 Let $\varphi\in\CCA(\RRD)$ and take ${\overline{t}}\in[0,\infty)$. If $t\in[0,\infty)$ and $t\neq{\overline{t}}$ then
\begin{equation*}
\begin{split}
&|\langle \nu_t-\nu_{{\overline{t}}} ,\varphi\rangle|=\left|\int_\RRD\varphi(X_b(t,x))\psi(t,x)-\varphi(X_b({\overline{t}},x))\psi({\overline{t}},x)\dd\nu_0(x)\right|\\
&\leq \int_\RRD\!\!\left|\varphi(X_b(t,x))-\varphi(X_b({\overline{t}},x))\right|\psi(t,x)\dd\nu_0(x)+\int_\RRD\!\!|\varphi(X_b({\overline{t}},x))|\: \big|\psi(t,x)-\psi({\overline{t}},x)\big|\dd\nu_0(x).
\end{split}
\end{equation*} 
Without lost of generality we may assume that $|t-{\overline{t}}|<1$. Then one has
\[|\psi(t,x)|\leq \exp\left(\int_0^tw^+(s,X_b(s,x))\dd s\right)\leq \exp\big(\|w^+\|_\infty({\overline{t}}+1)\big)=:C_1
\]
and hence, writing $\int_0^T\|b\|_\infty \dd s$ as a shortcut for $\int_0^T\|b(s,\cdot)\|_\infty\dd s$,
\begin{equation*}
\begin{split}
&|\langle \nu_t-\nu_{{\overline{t}}},\varphi\rangle|\leq\\
&\leq \|\nu_0\|_{TV}\left(C_1\|\varphi(X_b(t,\cdot))-\varphi(X_b({\overline{t}},\cdot))\|_\infty+\|\varphi\|_\infty\|\psi(t,\cdot)-\psi(\overline{t},\cdot)\|_\infty\right)\\
&\leq\|\nu_0\|_{TV}\left(C_1\|\nabla\varphi\|_\infty\|X_b(t,\cdot)-X_b({\overline{t}},\cdot)\|_\infty
+\|\varphi\|_\infty\|\psi(t,\cdot)-\psi({\overline{t}},\cdot)\|_\infty
\right)
\\
&\leq \|\nu_0\|_{TV}\left(C_1\int_0^T\!\!\|b\|_\infty\dd s\:\|\nabla\varphi\|_\infty|t-{\overline{t}}|+\|\varphi\|_\infty\|\psi(t,\cdot)-\psi({\overline{t}},\cdot)\|_\infty\right)\\
&\leq
\|\nu_0\|_{TV}\left(C_1\int_0^T\!\!\|b\|_\infty\dd s\:\|\nabla\varphi\|_\infty|t-{\overline{t}}|+\exp(\|w\|_\infty\max\{t,\overline{t}\})\|w\|_\infty\|\varphi\|_\infty|t-{\overline{t}}|\right).
\end{split}
\end{equation*}
Thus, taking supremum over $\varphi$ in the unit ball of $\CCA(\RRD)$, one obtains
\begin{equation}\label{eq:lipsh_time}
\|\nu_t-\nu_{{\overline{t}}}\|_\ZZZ\leq \Big(C_1\int_0^T\|b\|_\infty\dd s+\exp(\|w\|_\infty\max\{t,\overline{t}\})\|w\|_\infty\Big)\|\nu_0\|_{TV}|t-{\overline{t}}|
\end{equation}
what means that $\nu_\bullet$ is a  locally Lipschitz continuous. In particular, it is continuous mapping.
\end{proof}
Let us assume that
\begin{equation}\label{eq:assum_b}
\begin{split}
\big(t \mapsto b(t, \cdot)\big)& \in \CC_b\left([ 0, +\infty);\; \CCA(\RRD;\RRD)\right),\\
\big(t \mapsto w(t, \cdot)\big)& \in \CC_b\left([ 0, +\infty);\; \CCA(\RRD;\RR)\right).
\end{split}
\end{equation}
These assumptions are necessary to assume that the solution is differentiable with respect to the perturbating parameter (see Theorem~\ref{th:main} and Theorem~\ref{tw:ciagla_pocho}). Under assumption~\eqref{eq:assum_b} the velocity field $b$ is globally Lipschitz and hence there exists a global unique solution to~\eqref{def:diff_init0} what implies that~\eqref{eq:def_linear} also has unique solution.

 Let us notice the following dependence of $X_b$ on $b$. 
\begin{lemma}\label{lem:X_estim}
Let $b(t, \cdot)$, $\overline{b}(t,\cdot)\in \CCA(\RRD;\RRD)$ and let $X_b$ and $X_{\overline{b}}$ be the solution to problem~\eqref{def:diff_init0} corresponding to~$b$ and $\overline{b}$. Then the following estimates holds
\[\left| X_b(t,x) - X_{\overline{b}}(t,x)\right| \leq  \exp(t\: C_t[b, \overline{b}]) \int_0^t \|b(s, \cdot)- \overline{b}(s,\cdot)\|_\infty\dd s,\]
where
\begin{equation}\label{eq:constant_Ct}
C_t[b, \overline{b}]:=  \min \left\lbrace \sup_{0\leq s \leq t} \Vert \nabla_x b(s, \cdot) \Vert_\infty, \sup_{0\leq s \leq t} \Vert \nabla_x \overline{b}(s, \cdot) \Vert_\infty \right\rbrace.\end{equation}
\end{lemma}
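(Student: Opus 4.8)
The plan is to derive the estimate by writing both $X_b$ and $X_{\overline b}$ in their Duhamel (integral) form and applying Gr\"onwall's inequality, with a symmetric choice of which vector field supplies the Lipschitz bound. First I would fix $x\in\RRD$ and set $\delta(t):=\left|X_b(t,x)-X_{\overline b}(t,x)\right|$. From~\eqref{def:diff_init0} (taken with $t_0=0$) both flows satisfy $X_b(t,x)=x+\int_0^t b(s,X_b(s,x))\dd s$ and likewise for $\overline b$, so subtracting gives
\[
X_b(t,x)-X_{\overline b}(t,x)=\int_0^t\Big(b(s,X_b(s,x))-\overline b(s,X_{\overline b}(s,x))\Big)\dd s.
\]
Then I would insert and subtract the mixed term $b(s,X_{\overline b}(s,x))$ inside the integrand, obtaining the two contributions $b(s,X_b(s,x))-b(s,X_{\overline b}(s,x))$, bounded by $\sup_{0\le s\le t}\|\nabla_x b(s,\cdot)\|_\infty\,\delta(s)$, and $b(s,X_{\overline b}(s,x))-\overline b(s,X_{\overline b}(s,x))$, bounded pointwise by $\|b(s,\cdot)-\overline b(s,\cdot)\|_\infty$. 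This yields
\[
\delta(t)\ \le\ \int_0^t \|b(s,\cdot)-\overline b(s,\cdot)\|_\infty\dd s\ +\ \sup_{0\le s\le t}\|\nabla_x b(s,\cdot)\|_\infty\int_0^t\delta(s)\dd s.
\]

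Next I would apply the integral form of Gr\"onwall's lemma. Since the inhomogeneous term $A(t):=\int_0^t\|b(s,\cdot)-\overline b(s,\cdot)\|_\infty\dd s$ is nondecreasing and the coefficient $L_b:=\sup_{0\le s\le t}\|\nabla_x b(s,\cdot)\|_\infty$ is constant in the inner integral, Gr\"onwall gives $\delta(t)\le A(t)\,e^{L_b t}$, i.e.
\[
\left|X_b(t,x)-X_{\overline b}(t,x)\right|\ \le\ e^{tL_b}\int_0^t\|b(s,\cdot)-\overline b(s,\cdot)\|_\infty\dd s.
\]
By symmetry of the roles of $b$ and $\overline b$ — inserting instead the mixed term $\overline b(s,X_b(s,x))$ — the same bound holds with $L_b$ replaced by $L_{\overline b}:=\sup_{0\le s\le t}\|\nabla_x\overline b(s,\cdot)\|_\infty$. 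Taking the smaller of the two exponents gives exactly the constant $C_t[b,\overline b]$ of~\eqref{eq:constant_Ct}, and taking the supremum over $x$ is not even needed since the bound is already pointwise in $x$.

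There is no real obstacle here; the only points requiring a word of care are that the hypotheses~\eqref{eq:assum_b} (or merely $b(t,\cdot),\overline b(t,\cdot)\in\CCA(\RRD;\RRD)$ with the implicit local integrability in $t$) guarantee both that the flows are globally defined and that $\nabla_x b$, $\nabla_x\overline b$ are bounded, so $L_b,L_{\overline b}<\infty$; and that the mixed-term trick must be set up so that the Lipschitz constant of whichever field one chooses multiplies the difference of the \emph{flows} rather than a difference of base points, which is what makes the Gr\"onwall iteration close. The asymmetry in~\eqref{eq:constant_Ct} — the minimum over the two sup-norms — is precisely the payoff of running the argument both ways and keeping the better estimate.
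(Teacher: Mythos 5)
Your proposal is correct and follows essentially the same argument as the paper: write both flows in Duhamel form, insert a mixed term to split the integrand into a sup-norm difference of the vector fields plus a Lipschitz-times-$\delta(s)$ term, apply Gr\"onwall, and then invoke symmetry in $b,\overline b$ to obtain the minimum in $C_t[b,\overline b]$. The only cosmetic difference is that you insert $b(s,X_{\overline b}(s,x))$ whereas the paper inserts $\overline b(s,X_b(s,x))$ — since both arguments appeal to symmetry at the end, this changes nothing.
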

\begin{proof}
One has
\begin{equation*}
\begin{split}
&\Big| X_b(t, x)-X_{\overline{b}}(t,x) \Big|=\\
&= \Big| x +\int_0^t b(s, X_b(s, x))\:\dd s - x -\int_0^t \overline{b}(s, X_{\overline{b}}(s, x))\:\dd s\Big| \\
&\leq  \int_0^t| b(s, X_b(s,x))-\overline{b}(s, X_{\overline{b}}(s,x))| \dd s\\
& \leq 
\int_0^t| b(s, X_b(s,x))-\overline{b}(s, X_{b}(s,x))| \dd s + \int_0^t| \overline{b}(s, X_b(s,x))-\overline{b}(s,  X_{\overline{b}}(s,x))| \dd s\\
& \leq 
\int_0^t \|b(s,\cdot)-\overline{b}(s,\cdot)\|_\infty \dd s + \int_0^t \|\nabla_x\overline{b}(s,\cdot)\|_\infty |X_b(s,x)-X_{\overline{b}}(s,x)|\dd s\\
& \leq 
\int_0^t \|b(s,\cdot)-\overline{b}(s,\cdot)\|_\infty \dd s + \sup_{0\leq s \leq t}\|\nabla_x\overline{b}(s,\cdot)\|_\infty\int_0^t  |X_b(s,x)-X_{\overline{b}}(s,x)|\dd s.
\end{split}
\end{equation*}
By Gronwall Lemma and observing that the roles of $b$ and $\overline{b}$
can be interchanged, the proof is
completed.
\end{proof}

\subsection{Space $\ZZZ$ and weighted space}\label{sec:ZZZ}
Recall the definition of the space $\ZZZ$ in~\eqref{def:space_Z}, following~\cite{lycz:2018}.
Throughout we identify $\MM(\RRD)$ with the subspace of~$(\CCA(\RRD))^*$. 
The norm on~$\ZZZ$ (and on $\MM(\RRD)$) is $\Vert \cdot \Vert_{(\CCA(\RRD))^*}$, which will be denoted by $\|\cdot\|_\ZZZ$. 
Below some properties of the space $\ZZZ$ are presented. 
\begin{proposition}\cite[Proposition 4.1]{lycz:2018}\label{prop:separable}
The set $\mathrm{span}_{\mathbb{Q}}\{\delta_x\colon  x\in\mathbb{Q}^d \}$ is dense in $\ZZZ$ with respect to  the $\CCAS$-topology, i.e.
\[\ZZZ=\overline{\mathrm{span}_{\mathbb{Q}}\{\delta_x\colon x\in\mathbb{Q}^d \}}^{\CCAS}.\]
Consequently, $\ZZZ$ is a separable space.
\end{proposition}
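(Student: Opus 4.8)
The plan is to prove both assertions of Proposition~\ref{prop:separable} by working in the predual $\CCA(\RRD)$ and exploiting the definition $\ZZZ=\overline{\MM(\RRD)}^{\CCAS}$ together with a separation (Hahn--Banach) argument. Write $D:=\mathrm{span}_{\QQ}\{\delta_x\colon x\in\QQD\}$ for the candidate dense set and let $\overline{D}$ denote its closure in $\ZZZ$ with respect to the $\CCAS$-topology, which is a closed linear subspace of $\ZZZ$.

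First I would show that the Dirac masses $\delta_x$ for $x\in\RRD$ already lie in $\overline{D}$. Fix $x\in\RRD$ and a sequence $q_n\in\QQD$ with $q_n\to x$. For any $\varphi\in\CCA(\RRD)$ we have $\langle \delta_{q_n}-\delta_x,\varphi\rangle=\varphi(q_n)-\varphi(x)$, and since $\varphi$ has bounded gradient, $|\varphi(q_n)-\varphi(x)|\leq \|\nabla\varphi\|_\infty\,|q_n-x|\leq \|\varphi\|_{\CCA(\RRD)}\,|q_n-x|$. Taking the supremum over $\varphi$ in the unit ball of $\CCA(\RRD)$ gives $\|\delta_{q_n}-\delta_x\|_\ZZZ\leq |q_n-x|\to 0$, so $\delta_x\in\overline{D}$ (passing from $\QQ$-coefficients to $\RR$-coefficients is an easy further approximation, or simply note $\delta_{q_n}\in D$). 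Consequently $\overline{D}$ contains all finite real linear combinations of Dirac masses, i.e. all atomic measures with finite support.

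Next I would pass from atomic measures to all of $\MM(\RRD)$, and here is the step I expect to be the main obstacle. One cannot simply invoke weak-$*$ density of finitely supported measures in $\MM(\RRD)$, because the relevant topology is the $\CCAS$-norm topology, which is strictly coarser than total variation but not metrizably weak. The cleanest route is a Hahn--Banach argument: suppose $\overline{D}\neq\ZZZ$; since $\overline{D}$ is a proper closed subspace of the Banach space $\ZZZ$, there exists a nonzero continuous functional $\Lambda\in\ZZZ^*$ vanishing on $\overline{D}$. Now $\ZZZ$ is a closed subspace of $(\CCA(\RRD))^*$ that contains $\MM(\RRD)$ as a (norm-dense, by definition) subset, and one knows $\CCA(\RRD)$ is reflexive-like enough — actually here the correct fact is that since $\CCA(\RRD)$ is a Banach space, every $\Lambda\in\ZZZ^*$ restricted to $\MM(\RRD)\subset (\CCA(\RRD))^*$ must, by a density/continuity argument be represented by evaluation against some $\varphi\in\CCA(\RRD)^{**}$; to keep things elementary I would instead argue directly on the predual. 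The key observation is: $\Lambda$ vanishes on $\overline{D}\supseteq\{\delta_x:x\in\RRD\}$ already forces strong constraints. I would show that the bidual pairing forces $\Lambda$ to annihilate every $\delta_x$, hence (after identifying the action of $\Lambda$ with testing against a function $\varphi_\Lambda$ in the relevant space) that $\varphi_\Lambda(x)=0$ for all $x$, whence $\Lambda$ annihilates all of $\MM(\RRD)$; but $\MM(\RRD)$ is $\CCAS$-dense in $\ZZZ$ by definition, so $\Lambda\equiv 0$, a contradiction. Therefore $\overline{D}=\ZZZ$, which is the first claim.

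Finally, separability is immediate: $D=\mathrm{span}_{\QQ}\{\delta_x:x\in\QQD\}$ is a countable set (finite $\QQ$-linear combinations indexed by finite subsets of the countable set $\QQD$), and we have just shown it is dense in $\ZZZ$; a Banach space with a countable dense subset is separable. The delicate point throughout is handling the $\CCAS$-topology correctly in the Hahn--Banach step — one must be careful that the separating functional really is represented by a genuine element linked to $\CCA(\RRD)$, or else bypass this by a direct net-approximation argument showing every $\mu\in\MM(\RRD)$ is a $\CCAS$-limit of finitely supported $\QQ$-combinations (e.g. by a standard partition-of-$\RRD$-into-small-cubes construction for compactly supported $\mu$, then truncating general $\mu$), which may in fact be the shorter and more self-contained path and is presumably what \cite{lycz:2018} does.
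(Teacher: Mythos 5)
Your opening step is correct and matches the natural argument: for $x\in\RRD$ and $q_n\in\QQD$ with $q_n\to x$, the bound $|\varphi(q_n)-\varphi(x)|\le\|\nabla\varphi\|_\infty|q_n-x|\le\|\varphi\|_{\CCA}|q_n-x|$ gives $\|\delta_{q_n}-\delta_x\|_{\ZZZ}\le|q_n-x|$, so every $\delta_x$ lies in $\overline{D}$, and (since $\overline{D}$ is closed under real scalar approximation by rationals) so does every finitely supported measure. The separability conclusion from density of the countable set $D$ is also correct.

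The problem is the Hahn--Banach step, which is circular. You take $\Lambda\in\ZZZ^*$ annihilating $\overline{D}\supset\{\delta_x:x\in\RRD\}$ and want to conclude $\Lambda=0$. But the implication ``$\Lambda(\delta_x)=0$ for all $x$ implies $\Lambda\equiv0$ on $\ZZZ$'' is, by Hahn--Banach, \emph{equivalent} to the density statement you are trying to prove: a closed subspace of a Banach space is the whole space iff no nonzero functional annihilates it. Any attempt to justify that implication by ``identifying $\Lambda$ with a function $\varphi_\Lambda\in\CCA(\RRD)$'' is in fact invoking the isomorphism $\ZZZ^*\cong\CCA(\RRD)$ (Proposition~\ref{prop:Z_isom}), and the injectivity half of that isomorphism — $T\Lambda=0\Rightarrow\Lambda=0$ with $T\Lambda(x)=\Lambda(\delta_x)$ — is again precisely the density of $\mathrm{span}\{\delta_x\}$. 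So this route presupposes what it aims to establish, and the sketch between your two occurrences of ``I would show'' does not close the loop.

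The direct approximation you mention at the end is the right path and should be carried out rather than left as a remark. Fix $\mu\in\MM(\RRD)$ and $\varepsilon>0$. By tightness of finite measures, split $\mu=\mu\mathbin{\llcorner}B_R+\mu\mathbin{\llcorner}(\RRD\setminus B_R)$ with $\|\mu\mathbin{\llcorner}(\RRD\setminus B_R)\|_{TV}<\varepsilon$; since $\|\cdot\|_\ZZZ\le\|\cdot\|_{TV}$, the tail is small in $\ZZZ$. Partition $B_R$ into finitely many disjoint cubes $Q_i$ of diameter $<\varepsilon$ with centers $x_i$, and set $\mu_\varepsilon:=\sum_i\mu(Q_i)\delta_{x_i}$. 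For $\|\varphi\|_{\CCA}\le1$,
\[
\Big|\int_{B_R}\varphi\,\dd\mu-\sum_i\mu(Q_i)\varphi(x_i)\Big|
=\Big|\sum_i\int_{Q_i}\big(\varphi(x)-\varphi(x_i)\big)\,\dd\mu(x)\Big|
\le\varepsilon\,\|\mu\|_{TV},
\]
so $\|\mu\mathbin{\llcorner}B_R-\mu_\varepsilon\|_\ZZZ\le\varepsilon\|\mu\|_{TV}$. Finally replace the masses $\mu(Q_i)$ by nearby rationals and the centers $x_i$ by nearby rational points, using the Lipschitz estimate of your Step 1; this lands in $D$. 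Hence $\MM(\RRD)\subset\overline{D}$, and since $\MM(\RRD)$ is $\CCAS$-dense in $\ZZZ$ by the definition~\eqref{def:space_Z}, $\overline{D}=\ZZZ$. This is the self-contained argument; the Hahn--Banach scaffolding in your main route should be removed.
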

\begin{proposition}\cite[Proposition 4.3]{lycz:2018}\label{prop:Z_isom}
The space $\ZZZ^*$ is linearly isomorphic to $\CCA(\RRD)$ under the map $\phi \mapsto T\phi$, where $T\phi(x)\colonequals \phi(\delta_x)$, $T\colon  \ZZZ^* \to \CCA(\RRD)$.
\end{proposition}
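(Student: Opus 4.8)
The plan is to prove the isomorphism by exhibiting an explicit two-sided inverse of $T$. Alongside $T\colon\ZZZ^*\to\CCA(\RRD)$, $T\phi(x):=\phi(\delta_x)$, I would introduce $S\colon\CCA(\RRD)\to\ZZZ^*$ sending $f$ to the functional $\mu\mapsto\langle\mu,f\rangle$ on $\MM(\RRD)$, extended by density to $\ZZZ=\overline{\MM(\RRD)}^{\,\CCAS}$; equivalently, $Sf$ is the restriction to $\ZZZ$ of the canonical image of $f$ in $\CCA(\RRD)^{**}$. The only genuinely analytic ingredient is that $x\mapsto\delta_x$ is continuously differentiable from $\RRD$ into $\ZZZ$ \emph{for the norm topology} of $\ZZZ$; everything else is soft Banach-space theory together with Proposition~\ref{prop:separable}.

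First I would show that $T$ is well defined and bounded. Fix $x\in\RRD$ and a coordinate direction $e_i$; for $h\neq0$ the difference quotient $q_h:=h^{-1}(\delta_{x+he_i}-\delta_x)$ is a finite signed combination of Diracs, hence $q_h\in\MM(\RRD)\subset\ZZZ$. For $f\in\CCA(\RRD)$ the mean value theorem gives $\langle q_h,f\rangle=\partial_if(\xi_h)$ with $\xi_h\in[x,x+he_i]$, so, writing $\partial_i\delta_x$ for the functional $f\mapsto\partial_if(x)$, one has $\|q_h-\partial_i\delta_x\|_{\CCAS}=\sup_{\|f\|_{\CCA}\le1}|\partial_if(\xi_h)-\partial_if(x)|\le|h|^\alpha\to0$. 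Hence $q_h\to\partial_i\delta_x$ in $\ZZZ$-norm and, being a norm-limit of measures, $\partial_i\delta_x\in\ZZZ$. Since $\|\delta_x\|_\ZZZ\le1$ and $\|\partial_i\delta_x\|_\ZZZ\le1$, and since the continuity of $\phi$ on $\ZZZ$ lets it pass to the limit in the difference quotients, $T\phi$ is differentiable with $\partial_i(T\phi)(x)=\phi(\partial_i\delta_x)$; combined with $\|\partial_i\delta_{x_1}-\partial_i\delta_{x_2}\|_\ZZZ\le|x_1-x_2|^\alpha$ (again from the Hölder seminorm in $\|\cdot\|_{\CCA}$) this yields $T\phi\in\CCA(\RRD)$ with $\|T\phi\|_{\CCA}\le C_d\|\phi\|_{\ZZZ^*}$.

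Then I would verify that $S$ is well defined and bounded — immediate from $|\langle\mu,f\rangle|\le\|\mu\|_{\CCAS}\|f\|_{\CCA}$ for $\mu\in\MM(\RRD)$ and density of $\MM(\RRD)$ in $\ZZZ$, giving $\|Sf\|_{\ZZZ^*}\le\|f\|_{\CCA}$ — and that $S$ and $T$ are mutually inverse. Indeed $(TSf)(x)=(Sf)(\delta_x)=\langle\delta_x,f\rangle=f(x)$, so $TS=\mathrm{id}_{\CCA(\RRD)}$; and $(ST\phi)(\mu)=\langle\mu,T\phi\rangle$, which at $\mu=\delta_x$ equals $T\phi(x)=\phi(\delta_x)$, so $ST\phi$ and $\phi$ agree on every Dirac, hence on $\mathrm{span}_\QQ\{\delta_x:x\in\QQ^d\}$, which is dense in $\ZZZ$ by Proposition~\ref{prop:separable}, whence $ST\phi=\phi$ by continuity. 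Thus $T$ is a bounded linear bijection with bounded inverse $S=T^{-1}$, i.e.\ a linear isomorphism. (Alternatively, once $T$ is bounded and bijective one could invoke the open mapping theorem, but the explicit $S$ is cleaner and self-contained.)

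The step I expect to be the main obstacle is the norm-differentiability of $x\mapsto\delta_x$ in $\ZZZ$: one must show the difference quotients converge in the $\CCAS$-norm — not merely weak-$*$, since a general $\phi\in\ZZZ^*$ need not be weak-$*$ continuous — and that the limiting functionals $\partial_i\delta_x$ lie in $\ZZZ=\overline{\MM(\RRD)}$. Both facts rest on the Hölder term built into $\|\cdot\|_{\CCA}$ through the bound $\sup_{\|f\|_{\CCA}\le1}|\partial_if(\xi)-\partial_if(x)|\le|\xi-x|^\alpha$, which is precisely why the predual of $\CCA(\RRD)$, rather than of $W^{1,\infty}(\RRD)$, is the appropriate ambient space, cf.\ Example~\ref{example}.
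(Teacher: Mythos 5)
Your proof is correct, and it is the natural route to this result. The paper itself does not give a proof but cites \cite[Proposition 4.3]{lycz:2018}; your argument — establishing norm-convergence of the difference quotients $h^{-1}(\delta_{x+he_i}-\delta_x)\to\partial_i\delta_x$ in $\ZZZ$ using the H\"older seminorm of $\nabla f$, so that $\partial_i\delta_x$ lies in $\ZZZ=\overline{\MM(\RRD)}^{\CCAS}$ as a norm-limit of measures, then producing $T\phi\in\CCA(\RRD)$ with $\|T\phi\|_{\CCA}\le C_d\|\phi\|_{\ZZZ^*}$, exhibiting the explicit inverse $S$ by restricting the canonical embedding $\CCA(\RRD)\hookrightarrow\CCA(\RRD)^{**}$ to $\ZZZ$, and closing the loop via density of $\mathrm{span}_\QQ\{\delta_x:x\in\QQ^d\}$ (Proposition~\ref{prop:separable}) — is, to the best of my knowledge, essentially the same argument as in the cited reference. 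You are right to flag the step $\partial_i\delta_x\in\ZZZ$ as the crux: without the H\"older term in the $\CCA$ norm the difference quotients would only converge weak-$*$, not in norm, and a general $\phi\in\ZZZ^*$ need not be weak-$*$ continuous (this is exactly what fails for $W^{1,\infty}$, cf.\ Example~\ref{example}). One cosmetic remark: the point $\xi_h$ from the mean value theorem depends on $f$, so it is cleaner to bypass the mean value theorem entirely and bound $\bigl|h^{-1}(f(x+he_i)-f(x))-\partial_if(x)\bigr|=\bigl|h^{-1}\int_0^h(\partial_if(x+se_i)-\partial_if(x))\,\dd s\bigr|\le|\nabla f|_\alpha\,|h|^\alpha$ directly; the estimate and conclusion are the same.
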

The following result was not stated in~\cite{lycz:2018}:
\begin{proposition}\label{prop:Z+}\label{completeZ+}
$\MMR$ is closed in $\CCAS$.
\end{proposition}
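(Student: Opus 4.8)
The plan is to show that if $(\mu_n)$ is a sequence in $\MMR$ converging in the $\CCAS$-norm to some $\Lambda \in \CCAS$, then $\Lambda$ is (represented by) a non-negative bounded Radon measure. The crucial input is the duality between $\ZZZ$ and $\CCA(\RRD)$ from Proposition~\ref{prop:Z_isom}: an element $\Lambda\in\ZZZ$ acts on $\CCA(\RRD)$, and in particular on the dense-in-$\CC_b$ family of test functions we need. First I would observe that since $\ZZZ\subset\CCAS$ and $\CCA(\RRD)$ is dense in $\CC_b(\RRD)$ in a suitable sense (uniformly on compacta, with control of the $\CCA$-norm on bump functions), the functional $\Lambda$ is automatically positive: for any $\varphi\in\CCA(\RRD)$ with $\varphi\geq 0$ we have $\langle\mu_n,\varphi\rangle\geq 0$ for all $n$, hence $\langle\Lambda,\varphi\rangle=\lim_n\langle\mu_n,\varphi\rangle\geq 0$. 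Likewise $\Lambda$ is bounded: $\|\mu_n\|_{\TV}=\|\mu_n\|_\ZZZ$ for non-negative measures (testing against functions approaching the constant $1$), and $\|\mu_n\|_\ZZZ\to\|\Lambda\|_\ZZZ$, so $\sup_n\|\mu_n\|_{\TV}<\infty$.

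Next I would upgrade this positive, bounded linear functional on $\CCA(\RRD)$ to a genuine Radon measure. The idea is to extend $\Lambda$ to $\CC_c(\RRD)$ (or $\CC_0(\RRD)$): given $f\in\CC_c(\RRD)$, approximate it uniformly by functions $f_k\in\CCA(\RRD)$ supported in a fixed compact neighbourhood of $\mathrm{supp}(f)$; using positivity of $\Lambda$ together with $|\langle\Lambda,f_k-f_j\rangle|\leq \langle\Lambda,\chi\rangle\,\|f_k-f_j\|_\infty$ for a fixed cutoff $\chi\in\CCA(\RRD)$ with $\chi\geq 0$, $\chi\equiv 1$ on that neighbourhood, the values $\langle\Lambda,f_k\rangle$ form a Cauchy sequence whose limit is independent of the approximating sequence. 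This defines a positive linear functional on $\CC_c(\RRD)$, bounded by $\sup_n\|\mu_n\|_{\TV}$ on $\CC_0$, and by the Riesz representation theorem it is integration against a unique $\nu\in\MMR$. Finally I would check $\Lambda=\nu$ as elements of $\CCAS$: they agree on $\CC_c^1(\RRD)\cap\CCA(\RRD)$ by construction, and a density argument (again using Proposition~\ref{prop:separable}, i.e. that $\mathrm{span}_\QQ\{\delta_x\}$ is dense in $\ZZZ$, so its predual-type pairing is determined by values that $\CC_c^1$ functions already see) shows they agree on all of $\CCA(\RRD)$.

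The main obstacle I anticipate is the passage from "positive bounded functional on $\CCA(\RRD)$" to "Radon measure": one must be careful that $\CCA(\RRD)$-approximations of a compactly supported continuous function can be taken with uniformly bounded supports and controlled behaviour, so that positivity of $\Lambda$ yields the Cauchy estimate above — this is where the cutoff function $\chi$ and the positivity are essential, since $\Lambda$ need not be continuous in $\|\cdot\|_\infty$ a priori. A secondary technical point is making sure the Riesz measure has finite total variation (i.e. lives in $\MMR$, not just in the Radon measures on $\RRD$): this follows from the uniform bound $\sup_n\|\mu_n\|_{\TV}<\infty$, which transfers to $\langle\Lambda,f\rangle\leq (\sup_n\|\mu_n\|_{\TV})\|f\|_\infty$ for $f\in\CC_c(\RRD)$, $f\geq 0$, hence to $\nu(\RRD)\leq\sup_n\|\mu_n\|_{\TV}<\infty$. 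Once these are in place, closedness of $\MMR$ in $\CCAS$ is immediate.
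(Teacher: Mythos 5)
Your proof follows the same skeleton as the paper's: use the nonnegativity of the $\mu_n$ to show that the limit functional $\Lambda$ is positive on nonnegative test functions, then realise $\Lambda$ as integration against a nonnegative measure. Where the paper stops after citing the standard distribution-theory fact (Tartar, Lemma~37.2) that a nonnegative distribution is a nonnegative Radon measure, you reprove the relevant piece by hand — the cutoff/Cauchy trick to extend $\Lambda$ from $\CCA(\RRD)$ to $\CC_c(\RRD)$, then Riesz representation — and you make the uniform total-variation bound $\sup_n\|\mu_n\|_{\TV}=\sup_n\|\mu_n\|_\ZZZ<\infty$ explicit. Those steps are correct, and arguably more transparent than the paper's terse citation, which leaves the finiteness of $\nu(\RRD)$ implicit.

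The last step, however, does not hold up as written. You want $\Lambda=\nu$ as elements of $\CCAS$, having checked agreement only on compactly supported test functions. To ``upgrade'' this by a density argument you would need $\CC_c^1(\RRD)\cap\CCA(\RRD)$ to be dense in $\CCA(\RRD)$ in the $\CCA$-norm, and this is false: the constant function $1$ lies at $\CCA$-distance at least $1$ from every compactly supported function. Proposition~\ref{prop:separable} is unrelated to this: it gives density of the $\QQ$-span of Dirac deltas in $\ZZZ$, i.e.\ it approximates elements of the predual, not test functions in $\CCA(\RRD)$, so it cannot serve as the missing density statement.

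The actual missing ingredient is a no-mass-escape statement, $\nu(\RRD)=\langle\Lambda,1\rangle$. Take $\chi_R\in\CC_c^\infty(\RRD)$ with $0\leq\chi_R\leq1$, $\chi_R\equiv1$ on $B_R$, obtained by scaling a fixed cutoff; then $\|\chi_R\|_{\CCA(\RRD)}\leq 2$ for all large $R$. Hence $|\langle\mu_n-\Lambda,\chi_R\rangle|\leq 2\|\mu_n-\Lambda\|_{\CCAS}\to 0$ \emph{uniformly in $R$}, which lets you interchange the $n\to\infty$ and $R\to\infty$ limits in $\int\chi_R\,d\mu_n$ and obtain $\nu(\RRD)=\lim_R\langle\Lambda,\chi_R\rangle=\langle\Lambda,1\rangle$. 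Then for $\varphi\in\CCA(\RRD)$ with $\varphi\geq0$, positivity gives $0\leq\langle\Lambda,\varphi(1-\chi_R)\rangle\leq\|\varphi\|_\infty\langle\Lambda,1-\chi_R\rangle\to0$, so $\langle\Lambda,\varphi\rangle=\lim_R\langle\Lambda,\varphi\chi_R\rangle=\lim_R\int\varphi\chi_R\,d\nu=\int\varphi\,d\nu$; general $\varphi\in\CCA(\RRD)$ follows by writing $\varphi=(\varphi+\|\varphi\|_\infty)-\|\varphi\|_\infty$. To be fair, the paper's own proof is also silent on the passage from $\CC_c^\infty$-test functions to all of $\CCA(\RRD)$ — it merely invokes the Tartar lemma and declares $F$ a nonnegative Radon measure — so the omission is shared; but the density claim you inserted to close it is not a valid one.
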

\begin{proof}
Let $\nu_n\in\MMR$ such that $\nu_n\to F$ in $\CCAS$. Then for every  $f\in\CC_c^1(\RRD)\subset\CCA(\RRD)$ and $f\geq 0$ one has $\langle f,\nu_n \rangle \geq 0$. This implies that $\langle f, F\rangle\geq 0$. A well-known result concerning distributions says: if a distribution $F$ is non-negative (in the sense that $\langle f,F \rangle\geq 0$ for all $f\in\CC_c^\infty$ such that $f\geq 0$), then $F$ is a non-negative Radon measure  (see \cite[Lemma~37.2]{tart:2007}). 
\end{proof}
%
%
For the investigation of the mapping $h\mapsto(t\mapsto\nu_t^h)$, we need a {\it weighted space}. A  motivation for this is that $t\mapsto\nu_t$ is a continuous mapping i.e. $t\mapsto\nu_t\in\CC([0,\infty);\MM(\RRD))$ with $\|\cdot\|_\ZZZ$, but it is not bounded with respect to time, generally. 
Boundedness of this mapping will play a key role in the result concerning non-linear transport equation. Hence, we introduce a new norm which makes this mapping bounded. Following proposition says about evolution of $\nu_t$ in time.
\begin{proposition}\label{prop:mu_0_TV}
 Let $\nu_0\in\MM(\RRD)$, velocity field $b$ satisfies~\eqref{eq:assum_b} and $w\in\CC_b([0,\infty)\times \RRD)$.
Then for every weak solution $\nu_t$ to~\eqref{eq:def_linear} the following estimate holds 
\[
\Vert \nu_t \Vert_{TV} \leq e^{t\|w^+(\cdot,\cdot)\|_\infty}\Vert \nu_0 \Vert_{TV},\qquad \textrm{for all } t\in[0,\infty),
\] where~$w^+$ is the positive part of $w$.
\end{proposition}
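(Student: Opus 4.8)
The statement to prove is the total-variation bound
$\|\nu_t\|_{\TV} \le e^{t\|w^+\|_\infty}\|\nu_0\|_{\TV}$ for the weak solution of the linear problem~\eqref{eq:def_linear}. The plan is to start from the representation formula~\eqref{repr:form} of Lemma~\ref{lem:repr_form}, namely $\nu_t = X_b(t,\cdot)\#\big(e^{\int_0^t w(s,X_b(s,\cdot))\dd s}\,\nu_0\big)$, and to push the total-variation norm through the two operations that build $\nu_t$ out of $\nu_0$: multiplication by a scalar density and push-forward by the flow map.

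First I would recall that for a signed measure $\lambda$ and a Borel measurable function $\rho$, the measure $\rho\,\lambda$ has total variation $|\rho\,\lambda| = |\rho|\,|\lambda|$, so $\|\rho\,\lambda\|_{\TV} = \int_\RRD |\rho|\,\dd|\lambda|$; and that for a Borel map $r$, push-forward does not increase total variation: $\|r\#\lambda\|_{\TV} \le \|\lambda\|_{\TV}$, with equality when $r$ is injective, which is the case here since $X_b(t,\cdot)$ is a diffeomorphism (assumption~\eqref{eq:assum_b} makes $b$ globally Lipschitz). Applying this to~\eqref{repr:form} gives
\begin{equation*}
\|\nu_t\|_{\TV} = \Big\| e^{\int_0^t w(s,X_b(s,\cdot))\dd s}\,\nu_0 \Big\|_{\TV} = \int_\RRD e^{\int_0^t w(s,X_b(s,x))\dd s}\,\dd|\nu_0|(x),
\end{equation*}
using that the exponential density is strictly positive. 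Then I would bound the exponent pointwise by $\int_0^t w(s,X_b(s,x))\dd s \le \int_0^t w^+(s,X_b(s,x))\dd s \le t\,\|w^+\|_\infty$, pull the resulting constant $e^{t\|w^+\|_\infty}$ out of the integral, and conclude $\|\nu_t\|_{\TV} \le e^{t\|w^+\|_\infty}\,\|\nu_0\|_{\TV}$.

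I do not expect a serious obstacle here; the only point that needs a little care is justifying that the hypotheses of Proposition~\ref{prop:mu_0_TV} (which assumes $b$ as in~\eqref{eq:assum_b} and $w\in\CC_b([0,\infty)\times\RRD)$) fall under the hypotheses of Lemma~\ref{lem:repr_form}, so that the representation formula~\eqref{repr:form} is indeed available: under~\eqref{eq:assum_b} the field $t\mapsto b(t,\cdot)$ lies in $L^1_{\mathrm{loc}}([0,\infty);W^{1,\infty}(\RRD;\RRD))$, and a bounded continuous $w$ is in particular Borel measurable, bounded, and locally Lipschitz in $x$ for fixed $t$ (being $\CCA$ in $x$ in the intended application, or at worst one restricts to that case), so Lemma~\ref{lem:repr_form} applies and $\nu_t$ has the stated form. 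Alternatively — and this is the cleaner route if one wants to avoid the injectivity remark for push-forward — one can argue directly from the implicit formula~\eqref{eq:man_2}: $\|\nu_t\|_{\TV} \le \|\nu_0\|_{\TV} + \int_0^t \|w(s,\cdot)\|_\infty \|\nu_s\|_{\TV}\dd s \le \|\nu_0\|_{\TV} + \|w^+\|_\infty \int_0^t \|\nu_s\|_{\TV}\dd s$ after replacing $\|w(s,\cdot)\|_\infty$ by $\|w^+\|_\infty$ (valid because in the variation estimate only the positive part contributes to growth — one would instead keep $\|w\|_\infty$ and note the final Gronwall constant can be sharpened to $\|w^+\|_\infty$ by going back to the sign-aware representation), and then Gronwall's inequality yields the exponential bound. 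Either way the estimate~\eqref{repr:form}-based argument is the shortest and I would present that one.
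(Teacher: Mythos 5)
Your proof takes essentially the same route as the paper's: both start from the representation formula~\eqref{repr:form}, bound the exponential density pointwise by $e^{t\|w^+\|_\infty}$, and use that push-forward by the diffeomorphism $X_b(t,\cdot)$ preserves the total-variation norm. The only cosmetic difference is that the paper expresses $\|\cdot\|_{\TV}$ through its dual pairing with $\CC_b$ functions (taking the supremum over $\|f\|_\infty\leq 1$), while you work directly with the identities $|\rho\lambda|=|\rho||\lambda|$ and $\|r\#\lambda\|_{\TV}\leq\|\lambda\|_{\TV}$; these are equivalent formulations of the same estimate, and if anything your version makes the handling of a signed $\nu_0$ slightly more explicit.
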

\begin{proof}
By the definition of $TV$-norm and using representation formula~\eqref{repr:form} one has
\begin{equation*}
\begin{split}
\|\nu_t\|_{TV}=&\sup_{f\in\CC_b(\RRD)}\Big\lbrace \int_{\RRD}f \:\dd \nu_t: \|f\|_{\infty}\leq 1\Big\rbrace\\
\leq&\exp(\|w^+(\cdot,\cdot)\|_\infty t )\sup_{f\in\CC_b(\RRD)}\Big\lbrace \int_{\RRD}f(X_b(t, \cdot))\:\dd \nu_0: \|f\|_{\infty}\leq 1\big\rbrace\\
\leq&\exp(\|w^+(\cdot,\cdot)\|_\infty t )\sup_{f\in\CC_b(\RRD)}\Big\lbrace \int_{\RRD}f\:\dd \nu_0: \|f\|_{\infty}\leq 1\big\rbrace\\
  \leq& \exp(\|w^+(\cdot,\cdot)\|_\infty t )\|\nu_0\|_{TV}, 
\end{split}
\end{equation*}
what finishes the proof.
\end{proof}

\begin{corollary}\label{cly:bound_nu}
Since for any $\nu\in\MM(\RRD)$ one has $\|\nu\|_\ZZZ\leq \|\nu\|_{TV}$, therefore
\begin{equation}\label{eqn:TV_Z_nu_0}
\|\nu_t\|_\ZZZ\leq \exp(\|w^+(\cdot,\cdot)\|_\infty t)\|\nu_0\|_{TV},\qquad \textrm{ for all } t\in[0,\infty).
\end{equation}
\end{corollary}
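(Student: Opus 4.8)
The statement to be proved, Corollary~\ref{cly:bound_nu}, is an immediate consequence of Proposition~\ref{prop:mu_0_TV} together with the elementary norm comparison $\|\nu\|_\ZZZ\leq\|\nu\|_{TV}$, so the plan is short.

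\medskip

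First I would verify the auxiliary inequality $\|\nu\|_\ZZZ\leq\|\nu\|_{TV}$ for every $\nu\in\MM(\RRD)$. This follows from the definitions of the two norms: the $\ZZZ$-norm is the dual norm $\|\cdot\|_{(\CCA(\RRD))^*}$, so
\[
\|\nu\|_\ZZZ=\sup_{\|\varphi\|_{\CCA(\RRD)}\leq 1}\left|\int_\RRD\varphi\,\dd\nu\right|,
\]
while $\|\nu\|_{TV}=\sup_{\|f\|_\infty\leq 1,\,f\in\CC_b}\left|\int_\RRD f\,\dd\nu\right|$. Since $\|\varphi\|_\infty\leq\|\varphi\|_{\CCA(\RRD)}$ by the very definition of the H\"older norm, any $\varphi$ admissible for the $\ZZZ$-supremum is (after no rescaling needed) admissible for the $TV$-supremum, and hence the former supremum is bounded by the latter. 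This gives $\|\nu\|_\ZZZ\leq\|\nu\|_{TV}$.

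\medskip

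Next I would simply combine this with the conclusion of Proposition~\ref{prop:mu_0_TV}, which under the stated hypotheses on $b$ and $w$ (namely $b$ satisfying~\eqref{eq:assum_b} and $w\in\CC_b([0,\infty)\times\RRD)$) yields $\|\nu_t\|_{TV}\leq e^{t\|w^+(\cdot,\cdot)\|_\infty}\|\nu_0\|_{TV}$ for all $t\in[0,\infty)$. Chaining the two estimates gives
\[
\|\nu_t\|_\ZZZ\leq\|\nu_t\|_{TV}\leq\exp(\|w^+(\cdot,\cdot)\|_\infty\,t)\,\|\nu_0\|_{TV},\qquad t\in[0,\infty),
\]
which is exactly~\eqref{eqn:TV_Z_nu_0}. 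There is essentially no obstacle here; the only point requiring a moment's care is making sure the hypotheses of Proposition~\ref{prop:mu_0_TV} are in force (in particular that $b$ obeys~\eqref{eq:assum_b} so that the representation formula~\eqref{repr:form} applies and $w^+$ is bounded), but this is assumed in the statement of the corollary. The argument is therefore a two-line deduction once the trivial norm comparison is recorded.
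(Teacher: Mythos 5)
Your proof is correct and follows exactly the same route the paper intends: recording that $\|\nu\|_\ZZZ\leq\|\nu\|_{TV}$ because the $\ZZZ$-norm is a supremum over a smaller test set (the $\CCA$-unit ball, whose elements have $\|\varphi\|_\infty\leq 1$), and then chaining this with the $TV$-bound from Proposition~\ref{prop:mu_0_TV}. Nothing more is needed.
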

%
%
%
\begin{corollary}\label{prop:exp_omega_1}
Under assumptions of Lemma~\ref{lem:repr_form}, the weak solutions $t\mapsto \nu_t$ of linear problem~\eqref{eq:def_linear} are of a class $\CC_{\omega}([0,\infty);\MM(\RRD))$, where $\omega(t)=\exp(-\|w\|_\infty t)$.
\end{corollary}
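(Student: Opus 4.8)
The plan is to combine the growth estimate from Corollary~\ref{cly:bound_nu} with the local Lipschitz continuity from Corollary~\ref{coro:conti_time}, verifying the two defining properties of membership in $\CC_{\omega}([0,\infty);\MM(\RRD))$ with $\omega(t)=\exp(-\|w\|_\infty t)$: namely that $t\mapsto\nu_t$ is continuous into $\MM(\RRD)$ (with the $\|\cdot\|_\ZZZ$ topology) and that the weighted sup-norm $\sup_{t\geq 0}\omega(t)\|\nu_t\|_\ZZZ$ is finite.

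First I would recall that Lemma~\ref{lem:repr_form} gives the explicit representation~\eqref{repr:form}, which in particular places each $\nu_t$ in $\MM(\RRD)$, so the curve is genuinely $\MM(\RRD)$-valued. Next, continuity of $t\mapsto\nu_t$ into $\ZZZ$ (equivalently into $\MM(\RRD)$ with the relative $\CCAS$-topology) is exactly the content of Corollary~\ref{coro:conti_time}, since local Lipschitz continuity implies continuity. So the only remaining point is the finiteness of the weighted norm. Here I would invoke Corollary~\ref{cly:bound_nu}, which yields
\[
\omega(t)\,\|\nu_t\|_\ZZZ \;=\; e^{-\|w\|_\infty t}\,\|\nu_t\|_\ZZZ \;\leq\; e^{-\|w\|_\infty t}\, e^{\|w^+\|_\infty t}\,\|\nu_0\|_{TV}\;\leq\;\|\nu_0\|_{TV},
\]
using that $\|w^+\|_\infty\leq\|w\|_\infty$ (indeed $w^+\leq|w|$ pointwise), so the exponential factors are bounded by $1$ uniformly in $t\geq 0$. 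Taking the supremum over $t\geq 0$ gives $\|\nu_\bullet\|_{\MM(\RRD)_\omega}\leq\|\nu_0\|_{TV}<\infty$, as required.

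There is no real obstacle here: the statement is an immediate bookkeeping consequence of the two preceding corollaries, the only mild point being to observe that $\|w^+\|_\infty \le \|w\|_\infty$ so that the chosen weight exactly cancels (and in fact dominates) the growth bound, and to note that the hypotheses of Lemma~\ref{lem:repr_form} are in force so that~\eqref{repr:form}, Corollary~\ref{coro:conti_time}, and Corollary~\ref{cly:bound_nu} all apply. One could alternatively use the weaker weight $\omega(t)=\exp(-\|w^+\|_\infty t)$, but $\|w\|_\infty$ is used for uniformity with later sections.
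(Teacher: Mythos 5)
Your proof is correct and follows essentially the same route as the paper: invoke Lemma~\ref{lem:repr_form} for measure-valuedness, Corollary~\ref{coro:conti_time} for continuity, and Corollary~\ref{cly:bound_nu} (i.e.~estimate~\eqref{eqn:TV_Z_nu_0}) for boundedness of the weighted norm. You are slightly more explicit than the paper in justifying why the weight $e^{-\|w\|_\infty t}$ suffices (noting $\|w^+\|_\infty \leq \|w\|_\infty$), but the structure is identical.
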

\begin{proof}
The fact that $\nu_t$ are bounded Radon measures follows by Lemma~\ref{lem:repr_form}. 
By Corollary~\ref{coro:conti_time} one has that the solution is
locally Lipschitz continuous with respect to time, hence in particular it is continuous. By assumption $\nu_0\in\MM(\RRD)$ and estimate~\eqref{eqn:TV_Z_nu_0} one has, that the value $\|\nu_t\|_{\ZZZ_{\omega}}$ is bounded.
\end{proof}
\begin{remark}
The space $\CC_\omega([0,\infty);\ZZZ)$ is complete for any weight function $\omega(t)$ such that for all $t_0\in[0,\infty)$, there exists $\sigma>0$ and closed interval $[t_1,t_2]$ containing $t_0$ with $t_1<t_2$, such that $w(t)>\sigma$ for all $t\in[t_1,t_2]$.
\end{remark}

\subsection{Linear transport equation with perturbation}\label{sec:linear_perturb}
A brief review of \cite{lycz:2018} is presented i.e. linear perturbation in the linear transport equation. We extend the results of~\cite{lycz:2018} in Theorem~\ref{tw:ciagla_pocho} and Theorem~\ref{lem:main_2}.
Let us introduce a perturbation to the velocity field $b$ and into $w$ as follows:
\begin{equation}\label{def:pert}
 b^h(t, x) \colonequals b_0(t, x) + h \: b_1(t, x),\qquad \textrm{ and }\qquad  w^h(t, x) \colonequals w_0(t, x) + h \: w_1(t, x),
\end{equation}
where $b_i:[0,\infty)\times\RRD\to\RRD$ and $w_i:[0,\infty)\times\RRD\to\RR$ for $i=0,1$ and parameter
$h\in(-\frac{1}{2},\frac{1}{2})$. 
The perturbed problem has the form
\begin{equation}\label{def:prob_init2_lin}
\left\{ \begin{array}{l}
\partial_t\nu_t^h + \textrm{div}_x\Big( b^h( t, x)\; \nu_t^h \Big) = w^h(t,x)\;\nu_t^h \quad \textrm{in }(\CC_c^1([0,+\infty) \times \RRD))^{\ast}, \\
\nu_{t=0}^h= \nu_0 \in \mathcal{M}(\RRD). 
\end{array} \right. 
\end{equation}
Notice that the initial condition in~\eqref{def:prob_init2_lin} is not perturbed. Proposition~\ref{prop:mu_0_TV} remains true for solution $\nu_t^h$. 
The following theorem addresses the differentiability of $t\mapsto\nu_t^h$ with respect to parameter $h$.

\begin{theorem}\label{th:main}\cite[Theorem 1.2]{lycz:2018}
Assume that $\left(t \mapsto b_i(t, \cdot) \right)\in \CC_b\big([0, +\infty); \CCA(\RRD;\RRD)\big)$ and $\left(t \mapsto w_i(t, \cdot) \right)\in \CC_b\big([0, +\infty); \CCA(\RRD;\RR)\big)$ for $i=0,1$.
Let $\nu_t^h$ be the weak solution to problem~\eqref{def:prob_init2_lin} where $b^h$ and $w^h$ are  defined by~\eqref{def:pert}. Then the mapping
\[(-\tfrac{1}{2}, \tfrac{1}{2}) \ni h \mapsto \nu_t^{h} \in \mathcal{M}(\RRD)\]
 is differentiable in $\ZZZ$, i.e. for fixed $t$ the derivative $
\partial_h \nu_t^{h}|_{h=h_0} \in \ZZZ.
$
\end{theorem}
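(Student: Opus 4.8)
\textbf{Proof plan for Theorem~\ref{th:main}.}
The plan is to build the candidate derivative explicitly from the representation formula~\eqref{repr:form} and then show the difference quotient converges to it in $\ZZZ = \overline{\MM(\RRD)}^{\CCAS}$. Write $\nu_t^h = X_{b^h}(t,\cdot)\#\big(\psi^h(t,\cdot)\,\nu_0\big)$ with $\psi^h(t,x) := \exp\!\big(\int_0^t w^h(s,X_{b^h}(s,x))\,\dd s\big)$. For a test function $\varphi\in\CCA(\RRD)$ we then have $\langle \nu_t^h,\varphi\rangle = \int_\RRD \varphi(X_{b^h}(t,x))\,\psi^h(t,x)\,\dd\nu_0(x)$, so the whole question reduces to differentiating, in $h$, the integrand $h\mapsto \varphi(X_{b^h}(t,x))\,\psi^h(t,x)$ uniformly enough in $x$ to pass the limit under $\int\dd\nu_0$. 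First I would differentiate the characteristic flow: $Y(t,x) := \partial_h X_{b^h}(t,x)$ solves the linearized ODE $\dot Y = \nabla_x b^h(t,X_{b^h})\,Y + b_1(t,X_{b^h})$ with $Y(0,x)=0$, which by Gronwall gives $\|Y(t,\cdot)\|_\infty \le \big(\int_0^t\|b_1(s,\cdot)\|_\infty\dd s\big)\exp\!\big(\int_0^t\|\nabla_x b^h(s,\cdot)\|_\infty\dd s\big)$, and similarly a bound on $\partial_h\psi^h$ in terms of $w_1$, $\nabla_x w_0$, $\nabla_x w_1$ and $Y$. These bounds are uniform for $h\in(-\tfrac12,\tfrac12)$ under the hypotheses $(t\mapsto b_i(t,\cdot)),(t\mapsto w_i(t,\cdot))\in\CC_b([0,\infty);\CCA)$.

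Next I would assemble the candidate: define $\eta_t^{h_0}\in\ZZZ$ by
\[
\langle \eta_t^{h_0},\varphi\rangle := \int_\RRD \Big( \nabla\varphi\big(X_{b^{h_0}}(t,x)\big)\cdot Y(t,x)\,\psi^{h_0}(t,x) + \varphi\big(X_{b^{h_0}}(t,x)\big)\,\partial_h\psi^{h}(t,x)\big|_{h_0}\Big)\dd\nu_0(x),
\]
checking that this is a bounded linear functional on $\CCA(\RRD)$ (each factor is controlled: $\nabla\varphi$ and $\varphi$ by $\|\varphi\|_{\CCA}$, the flow/exponential factors by the Gronwall bounds above, and $\nu_0$ is finite). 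One must also verify $\eta_t^{h_0}\in\ZZZ$ rather than merely in $\CCAS$: this follows because $\eta_t^{h_0}$ is a limit in $\CCAS$ of difference quotients $(\nu_t^{h_0+\epsilon}-\nu_t^{h_0})/\epsilon$, each of which lies in $\MM(\RRD)$, so the limit lies in the closure $\ZZZ$ — so really the convergence statement does double duty.

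Then the core estimate: for $\varphi$ in the unit ball of $\CCA(\RRD)$, bound
\[
\Big|\Big\langle \tfrac{\nu_t^{h_0+\epsilon}-\nu_t^{h_0}}{\epsilon} - \eta_t^{h_0},\,\varphi\Big\rangle\Big| \le \int_\RRD \Big| \tfrac{\varphi(X_{b^{h_0+\epsilon}})\psi^{h_0+\epsilon} - \varphi(X_{b^{h_0}})\psi^{h_0}}{\epsilon} - \big[\nabla\varphi(X_{b^{h_0}})\cdot Y\,\psi^{h_0}+\varphi(X_{b^{h_0}})\partial_h\psi^{h}|_{h_0}\big]\Big|\dd\nu_0(x),
\]
and show the integrand tends to $0$ as $\epsilon\to0$ with a dominating function independent of $\epsilon$ and of $\varphi$ (in the unit ball). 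The pointwise convergence is the chain rule applied to the $C^1$-in-$h$ map $h\mapsto(X_{b^h}(t,x),\psi^h(t,x))$ combined with $\varphi\in\CC^{1+\alpha}$ (so $\nabla\varphi$ exists and is continuous); the domination comes from the uniform-in-$h$ Gronwall bounds plus $\|\varphi\|_{\CCA}\le 1$. Dominated convergence against the finite measure $\nu_0$ then gives that the supremum over $\varphi$ goes to $0$, i.e. convergence in $\ZZZ$. The main obstacle I anticipate is the \emph{uniformity in $\varphi$} of the pointwise limit: one needs the remainder in the first-order Taylor expansion of $\varphi$ along the flow to be controlled using only the $\CC^{1+\alpha}$-norm of $\varphi$ (via the H\"older modulus of $\nabla\varphi$ and estimate~\eqref{eqn:alpha}) and the displacement $|X_{b^{h_0+\epsilon}}(t,x)-X_{b^{h_0}}(t,x)|=O(\epsilon)$ from Lemma~\ref{lem:X_estim} — this is exactly why the space $\CCA$, rather than $W^{1,\infty}$, is forced, as Example~\ref{example} shows. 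Handling the product rule interplay of the flow term and the exponential term while keeping every constant uniform over $h$ and independent of $\varphi$ is the delicate bookkeeping.
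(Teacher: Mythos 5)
Your proposal is correct in substance and lands on essentially the same estimates as the paper, but the logical packaging is genuinely different, and it is worth comparing. The paper's proof (Appendix B) never constructs the candidate derivative explicitly: instead it fixes $t$, takes two sequences $\lambda_1,\lambda_2\to 0$, and shows directly that the difference quotients $\big(\nu_t^{\lambda_i}-\nu_t^0\big)/\lambda_i$ form a Cauchy sequence in $\ZZZ$, concluding by completeness of $\ZZZ$. You instead write down the putative derivative $\eta_t^{h_0}$ by formally differentiating the representation formula (producing the flow-derivative term $\nabla\varphi(X)\cdot Y\,\psi$ and the exponential-derivative term $\varphi(X)\,\partial_h\psi$), prove it is a bounded functional, and show the single-parameter difference quotient converges to it. Both approaches hinge on the same two ingredients: the Gronwall/H\"older estimate on $\partial_h X$ (this is exactly Proposition~\ref{claim:esti_X} in the paper) and the first-order Taylor expansion of $\varphi$ along the flow with remainder controlled by $|\nabla\varphi|_\alpha\,|X_{b^{h_0+\epsilon}}-X_{b^{h_0}}|^{1+\alpha}=\mathcal{O}(\epsilon^{1+\alpha})$, which after division by $\epsilon$ gives $\mathcal{O}(\epsilon^\alpha)$ — exactly the point at which $\CCA$ rather than $W^{1,\infty}$ is forced, and which your plan correctly identifies as the crux. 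What your route buys is an explicit expression for $\partial_h\nu_t^h$, and, as you note, membership of the limit in $\ZZZ$ (rather than merely in $\CCAS$) is automatic since each difference quotient lies in $\MM(\RRD)$ and $\ZZZ$ is by definition the $\CCAS$-closure; the paper gets the same fact from completeness. What the paper's Cauchy formulation buys is that one never has to justify that the formal derivative is a well-defined bounded functional before the convergence proof is finished.

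One small caution for the write-up of your step ``the pointwise convergence is the chain rule applied to the $C^1$-in-$h$ map'': to get $\mathcal{O}(\epsilon^{1+\alpha})$ control (rather than just $o(\epsilon)$) on the term $\nabla\varphi(X_{b^{h_0}})\cdot\big(X_{b^{h_0+\epsilon}}-X_{b^{h_0}}-\epsilon\,\partial_hX|_{h_0}\big)$ you actually need the $\alpha$-H\"older continuity of $h\mapsto\partial_h X(t,x;h)$, not merely its existence; this is precisely what Proposition~\ref{claim:esti_X} provides, so make sure to cite it for that purpose rather than only for the sup-norm bound on $\partial_h X$. Also note that in your dominated-convergence step the dominating function must be uniform over $h$ in a neighbourhood of $h_0$, $\epsilon$ small, and $\varphi$ in the unit ball simultaneously; your Gronwall bounds do provide this, but it is worth stating explicitly, because the paper takes pains (and later, in Theorem~\ref{tw:ciagla_pocho}, explicitly tracks the $t$-dependence) precisely to make these constants uniform.
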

Lemma~\ref{lem:repr_form} provides that~\eqref{def:prob_init2_lin} has a unique global weak solution, which is a measure.
Theorem~\ref{th:main} is an analogue to~\cite[Theorem 1.2]{lycz:2018} -- it was originally formulated for $\nu_0\in\mathcal{P}(\RRD)$, where the linear perturbation occurred only in $b$, not in $w$. However, the proof can be easily extended to our settings.
The proof of~\cite[Theorem 1.2]{lycz:2018} is based on the fact that $\ZZZ$ is a complete space. Thus it is enough to show that a proper sequence of differential quotients is a~Cauchy sequence.
The proof is presented in Appendix~\ref{apdx:lin} on page~\pageref{apdx:lin}. In contrast to~\cite{lycz:2018} we will point out a dependence of constants on time, which helps us to obtain a stronger result, concerning continuous differentiability.

\begin{theorem}\label{tw:ciagla_pocho}
Let $\nu_\bullet^h:t\mapsto\nu_t^h$ be the weak solution to problem~\eqref{def:prob_init2_lin} with coefficients~$b^h$ and~$w^h$ defined by~\eqref{def:pert}. Moreover, let coefficients $b^h$, $w^h$ satisfy assumptions of Theorem~\ref{th:main}. Then 
\[ \Big(h \mapsto \nu_\bullet^{h}\Big) \in \CC^1\left((-\tfrac{1}{2}, \tfrac{1}{2});\;\CC_{\widehat{\omega}}([0,\infty);\ZZZ)\right),\]
where $\widehat{\omega}(t)$ is of order $\mathcal{O}(\tfrac{1}{t}\exp(-gt))$  for some constant $g>0$ independent of $h$.
\end{theorem}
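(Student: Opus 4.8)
The plan is to establish two things: first, that for each fixed $h$ the derivative $\partial_h\nu_t^h$ (which exists in $\ZZZ$ by Theorem~\ref{th:main}) depends continuously on $t$ and satisfies an explicit growth bound in $t$ of order $\mathcal{O}(t\exp(gt))$ for some $g>0$ independent of $h$; second, that the map $h\mapsto\partial_h\nu_\bullet^h$ is continuous from $(-\tfrac12,\tfrac12)$ into $\CC_{\widehat\omega}([0,\infty);\ZZZ)$, which combined with differentiability in $h$ gives membership in $\CC^1$. Dividing the $t$-growth bound by a weight $\widehat\omega(t)=\mathcal{O}(\tfrac1t\exp(-gt))$ exactly neutralizes the $t\exp(gt)$ factor, so that $\partial_h\nu_\bullet^h\in\CC_{\widehat\omega}([0,\infty);\ZZZ)$; and since the remark after Corollary~\ref{prop:exp_omega_1} shows $\CC_{\widehat\omega}([0,\infty);\ZZZ)$ is complete (the weight is locally bounded below by a positive constant), it suffices to produce Cauchy estimates in this weighted norm.

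The first step reuses the representation formula~\eqref{repr:form}. Writing $\psi^h(t,x):=\exp(\int_0^t w^h(s,X_{b^h}(s,x))\dd s)$, one has $\nu_t^h=X_{b^h}(t,\cdot)\#(\psi^h(t,\cdot)\nu_0)$, so for $\varphi\in\CCA(\RRD)$,
\[
\langle\partial_h\nu_t^h,\varphi\rangle=\int_\RRD\partial_h\big[\varphi(X_{b^h}(t,x))\psi^h(t,x)\big]\dd\nu_0(x),
\]
and the integrand is controlled by $\|\nabla\varphi\|_\infty|\partial_h X_{b^h}(t,x)|\,\psi^h+\|\varphi\|_\infty|\partial_h\psi^h|$. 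The quantities $\partial_h X_{b^h}$ and $\partial_h\psi^h$ satisfy linear ODEs in $t$ obtained by differentiating~\eqref{def:diff_init0} and the exponential; Gr\"onwall's lemma (in the spirit of Lemma~\ref{lem:X_estim} and Corollary~\ref{coro:conti_time}) yields bounds of the form $|\partial_h X_{b^h}(t,x)|\le C\,t\exp(Ct)$ and $|\partial_h\psi^h(t,x)|\le C\,t\exp(Ct)$ with $C$ depending only on $\|b_i\|_\infty,\|\nabla b_i\|_\infty,\|w_i\|_\infty$ for $i=0,1$ — in particular uniformly in $h\in(-\tfrac12,\tfrac12)$. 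Taking the supremum over $\varphi$ in the unit ball of $\CCA(\RRD)$ gives $\|\partial_h\nu_t^h\|_\ZZZ\le C\|\nu_0\|_{TV}\,t\exp(Ct)$. Continuity of $t\mapsto\partial_h\nu_t^h$ in $\ZZZ$ follows from the same estimates applied to differences, as in Corollary~\ref{coro:conti_time}, using continuity of $t\mapsto b^h(t,\cdot)$, $t\mapsto w^h(t,\cdot)$.

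For the second step — continuity in $h$ — one estimates $\|\partial_h\nu_t^{h}-\partial_h\nu_t^{h'}\|_\ZZZ$ by differentiating the representation formula and comparing the flow maps $X_{b^h}$, $X_{b^{h'}}$ and the exponents $\psi^h$, $\psi^{h'}$; since $b^h-b^{h'}=(h-h')b_1$ and $w^h-w^{h'}=(h-h')w_1$, Lemma~\ref{lem:X_estim} and its differentiated analogue produce a bound of the form $C|h-h'|\,t\exp(Ct)\|\nu_0\|_{TV}$. Multiplying by $\widehat\omega(t)$ and taking the supremum over $t\ge0$ bounds the weighted-norm distance by $C'|h-h'|$, giving the desired continuity; together with Theorem~\ref{th:main} this establishes $h\mapsto\nu_\bullet^h\in\CC^1$. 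The main obstacle I anticipate is making the Gr\"onwall arguments for the $h$-derivatives of the flow map and the exponential genuinely uniform in $h$ while keeping the explicit $t$-dependence visible, since $\partial_h\psi^h$ involves $\partial_h X_{b^h}$ inside a time integral — so the two quantities must be estimated simultaneously as a coupled system, and one has to be careful that the polynomial-in-$t$ prefactor is $t$ (not $t^2$) to match the claimed order of $\widehat\omega$; this is what dictates the precise form $\mathcal{O}(\tfrac1t\exp(-gt))$ of the weight.
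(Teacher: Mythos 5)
Your proposal is correct in outline but takes a genuinely different route from the paper's proof. The paper re-examines the Cauchy-sequence argument used to prove Theorem~\ref{th:main} and tracks the explicit time-dependence of every constant: the terms $I_{\lambda_1}^{(1)}-I_{\lambda_2}^{(1)}$ and $I_{\lambda_1}^{(2)}-I_{\lambda_2}^{(2)}$ are each tamed by a suitable weight, and the final $\widehat\omega$ is a pointwise minimum of the weights so obtained. You instead differentiate the representation formula~\eqref{repr:form} under the integral sign, obtain the pointwise bound $\|\partial_h\nu_t^h\|_\ZZZ=\mathcal{O}(t\exp(gt))$ uniformly in $h$, and then pass from the pointwise-in-$t$ differentiability of Theorem~\ref{th:main} to strong differentiability of $h\mapsto\nu_\bullet^h$ in $\CC_{\widehat\omega}([0,\infty);\ZZZ)$ through continuity of $h\mapsto\partial_h\nu_\bullet^h$, i.e.\ an FTC-type argument. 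Your route is arguably cleaner and, incidentally, needs only a $1/t$ prefactor in $\widehat\omega$, whereas the paper's weight picks up an extra factor of $t$ from multiplying the bound on $\partial_hX$ by the bound on the difference of exponential factors.

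One concrete error in Step~2: the estimate $\|\partial_h\nu_t^h-\partial_h\nu_t^{h'}\|_\ZZZ\leq C|h-h'|\,t\exp(Ct)\,\|\nu_0\|_{TV}$ is too strong. Lemma~\ref{lem:X_estim} gives Lipschitz dependence of $X$ on $b$, but its ``differentiated analogue'' does not inherit the Lipschitz modulus: comparing $\partial_hX(t,y;h_1)$ with $\partial_hX(t,y;h_2)$ produces a term $\big(\nabla_xb^{h_1}(t,X(t,y;h_1))-\nabla_xb^{h_1}(t,X(t,y;h_2))\big)\partial_hX(t,y;h_1)$, and since $\nabla_xb_i$ is only $\alpha$-H\"older the resulting bound is of order $|X(t,y;h_1)-X(t,y;h_2)|^\alpha=\mathcal{O}(|h_1-h_2|^\alpha)$, not $\mathcal{O}(|h_1-h_2|)$. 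This is exactly what Proposition~\ref{claim:esti_X} records, and it is the relevant lemma here rather than Lemma~\ref{lem:X_estim}. The H\"older modulus still suffices to obtain continuity of $h\mapsto\partial_h\nu_\bullet^h$ and hence the $\CC^1$ conclusion, but the Lipschitz rate as stated would not survive a careful writeup under the theorem's hypotheses.
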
 
The constant $g$ depends on the size of the interval $(-\xi,\xi)$ in which $h$ is considered.
The proof is presented in Appendix~\ref{apdx:ciagla_pocho} on page~\pageref{apdx:ciagla_pocho}.
Note that Theorem~\ref{tw:ciagla_pocho} implies Theorem~\ref{th:main} for fixed~$t$.
Explicit weight $\widehat{\omega}(t)$ is given in the Appendix by formula~\eqref{final_weight_oljee}.

Let us investigate a different perturbation than~\eqref{def:pert}: 
\begin{equation}\label{eq:non_linear_b}
\begin{split}
\overline{b}^h(t, x):=&b_0(t,x)+h\: b_1(t,x)+C(t)\mathcal{O}(|h|),
\\
\overline{w}^h(t, x):=&w_0(t,x)+h\: w_1(t,x)+C(t)\mathcal{O}(|h|),
\end{split}
\end{equation}
where $C(t)$ is a constant dependent on time. As before coefficients $b_i:[0,\infty)\times\RRD\to\RRD$ and $w_i:[0,\infty)\times\RRD\to\RRD$ for $i=0,1$ and parameter
$h\in(-\frac{1}{2},\frac{1}{2})$. The perturbed problem  has the form
\begin{equation}\label{def:prob_init3}
\left\{ \begin{array}{l}
\partial_t\nu_t^h + \textrm{div}_x\left( \overline{b}^h( t, x)\;\nu_t^h \right) = \overline{w}^h(t,x)\;\nu_t^h \quad \textrm{in }(\CC_c^1([0,+\infty) \times \RRD))^{\ast}, \\
\nu_{t=0}^h= \nu_0 \in \mathcal{M}(\RRD). 
\end{array} \right. 
\end{equation}
The following extension of Theorem~\ref{tw:ciagla_pocho} remains true.
\begin{theorem}\label{lem:main_2}
Let the mapping $\nu_\bullet^h:t\mapsto\nu_t^h$ be the weak solution to problem~\eqref{def:prob_init3} with coefficients defined by~\eqref{eq:non_linear_b}.
Assume that $\left(t \mapsto b_i(t, \cdot) \right)\in \CC_b\big([0, +\infty); \CCA(\RRD;\RRD)\big)$ and $\left(t \mapsto w_i(t, \cdot) \right)\in \CC_b\big([0, +\infty); \CCA(\RRD;\RR)\big)$ for $i=0,1$ and let $C(t)=\mathcal{O}(te^{gt})$.
 Then the mapping
$(-\frac{1}{2}, \frac{1}{2}) \ni h \mapsto \nu_t^{h} \in \mathcal{M}(\RRD)$
 is differentiable in~$\ZZZ$. Moreover,
 \[ \Big(h \mapsto \nu_\bullet^{h}\Big) \in \CC^1\big((-\tfrac{1}{2}, \tfrac{1}{2});\;\CC_{\omega_2}([0,\infty);\ZZZ)\big),\]
where weight $\omega_2(t)=\mathcal{O}\big(\tfrac{1}{t^2}\exp(-gt)\big)$ and constant $g>0$.
\end{theorem}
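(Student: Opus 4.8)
The plan is to run the argument in close parallel with the proof of Theorem~\ref{tw:ciagla_pocho}, the only genuinely new ingredient being the careful bookkeeping of the time-dependent factor $C(t)=\mathcal{O}(te^{gt})$ that multiplies the higher-order part of the perturbation. First I would dispose of existence and uniqueness: by~\eqref{eq:non_linear_b}, the assumptions on $b_i,w_i$ and the local boundedness of $C(\cdot)$, the field $\overline{b}^h$ belongs to $L^1_{loc}([0,\infty);W^{1,\infty}(\RRD;\RRD))$ and $\overline{w}^h$ is bounded and, for fixed $t$, locally Lipschitz in $x$, so Lemma~\ref{lem:repr_form} applies and $\nu_t^h$ is a bounded Radon measure given by
\[
\nu_t^h=X_{\overline{b}^h}(t,\cdot)\#\Big(e^{\int_0^t\overline{w}^h(s,X_{\overline{b}^h}(s,\cdot))\,\dd s}\,\nu_0\Big).
\]
I would then fix the meaning of the $C(t)\mathcal{O}(|h|)$ terms by writing $\overline{b}^h=b^h+r_b^h$, $\overline{w}^h=w^h+r_w^h$ with $b^h,w^h$ as in~\eqref{def:pert}, where $h\mapsto r_b^h,r_w^h$ is $\CC^1$ with $r_b^0=r_w^0=0$ and $\|r_b^h(t,\cdot)\|_{\CCA}+\|r_w^h(t,\cdot)\|_{\CCA}\le C(t)|h|$; this is the form in which the remainder arises in the application to the non-linear problem, and it is precisely the size bound $C(t)=\mathcal{O}(te^{gt})$ that is responsible for the $\omega_2$ of the statement.

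For differentiability in $\ZZZ$ at a fixed $t$ I would use that $\ZZZ$ is complete, being a closed subspace of the Banach space $\CCAS$, so it is enough to verify the Cauchy criterion for the difference quotients $\tfrac{1}{h}(\nu_t^{h}-\nu_t^{0})$ as $h\to0$. Pairing $\nu_t^{h_1}-\nu_t^{h_2}$ with $\varphi\in\CCA(\RRD)$, $\|\varphi\|_{\CCA}\le1$, and inserting the representation formula, I would split the difference into (i) the contribution of $X_{\overline{b}^{h_1}}(t,\cdot)-X_{\overline{b}^{h_2}}(t,\cdot)$, estimated by Lemma~\ref{lem:X_estim} applied to $\overline{b}^{h_1}$ and $\overline{b}^{h_2}$, and (ii) the contribution of the difference of the exponential Radon--Nikodym weights $e^{\int_0^t\overline{w}^{h_i}(s,X_{\overline{b}^{h_i}}(s,\cdot))\,\dd s}$, estimated by a Gronwall/mean-value computation as in the proof of Corollary~\ref{coro:conti_time}; the H\"older regularity of $\varphi$ enters through~\eqref{eqn:CCA_fg}--\eqref{eqn:alpha}. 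Since the differences $\|r_b^{h_1}(s,\cdot)-r_b^{h_2}(s,\cdot)\|_\infty$ and $\|r_w^{h_1}(s,\cdot)-r_w^{h_2}(s,\cdot)\|_\infty$ are controlled through the $\CC^1$ higher-order parts, the resulting bound for $\big\|\tfrac{1}{h_1}(\nu_t^{h_1}-\nu_t^0)-\tfrac{1}{h_2}(\nu_t^{h_2}-\nu_t^0)\big\|_{\ZZZ}$ has exactly the shape obtained in Theorem~\ref{tw:ciagla_pocho}, augmented by terms carrying the factor $C(t)=\mathcal{O}(te^{gt})$; for fixed $t$ this tends to $0$ as $h_1,h_2\to0$, hence $\partial_h\nu_t^h|_{h=0}\in\ZZZ$, and repeating the computation around an arbitrary base point $h_0\in(-\tfrac{1}{2},\tfrac{1}{2})$ gives $\partial_h\nu_t^h|_{h=h_0}\in\ZZZ$.

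To upgrade this to $\big(h\mapsto\nu_\bullet^h\big)\in\CC^1\big((-\tfrac{1}{2},\tfrac{1}{2});\CC_{\omega_2}([0,\infty);\ZZZ)\big)$, I would rerun the same estimates keeping the $t$-dependence explicit. Continuity in $t$ of $\nu_\bullet^h$ and of the derivative curve $t\mapsto\partial_h\nu_t^h|_{h=h_0}$ follows as in Corollary~\ref{coro:conti_time}, now with the extra $r_b^h,r_w^h$ contributions. For the weighted sup-norm, every Gronwall constant that produced the weight $\widehat\omega(t)=\mathcal{O}(\tfrac{1}{t}e^{-gt})$ of Theorem~\ref{tw:ciagla_pocho} additionally meets the size bound $C(s)|h|$ with $C(s)=\mathcal{O}(se^{gs})$, and hence picks up one extra power of $t$; choosing $\omega_2(t)=\mathcal{O}\big(\tfrac{1}{t^2}\exp(-gt)\big)$ exactly compensates this, so that $\sup_{t\ge0}\omega_2(t)\big\|\tfrac{1}{h-h_0}(\nu_t^h-\nu_t^{h_0})-\partial_h\nu_t^h|_{h=h_0}\big\|_{\ZZZ}\to0$ and $\sup_{t\ge0}\omega_2(t)\big\|\partial_h\nu_t^h|_{h=h_1}-\partial_h\nu_t^h|_{h=h_0}\big\|_{\ZZZ}\to0$ as $h_1\to h_0$, which is the asserted regularity; boundedness of the weighted norms of $\nu_\bullet^h$ itself is Corollary~\ref{prop:exp_omega_1} applied with $\overline{b}^h,\overline{w}^h$.

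The \textbf{main obstacle} I expect is precisely this last weighting step: one must commit to a single weight for which all the Gronwall exponentials generated by $\overline{b}^h$ and $\overline{w}^h$ --- now also driven by the super-linearly growing factor $C(t)=\mathcal{O}(te^{gt})$ --- are absorbed uniformly in $t$, while the difference quotients stay Cauchy in that weighted norm. This is what forces the single power $1/t$ of Theorem~\ref{tw:ciagla_pocho} to become $1/t^2$ in $\omega_2$, and checking that exactly one extra power suffices requires tracking the $t$-dependence through every Gronwall step rather than through any individual estimate.
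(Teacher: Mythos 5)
The paper itself gives no detailed proof of this theorem: it says only that ``the proof follows exactly the same lines of reasoning as the proof of Theorem~\ref{tw:ciagla_pocho}, and hence is omitted.'' Your proposal is precisely the argument the authors are pointing at: repeat the Cauchy-sequence proof of Theorems~\ref{th:main} and~\ref{tw:ciagla_pocho}, insert the representation formula, use Lemma~\ref{lem:X_estim} and Gronwall for the flow map, and track the additional multiplicative factor $C(t)=\mathcal{O}(te^{gt})$ through every Gronwall estimate; since that factor enters once multiplicatively it costs exactly one extra power of $t$ in the weight, turning the $\mathcal{O}(\tfrac{1}{t}e^{-gt})$ of Theorem~\ref{tw:ciagla_pocho} into the stated $\omega_2(t)=\mathcal{O}(\tfrac{1}{t^2}e^{-gt})$. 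Your explicit decomposition $\overline{b}^h=b^h+r_b^h$, $\overline{w}^h=w^h+r_w^h$ is a useful clarification of the paper's informal $C(t)\mathcal{O}(|h|)$ notation, and matches what the remainders are in the intended application in Section~\ref{sec:linear_tr} (Taylor remainders, which in fact satisfy the sharper $\mathcal{O}(|h|^{1+\alpha})$ bound via Proposition~\ref{prop:remainder estimate Holder}, so that $\partial_h r_b^h|_{h=0}=0$). In short: correct, and the same approach as the paper.
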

The proof  follows exactly the same lines of reasoning as the proof of Theorem~\ref{tw:ciagla_pocho}, and hence is omitted. Theorem~\ref{lem:main_2} will be used in the final part of the proof of the Theorem~\ref{main_hille} (see page~\pageref{eq:final}).

\section{Properties of the superposition operators}\label{sec:nemy_0}
The density dependent velocity field $v(\mu_t)$ and production rate $m(\mu_t)$ are formed from a combination of a kernel operator $\mu_t\mapsto k_{\mu_t}$ (equation~\eqref{eq:kernel_k}) and a superposition operator: $v(\mu_t)=v\circ k_{\mu_t}$. In the literature, few results can be found on such operators in H\"older spaces, even less in relation to measures. This section provides the required results. 
In particular, in the proof of Theorem~\ref{main_hille} we will approximate the weak solution to the non-linear equation by solutions to the linear equation. We will then use properties of the linear equation; see Theorem~\ref{tw:ciagla_pocho} and Theorem~\ref{lem:main_2}. Recall that by $\mu_\bullet$ we mean the mapping $t\mapsto\mu_t$. This section is one of steps in arguing that coefficients $v(k_{\mu_\bullet})$ and $m(k_{\mu_\bullet})$ of equation~\eqref{def:prob_init2} satisfy assumptions of Theorem~\ref{tw:ciagla_pocho}.

First, we will show that the mapping \[\mu\mapsto u\!\left(\int_\RRD K_u(y,x)\:\dd\mu(y)\right)\quad \textrm{is of a class} \quad \CC^1\left(\ZZZ;\; \CCA(\RRD;\RRD)\right),\] for $u\in\{v_0,v_1\}$ and analogously for $u\in\{m_0,m_1\}$.
Then we will prove that \[\mu_\bullet \mapsto\! \left(t\mapsto u\!\left(\int_\RRD K_u(y,x)\:\dd\mu_t(y)\right)(\cdot)\right)\]
is of class $\CC^1\left(\CC_\omega([0,\infty);\ZZZ);\: \CC_\omega([0,\infty); \CCA(\RRD;\RRD))\right)$ for $u\in\{v_0,v_1\}$ and analogously for $u\in\{m_0,m_1\}$. Since we fix $u$, we simply write $K$ instead of $K_u$.

The function $k_{\mu}(x):=\int_{\RRD} K(y, x)\:\dd \mu(y)$ is well-defined for any measure $\mu\in \mathcal{M}(\RRD)$ as an~integral. However, $\int_\RRD K(y,x)\:\dd\mu(y)$ is not well-defined for $\mu\in\ZZZ\setminus\MM(\RRD)$. For fixed $x$, the kernel $K(\cdot,x)\in\CCA(\RRD)$ by assumption {\bf A3} of Theorem~\ref{main_hille}. Hence, $\langle K(\cdot,x),\phi\rangle_{\CCA(\RRD),\:\ZZZ}$ is defined for every $x\in\RRD$ as the value of $\phi\in\ZZZ\subset(\CCA(\RRD))^*$ on $K(\cdot,x)\in\CCA(\RRD)$ .

We will always write integral instead of $\langle \cdot,\cdot \rangle$ also when $\mu\in\ZZZ\setminus \MM(\RRD)$, realizing the slight abuse of notation. One of the reasons why we do not distinguish notation is that in fact solution to~\eqref{def:prob_init2} is a~measure (see  Corollary~\ref{cry:measure}).

\begin{lemma}\label{lem:estim_x_K}
Let $\left( x \mapsto K(\cdot, x) \right)\in \CCA\left(\RRD; \CCA(\RRD)\right)$. Then for any $\mu \in  \ZZZ$ the mapping $\mu \mapsto k_\mu: \ZZZ \to \CCA(\RRD)$ is well-defined and linear. Moreover, this mapping is continuous. 
\end{lemma}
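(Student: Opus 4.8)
The plan is to verify the three asserted properties — well-definedness, linearity, and continuity — in that order, working directly from the definition $k_\mu(x) = \langle K(\cdot,x), \mu\rangle_{\CCA(\RRD),\,\ZZZ}$ for $\mu\in\ZZZ$. The hypothesis $\big(x\mapsto K(\cdot,x)\big)\in\CCA\big(\RRD;\CCA(\RRD)\big)$ means precisely that $K(\cdot,x)\in\CCA(\RRD)$ for each $x$, that $x\mapsto K(\cdot,x)$ is bounded as a $\CCA(\RRD)$-valued map, that its derivative $x\mapsto\partial_{x_j}K(\cdot,x)$ exists and is bounded in $\CCA(\RRD)$, and that this derivative is $\alpha$-H\"older as a $\CCA(\RRD)$-valued map. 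I would record these four quantities as finite constants coming from $\|x\mapsto K(\cdot,x)\|_{\CCA(\RRD;\CCA(\RRD))}$.

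First I would establish well-definedness and linearity together. For fixed $x$, since $K(\cdot,x)\in\CCA(\RRD)$ and $\mu\in\ZZZ\subset(\CCA(\RRD))^*$, the pairing $\langle K(\cdot,x),\mu\rangle$ is a well-defined scalar, and it is linear in $\mu$ because the dual pairing is; hence $\mu\mapsto k_\mu$ is a linear map into the space of scalar functions on $\RRD$. The substantive point is that $k_\mu$ actually lands in $\CCA(\RRD)$, i.e. that the map $x\mapsto\langle K(\cdot,x),\mu\rangle$ is bounded, $C^1$, and has $\alpha$-H\"older gradient. Boundedness is immediate: $|k_\mu(x)|\le\|\mu\|_\ZZZ\,\|K(\cdot,x)\|_{\CCA(\RRD)}\le\|\mu\|_\ZZZ\,\|x\mapsto K(\cdot,x)\|_{\CCA(\RRD;\CCA(\RRD))}$. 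For differentiability I would form the difference quotient $\big(k_\mu(x+he_j)-k_\mu(x)\big)/h = \big\langle (K(\cdot,x+he_j)-K(\cdot,x))/h,\ \mu\big\rangle$ and use that $\big(K(\cdot,x+he_j)-K(\cdot,x)\big)/h \to \partial_{x_j}K(\cdot,x)$ in the norm of $\CCA(\RRD)$ (which follows from the hypothesized $C^1$-regularity of the $\CCA(\RRD)$-valued map $x\mapsto K(\cdot,x)$, via the fundamental theorem of calculus applied to the Bochner-type integral of the continuous derivative); since $\mu$ is a bounded linear functional on $\CCA(\RRD)$, continuity of $\mu$ passes the limit through, giving $\partial_{x_j}k_\mu(x)=\langle\partial_{x_j}K(\cdot,x),\mu\rangle$. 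Then $|\nabla k_\mu|_\alpha$ is controlled by $\|\mu\|_\ZZZ\,\big|x\mapsto\nabla_x K(\cdot,x)\big|_{\alpha,\,\CCA(\RRD)}$ by the same pairing estimate applied to the H\"older difference of the derivative. Collecting these three bounds gives $\|k_\mu\|_{\CCA(\RRD)}\le C\|\mu\|_\ZZZ$ with $C=\|x\mapsto K(\cdot,x)\|_{\CCA(\RRD;\CCA(\RRD))}$, which is exactly the statement that $\mu\mapsto k_\mu$ is a bounded linear operator $\ZZZ\to\CCA(\RRD)$ — hence continuous — and the proof is complete.

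The only genuine obstacle is the interchange of the limit defining $\partial_{x_j}k_\mu$ with the functional $\mu$, and more precisely justifying that the scalar difference quotient of $k_\mu$ converges by showing the $\CCA(\RRD)$-norm convergence of the difference quotients of $x\mapsto K(\cdot,x)$; this is where the assumption that $x\mapsto K(\cdot,x)$ is genuinely $C^1$ \emph{as a $\CCA(\RRD)$-valued map} (not merely pointwise differentiable) is used, and it is what the hypothesis $\big(x\mapsto K(\cdot,x)\big)\in\CCA(\RRD;\CCA(\RRD))$ is designed to supply. Everything else is a routine application of $|\langle\phi,\mu\rangle|\le\|\phi\|_{\CCA(\RRD)}\|\mu\|_\ZZZ$ together with the definition of the $\CCA$-norm recalled earlier in the paper.
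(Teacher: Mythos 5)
Your proof is correct and follows the same strategy as the paper's: reduce everything to the pairing estimate $|\langle\phi,\mu\rangle|\le\|\phi\|_{\CCA(\RRD)}\|\mu\|_\ZZZ$ applied in turn to $K(\cdot,x)$, to $\nabla_x K(\cdot,x)$, and to H\"older increments of the latter, yielding $\|k_\mu\|_{\CCA(\RRD)}\le\|K\|_{\CCA(\RRD;\CCA(\RRD))}\|\mu\|_\ZZZ$ and hence boundedness of the linear map. The one place you go beyond the paper is in actually justifying $\partial_{x_j}k_\mu(x)=\langle\partial_{x_j}K(\cdot,x),\mu\rangle$ via $\CCA(\RRD)$-norm convergence of the difference quotients of the vector-valued map $x\mapsto K(\cdot,x)$ and continuity of the functional $\mu$; the paper simply asserts $\nabla_x k_\mu=\int_\RRD\nabla_x K(y,\cdot)\,\dd\mu(y)$ without comment, so your filling of that gap is a welcome tightening rather than a departure.
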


\begin{proof}
Notice that $k_\mu$ is differentiable, since $\nabla_x k_\mu=\int_\RRD \nabla_x K(y,\cdot)\:\dd\mu(y)$.
The linearity of the mapping $\mu \mapsto k_\mu$ is obvious, thus we concentrate on showing boundedness as  a~linear map $\ZZZ\to \CCA(\RRD)$. 
By definition
 \[\Vert k_\mu(\cdot) \Vert_{\CCA(\RRD)} = \sup_{x \in \RRD} |k_\mu(x)| +\sup_{x\in \RRD}|\nabla_x k_\mu(x)| + |\nabla_x k_\mu(x)|_\alpha.\]
We will estimate each term separately
\begin{align}\label{estim:bound1}
 \sup_{x \in \RRD} |k_\mu(x)| &= \sup_{x\in \RRD} \left|\int_{\RRD} K(y, x) \:\dd\mu(y)\right| \leq \sup_{x\in \RRD}\Vert K(\cdot, x)\Vert_{\CCA(\RRD)} \Vert \mu \Vert_\ZZZ,\\
\sup_{x\in \RRD}|\nabla_x k_\mu(x)| & 
= \sup_{x\in \RRD}\left| \int_{\RRD} \nabla_x K(y, x) \:\dd\mu(y)\right| 
\leq \sup_{x \in \RRD} \Vert \nabla_x K(\cdot,x)\Vert_{\CCA(\RRD)}  \Vert \mu \Vert_\ZZZ.\label{estim:bound11}
\end{align}
We need to argue that~$\sup_{x\in\RRD}\Vert \nabla_x K(\cdot,x)\Vert_{\CCA(\RRD)}$ is bounded. By assumption
$\left(x \mapsto\! K(\cdot, x)\right)\! \in \CCA(\RRD; \CCA(\RRD))$ one has that $\nabla_x K\in\CC^{0+\alpha}(\RRD)$, hence in particular $\nabla_x K$  is bounded. One has also, that for any $x$ value $\|\nabla_x K(\cdot,x)\Vert_{\CCA(\RRD)}$ (the norm $\|\cdot\|_{\CCA}$ concerns $y$) is bounded.
And next 
\begin{equation}\label{estim:bound3}
\begin{split}
 \left|\nabla_xk_\mu(x)\right|_\alpha &
=\sup_{\substack{x_1\neq x_2\\ x_1, x_2 \in \RRD }}\frac{\left|\int_{\RRD} \left( \nabla_x K(y, x_1) - \nabla_x K(y, x_2)\right) \:\dd\mu(y)\right|}{|x_1 - x_2|^{\alpha}}\\\
& \leq \sup_{\substack{x_1 \neq x_2\\ x_1, x_2 \in \RRD}}\left(\frac{\Vert  \nabla_x \left(K(\cdot,x_1) - K(\cdot, x_2)\right)\Vert_{\CCA(\RRD)}}{|x_1 - x_2|^{\alpha}}\right)  \Vert \mu \Vert_\ZZZ.
\end{split}
\end{equation}
The last inequality holds by assumption that $(x\mapsto K(\cdot, x))\in \CCA(\RRD; \CCA(\RRD))$, which implies that $\nabla_x K(\cdot, x)\in \CC^{0+\alpha}(\RRD)$.
Using~\eqref{estim:bound1}-\eqref{estim:bound3} one obtains that $k_\mu\in\CCA(\RRD)$ and 
\begin{equation}\label{eq:k_mu_in_CCA}
\|k_\mu\|_{\CCA(\RRD)} \le \|\mu\|_\ZZZ \|K\|_{\CCA( \RRD;\:\CCA(\RRD))}.
\end{equation}
Therefore, the mapping $\mu \mapsto k_\mu$ is continuous linear transformation of $\ZZZ$ into $\CCA(\RRD)$.
\end{proof}
\begin{lemma}\label{lem:1_25}
If $v\in\CCA(\RR)$ and $f,g\in\CCA(\RRD)$, then  $v\circ f$ and $v\circ g$ are in $\CCA(\RRD)$ and moreover
\[|v\circ f-v\circ g|_\alpha\leq \max\Big\{2\|v'\|_\infty \|f-g\|_\infty,\;
\|v'\|_\infty\|\nabla f-\nabla g\|_\infty+|v'|_\alpha \|f-g\|_\infty^\alpha\|\nabla g\|_\infty
\Big\}.
\]
\end{lemma}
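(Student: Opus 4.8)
The statement has two parts: first that $v\circ f,v\circ g\in\CCA(\RRD)$, and second the quantitative H\"older estimate on the difference. For the first part I would simply verify the defining norm of $\CCA$ is finite: $v\circ f$ is bounded by $\|v\|_\infty$, its gradient $\nabla(v\circ f)=(v'\circ f)\,\nabla f$ is bounded by $\|v'\|_\infty\|\nabla f\|_\infty$, and the H\"older seminorm of $\nabla(v\circ f)$ is controlled using the product rule~\eqref{eqn:alpha_infty} together with the composition rule~\eqref{eqn:alpha} applied to $v'\circ f$; all these are finite because $v\in\CCA(\RR)$ (so $v'\in\CC^\alpha$) and $f\in\CCA(\RRD)$. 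The same applies to $v\circ g$.

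For the main estimate, the plan is to write $f=g+(f-g)$ and split according to the two regimes that naturally appear in bounding a H\"older quotient of a difference of compositions. Concretely, for $x_1\neq x_2$ consider
\[
\Delta(x_1,x_2):=\big(v(f(x_1))-v(g(x_1))\big)-\big(v(f(x_2))-v(g(x_2))\big),
\]
whose supremum over $|x_1-x_2|^{-\alpha}$ is exactly $|v\circ f-v\circ g|_\alpha$. One bound comes from the triangle inequality applied termwise: $|\Delta(x_1,x_2)|\le |v(f(x_1))-v(g(x_1))|+|v(f(x_2))-v(g(x_2))|\le 2\|v'\|_\infty\|f-g\|_\infty$, which, after dividing by $|x_1-x_2|^\alpha$ and noting one may always reduce to $|x_1-x_2|\le 1$... actually this needs care, since the quotient blows up for small $|x_1-x_2|$; the cleaner route is to use this crude bound only when $|x_1-x_2|$ is bounded below, and it contributes the first term $2\|v'\|_\infty\|f-g\|_\infty$ in the maximum. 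The other, more delicate bound handles small $|x_1-x_2|$: here write $v(f)-v(g)=\int_0^1 v'(g+s(f-g))\,(f-g)\,ds$ pointwise, so that $v\circ f-v\circ g = (f-g)\cdot\int_0^1 v'\circ(g+s(f-g))\,ds$, a product of a $\CCA$ (indeed $\CC^1$) function $(f-g)$ and a $\CC^\alpha$ function. Alternatively, and perhaps more transparently given the form of the answer, one applies the Mean Value Theorem directly to $v$ at the two points $x_1,x_2$: there exist intermediate values so that each of $v(f(x_i))-v(g(x_i))=v'(\xi_i)(f(x_i)-g(x_i))$, and then
\[
\Delta(x_1,x_2)=v'(\xi_1)\big((f-g)(x_1)-(f-g)(x_2)\big)+\big(v'(\xi_1)-v'(\xi_2)\big)(f-g)(x_2).
\]
The first piece is bounded by $\|v'\|_\infty |x_1-x_2|\,\|\nabla f-\nabla g\|_\infty$ (using the Lipschitz bound on $f-g$ via its gradient), giving the term $\|v'\|_\infty\|\nabla f-\nabla g\|_\infty$ after dividing by $|x_1-x_2|^\alpha$ when $|x_1-x_2|\le 1$. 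The second piece requires estimating $|v'(\xi_1)-v'(\xi_2)|\le |v'|_\alpha|\xi_1-\xi_2|^\alpha$ and controlling $|\xi_1-\xi_2|$: since $\xi_i$ lies between $g(x_i)$ and $f(x_i)$, one gets $|\xi_1-\xi_2|\lesssim |g(x_1)-g(x_2)|+\|f-g\|_\infty\le \|\nabla g\|_\infty|x_1-x_2|+\|f-g\|_\infty$, and the $\CC^\alpha$-type submultiplicativity estimate~\eqref{eqn:alpha_infty} together with a careful bookkeeping of which terms carry the factor $|x_1-x_2|^\alpha$ yields the term $|v'|_\alpha\|f-g\|_\infty^\alpha\|\nabla g\|_\infty$ (the $\|\nabla g\|_\infty^\alpha |x_1-x_2|^\alpha$ part divides out, leaving $\|f-g\|_\infty^\alpha$ against the Lipschitz-type growth).

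The main obstacle I anticipate is the bookkeeping in the second regime: keeping track of exactly which combination of powers ($\alpha$ versus $1$) of $|x_1-x_2|$, $\|f-g\|_\infty$, and $\|\nabla g\|_\infty$ survive after dividing by $|x_1-x_2|^\alpha$, and arranging the split between ``$|x_1-x_2|$ large'' and ``$|x_1-x_2|$ small'' so that the two bounds combine into the stated maximum rather than a sum. A mild technical point is that controlling $|\xi_i-\xi_j|$ requires the intermediate values from the MVT to depend measurably/uniformly on $x$, which is not actually needed if one instead uses the integral form $v\circ f-v\circ g=(f-g)\int_0^1 v'\circ(g+s(f-g))\,ds$ and estimates the $\CC^\alpha$ seminorm of the product via~\eqref{eqn:alpha_infty}, bounding $|v'\circ(g+s(f-g))|_\alpha$ uniformly in $s$ by~\eqref{eqn:alpha} since $\|\nabla(g+s(f-g))\|_\infty\le \|\nabla g\|_\infty+\|\nabla f-\nabla g\|_\infty$ — this is the route I would ultimately take as it avoids any selection issues.
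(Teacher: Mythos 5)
Your first part is fine and essentially matches the paper: boundedness of $v\circ f$, of $\nabla(v\circ f)=(v'\circ f)\nabla f$, and of its H\"older seminorm via~\eqref{eqn:alpha_infty} and~\eqref{eqn:alpha}. You have also correctly identified the right global structure for the main estimate: a dichotomy between $|x_1-x_2|\geq 1$ (where the crude bound $2\|v\circ f-v\circ g\|_\infty$ suffices) and $|x_1-x_2|<1$ (where a Lipschitz-type estimate in terms of the gradient is needed). The paper packages this dichotomy as the one-line elementary fact that for any $\psi\in\CC^1$, $|\psi|_\alpha\leq\max\{2\|\psi\|_\infty,\|\nabla\psi\|_\infty\}$, then applies it directly to $\psi=v\circ f-v\circ g$.

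Where your proposal genuinely goes wrong is the bookkeeping you yourself flagged as the risk. Neither of your two routes for the small-distance regime produces the stated term $|v'|_\alpha\,\|f-g\|_\infty^\alpha\,\|\nabla g\|_\infty$ (note: $\alpha$-power on $\|f-g\|_\infty$, \emph{first} power on $\|\nabla g\|_\infty$). Via the pointwise MVT split, the second piece is bounded by
$|v'|_\alpha|\xi_1-\xi_2|^\alpha\|f-g\|_\infty$ with $|\xi_1-\xi_2|\lesssim\|\nabla g\|_\infty|x_1-x_2|+\|f-g\|_\infty$; after dividing by $|x_1-x_2|^\alpha$ you get a piece $|v'|_\alpha\|f-g\|_\infty^{1+\alpha}|x_1-x_2|^{-\alpha}$, which blows up as $x_2\to x_1$, plus a piece with the wrong powers $\|\nabla g\|_\infty^\alpha\|f-g\|_\infty$. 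Your preferred integral route $v\circ f-v\circ g=(f-g)\int_0^1 v'\circ(g+s(f-g))\,\mathrm{d}s$ fares no better: applying~\eqref{eqn:alpha_infty} and~\eqref{eqn:alpha} to the product gives $\|f-g\|_\infty\cdot|v'|_\alpha\cdot(\text{gradient})^\alpha$, again with the exponents on the two factors swapped relative to the target.

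The decisive step in the paper is simpler and avoids all of this. Having reduced to bounding $\|\nabla(v\circ f)-\nabla(v\circ g)\|_\infty$, one does a single add-and-subtract in
\[
(v'\circ f)\nabla f-(v'\circ g)\nabla g=(v'\circ f)(\nabla f-\nabla g)+(v'\circ f-v'\circ g)\nabla g,
\]
and bounds the \emph{sup-norm} of the second factor pointwise: $\|v'\circ f-v'\circ g\|_\infty\leq|v'|_\alpha\|f-g\|_\infty^\alpha$, since $v'$ is only $\CC^\alpha$. This is where the $\alpha$-power lands on $\|f-g\|_\infty$ and the first power stays on $\|\nabla g\|_\infty$. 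No H\"older quotient of the difference of compositions is ever taken directly; the only place the quotient appears is in the abstract $\CC^1$ inequality $|\psi|_\alpha\leq\max\{2\|\psi\|_\infty,\|\nabla\psi\|_\infty\}$, which you should prove (one line for each regime of $|x_1-x_2|$) and then plug in.
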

\begin{proof}
Compositions $v\circ f$ and $v\circ g$ are $\CCA(\RRD)$ according to~\eqref{eqn:alpha_infty} and~\eqref{eqn:alpha}. Knowing that $\frac{\partial}{\partial x_j}(v\circ f)=(v'\circ f)\frac{\partial f}{\partial x_j}$ and making use of estimations \eqref{eqn:alpha_infty} and~\eqref{eqn:alpha} one obtains
\[
\left\|\frac{\partial}{\partial x_j}(v\circ f)\right\|_\infty \leq \|v'\|_\infty \left\|\frac{\partial f}{\partial x_j}\right\|_\infty<\infty\]
and also
\begin{align*}
\left|\frac{\partial}{\partial x_j}(v\circ f)\right|_\alpha &\leq
|v'\circ f|_\alpha \left\|\frac{\partial f}{\partial x_j}\right\|_\infty
+
\|v'\circ f\|_\infty\left|\frac{\partial f}{\partial x_j}\right|_\alpha\\
&\leq |v'|_\alpha \|\nabla f\|_\infty^\alpha \left\|\frac{\partial f}{\partial x_j}\right\|_\infty +\|v'\|_\infty \left|\frac{\partial f}{\partial x_j}\right|_\alpha<\infty.
\end{align*}
Note that for any $\psi\in\CC^1(\RRD)$ the following holds $|\psi|_\alpha\leq \max \{2\|\psi\|_\infty,\|\nabla\psi\|_\infty\}$ and hence
\begin{align*}
&|v\circ f-v\circ g|_\alpha\leq \max \big\{2\|v\circ f-v\circ g\|_\infty,\quad \|\nabla(v\circ f)-\nabla(v\circ g)\|_\infty\big\}\\
&\leq \max\big\{2\|v'\|_\infty\|f-g\|_\infty, \quad \|(v'\circ f)\nabla f-(v'\circ g)\nabla g\|_\infty\big\}\\
&\leq \max\big\{2\|v'\|_\infty\|f-g\|_\infty, \quad 
\|v'\circ f\|_\infty \|\nabla f-\nabla g\|_\infty +\|v'\circ f-v'\circ g\|_\infty \|\nabla g\|_\infty\big\}\\
&\leq \max\big\{2\|v'\|_\infty\|f-g\|_\infty, \quad 
\|v'\|_\infty \|\nabla f-\nabla g\|_\infty +|v'|_\alpha\|f-g\|_\infty^\alpha \|\nabla g\|_\infty\big\}.
\end{align*}
\end{proof}
\begin{lemma}\label{lem:lem_2_2502}
If $v\in\CC^{2+\alpha}(\RR;\RRD)$ then the mapping $f\mapsto v\circ f:\CCA(\RRD)\to\CCA(\RRD;\RRD)$ is continuous.
\end{lemma}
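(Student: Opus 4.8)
The plan is to show continuity of $f \mapsto v \circ f$ by bounding $\|v\circ f - v\circ g\|_{\CCA(\RRD;\RRD)}$ in terms of $\|f-g\|_{\CCA(\RRD)}$ together with quantities that stay bounded as long as $g$ ranges over a bounded neighborhood of $f$. Recall that the $\CCA$-norm of a vector field decomposes into a sup-norm part, a sup-norm of the gradient, and the H\"older seminorm of the gradient; I would estimate each of these three contributions for the difference $v\circ f - v\circ g$ separately, componentwise in the values of $v$.

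First I would handle the sup-norm: $\|v\circ f - v\circ g\|_\infty \le \|v'\|_\infty \|f - g\|_\infty$ by the mean value theorem, using that $v\in\CC^{2+\alpha}$ has bounded first derivative. Next, for the gradient, writing $\nabla(v\circ f) = (v'\circ f)\,\nabla f$, I would split
\[
(v'\circ f)\nabla f - (v'\circ g)\nabla g = (v'\circ f)(\nabla f - \nabla g) + (v'\circ f - v'\circ g)\nabla g,
\]
so that $\|\nabla(v\circ f) - \nabla(v\circ g)\|_\infty \le \|v'\|_\infty \|\nabla f - \nabla g\|_\infty + \|v''\|_\infty \|f-g\|_\infty \|\nabla g\|_\infty$, where $v''$ is bounded because $v\in\CC^{2+\alpha}$. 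The term $\|\nabla g\|_\infty$ is controlled by $\|g\|_{\CCA}$, hence stays bounded near $f$. The last and most delicate piece is the H\"older seminorm of $\nabla(v\circ f) - \nabla(v\circ g)$: here I would use exactly the same telescoping decomposition as above, apply the product rule for H\"older seminorms~\eqref{eqn:alpha_infty} to each of the two summands, and then the composition rule~\eqref{eqn:alpha} to estimate $|v'\circ f|_\alpha$ and $|v'\circ f - v'\circ g|_\alpha$; for the latter I can invoke Lemma~\ref{lem:1_25} applied to $v'\in\CCA$ in place of $v$ (note $v'\in\CC^{1+\alpha}$ since $v\in\CC^{2+\alpha}$), which furnishes precisely the bound needed in terms of $\|f-g\|_\infty$, $\|\nabla f - \nabla g\|_\infty$, and norms of $f,g$. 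Collecting the three estimates yields $\|v\circ f - v\circ g\|_{\CCA} \le \Phi(\|f\|_{\CCA},\|g\|_{\CCA})\cdot\bigl(\|f-g\|_{\CCA} + \|f-g\|_{\CCA}^\alpha\bigr)$ for some function $\Phi$ that is locally bounded, which gives continuity (indeed local H\"older continuity of exponent $\alpha$).

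The main obstacle is the H\"older-seminorm term for the gradient, since it is the only place where the full strength of the hypothesis $v\in\CC^{2+\alpha}$ (rather than merely $\CC^{1+\alpha}$) is used: one needs $v''$ to exist, be bounded, and be $\alpha$-H\"older so that $v'\in\CCA$ and Lemma~\ref{lem:1_25} is applicable to $v'$. A minor technical point to keep straight is that $\|f-g\|_\infty^\alpha$ and $\|\nabla f - \nabla g\|_\infty^\alpha$ appear with exponent $\alpha<1$, so the modulus of continuity one obtains is $\CC^\alpha$ in $f$, not Lipschitz; this is perfectly sufficient for the claimed continuity. Everything else is a routine bookkeeping exercise combining~\eqref{eqn:alpha_infty},~\eqref{eqn:alpha}, and Lemma~\ref{lem:1_25}.
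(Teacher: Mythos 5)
Your proposal is correct and follows essentially the same route as the paper: decompose the $\CCA$-norm into sup-norm, gradient sup-norm, and gradient H\"older seminorm, telescope the gradient difference, apply~\eqref{eqn:alpha_infty} and~\eqref{eqn:alpha}, and invoke Lemma~\ref{lem:1_25} for $v'\in\CCA$ to control the key term $|v'\circ f - v'\circ g|_\alpha$. The only (immaterial) difference is that you pair the factor $v'\circ f - v'\circ g$ with $\nabla g$ in the telescoping, whereas the paper pairs it with $\nabla f$.
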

\begin{proof}
Let $f,g\in\CCA(\RRD)$. Then, with $v(s)=(v_1(s),\dots, v_d(s))$ one obtains
\begin{align*}
&\|v_i\circ f-v_i\circ g\|_\infty\leq \|v_i'\|_\infty\|f-g\|_\infty\\
&\textrm{and}\\
&\left\|\frac{\partial}{\partial  x_j}(v_i\circ f)-\frac{\partial}{\partial x_j}(v_i\circ g)\right\|_\infty
	=\left\|\left(\frac{\partial v_i}{\partial x_j}\circ f\right)\frac{\partial f}{\partial x_j}-\left(\frac{\partial v_i}{\partial x_j}\circ g\right)\frac{\partial g}{\partial x_j}\right\|\infty\\
	&\qquad \leq
	\left\|\frac{\partial v_i}{\partial x_j}\circ f-\frac{\partial v_i}{\partial x_j}\circ g\right\|_\infty \left\|\frac{\partial f}{\partial x_j}\right\|_\infty
	+\left\|\frac{\partial v_i}{\partial x_j}\circ g\right\|_\infty
	\left\|\frac{\partial f}{\partial x_j}-\frac{\partial g}{\partial x_j}\right\|_\infty\\
&\textrm{and also}\\
&\left|\frac{\partial }{\partial x_j}(v_j\circ f)-\frac{\partial }{\partial x_j}(v_i\circ g)\right|_\alpha \!\!\leq 
\underbrace{\left|\left(\frac{\partial v_i}{\partial x_j}\circ f-\frac{\partial v_i}{\partial x_j}\circ g\right)\frac{\partial f}{\partial x_j}\right|_\alpha
\!\!+
\left|\left(\frac{\partial v_i}{\partial x_j}\circ g\right)\!\left(\frac{\partial f}{\partial x_j}-\frac{\partial g}{\partial x_j}\right)\right|_\alpha}_{\textrm{according to~\eqref{eqn:alpha_infty} and~\eqref{eqn:alpha}}}\\
	& \qquad \leq \underbrace{\left|\frac{\partial v_i}{\partial x_j}\circ f -\frac{\partial v_i}{\partial x_j}\circ g \right|_\alpha}_{\textrm{according to Lemma~\ref{lem:1_25}}} \left\|\frac{\partial f}{\partial x_j}\right\|_\infty +\left\|\frac{\partial v_i}{\partial x_j}\circ f -\frac{\partial v_i}{\partial x_j}\circ g\right\|_\infty \left|\frac{\partial f}{\partial x_j}\right|_\alpha\\
	&\qquad\qquad +\left|\frac{\partial v_i}{\partial x_j}\circ g\right|_\alpha\left\|\frac{\partial f}{\partial x_j}-\frac{\partial g}{\partial x_j}\right\|_\infty + \left\|\frac{\partial v_i}{\partial x_j}\right\|_\infty \left|\frac{\partial f}{\partial x_j}-\frac{\partial g}{\partial x_j}\right|_\alpha\\
		& \qquad \leq \max \Big\{2\|v_i''\|_\infty \|f-g\|_\infty,\quad\|v_i''\|_\infty\|\nabla f-\nabla g\|_\infty+|v''|_\alpha\|f-g\|_\infty^\alpha\|\nabla g\|_\infty\Big\}\left\|\frac{\partial f}{\partial x_j}\right\|_\infty \\
		&\qquad \qquad 
		+\|v''\|_\infty \|f-g\|_\infty\left|\frac{\partial f}{\partial x_j}\right|_\alpha +\big(\left|v'\right|_\alpha\|g\|_\infty+\|v'\|_\infty|g|_\alpha\big)\left\|\frac{\partial f}{\partial x_j}-\frac{\partial g}{\partial x_j}\right\|_\infty\\
	&\qquad\qquad  + \left\|v'\right\|_\infty \left|\frac{\partial f}{\partial x_j}-\frac{\partial g}{\partial x_j}\right|_\alpha.
\end{align*}
These estimates yields the continuity of $f\mapsto v\circ f$ on $\CCA(\RRD).$
\end{proof}

\begin{corollary}\label{cry:1_25}
If $v\in\CC^{2+\alpha}(\RR;\RRD)$ and $S\subset\CCA(\RRD)$ is a separable subset, then $ v(S)\subset \CCA(\RRD;\RRD)$ is a separable subset.
\end{corollary}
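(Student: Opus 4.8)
The plan is to deduce this directly from Lemma~\ref{lem:lem_2_2502}, using the elementary topological principle that a continuous image of a separable metric space is separable. Since $\CCA(\RRD)$ is a normed (in particular metrizable) space, separability of $S$ means there is a countable set $D\subset S$ that is dense in $S$ for the norm topology of $\CCA(\RRD)$; fix such a $D$.

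Next I would propose the countable set $v(D):=\{v\circ f:f\in D\}\subset\CCA(\RRD;\RRD)$ as a candidate dense subset of $v(S)$. To verify density, take an arbitrary $g\in v(S)$, say $g=v\circ f$ with $f\in S$, and choose a sequence $(f_n)\subset D$ with $\|f_n-f\|_{\CCA(\RRD)}\to 0$. Because $v\in\CC^{2+\alpha}(\RR;\RRD)$, Lemma~\ref{lem:lem_2_2502} applies and the superposition operator $f\mapsto v\circ f$ is continuous from $\CCA(\RRD)$ into $\CCA(\RRD;\RRD)$; hence $\|v\circ f_n-v\circ f\|_{\CCA(\RRD;\RRD)}\to 0$, i.e. $v\circ f_n\to g$. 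Since each $v\circ f_n$ lies in $v(D)$, the element $g$ belongs to the closure of $v(D)$.

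Consequently $v(D)$ is a countable dense subset of $v(S)$, which shows that $v(S)\subset\CCA(\RRD;\RRD)$ is separable. There is essentially no obstacle here: the argument is the standard "continuous image of separable is separable", and the only points to keep in mind are that the metrizability of $\CCA(\RRD)$ legitimizes both the description of separability via a countable dense subset and the sequential form of continuity, and that the regularity hypothesis $v\in\CC^{2+\alpha}$ is precisely what is required to invoke Lemma~\ref{lem:lem_2_2502}.
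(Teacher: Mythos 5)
Your argument is correct and is precisely the reasoning the paper intends: the corollary is stated immediately after Lemma~\ref{lem:lem_2_2502} with no separate proof, because it follows at once from the continuity of $f\mapsto v\circ f$ together with the standard fact that a continuous image of a separable metric space is separable. Your write-up simply makes that implicit step explicit, so there is nothing further to add.
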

We use this to observe the following:
\begin{corollary}\label{cry:2_25}
Let $v\in\CC^{2+\alpha}(\RR;\RRD)$, then $\{v(k_\mu(\cdot)):\mu\in\ZZZ\}$ is a~separable subset of $\CCA(\RRD;\RRD)$.
\end{corollary}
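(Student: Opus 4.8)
The proof is a short chaining of three results already in hand: the separability of $\ZZZ$ (Proposition~\ref{prop:separable}), the continuity of the kernel operator $\mu\mapsto k_\mu$ (Lemma~\ref{lem:estim_x_K}), and Corollary~\ref{cry:1_25}. The plan is to first produce a separable subset of $\CCA(\RRD)$ out of $\ZZZ$ via the kernel map, and then push it forward through the superposition operator $v$.

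First I would fix, using Proposition~\ref{prop:separable}, a countable set $D\subset\ZZZ$ that is dense for the norm $\|\cdot\|_\ZZZ$. By Lemma~\ref{lem:estim_x_K}, the map $\Phi\colon\mu\mapsto k_\mu$ is a bounded, hence continuous, linear operator from $\ZZZ$ into $\CCA(\RRD)$; in particular the estimate~\eqref{eq:k_mu_in_CCA} gives $\|k_\mu-k_{\mu'}\|_{\CCA(\RRD)}\le \|\mu-\mu'\|_\ZZZ\,\|K\|_{\CCA(\RRD;\CCA(\RRD))}$. Consequently $\Phi(D)$ is a countable subset of $S\colonequals\Phi(\ZZZ)=\{k_\mu:\mu\in\ZZZ\}$ which is dense in $S$ for the topology induced by $\|\cdot\|_{\CCA(\RRD)}$, so $S$ is a separable subset of $\CCA(\RRD)$.

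Finally, I would apply Corollary~\ref{cry:1_25} to the separable set $S\subset\CCA(\RRD)$: since $v\in\CC^{2+\alpha}(\RR;\RRD)$, the image $v(S)=\{v\circ k_\mu:\mu\in\ZZZ\}=\{v(k_\mu(\cdot)):\mu\in\ZZZ\}$ is a separable subset of $\CCA(\RRD;\RRD)$, which is exactly the assertion. I do not expect any genuine obstacle here; the only point worth spelling out is the elementary fact that the continuous image of a separable space is separable, obtained by taking the image of the countable dense set $D$. (Alternatively one could observe that $\mu\mapsto v\circ k_\mu$ is itself continuous from $\ZZZ$ to $\CCA(\RRD;\RRD)$ by combining Lemma~\ref{lem:estim_x_K} with Lemma~\ref{lem:lem_2_2502}, and conclude separability of its range directly; invoking Corollary~\ref{cry:1_25} is simply the most economical route.)
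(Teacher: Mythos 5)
Your argument is correct and is essentially the paper's own proof: both invoke Proposition~\ref{prop:separable} for separability of $\ZZZ$, Lemma~\ref{lem:estim_x_K} for continuity of $\mu\mapsto k_\mu$ (so $\{k_\mu:\mu\in\ZZZ\}$ is separable as a continuous image of a separable space), and then Corollary~\ref{cry:1_25} to push the separability through $v$. You merely spell out the standard ``continuous image of separable is separable'' step that the paper leaves implicit.
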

\begin{proof}
The mapping $\mu\mapsto k_\mu:\ZZZ\to\CCA(\RRD)$ is continuous (see Lemma~\ref{lem:estim_x_K}). Hence, $S=\{k_\mu:\mu\in\ZZZ\}$ is separable, because $\ZZZ$ is separable  (according to Proposition~\ref{prop:separable}) Corollary~\ref{cry:1_25} implies that $\{v(k_\mu):\mu\in\ZZZ\}$ is separable.
\end{proof}
\begin{lemma}\label{lem:nem_char} Under assumptions {\bf A1}, {\bf A2} and {\bf A3} of Theorem~\ref{main_hille} the mapping $\mu \mapsto u(k_\mu)(\cdot)$ is of class $\CC^1\left(\ZZZ; \CCA(\RRD;\RRD)\right)$ for $u\in\{v_0,v_1\}$ and of class $\CC^1\left(\ZZZ; \CCA(\RRD;\RR)\right)$ for $u\in\{m_0,m_1\}$.
Moreover, there exist constants $C_{u,K_u},C_{u,K_u}'>0$ such that 
\begin{equation}\label{eq:estim_coeff}
\|u(k_\mu)\|_{\CCA(\RRD)}\leq C_{u,K_u}(1+\|\mu\|_\ZZZ+\|\mu\|_\ZZZ^2)
\end{equation}
\begin{equation*}
\textrm{and} \qquad
\|\partial u(k_\bullet)(\mu)\|_{\mathcal{L}(\ZZZ;\CCA(\RRD))}\leq C_{u,K_u}'(1+\|\mu\|_\ZZZ+\|\mu\|_\ZZZ^{1+\alpha}).
\end{equation*}
\end{lemma}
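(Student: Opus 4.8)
The plan is to establish the statement in two stages: first prove $C^1$-regularity of the map $\mu\mapsto u(k_\mu)(\cdot)$ by composing the two building blocks already prepared, and then derive the quantitative estimates from the explicit expressions for the value and the derivative. Recall from Lemma~\ref{lem:estim_x_K} that $\mu\mapsto k_\mu$ is a continuous \emph{linear} map $\ZZZ\to\CCA(\RRD)$, hence itself of class $C^1$ with derivative equal to the map at every point; and from~\eqref{eq:k_mu_in_CCA} we have $\|k_\mu\|_{\CCA}\le\|\mu\|_\ZZZ\,\|K\|_{\CCA(\RRD;\CCA(\RRD))}$. It therefore suffices to show that the superposition (Nemytskii) operator $N_u\colon f\mapsto u\circ f$ is of class $C^1$ from $\CCA(\RRD)$ to $\CCA(\RRD;\RRD)$ (resp.\ $\CCA(\RRD;\RR)$), and then apply the chain rule in Banach spaces: $u(k_\bullet)=N_u\circ(\mu\mapsto k_\mu)$, so $\partial u(k_\bullet)(\mu)=\big(DN_u(k_\mu)\big)\circ(\nu\mapsto k_\nu)$, i.e.\ for $\nu\in\ZZZ$, $\partial u(k_\bullet)(\mu)[\nu]=(u'\circ k_\mu)\cdot k_\nu$.

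For the Nemytskii step I would argue as follows. Under {\bf A1}/{\bf A2} we have $u\in\CC^{3+\alpha}$, so $u'\in\CC^{2+\alpha}$; Lemma~\ref{lem:1_25} shows $N_u$ maps $\CCA$ into $\CCA$, and the candidate derivative at $f$ is the bounded linear map $g\mapsto(u'\circ f)\cdot g$, which is well-defined $\CCA\to\CCA$ by the algebra property~\eqref{eqn:CCA_fg} since $u'\circ f\in\CCA$ by Lemma~\ref{lem:1_25} applied to $u'\in\CC^{2+\alpha}\subset\CCA$. To verify Fr\'echet differentiability one writes, for $g\in\CCA$ small, the pointwise Taylor remainder $u(f(x)+g(x))-u(f(x))-u'(f(x))g(x)=g(x)^2\int_0^1(1-\tau)\,u''(f(x)+\tau g(x))\,\dd\tau$, and one estimates its $\CCA$-norm by $o(\|g\|_\infty)$, hence $o(\|g\|_\CCA)$; here the $\CC^{3+\alpha}$ regularity of $u$ (giving boundedness and H\"older control of $u''$ and $u'''$) is exactly what makes the H\"older seminorm of the remainder controllable via~\eqref{eqn:alpha_infty}--\eqref{eqn:alpha}. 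Continuity of $f\mapsto DN_u(f)=(u'\circ f)\cdot$ in operator norm follows from Lemma~\ref{lem:lem_2_2502} applied to $u'\in\CC^{2+\alpha}$ together with the algebra property~\eqref{eqn:CCA_fg}. Composing with the continuous linear map $\mu\mapsto k_\mu$ yields $u(k_\bullet)\in C^1(\ZZZ;\CCA(\RRD;\RRD))$ (resp.\ with target $\CCA(\RRD;\RR)$).

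For the estimates, from $u(k_\mu)=N_u(k_\mu)$ and Lemma~\ref{lem:1_25} one has a bound of the form $\|u(k_\mu)\|_\CCA\le P(\|k_\mu\|_\CCA)$ where $P$ is a polynomial of degree governed by the quadratic appearance of $\|\nabla f\|_\infty$ through the chain-rule term $(u'\circ f)\nabla f$; inserting $\|k_\mu\|_\CCA\le\|\mu\|_\ZZZ\|K\|_{\CCA(\RRD;\CCA(\RRD))}$ gives~\eqref{eq:estim_coeff} with $C_{u,K_u}$ depending on $\|u\|_{\CC^{3+\alpha}}$ and $\|K_u\|_{\CCA(\RRD;\CCA(\RRD))}$, the constant $1$ absorbing the zeroth-order term $\|u\|_\infty$. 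Similarly, $\|\partial u(k_\bullet)(\mu)\|_{\mathcal L(\ZZZ;\CCA)}\le\|(u'\circ k_\mu)\cdot(\cdot)\|_{\mathcal L(\CCA;\CCA)}\cdot\|(\mu\mapsto k_\mu)\|_{\mathcal L(\ZZZ;\CCA)}\le\|u'\circ k_\mu\|_\CCA\cdot\|K_u\|_{\CCA(\RRD;\CCA(\RRD))}$; bounding $\|u'\circ k_\mu\|_\CCA$ again by Lemma~\ref{lem:1_25} produces the $1+\|\mu\|_\ZZZ+\|\mu\|_\ZZZ^{1+\alpha}$ behaviour --- the $1+\alpha$ power arising precisely from the term $|u''|_\alpha\|\nabla k_\mu\|_\infty^\alpha\,\|\nabla k_\mu\|_\infty$ in the H\"older seminorm of $u'\circ k_\mu$, where one factor of $\|\nabla k_\mu\|_\infty$ is raised to the power $\alpha$.

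The main obstacle I expect is the Fr\'echet-differentiability verification for the Nemytskii operator in the H\"older norm: showing that the $\CCA$-norm (and in particular the H\"older seminorm of the gradient) of the quadratic remainder $g^2\int_0^1(1-\tau)u''(f+\tau g)\,\dd\tau$ is $o(\|g\|_\CCA)$ requires carefully differentiating this expression, applying the product and composition H\"older estimates~\eqref{eqn:CCA_fg}--\eqref{eqn:alpha}, and observing that every resulting term carries at least one factor of $g$ or $\nabla g$ tending to zero, which is exactly where the extra derivative in the hypothesis $u\in\CC^{3+\alpha}$ (rather than $\CC^{2+\alpha}$) is consumed. Everything else is a bookkeeping exercise with the algebra and composition inequalities already recorded in~\eqref{eqn:CCA_fg}--\eqref{eqn:alpha} and Lemmas~\ref{lem:1_25}--\ref{lem:lem_2_2502}.
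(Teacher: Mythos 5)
Your proposal is correct and reaches the same derivative formula $\partial u(k_\bullet)(\mu)[\nu]=(u'\circ k_\mu)\,k_\nu$ and the same quantitative bounds, but it takes a genuinely different route than the paper. The paper's proof (Appendix~\ref{apx:lem36}) works directly on the composite $\mu\mapsto u(k_\mu)$: it expands $u$, $u'$, and $u''$ pointwise by Taylor's formula around $k_{\overline\mu}(x)$, and estimates each of the three pieces of the $\CCA$-norm of $u(k_\mu)-u(k_{\overline\mu})-(u'\circ k_{\overline\mu})k_{\mu-\overline\mu}$ using Proposition~\ref{prop:remainder estimate Holder} and the Lagrange form of the remainder, obtaining a bound of the form $c_1\|\mu-\overline\mu\|_\ZZZ^{1+\alpha}+c_2\|\mu-\overline\mu\|_\ZZZ^2$ which is $o(\|\mu-\overline\mu\|_\ZZZ)$; boundedness of the map and of its derivative are then verified in two further hand computations. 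You instead factorize $u(k_\bullet)=N_u\circ(\mu\mapsto k_\mu)$ and invoke the Banach-space chain rule: the inner map is continuous linear (Lemma~\ref{lem:estim_x_K}), so the whole burden shifts to showing the superposition operator $N_u\colon\CCA\to\CCA$ is of class $C^1$ when $u\in\CC^{3+\alpha}$, which you attack via the integral form $u(f+g)-u(f)-u'(f)g=g^2\int_0^1(1-\tau)u''(f+\tau g)\,\dd\tau$. This decomposition is more modular and shows cleanly where each derivative of $u$ is consumed (one for the value, one for the derivative, one for the remainder control, and the H\"older exponent for the seminorm), and it lets the estimates for $\|u(k_\mu)\|_\CCA$ and $\|\partial u(k_\bullet)(\mu)\|_{\mathcal L(\ZZZ;\CCA)}$ drop out from $\|k_\mu\|_\CCA\le\|K\|_{\CCA(\RRD;\CCA(\RRD))}\|\mu\|_\ZZZ$ by plugging into the Nemytskii bounds. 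The price is that the Fr\'echet-differentiability of $N_u$ in the $\CCA$-norm — in particular the $o(\|g\|_\CCA)$ control of the H\"older seminorm of the gradient of the quadratic remainder — is exactly the delicate computation the paper does inline; you correctly identify it as the main obstacle but leave it at the level of a sketch, so a full write-up would still have to carry out bookkeeping comparable in length to the paper's Step~1. One small phrasing slip: you say the $\CCA$-norm of the remainder "is $o(\|g\|_\infty)$, hence $o(\|g\|_\CCA)$", but the gradient and H\"older-seminorm contributions are not controlled by $\|g\|_\infty$ alone; what you mean (and say correctly a few lines later) is that each term carries a factor of $g$, $\nabla g$, or a H\"older quotient of $\nabla g$ that vanishes with $\|g\|_\CCA$.
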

\begin{remark}
 Proof of the above lemma requires $u\in\CC^{3+\alpha}(\RRD)$ (assumption {\bf A1} and {\bf A2} of Theorem~\ref{main_hille}), and this is the only place in this paper where such a strong assumption on coefficients is needed. The differentiability of this map is necessary further for the differentiability in parameter.
Notice that when $v$ and $m$ are $C^{2+\alpha}$ then Lemma~\ref{lem:lem_2_2502} holds.
\end{remark}
Proof of Lemma~\ref{lem:nem_char} can be found in the Appendix~\ref{apx:lem36}. Now we present a stronger statement.
\begin{lemma}\label{tw:B_C1}
For any weight $\omega(t)>0$,
under the  assumptions {\bf A1}, {\bf A2}, {\bf A3} of Theorem~\ref{main_hille}, the mapping $\mu_\bullet \mapsto u(k_{\mu_\bullet})(\cdot)$ is of class  \[\CC^1\left(\CC_\omega([0,\infty);\ZZZ);\: \CC_\omega([0,\infty); \CCA(\RRD;\RRD))\right)\] for $u\in\{v_0,v_1\}$ and similarly for $u\in\{m_0,m_1\}$.
\end{lemma}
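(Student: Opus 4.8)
The plan is to upgrade Lemma~\ref{lem:nem_char}, which gives that $\mu\mapsto u(k_\mu)$ is $\CC^1$ from $\ZZZ$ to $\CCA(\RRD;\RRD)$ together with the polynomial bounds~\eqref{eq:estim_coeff}, to the level of curves by a pointwise-in-$t$ argument combined with a uniform continuity control supplied by the weight. Write $\Phi\colon\ZZZ\to\CCA(\RRD;\RRD)$, $\Phi(\mu):=u(k_\mu)(\cdot)$, so that the map to be studied is the Nemytskii-type operator $\hat\Phi\colon\mu_\bullet\mapsto(t\mapsto\Phi(\mu_t))$. First I would check that $\hat\Phi$ maps $\CC_\omega([0,\infty);\ZZZ)$ into $\CC_\omega([0,\infty);\CCA(\RRD;\RRD))$: for $\mu_\bullet\in\CC_\omega([0,\infty);\ZZZ)$ the composition $t\mapsto\Phi(\mu_t)$ is continuous because $\Phi$ is continuous and $t\mapsto\mu_t$ is continuous; and $\omega(t)\|\Phi(\mu_t)\|_{\CCA}\le \omega(t)\,C_{u,K_u}(1+\|\mu_t\|_\ZZZ+\|\mu_t\|_\ZZZ^2)$, which is bounded in $t$ once we know $\sup_t\omega(t)\|\mu_t\|_\ZZZ<\infty$ — here one uses $\omega(t)\le1$ (or, more carefully, that $\omega$ is bounded; in our application $\tilde\omega\le1$) so that $\omega(t)\|\mu_t\|_\ZZZ^2=(\omega(t)\|\mu_t\|_\ZZZ)\cdot\|\mu_t\|_\ZZZ$ is controlled after absorbing an extra factor of $\omega$, i.e. one in fact estimates $\omega(t)(1+\|\mu_t\|_\ZZZ+\|\mu_t\|_\ZZZ^2)\le C(1+\|\mu_\bullet\|_{\ZZZ_\omega}+\|\mu_\bullet\|_{\ZZZ_\omega}^2)$ using $\omega\le1$ to bound $\omega(t)\|\mu_t\|^2_\ZZZ \le \omega(t)^2\|\mu_t\|^2_\ZZZ \le \|\mu_\bullet\|^2_{\ZZZ_\omega}$ only when $\omega\le 1$; since the statement is "for any weight $\omega>0$" one should either restrict to $\omega\le1$ or note the estimate degrades by a fixed constant $\sup_t\omega(t)$ when $\omega$ is bounded, which is the only case needed.

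Next I would identify the candidate derivative. Define $D\hat\Phi(\mu_\bullet)\colon\CC_\omega([0,\infty);\ZZZ)\to\CC_\omega([0,\infty);\CCA(\RRD;\RRD))$ by $(D\hat\Phi(\mu_\bullet)\eta_\bullet)(t):=\partial\Phi(\mu_t)[\eta_t]$, where $\partial\Phi(\mu_t)\in\mathcal L(\ZZZ;\CCA(\RRD;\RRD))$ is the Fr\'echet derivative from Lemma~\ref{lem:nem_char}. One checks this is a bounded linear operator using the second bound in Lemma~\ref{lem:nem_char}: $\omega(t)\|\partial\Phi(\mu_t)[\eta_t]\|_{\CCA}\le C'_{u,K_u}\,\omega(t)(1+\|\mu_t\|_\ZZZ+\|\mu_t\|_\ZZZ^{1+\alpha})\|\eta_t\|_\ZZZ$, and the first three factors are bounded in $t$ as above while $\omega(t)\|\eta_t\|_\ZZZ\le\|\eta_\bullet\|_{\ZZZ_\omega}$. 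To see $t\mapsto\partial\Phi(\mu_t)[\eta_t]$ is continuous, combine continuity of $t\mapsto\eta_t$ in $\ZZZ$, continuity of $t\mapsto\mu_t$ in $\ZZZ$, and continuity of $\mu\mapsto\partial\Phi(\mu)$ in $\mathcal L(\ZZZ;\CCA)$ (which is part of $\Phi\in\CC^1$); this is just continuity of the evaluation map $(A,\eta)\mapsto A\eta$ on $\mathcal L(\ZZZ;\CCA)\times\ZZZ$.

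The core step is the remainder estimate. Fix $\mu_\bullet$ and let $\eta_\bullet\to0$ in $\CC_\omega([0,\infty);\ZZZ)$; I must show
\[
\sup_{t\ge0}\omega(t)\,\big\|\Phi(\mu_t+\eta_t)-\Phi(\mu_t)-\partial\Phi(\mu_t)[\eta_t]\big\|_{\CCA}=o\big(\|\eta_\bullet\|_{\ZZZ_\omega}\big).
\]
Pointwise in $t$, Fr\'echet differentiability of $\Phi$ gives $\|\Phi(\mu_t+\eta_t)-\Phi(\mu_t)-\partial\Phi(\mu_t)[\eta_t]\|_{\CCA}\le r(\mu_t,\eta_t)\|\eta_t\|_\ZZZ$ with $r(\mu_t,\eta_t)\to0$ as $\|\eta_t\|_\ZZZ\to0$, but the rate is a priori $t$-dependent, so I cannot simply take the supremum. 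The way around this — and the main obstacle — is to get a \emph{uniform} modulus: using the integral form of the remainder $\Phi(\mu_t+\eta_t)-\Phi(\mu_t)-\partial\Phi(\mu_t)[\eta_t]=\int_0^1\big(\partial\Phi(\mu_t+s\eta_t)-\partial\Phi(\mu_t)\big)[\eta_t]\,ds$, one bounds the $\CCA$-norm by $\|\eta_t\|_\ZZZ\sup_{0\le s\le1}\|\partial\Phi(\mu_t+s\eta_t)-\partial\Phi(\mu_t)\|_{\mathcal L(\ZZZ;\CCA)}$. Now one must control $\|\partial\Phi(\mu_t+s\eta_t)-\partial\Phi(\mu_t)\|_{\mathcal L(\ZZZ;\CCA)}$ uniformly in $t$: going back into the explicit formulas in the proof of Lemma~\ref{lem:nem_char} (composition of the linear bounded $\mu\mapsto k_\mu$ with the superposition operators $f\mapsto u'\circ f$, etc.), the modulus of continuity of $\partial\Phi$ near $\mu$ is estimated by the bounds~\eqref{eqn:alpha_infty}--\eqref{eqn:alpha} and Lemma~\ref{lem:1_25}/Lemma~\ref{lem:lem_2_2502} in terms of $\|k_{\mu_t}\|_{\CCA}$, $\|k_{s\eta_t}\|_{\CCA}$ and their H\"older seminorms — all of which are $\le C(1+\|\mu_t\|_\ZZZ)$ and $\le C\|\eta_t\|_\ZZZ$ respectively, hence, after multiplying by the outer weight $\omega(t)$ and using $\omega\le1$ and $\sup_t\omega(t)\|\mu_t\|_\ZZZ\le\|\mu_\bullet\|_{\ZZZ_\omega}=:M$, one gets a bound of the form $\omega(t)\|\partial\Phi(\mu_t+s\eta_t)-\partial\Phi(\mu_t)\|_{\mathcal L(\ZZZ;\CCA)}\le \Psi_M\big(\|\eta_t\|_\ZZZ\big)$ for a modulus $\Psi_M$ (continuous, increasing, $\Psi_M(0)=0$) depending only on $M$ and the $\CCA$-data of $K$ and $u$, not on $t$. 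Combining,
\[
\omega(t)\big\|\Phi(\mu_t+\eta_t)-\Phi(\mu_t)-\partial\Phi(\mu_t)[\eta_t]\big\|_{\CCA}\le \Psi_M\big(\|\eta_t\|_\ZZZ\big)\cdot\|\eta_t\|_\ZZZ\le \Psi_M\big(\|\eta_\bullet\|_{\ZZZ_\omega}\big)\cdot\|\eta_\bullet\|_{\ZZZ_\omega},
\]
using $\|\eta_t\|_\ZZZ\le\omega(t)^{-1}\omega(t)\|\eta_t\|_\ZZZ$ is \emph{not} directly usable because $\omega^{-1}$ blows up — instead one keeps one factor $\omega(t)\|\eta_t\|_\ZZZ\le\|\eta_\bullet\|_{\ZZZ_\omega}$ and, for the argument of $\Psi_M$, notes that on the time interval where $\omega(t)$ is small $\|\eta_t\|_\ZZZ$ need not be small, so one must in fact carry the weight \emph{inside} too: repeat the Lemma~\ref{lem:nem_char} estimates tracking the substitution $\eta_t\rightsquigarrow\omega(t)\eta_t$ throughout, which works because the bounds are polynomial with the top-degree term in $\eta$ being degree $1+\alpha<2$, so an extra $\omega(t)^\alpha\le1$ appears and the whole remainder is $\le C(1+M+M^{1+\alpha})\,\|\eta_\bullet\|_{\ZZZ_\omega}^{1+\alpha}=o(\|\eta_\bullet\|_{\ZZZ_\omega})$. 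Taking the supremum over $t$ then finishes Fr\'echet differentiability; continuity of $\mu_\bullet\mapsto D\hat\Phi(\mu_\bullet)$ in $\mathcal L\big(\CC_\omega([0,\infty);\ZZZ);\CC_\omega([0,\infty);\CCA)\big)$ follows by the same uniform-modulus bookkeeping applied to $\|\partial\Phi(\mu_t)-\partial\Phi(\nu_t)\|_{\mathcal L(\ZZZ;\CCA)}$ when $\mu_\bullet\to\nu_\bullet$ in the weighted norm. I expect the genuine difficulty to be exactly this point: turning the pointwise-in-$t$ derivative estimates of Lemma~\ref{lem:nem_char} into estimates that are uniform in $t$ after weighting, which forces one to re-examine the proof of Lemma~\ref{lem:nem_char} with the weight inserted rather than quoting it as a black box; the subquadratic ($1+\alpha$) growth of the derivative bound is what makes this go through for bounded weights.
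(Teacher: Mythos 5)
Your overall plan---lift the pointwise Fr\'echet differentiability of Lemma~\ref{lem:nem_char} to the curve level, take the pointwise remainder estimate from the proof of that lemma, multiply by $\omega(t)$, and take $\sup_t$---is exactly what the paper does, and you correctly put your finger on the one genuinely delicate point: the pointwise remainder is nonlinear in $\|\eta_t\|_{\ZZZ}$ while the weighted norm only gives you one factor of $\omega(t)$ to spend, so you cannot simply replace $\|\eta_t\|_{\ZZZ}$ by $\|\eta_\bullet\|_{\ZZZ_\omega}$ in every factor.

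However, the specific mechanism you propose to close this gap does not work. First, the top degree of the remainder in Lemma~\ref{lem:nem_char} is $2$, not $1+\alpha$: the terms $II.2$ and $II.3$ in Appendix~A contain the contributions $\|u''\|_\infty\,\|k_{\mu-\overline\mu}\|_\infty\,\|\partial_{x_j}k_{\mu-\overline\mu}\|_\infty$ and $|u''\circ k_{\overline\mu}|_\alpha\,\|\mu-\overline\mu\|_{\ZZZ}^2$, each of order $\|\mu-\overline\mu\|_{\ZZZ}^2$, which is also reflected in the paper's own final bound $c_1\|\cdot\|+c_2\|\cdot\|^{1+\alpha}+c_3\|\cdot\|^2$. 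Second, the proposed ``substitution $\eta_t\rightsquigarrow\omega(t)\eta_t$'' is not an operation you can perform: the Taylor remainder $R(\overline\mu_t,\eta_t)=u(k_{\overline\mu_t+\eta_t})-u(k_{\overline\mu_t})-\partial u(k_{\overline\mu_t})\eta_t$ is a fixed nonlinear function of the actual increment $\eta_t$, not of a rescaled copy of it; rescaling the argument changes the quantity you are estimating. Third, the inequality $\omega\le1$ gives you nothing here: to distribute $\omega(t)$ across $\|\eta_t\|^p$ with $p>1$ you would need $\omega(t)^{p-1}$ to be bounded \emph{below}, not above, so a weight with $\omega(t)\to0$ is precisely the bad case. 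Concretely, with $\omega(t)=e^{-t}$ and $\|\eta_t\|_{\ZZZ}=\varepsilon\,e^{(1-\delta)t}$ for small $\delta>0$ one has $\|\eta_\bullet\|_{\ZZZ_\omega}=\varepsilon$ yet $\sup_t\omega(t)\|\eta_t\|_{\ZZZ}^2=\infty$.

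Two further points are worth flagging. The constants $\widehat C_1,\widehat C_2$ obtained in the proof of Lemma~\ref{lem:nem_char} actually depend on $\|\mu_t\|_{\ZZZ}$ and $\|\overline\mu_t\|_{\ZZZ}$ through factors like $\|\partial_{x_j} k_{\mu}\|_\infty$ and $|u''\circ k_{\overline\mu}|_\alpha\le|u''|_\alpha\,\|\nabla k_{\overline\mu}\|_\infty^\alpha$, so the phrase ``constants do not depend on time'' requires care; those quantities grow with $t$ whenever $\omega(t)\to0$. This means the paper's own one-line proof of the lemma---asserting $\sup_t\omega(t)\|R(t)\|_{\CCA}\le c_1\|\eta_\bullet\|_{\ZZZ_\omega}+c_2\|\eta_\bullet\|_{\ZZZ_\omega}^{1+\alpha}+c_3\|\eta_\bullet\|_{\ZZZ_\omega}^2$ ``immediately''---has the same gap you are trying to fill; you have reproduced the difficulty, not resolved it. The way the rest of the paper actually avoids it (see Lemma~\ref{cl:B_prop} and Lemma~\ref{lem:4_3_new}) is to work on finite intervals $[0,T]$, or more generally on sets where $\sup_t\|\mu_t\|_{\ZZZ}$ itself is finite, which holds when $\omega$ is bounded below on $[0,T]$; on such sets one can absorb the extra powers of $\|\mu_t\|_{\ZZZ}$ and $\|\eta_t\|_{\ZZZ}$ into constants and the argument goes through. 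A clean fix for Lemma~\ref{tw:B_C1} would make that restriction explicit rather than claiming the result for an arbitrary positive weight on all of $[0,\infty)$.
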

\begin{proof}[Proof of Lemma~\ref{tw:B_C1}]
We need to show that the following norm \begin{equation*}\label{eqn:estim_BC1}
\begin{split}
& \sup_{t\geq 0}\omega(t)\Big\|u(k_{\mu_t})-u(k_{\overline{\mu}_t})- \partial u(k_{\overline{\mu}_t})(\mu_t-\overline{\mu}_t)\Big\|_{\CCA(\RRD)}
\end{split}
\end{equation*}
can be estimated from above by $\|\mu_\bullet-\overline{\mu}_\bullet\|_{\ZZZ_{\omega}}$. We will directly use estimations from proof of Lemma~\ref{lem:nem_char}. Since constants in~\eqref{eq:estim_coeff} do not depend on time, thus immediately one obtains
\begin{align*}
& \sup_{t\geq 0}\omega(t)\Big\|u(k_{\mu_t})-u(k_{\overline{\mu}_t})- \partial u(k_{\overline{\mu}_t})(\mu_t-\overline{\mu}_t)\Big\|_{\CCA(\RRD)} \leq\\
&\leq\sup_{t\geq 0}\omega(t)\left\{
\Big\| u(k_{\mu_t})-u(k_{\overline{\mu}_t})- \partial u(k_{\overline{\mu}_t})(\mu_t-\overline{\mu}_t)\Big\|_{\infty}\right\}\\
&\qquad+\sup_{t\geq 0}\omega(t)\left\{
\Big\| \nabla_x \Big(u(k_{\mu_t})-u(k_{\overline{\mu}_t})- \partial u(k_{\overline{\mu}_t})(\mu_t-\overline{\mu}_t)  \Big)\Big\|_{\infty}\right\}\\
& \qquad+\sup_{t\geq 0}\omega(t)\left\{
\left| \nabla_x \Big( u(k_{\mu_t}) - u(k_{\overline{\mu}_t})- \partial u(k_{\overline{\mu}_t})(\mu_t-\overline{\mu}_t)\Big)\right|_\alpha\right\}\\
&\leq
c_1\|\mu_\bullet-\overline{\mu}_\bullet\|_{\ZZZ_\omega} + c_2 \|\mu_\bullet-\overline{\mu}_\bullet\|_{\ZZZ_\omega}^{1+\alpha}+ c_3 \|\mu_\bullet-\overline{\mu}_\bullet\|_{\ZZZ_\omega}^{2}.
\end{align*}
This finishes the proof.
\end{proof}
Since the parameter $h$ is considered in a bounded interval and since $v^h=v_0+h\: v_1$ and analogously $m^h=m_0+h\: m_1$ (see~\eqref{pert_non}), the regularity of perturbed coefficients $v^h$ and $m^h$ can be easily stated.
\begin{corollary}
Under assumption {\bf A1}, {\bf A2}, {\bf A3} of Theorem~\ref{main_hille} for any fixed value of parameter~$h$ the mapping $\mu_\bullet \mapsto u^h(k_{\mu_\bullet})(\cdot)$ is of class  \[\CC^1\left(\CC_\omega([0,\infty);\ZZZ);\: \CC_\omega([0,\infty); \CCA(\RRD;\RRD))\right)\] for $u\in\{v_0,v_1\}$ and analogously for $u\in\{m_0,m_1\}$. 
\end{corollary}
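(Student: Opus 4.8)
The plan is to derive the statement directly from Lemma~\ref{tw:B_C1} by exploiting the fact that, for $h$ ranging over a fixed bounded interval $I$, the perturbed coefficient $u^h = u_0 + h\,u_1$ is an \emph{affine} combination of two maps each already known to be $\CC^1$ in the required sense. Concretely, for $u\in\{v_0,v_1\}$ write
\[
u^h(k_{\mu_\bullet})(\cdot) = u_0(k_{\mu_\bullet})(\cdot) + h\; u_1(k_{\mu_\bullet})(\cdot),
\]
and similarly for $m_0,m_1$. By Lemma~\ref{tw:B_C1}, each of the two mappings $\mu_\bullet \mapsto u_0(k_{\mu_\bullet})(\cdot)$ and $\mu_\bullet \mapsto u_1(k_{\mu_\bullet})(\cdot)$ belongs to $\CC^1\big(\CC_\omega([0,\infty);\ZZZ);\;\CC_\omega([0,\infty);\CCA(\RRD;\RRD))\big)$. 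Since $h$ is a fixed scalar, the map $\mu_\bullet\mapsto u^h(k_{\mu_\bullet})(\cdot)$ is a fixed finite linear combination of these two $\CC^1$ maps, hence itself $\CC^1$ with Fréchet derivative $\partial u^h(k_{\overline\mu_\bullet}) = \partial u_0(k_{\overline\mu_\bullet}) + h\,\partial u_1(k_{\overline\mu_\bullet})$, which depends continuously on $\overline\mu_\bullet$. This is the whole argument; I would spell it out in two or three lines.

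First I would note that $\CC_\omega([0,\infty);\CCA(\RRD;\RRD))$ is a normed (indeed Banach) vector space under the weighted sup-norm~\eqref{eq:bielec_norm_I}, so that linear combinations of maps into it are well-defined and the space of $\CC^1$ maps from $\CC_\omega([0,\infty);\ZZZ)$ into it is itself a vector space. Then I would invoke the elementary fact that if $F, G$ are Fréchet-$\CC^1$ between two normed spaces and $\lambda\in\R$, then $F + \lambda G$ is Fréchet-$\CC^1$ with $D(F+\lambda G) = DF + \lambda\, DG$, and $\mu\mapsto DF(\mu)+\lambda\,DG(\mu)$ is continuous into $\mathcal{L}$ because each summand is. Applying this with $F = \big(\mu_\bullet\mapsto u_0(k_{\mu_\bullet})\big)$, $G = \big(\mu_\bullet\mapsto u_1(k_{\mu_\bullet})\big)$, $\lambda = h$ gives the claim for the $v$-coefficient; the $m$-coefficient is identical with $\RRD$ replaced by $\RR$ as the target of $u^h$.

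There is essentially no obstacle here: the statement is a formal corollary of Lemma~\ref{tw:B_C1} together with the vector-space structure of $\CC^1$ maps. The only point worth a word of care is that the estimates~\eqref{eq:estim_coeff} and the remainder bound in the proof of Lemma~\ref{tw:B_C1} are applied with a \emph{fixed} $h$, so that the constants $c_1,c_2,c_3$ appearing there simply acquire a factor of $\max\{1,|h|\}$ and remain finite; no uniformity in $h$ is claimed or needed at this stage (uniformity in $h$ is precisely what is postponed to the treatment of the full problem~\eqref{def:prob_init2}, cf.\ the Remark following Theorem~\ref{main_hille}). Hence the proof is a one-paragraph reduction:

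\begin{proof}
Fix $h\in(-\tfrac12,\tfrac12)$. By~\eqref{pert_non} we have $u^h(k_{\mu_\bullet})(\cdot) = u_0(k_{\mu_\bullet})(\cdot) + h\,u_1(k_{\mu_\bullet})(\cdot)$, a fixed linear combination with scalar coefficients $1$ and $h$. By Lemma~\ref{tw:B_C1}, both $\mu_\bullet\mapsto u_0(k_{\mu_\bullet})(\cdot)$ and $\mu_\bullet\mapsto u_1(k_{\mu_\bullet})(\cdot)$ are of class $\CC^1\big(\CC_\omega([0,\infty);\ZZZ);\;\CC_\omega([0,\infty);\CCA(\RRD;\RRD))\big)$. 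Since the target is a Banach space and the class of Fréchet-$\CC^1$ maps between normed spaces is closed under finite linear combinations — with $D(F+hG) = DF + h\,DG$ and the right-hand side continuous in the base point as a sum of continuous $\mathcal{L}$-valued maps — it follows that $\mu_\bullet\mapsto u^h(k_{\mu_\bullet})(\cdot)$ is of class $\CC^1\big(\CC_\omega([0,\infty);\ZZZ);\;\CC_\omega([0,\infty);\CCA(\RRD;\RRD))\big)$ as well, with derivative $\partial u^h(k_{\overline\mu_\bullet}) = \partial u_0(k_{\overline\mu_\bullet}) + h\,\partial u_1(k_{\overline\mu_\bullet})$. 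The argument for $u\in\{m_0,m_1\}$ is identical, with $\RRD$ replaced by $\RR$ in the target of $u^h$.
\end{proof}
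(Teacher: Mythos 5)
Your proof is correct and follows exactly the route the paper intends: the paper offers no separate argument for this corollary, but the sentence immediately preceding it ("Since the parameter $h$ is considered in a bounded interval and since $v^h=v_0+h\,v_1$\dots") signals precisely the reduction you carry out, namely that $u^h = u_0 + h\,u_1$ is a fixed linear combination of the two maps already shown $\CC^1$ in Lemma~\ref{tw:B_C1}. Your added remark that boundedness of the $h$-interval is not needed at this stage for a fixed $h$ is also accurate and clarifies the paper's phrasing.
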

Notice that in the above lemma nothing was assumed about weight $\omega$.
We now first compute the Fr\'echet derivative of $\mu\mapsto u(k_\mu)$ at $\mu_0$, assuming that it exists. If it does, then it equals the directional derivative (Gateaux derivative):
\[\partial [u(k_\bullet)](\mu)\overline{\mu}=D_{\overline{\mu}}u(k_\mu)=\lim_{h\to 0}\frac{u(k_\mu+hk_{\overline{\mu}})-u(k_\mu)}{h},\]
where the limit is in $\CCA(\RRD)$. If it exists, then the limit must also exists pointwise at $x\in\RRD$.

In the following, we will omit the evaluation at $x$ in $D_{\overline{\mu}}u(k_\mu),k_\mu$ and $k_{\overline{\mu}}$.
Then 
\begin{equation}\label{eq:deriv_direct_N}
\begin{split}
D_{\overline{\mu}} u(k_\mu)& = \lim_{h \to 0} \frac{u\circ(k_\mu+h\: k_{\overline{\mu}}) - u\circ k_\mu}{h}\\
& \mbox{\tiny{(using Taylor expansion)}}\\
 &=\lim_{h \to 0} \frac{u\circ k_\mu + h\: (u'\circ k_\mu)\: k_{\overline{\mu}} + R_1[u,k_\mu](k_\mu+h\: k_{\overline{\mu}})
 - u\circ k_\mu}{h} \\
  &=(u'\circ k_\mu)\: k_{\overline{\mu}} +\lim_{h \to 0} \frac{  R_1[u,k_\mu](k_\mu+h\: k_{\overline{\mu}})}{h} =
(u'\circ k_\mu) \: k_{\overline{\mu}},
\end{split}
\end{equation}
where $R_{n+1}[f,x_0](x)$ is the remainder term when expanding a function  $x\mapsto f(x)$ in Taylor series at~$x_0$ up to order $n$. 
The vanishing of the limit with the remainder term $R_1[u,k_\mu]$ follows from the following proposition.
\begin{proposition}\label{prop:remainder estimate Holder}
Let $\Omega\subset\RRD$ be an open domain and let $f\in \CC^{n+\alpha}(\Omega)$, with $n\in\NN_0, 0<\alpha\leq 1$. Then for any $x,x_0\in\Omega$
\begin{equation*}\label{eq:remainder estimate Holder}
\bigl| R_n[f;x_0](x) \bigr| \leq C_{n,\alpha} \max_{\beta\in\NN_0^d,\ |\beta|=n} |D^\beta f|_\alpha \: \|x-x_0\|^{n+\alpha},
\end{equation*}
where $C_{n,\alpha} := \frac{d^n}{\alpha(\alpha+1)\cdot\,\ldots\,\cdot(\alpha+n-1)}$.
\end{proposition}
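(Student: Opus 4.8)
The plan is to prove the Taylor remainder estimate by writing $R_n[f;x_0](x)$ in integral (Lagrange integral) form, parametrising along the segment from $x_0$ to $x$, and then exploiting the H\"older continuity of the top-order derivatives $D^\beta f$, $|\beta|=n$, rather than merely their boundedness. First I would reduce to the one-dimensional situation: define $\gamma(t) := x_0 + t(x-x_0)$ for $t\in[0,1]$ and $\varphi(t) := f(\gamma(t))$. Since $f\in\CC^{n+\alpha}(\Omega)$, the function $\varphi$ is $n$ times continuously differentiable on a neighbourhood of $[0,1]$ (using convexity of the segment, which lies in $\Omega$ if $\Omega$ is convex; in general one works locally, or one assumes the segment is contained in $\Omega$, which is the only case in which $R_n$ is naturally defined), and the classical one-variable integral form of Taylor's theorem gives
\[
\varphi(1) = \sum_{k=0}^{n-1}\frac{\varphi^{(k)}(0)}{k!} + \frac{1}{(n-1)!}\int_0^1 (1-t)^{n-1}\,\varphi^{(n)}(t)\,\dd t,
\]
so that, by the multivariate chain rule, $R_n[f;x_0](x) = \frac{1}{(n-1)!}\int_0^1 (1-t)^{n-1}\big(\varphi^{(n)}(t)-\varphi^{(n)}(0)\big)\,\dd t$ after subtracting the $k=n$ term $\varphi^{(n)}(0)/n!$ that is part of the Taylor polynomial of order $n$.

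Next I would expand $\varphi^{(n)}(t)$ by the chain rule: $\varphi^{(n)}(t) = \sum_{|\beta|=n} \binom{n}{\beta} D^\beta f(\gamma(t))\,(x-x_0)^\beta$, where $\binom{n}{\beta} = n!/\beta!$ and $\sum_{|\beta|=n}\binom{n}{\beta} = d^n$. Therefore
\[
\bigl|\varphi^{(n)}(t)-\varphi^{(n)}(0)\bigr| \le \sum_{|\beta|=n}\binom{n}{\beta}\,\bigl|D^\beta f(\gamma(t)) - D^\beta f(x_0)\bigr|\,|(x-x_0)^\beta|.
\]
Using $|D^\beta f(\gamma(t))-D^\beta f(x_0)| \le |D^\beta f|_\alpha\,\|\gamma(t)-x_0\|^\alpha = |D^\beta f|_\alpha\, t^\alpha\,\|x-x_0\|^\alpha$ and $|(x-x_0)^\beta|\le \|x-x_0\|^n$, we get
\[
\bigl|\varphi^{(n)}(t)-\varphi^{(n)}(0)\bigr| \le \Big(\max_{|\beta|=n}|D^\beta f|_\alpha\Big)\, d^n\, t^\alpha\, \|x-x_0\|^{n+\alpha}.
\]
Plugging this into the integral bound and computing the elementary integral $\frac{1}{(n-1)!}\int_0^1 (1-t)^{n-1}t^\alpha\,\dd t = \frac{1}{(n-1)!}B(n,\alpha+1) = \frac{\Gamma(\alpha+1)}{\Gamma(n+\alpha+1)}\cdot\frac{1}{1} = \frac{1}{\alpha(\alpha+1)\cdots(\alpha+n-1)}$ (Beta function identity) yields exactly the constant $C_{n,\alpha} = \dfrac{d^n}{\alpha(\alpha+1)\cdots(\alpha+n-1)}$, completing the estimate.

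The main obstacle, such as it is, is bookkeeping rather than conceptual: one must be careful that the ``$R_n$'' here is the remainder after the \emph{$n$-th order} Taylor polynomial, so the expansion is carried to order $n$ and the remainder is controlled by the \emph{oscillation} of the order-$n$ derivatives (not their size) — this is precisely what lets the H\"older seminorm $|D^\beta f|_\alpha$ appear and what upgrades the usual $\|x-x_0\|^n$ to $\|x-x_0\|^{n+\alpha}$. One should also note the $n=0$ case separately, where $R_0[f;x_0](x)=f(x)-f(x_0)$ and the bound is immediate from the definition of $|f|_\alpha$ with $C_{0,\alpha}=1$ (the empty product), consistent with the formula. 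A minor point worth a sentence is the domain issue: the estimate is meaningful only when the segment $[x_0,x]$ lies in $\Omega$; for the application in \eqref{eq:deriv_direct_N} one has $\Omega=\RR$ (composing with the scalar $k_\mu$), so convexity is automatic and no subtlety arises.
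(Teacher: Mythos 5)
The paper states Proposition~\ref{prop:remainder estimate Holder} without proof, so there is no argument to compare against; your route (one-variable integral remainder along the segment, multinomial chain rule, H\"older seminorm on the top-order derivatives, Beta-function evaluation) is the standard way to prove this and it is essentially correct, including the identification $R_n[f;x_0](x)=\tfrac{1}{(n-1)!}\int_0^1(1-t)^{n-1}\bigl(\varphi^{(n)}(t)-\varphi^{(n)}(0)\bigr)\,\dd t$, the bound $\sum_{|\beta|=n}\binom{n}{\beta}\le d^n$, and the separate treatment of $n=0$.

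There is, however, a computational slip in the final Beta-function step. One has
\[
\frac{1}{(n-1)!}\int_0^1 (1-t)^{n-1}t^\alpha\,\dd t = \frac{1}{(n-1)!}\,B(\alpha+1,n)=\frac{\Gamma(\alpha+1)}{\Gamma(\alpha+n+1)}=\frac{1}{(\alpha+1)(\alpha+2)\cdots(\alpha+n)},
\]
\emph{not} $\frac{1}{\alpha(\alpha+1)\cdots(\alpha+n-1)}$; the two products are shifted by one. The number you actually obtain therefore equals $\frac{\alpha}{\alpha+n}$ times the proposition's $C_{n,\alpha}/d^n$, so your argument in fact yields a \emph{strictly smaller} constant. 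This still establishes the proposition — the claimed inequality holds a fortiori with the larger stated $C_{n,\alpha}$ — but your assertion that the integral yields ``exactly'' $C_{n,\alpha}$ is incorrect, and it suggests the constant in the proposition as printed is not sharp (it can also be recovered, with $\alpha(\alpha+1)\cdots(\alpha+n-1)$ in the denominator being the result of slightly cruder bookkeeping, e.g.\ replacing $1/(n+\alpha)$ by $1/\alpha$ at the last integration step). You should either correct the identity and note explicitly that $\frac{1}{(\alpha+1)\cdots(\alpha+n)}\le\frac{1}{\alpha\cdots(\alpha+n-1)}$ for $0<\alpha\le 1$, $n\ge 1$, or simply record the sharper constant. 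Your remarks about the $n=0$ case and the convexity/segment issue are apt.
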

The $D^\beta$ is a multi-index notation for vector $\beta$.
Hence, \[\big|R_1[u,k_\mu](k_\mu+h\:k_{\overline{\mu}})(x)\big|\leq C_{1,\alpha}|u'|_\alpha\:|h|^{1+\alpha}\:\|k_{\overline{\mu}}\|_\infty^{1+\alpha}.\]
%
%
%
A weak solution $\mu_\bullet$ to the non-linear problem~\eqref{def:prob_init} (or~\eqref{def:prob_init2}) is a weak solution to the linear problem~\eqref{eq:def_linear}, where the velocity field $b(t,x)=v_0(k_{\mu_t}(x))$ and production rate $w(t,x)=m_0(k_{\mu_t}(x))$. In order to be able to apply the crucial representation results for the solution to the linear equation (Lemma~\ref{lem:repr_form}) we need to verify that $b$ and $w$ defined by $t\mapsto \mu_t$ satisfy the assumptions of this lemma. This is assured by the following. 
\begin{lemma}\label{lem:3_26}
Let $v_0\in\CC^{2+\alpha}(\RR;\RRD)$, $m_0\in\CC^{2+\alpha}(\RR)$. Assume moreover, that $x\mapsto K_{u}(\cdot, x)$ is in $\CCA(\RRD;\CCA(\RRD))$ for $u\in\{v_0, m_0\}$ and that for any $\phi\in\big(\CCA(\RRD)\big)^*$ the mapping 
\[y\mapsto \langle K_u(y,\cdot),\phi\rangle \qquad \textrm{ is continuous and bounded.}\]
Then for any narrowly continuous map $\nu_\bullet:\RR_+\to \mathcal{M}(\RRD)$, the following holds:
\begin{enumerate}[label=(\roman*)]
\item $w(t,x):=m_0(k_{\nu_t}(x))$ (see notation~\eqref{pert_non}) is separately continuous, bounded and for each $t\geq 0$, $w(t,\cdot)\in\CCA(\RRD)$.
\item $b(t,x):=v_0(k_{\nu_t}(x))$ is such that $t\mapsto b(t,\cdot):\RR_+\!\to\CCA(\RRD;\RRD)$ is weakly continuous. It is Bochner measurable and essentially bounded $t\mapsto b(t,\cdot)\in\textrm{L}^\infty(\RR_+;\CCA(\RRD;\RRD))$. 
\end{enumerate}
\end{lemma}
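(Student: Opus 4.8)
The plan is to verify each regularity property directly from the definition of $w$ and $b$ as compositions of the kernel operator $\nu \mapsto k_\nu$ with the superposition operators $m_0$ and $v_0$, exploiting the earlier results on these building blocks. For part (i), fix $t\ge 0$; since $\nu_t\in\mathcal{M}(\RRD)$ and $x\mapsto K_{m_0}(\cdot,x)\in\CCA(\RRD;\CCA(\RRD))$, Lemma~\ref{lem:estim_x_K} gives $k_{\nu_t}\in\CCA(\RRD)$, and then $m_0\in\CC^{2+\alpha}(\RR)$ together with Lemma~\ref{lem:lem_2_2502} (or the algebra/composition rules~\eqref{eqn:alpha_infty}--\eqref{eqn:alpha}) gives $w(t,\cdot)=m_0\circ k_{\nu_t}\in\CCA(\RRD)$. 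Boundedness is immediate from~\eqref{eq:k_mu_in_CCA}: $\|k_{\nu_t}\|_{\CCA}\le \|\nu_t\|_{\TV}\|K_{m_0}\|_{\CCA(\RRD;\CCA(\RRD))}$, and $\nu_\bullet$ narrowly continuous on $\RR_+$ need not be uniformly bounded in $\TV$-norm globally, so boundedness here should be read as: for each $t$, $w(t,\cdot)$ is bounded, and on bounded time-intervals $w$ is uniformly bounded. Continuity in $x$ for fixed $t$ follows from $w(t,\cdot)\in\CCA(\RRD)\subset\CC_b(\RRD)$. Continuity in $t$ for fixed $x$ is where the weak-continuity hypothesis on $y\mapsto K_{m_0}(y,\cdot)$ enters: if $t_n\to t$, then $\nu_{t_n}\to\nu_t$ narrowly, hence $k_{\nu_{t_n}}(x)=\langle K_{m_0}(\cdot,x),\nu_{t_n}\rangle \to \langle K_{m_0}(\cdot,x),\nu_t\rangle=k_{\nu_t}(x)$ because $K_{m_0}(\cdot,x)\in\CCA(\RRD)\subset\CC_b(\RRD)$ is an admissible test function, and then continuity of $m_0$ gives $w(t_n,x)\to w(t,x)$. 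Thus $w$ is separately continuous.

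For part (ii), the same argument gives $b(t,\cdot)=v_0\circ k_{\nu_t}\in\CCA(\RRD;\RRD)$ for each $t$, with the quantitative bound $\|b(t,\cdot)\|_{\CCA}\le$ (a continuous function of $\|k_{\nu_t}\|_{\CCA}$), again using Lemma~\ref{lem:estim_x_K} and Lemma~\ref{lem:lem_2_2502}. Weak continuity of $t\mapsto b(t,\cdot)$ means that for each $\phi\in(\CCA(\RRD;\RRD))^*$ the scalar map $t\mapsto \langle b(t,\cdot),\phi\rangle$ is continuous; I would prove this by combining the narrow continuity of $\nu_\bullet$, the weak continuity of $y\mapsto K_{v_0}(y,\cdot)$, and the continuity of the superposition operator $f\mapsto v_0\circ f$ on $\CCA$ from Lemma~\ref{lem:lem_2_2502}. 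Concretely: if $t_n\to t$, then $k_{\nu_{t_n}}\to k_{\nu_t}$ in $\CCA(\RRD)$ — this needs the weak-continuity hypothesis plus a uniform bound and an equicontinuity/pointwise-convergence argument to upgrade pointwise convergence of $k_{\nu_{t_n}}(x)$ to norm convergence in $\CCA$ — and then Lemma~\ref{lem:lem_2_2502} yields $b(t_n,\cdot)=v_0\circ k_{\nu_{t_n}}\to v_0\circ k_{\nu_t}=b(t,\cdot)$ in $\CCA(\RRD;\RRD)$, which is stronger than weak continuity. Bochner measurability then follows from weak (hence, on the separable subset $\{v_0(k_{\nu_t}):t\ge0\}\subset\CCA(\RRD;\RRD)$ — separable by Corollary~\ref{cry:2_25} — strong) measurability via the Pettis measurability theorem. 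Essential local boundedness of $t\mapsto\|b(t,\cdot)\|_{\CCA}$, hence membership in $\textrm{L}^\infty_{\rm loc}(\RR_+;\CCA(\RRD;\RRD))$, follows from the bound $\|b(t,\cdot)\|_{\CCA}\le C(\|\nu_t\|_{\TV})$ together with local boundedness of $t\mapsto\|\nu_t\|_{\TV}$ (which holds because narrow continuity implies $t\mapsto\|\nu_t\|_{\TV}$ is lower semicontinuous and, in the intended applications, controlled by Proposition~\ref{prop:mu_0_TV}).

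The main obstacle I expect is the upgrade from the weak/pointwise continuity hypothesis on the kernel to genuine norm-continuity of $t\mapsto k_{\nu_t}$ in $\CCA(\RRD)$, because the $\CCA$-norm controls a H\"older seminorm of the gradient and narrow convergence of $\nu_{t_n}$ alone does not obviously control $\sup_{x_1\ne x_2}|x_1-x_2|^{-\alpha}\langle \nabla_xK(\cdot,x_1)-\nabla_xK(\cdot,x_2),\nu_{t_n}-\nu_t\rangle$ uniformly in $n$. The way around this is to observe that the bound~\eqref{eq:k_mu_in_CCA} makes $\{k_{\nu_{t_n}}\}$ bounded in $\CCA(\RRD)$, and that on such a bounded set the $\CCA$-topology is metrizable by a weaker topology (e.g. uniform convergence on compacts of the function and its gradient) on which narrow convergence of $\nu_{t_n}$ does give convergence; a standard interpolation/Arzel\`a--Ascoli argument then promotes this to $\CCA$-norm convergence after passing to a slightly smaller H\"older exponent, or one simply works with weak continuity throughout, which is all that is claimed in the statement of the lemma for $b$ and suffices for the subsequent use of Lemma~\ref{lem:repr_form} (whose hypothesis is merely $b\in\textrm{L}^1_{\rm loc}(\RR_+;W^{1,\infty})$ together with Borel measurability). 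In fact, for statement (ii) it is cleanest to prove weak continuity of $t\mapsto b(t,\cdot)$ directly — testing against a fixed $\phi\in(\CCA(\RRD;\RRD))^*$ and using dominated convergence on the pointwise-in-$x$ convergence $b(t_n,x)\to b(t,x)$ together with the uniform $\CCA$-bound — and to invoke the separability statement of Corollary~\ref{cry:2_25} to deduce strong (Bochner) measurability from weak measurability.
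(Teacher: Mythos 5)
Your part (i) matches the paper up to a small misdirection: boundedness of $w$ does not rest on any uniform control of $\|\nu_t\|_{\TV}$ — it is simply $\|w\|_\infty\le\|m_0\|_\infty$ because $m_0$ itself is bounded, and the $\CCA$-membership of $w(t,\cdot)$ and the separate continuity in $t$ (via narrow continuity of $\nu_\bullet$ and continuity/boundedness of $y\mapsto K_{m_0}(y,x)$) are exactly as in the paper. For part (ii), however, neither of your proposed routes to the weak continuity of $t\mapsto b(t,\cdot)$ is sound. The Arzel\`a--Ascoli upgrade cannot produce $\CCA(\RRD)$-norm continuity of $t\mapsto k_{\nu_t}$ from narrow continuity alone (at best one obtains local $\CC^{\alpha'}$-convergence, $\alpha'<\alpha$, which is not what Lemma~\ref{lem:lem_2_2502} needs, and in any case the lemma only claims weak continuity). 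Your ``cleanest'' fallback — testing against a fixed $\phi\in(\CCA(\RRD;\RRD))^*$ and invoking dominated convergence on the pointwise-in-$x$ convergence $b(t_n,x)\to b(t,x)$ together with the uniform $\CCA$-bound — is also unsound: a general element of $(\CCA(\RRD;\RRD))^*$ is not representable by a measure, so bounded pointwise convergence of H\"older functions does not imply convergence of their pairings with an arbitrary bounded linear functional on a H\"older space.

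The step you are missing is exactly what the hypothesis that $y\mapsto\langle K_u(y,\cdot),\phi\rangle$ is continuous and bounded for \emph{every} $\phi\in(\CCA(\RRD))^*$ is there for: it lets you push the test functional onto the measure side. Since $k_{\nu_t}=\int_\RRD K_{v_0}(y,\cdot)\,\dd\nu_t(y)$ as a Bochner integral in $\CCA(\RRD)$, for a bounded linear $\phi$ one has $\langle k_{\nu_t},\phi\rangle=\int_\RRD\langle K_{v_0}(y,\cdot),\phi\rangle\,\dd\nu_t(y)$ with a continuous, bounded integrand, so narrow continuity of $\nu_\bullet$ immediately gives continuity in $t$ of this pairing. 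The paper extends this to $b(t,\cdot)=v_0(k_{\nu_t})$ by testing against $\phi=(\phi_1,\dots,\phi_d)\in[\CCA(\RRD)^*]^d$, composing each $\phi_i$ with the superposition map associated to $v_{0i}$ to produce $\psi_i$, and writing $\langle v_0(k_{\nu_t}),\phi\rangle=\sum_i\int_\RRD\langle K_{v_0}(y,\cdot),\psi_i\rangle\,\dd\nu_t(y)$; continuity and boundedness of each integrand (again from the hypothesis) gives continuity, hence Borel measurability, of $t\mapsto\langle b(t,\cdot),\phi\rangle$, after which Pettis and the separability of Corollary~\ref{cry:2_25} yield Bochner measurability and the $L^\infty_{\rm loc}(\RR_+;W^{1,\infty})$ conclusion. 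You correctly identify the ancillary tools (separability, Pettis) but lack the structural change-of-test-function step that supplies the weak measurability those tools require as input.
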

\begin{proof}
For fixed $x\in\RRD$, the mapping $y\mapsto K_u(y,x)$ is continuous and bounded, hence $t\mapsto k_{\nu_t}(x)=\int_\RRD K_u(y,x)\dd\nu_t(y)$ is also continuous and bounded (by assumption that $t\mapsto \nu_t$ is narrowly continuous).

For fixed $t$, the mapping $x\mapsto k_{\nu_t}(x)\in\CCA(\RRD)$. Hence, $x\mapsto w(t,x)$ is $\CCA(\RRD)$, in particular it is Lipschitz. Coefficient $w$ is bounded because $m_0$ is bounded. Thus, $w$ satisfies the conditions of Lemma~\ref{lem:repr_form}. Next, $b(t,\cdot)\in\CCA(\RRD;\RRD)\subset W^{1,\infty}(\RRD;\RRD)$. Moreover, one has
\[\Big\{b(t,\cdot):t\in[0,T]\Big\}\subset\Big\{v_0(k_\mu):\mu\in\mathcal{M}(\RRD)\Big\}.\]
The latter set is a separable subset of $\CCA(\RRD;\RRD)$ according to Corollary~\ref{cry:2_25}. Furthermore, for any $\phi\in(\CCA(\RRD;\RRD))^*$, which can be indicated with $(\phi_1,\dots,\phi_d)\in\left[\CCA(\RRD)^*\right]^d$, we have
\begin{align*}
\langle v_0(k_{\nu_t}),\phi\rangle&=\sum_{i=1}^d \langle v_{0i}(k_{\nu_t}),\phi_i\rangle=\sum_{i-1}^d\langle k_{\nu_t},\phi_i\circ v_{0i}\rangle.
\end{align*}
Each $\psi_i=\phi_i\circ v_{0i}\in\CCA(\RRD)^*$. So,
\[t\mapsto \langle v_0(k_{\nu_t}),\phi\rangle=\sum_{i=1}^d\int_\RRD\langle K_{v_0}(y,\cdot),\psi_i\rangle\:\dd\nu_t(y).\]
The functions $y\mapsto \langle K_{v_0}(y,\cdot),\psi_i\rangle$ are continuous and bounded by assumption $t\mapsto \nu_t$ is narrowly continuous, so $t\mapsto \langle v_0(k_{\nu_t}),\phi\rangle$ is continuous for every $\phi$, hence Borel measurable.

Pettis Measurability Theorem implies that $t\mapsto b(t,\cdot)=v_0(k_{\nu_t})$ is Bochner measurable. As it is bounded,
\[t\mapsto b(t,\cdot)\in L^\infty([0,T];W^{1,\infty}(\RRD;\RRD))\subset L^1([0,T];W^{1,\infty}(\RRD;\RRD)).\]
So, $b$ and $w$ satisfy the conditions of Lemma~\ref{lem:repr_form}.
\end{proof}
\begin{corollary}\label{cry:311}
Under conditions of Lemma~\ref{lem:3_26}, the coefficients $w(t,\cdot)=m_0(k_{\nu_t})$ and $b(t,\cdot)=v_0(k_{\nu_t})$ satisfy the conditions of Lemma~\ref{lem:repr_form}, when restricted to $t\in[0,T], T>0$.
\end{corollary}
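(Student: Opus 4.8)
The plan is to observe that Corollary~\ref{cry:311} is little more than a reformulation of Lemma~\ref{lem:3_26} on a finite time horizon, so almost nothing new is needed. First I would recall precisely what Lemma~\ref{lem:repr_form} demands of the coefficients on $[0,T]$: the velocity field must be a Borel map with $t\mapsto b(t,\cdot)\in L^1([0,T];W^{1,\infty}(\RRD;\RRD))$, and $w$ must be a Borel measurable function which is bounded and, for each fixed $t$, locally Lipschitz in $x$. The strategy is then simply to verify each of these items from the two assertions of Lemma~\ref{lem:3_26} together with the inclusion $\CCA(\RRD)\subset W^{1,\infty}(\RRD)$.

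For $w(t,x)=m_0(k_{\nu_t}(x))$: Lemma~\ref{lem:3_26}(i) already gives that $w$ is separately continuous, bounded, and that $w(t,\cdot)\in\CCA(\RRD)$ for every $t$. A separately continuous function on $[0,T]\times\RRD$ is jointly Borel measurable (it is of first Baire class), which settles the measurability requirement; boundedness is given directly; and since $\CCA(\RRD)\subset W^{1,\infty}(\RRD)$, the section $w(t,\cdot)$ is globally, hence locally, Lipschitz in $x$ for each fixed $t$.

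For $b(t,x)=v_0(k_{\nu_t}(x))$: Lemma~\ref{lem:3_26}(ii) states that $t\mapsto b(t,\cdot)$ is Bochner measurable and essentially bounded with values in $\CCA(\RRD;\RRD)$. Restricting from $\RR_+$ to $[0,T]$, which has finite Lebesgue measure, the inclusions $\CCA(\RRD;\RRD)\subset W^{1,\infty}(\RRD;\RRD)$ and $L^\infty([0,T];W^{1,\infty}(\RRD;\RRD))\subset L^1([0,T];W^{1,\infty}(\RRD;\RRD))$ immediately give $t\mapsto b(t,\cdot)\in L^1([0,T];W^{1,\infty}(\RRD;\RRD))$; moreover Bochner measurability into a separable subspace is in particular Borel measurability, so $b$ is an admissible Borel velocity field in the sense of Lemma~\ref{lem:repr_form}.

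Combining these observations, both $b$ and $w$ satisfy all the hypotheses of Lemma~\ref{lem:repr_form} on every interval $[0,T]$ with $T>0$, which is exactly the claim. There is essentially no obstacle here: the only point that is not a pure bookkeeping of function-space inclusions is the invocation of the classical fact that separate continuity implies joint Borel measurability, and that is standard.
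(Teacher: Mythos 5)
Your proposal is correct and follows the same line as the paper's own argument: everything except the joint Borel measurability of $w$ is already provided by Lemma~\ref{lem:3_26} and the inclusion $\CCA\subset W^{1,\infty}$, and the remaining gap is closed by the classical fact that a separately continuous function on a product of metric spaces is jointly Borel measurable (the paper cites \cite[Lemma~6.4.6]{bogachev2007measure} for this; your invocation of the Baire class of separately continuous functions is the same standard fact). Your write-up is a bit more verbose in re-verifying the $b$ part, but the argument is identical in substance.
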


\begin{proof}
What is left to prove is that $w$ is (jointly) Borel measurable. This follows from separate continuous in $t$ and $x$, see e.g.~\cite[Lemma 6.4.6, page 16]{bogachev2007measure}.
\end{proof}
\section{Existence of weak solution to non-perturbed non-linear equation}\label{sec:well_00}
In this section we argue that the {\bf non-perturbed}, non-linear problem~\eqref{def:prob_init} has a weak solution. This solution will be denoted by~$\mu_\bullet^0$, to underline that $h=0$. An approximation scheme will be introduced -- let us denote by $\nu_\bullet^{0,n}$ an approximate solution in $n$-th step of approximation.
To obtain the existence of a unique solution we need to show that:
\begin{description}
\item[A] The approximation scheme is convergent in $\ZZZ_{\tilde{\omega}}$ i.e. $\lim_{n\to\infty}\nu_\bullet^{0,n}=:\overline{\nu}_\bullet^0$ exists for every $t\in[0,\infty)$.
\item[B] The limit $\overline{\nu}_t^0$ in $\ZZZ$ is in fact a measure -- convergence is shown in $\ZZZ$ and the space $\MM(\RRD)\subset \ZZZ$ is not complete with the norm $\|\cdot\|_{\ZZZ}$. Hence, an additional argument is needed to establish that $\overline{\nu}_t^0\in\MM(\RRD)$;
\item[C] The mapping $t\mapsto\overline{\nu}_t^0$ is narrowly continuous,
so $\overline{\nu}_\bullet^0$ is a weak solution to~\eqref{def:prob_init}.
\item[D] There is no other solution to~\eqref{main_hille} in the sense of Definition~\ref{def:weak_solu_22}, when $v_0\in\CC^{2+\alpha}(\RRD;\RRD)$ and $m_0\in\CC^{2+\alpha}(\RRD)$, in the space $C_\omega([0,\infty);\ZZZ)$.
\end{description}  
In this section we concentrate on showing {\bf A}. 

Using the notation for the superposition operator (introduced in the previous section), non-linear and non-perturbed transport equation can be written as
 \begin{equation}\label{eqn:pertur0_00}
\left\{ \begin{array}{l}
\partial_t \mu_t^0 +\div_x \Big(v_0(k_{\mu_t^0})\; \mu_t^0\Big)= m_0(k_{\mu_t^0})\;\mu_t^0,\\
\mu_{t=0}^0=\mu_0\in\MM(\RRD).
\end{array} \right.
\end{equation} 
 We claim that a solution $t\mapsto\mu_t^0$ can be obtained as a limit of a sequence $\lim_{n\to\infty}\nu_\bullet^{0,n}$ defined by the following scheme. Recall Lemma~\ref{lem:repr_form} and Corollary~\ref{coro:conti_time}.
\begin{description}
 \item[Step 0] A unique continuous mapping $\nu_\bullet^{0,0}:[0,\infty)\to\ZZZ$ solves the equation
\begin{equation}\label{egn:non-pertur0_0000}
\left\{ \begin{array}{l}
\partial_t \nu_t^{0,0} +\div_x \Big(v_0(k_{\mu_0})\; \nu_t^{0,0}\Big)=m_0(k_{\mu_0})\;\nu_t^{0,0},\\
\nu_0^{0,0}=\mu_0,
\end{array} \right. 
\end{equation} 
and $\nu_t^{0,0}\in\mathcal{M}(\RRD)$ and $t\mapsto \nu_t^{0,0}$ is narrowly continuous.
   \item[Step 1] For $n \geq 1$, a unique continuous mapping $\nu_\bullet^{0,n}:[0,\infty)\to\ZZZ$ solves the equation
 \begin{small}
\begin{equation}\label{egn:non-pertur0_01}
\left\{ \begin{array}{l}
\partial_t \nu_t^{0,n} +\div_x \Big(v_0(k_{\nu_t^{0,n-1}})\; \nu_t^{0,n}\Big)= m_0(k_{\nu_t^{0,n-1}})\nu_t^{0,n},\\
  \nu_0^{0,n}=\mu_0,
\end{array} \right.
\end{equation}
 \end{small}
 and $\nu_t^{0,n}\in\mathcal{M}(\RRD)$ and $t\mapsto \nu_t^{0,0}$ is narrowly continuous.
\end{description}

Note that in every step of approximation $\nu_\bullet^{0,n}$ solves the {\bf linear problem} i.e. the coefficients do not depend on the solution. 
Moreover, notice that if only the mappings $\big(t\mapsto v_0(k_{\nu_t})(\cdot)\big)$, $\big(t\mapsto m_0(k_{\nu_t})(\cdot)\big)$ of $[0,\infty)$ into $\CCA(\RR;\RRD)$ and respectively $\CCA(\RR,\RR)$ are continuous and bounded, then immediately   according to Lemma~\ref{lem:repr_form} (representation formula) one has existence and uniqueness of weak solution $\nu_\bullet$ to the problem 
\begin{equation}\label{eq:T_h_nemy_00}
\left\{ \begin{array}{l}
\partial_ t \nu_t +\textrm{div}_x \Big(v_0(k_{\nu_t})\;\nu_t\Big)= m_0(k_{\nu_t})\;\nu_t,\\
\nu_{t=0}=\nu_0\in\MM(\RRD).
\end{array}\right. 
\end{equation} 
Indeed, according to Lemma~\ref{tw:B_C1} one has that for any $\nu_\bullet\in\CC_\omega([0,\infty);\MM(\RRD))$, coefficients $v_0(k_{\nu_\bullet})$ and $m_0(k_{\nu_\bullet})$ are of the class $\CC_\omega([0,\infty);\CCA(\RR;\RRD))$ and $\CC_\omega([0,\infty);\CCA(\RR;\RR))$ respectively. Hence, \eqref{eq:T_h_nemy_00} has unique weak solution in a class of mappings $\CC_\omega([0,\infty);\MM(\RRD))$.
  
The approximation scheme makes that a sequence $\{\nu_t^{0,n}\}_{n\in\NN}$ can be viewed as realised by repeated application of a suitable operator $\mathcal{T}$ on a subset of $\CC([0,\infty);\ZZZ)$. To make this precise, consider $T>0$ and
\[S:\CC\big([0,\infty);\CCA(\RRD;\RR^{d+1})\big)\to\CC([0,\infty);\ZZZ):\qquad \big(t\mapsto(b(t,\cdot),w(t,\cdot))\big)\mapsto(t\mapsto\widehat{\nu}_t),\]
where $\widehat{\nu}_\bullet$ is the weak solution to the linear problem~\eqref{eq:def_linear} with velocity field $b$ and production rate $w$. According to Corollary~\ref{coro:conti_time}, $t\mapsto\widehat{\nu}_t:[0,\infty)\to\ZZZ$ is continuous. Let us consider also
\[B:\CC([0,T];\ZZZ)\to\CC([0,T];\CCA(\RRD;\RR^{d+1})):\qquad (t\mapsto\nu_t)\mapsto\big(t\mapsto(v_0(k_{\nu_t}),m_0(k_{\nu_t}))\big).\]
{\cred Define $\mathcal{T}:=S\circ B$. The crucial observation concerning existence of weak solution to the non-perturbed linear equation~\eqref{def:prob_init} is the following:
\begin{proposition}\label{prop:san_260220}
Let $v_0\in\CC^{2+\alpha}(\RR;\RRD)$ and $m_0\in\CC^{2+\alpha}(\RR;\RR)$. Assume moreover, that $x\mapsto K_{v_0}(\cdot, x)$ and $x\mapsto K_{m_0}(\cdot,x)$ are in $\CCA(\RRD;\CCA(\RRD))$, while for every $\psi\in(\CCA(\RRD))^*$, the mapping $y\mapsto \langle K_u(y,\cdot),\psi\rangle$ is continuous and bounded. Then a mapping $\nu_\bullet:[0,\infty)\to\MM(\RRD)$ is a~weak solution to the non-linear problem~\eqref{def:prob_init} with initial condition $\mu_0\in\MM(\RRD)$ if and only if $\nu_\bullet\in\CC([0,\infty);\ZZZ)$, $\nu_0=\mu_0$ and $\nu_\bullet$ is a fixed point of $\mathcal{T}\colon\mathcal{T}(\nu_\bullet)=\nu_\bullet$.
\end{proposition}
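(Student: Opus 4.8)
The plan is to prove the stated equivalence by unwinding the definitions of $B$, $S$ and $\mathcal{T}=S\circ B$, and by exploiting the tautology that a weak solution of the non-linear problem~\eqref{def:prob_init} (with $v=v_0$, $m=m_0$) is exactly a weak solution of the \emph{linear} equation~\eqref{eq:def_linear} whose coefficients have been frozen along the solution itself. Throughout I would fix $T>0$ and argue on $[0,T]$; since $T$ is arbitrary this yields the claim on $[0,\infty)$, and $S$ is understood to produce the solution of~\eqref{eq:def_linear} with the prescribed initial datum $\mu_0$.

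\textbf{Step 1.} I would first check that $\mathcal{T}=S\circ B$ is well defined on $\{\nu_\bullet\in\CC([0,T];\ZZZ):\nu_0=\mu_0\}$ and maps it into itself. By Lemma~\ref{lem:estim_x_K} the map $\mu\mapsto k_\mu$ is continuous from $\ZZZ$ to $\CCA(\RRD)$, and by Lemma~\ref{lem:lem_2_2502} the maps $f\mapsto v_0\circ f$, $f\mapsto m_0\circ f$ are continuous on the relevant H\"older spaces; hence $B(\nu_\bullet)=\bigl(t\mapsto(v_0(k_{\nu_t}),m_0(k_{\nu_t}))\bigr)$ is a continuous curve into $\CCA(\RRD;\RR^{d+1})$, and therefore bounded on the compact interval $[0,T]$. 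Consequently $b(t,x):=v_0(k_{\nu_t}(x))$ lies in $L^\infty([0,T];\CCA(\RRD;\RRD))\subset L^1([0,T];W^{1,\infty}(\RRD;\RRD))$ and $w(t,x):=m_0(k_{\nu_t}(x))$ is bounded, Lipschitz in $x$ for each fixed $t$, and jointly Borel measurable by separate continuity (as in Corollary~\ref{cry:311}). Thus Lemma~\ref{lem:repr_form} applies: the linear problem~\eqref{eq:def_linear} with these coefficients and datum $\mu_0$ has a unique narrowly continuous solution, a bounded Radon measure for each $t$, given by the representation formula~\eqref{repr:form}; by Corollary~\ref{coro:conti_time} it is continuous into $\ZZZ$ and equals $\mu_0$ at $t=0$. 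This is precisely $S(B(\nu_\bullet))=\mathcal{T}(\nu_\bullet)$.

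\textbf{Step 2.} For ``$\Rightarrow$'': if $\nu_\bullet:[0,\infty)\to\MM(\RRD)$ is a weak solution of~\eqref{def:prob_init}, it is narrowly continuous, and its defining identity in Definition~\ref{def:weak_solu_22} is precisely the weak formulation (Definition~\ref{def:weak_solu}) of~\eqref{eq:def_linear} with $b=v_0(k_{\nu_\bullet})$, $w=m_0(k_{\nu_\bullet})$. By Lemma~\ref{lem:3_26} and Corollary~\ref{cry:311} these coefficients satisfy the hypotheses of Lemma~\ref{lem:repr_form}, so by the uniqueness asserted there $\nu_\bullet$ coincides with the solution produced by $S$ from the data $B(\nu_\bullet)$, i.e. $\nu_\bullet=\mathcal{T}(\nu_\bullet)$; moreover Corollary~\ref{coro:conti_time} gives $\nu_\bullet\in\CC([0,\infty);\ZZZ)$, and $\nu_0=\mu_0$ by hypothesis. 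For ``$\Leftarrow$'': if $\nu_\bullet\in\CC([0,\infty);\ZZZ)$, $\nu_0=\mu_0$ and $\nu_\bullet=\mathcal{T}(\nu_\bullet)$, then by Step 1 $\nu_\bullet=S(B(\nu_\bullet))$ is the representation-formula solution~\eqref{repr:form} of~\eqref{eq:def_linear} with coefficients $v_0(k_{\nu_\bullet})$, $m_0(k_{\nu_\bullet})$; in particular each $\nu_t$ is a bounded Radon measure and $t\mapsto\nu_t$ is narrowly continuous, and (after a routine density passage from $\CC_c^\infty$ to $\CC_c^1$ test functions) $\nu_\bullet$ satisfies the identity of Definition~\ref{def:weak_solu_22}, hence is a weak solution of the non-linear problem~\eqref{def:prob_init}.

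The step I expect to be the only genuinely non-formal point is the upgrade, in the ``$\Leftarrow$'' direction, from the a priori membership $\nu_t\in\ZZZ$ to $\nu_t\in\MM(\RRD)$: since $\MM(\RRD)$ is not closed in $\ZZZ$, this cannot come from a soft limiting argument and must be read off from the explicit push-forward structure of the range of $S$, i.e. from the representation formula~\eqref{repr:form}, which manufactures honest bounded Radon measures. Everything else is bookkeeping: verifying that the kernel and superposition operators place $B(\nu_\bullet)$ in exactly the function class where Lemma~\ref{lem:repr_form} is available (Lemmas~\ref{lem:estim_x_K}, \ref{lem:lem_2_2502}, \ref{lem:3_26} and Corollaries~\ref{cry:311}, \ref{coro:conti_time}), and recording that ``weak solution of~\eqref{def:prob_init}'' literally means ``weak solution of~\eqref{eq:def_linear} with coefficients frozen along the solution''.
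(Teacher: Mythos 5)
Your proof is correct and follows essentially the same route as the paper's: both directions rest on the observation that ``weak solution of~\eqref{def:prob_init}'' is the same as ``weak solution of~\eqref{eq:def_linear} with coefficients $v_0(k_{\nu_\bullet})$, $m_0(k_{\nu_\bullet})$,'' combined with the hypotheses-verification in Lemma~\ref{lem:3_26} and Corollary~\ref{cry:311}, the uniqueness and representation formula of Lemma~\ref{lem:repr_form}, and the $\ZZZ$-continuity from Corollary~\ref{coro:conti_time}. You are slightly more explicit than the paper about two bookkeeping points the paper leaves implicit --- the well-definedness of $\mathcal{T}$ on the relevant affine subspace of $\CC([0,T];\ZZZ)$, and the density passage between the $\CC_c^\infty$ test class of Definition~\ref{def:weak_solu} and the $\CC_c^1$ test class of Definition~\ref{def:weak_solu_22} --- but these do not change the argument.
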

\begin{proof}
If $\nu_\bullet:[0,\infty)\to\mathcal{M}(\RRD)$ is a weak solution to~\eqref{def:prob_init} with initial condition $\mu_0$, then $\nu_\bullet$ is a weak solution to the linear equation~\eqref{eq:def_linear} with coefficients (see notation~\eqref{pert_non}):
\[b(t,\cdot)=v_0(k_{\nu_t}), \qquad w(t,\cdot)=m_0(k_{\nu_t}).\]

According to Lemma~\ref{lem:3_26} and Corollary~\ref{cry:311}, $b(t,x)$ and $w(t,x)$ satisfy the conditions of Lemma~\ref{lem:repr_form}.
 By uniqueness, $\nu_\bullet$ must be given by expression~\eqref{repr:form}. Corollary~\ref{coro:conti_time} yields, that $\nu_\bullet:[0,\infty)\to\ZZZ$ is continuous, $\nu_0=\mu_0$, $B(b,w)$ and $S(b,w)=\nu_\bullet$. Hence, one has $\nu_\bullet=\mathcal{T}(\nu_\bullet)$.

For the other implication, let $\nu_\bullet\in\CC([0,\infty);\ZZZ)$ be a fixed point for $\mathcal{T}$ with $\nu_0=\mu_0$. By construction, $\nu_\bullet=\mathcal{T}(\nu_\bullet)=S(B(\nu_\bullet))$ then is weak solution to the linear equation with $b(t,\cdot)=v_0(k_{\nu_t})$ and $w(t,\cdot)=m_0(k_{\nu_t})$. Thus it is a weak solution to the non-linear equation~\eqref{def:prob_init}.
\end{proof}

We shall now construct a (unique) fixed point of $\mathcal{T}$, in a suitable subset of $\CC_\omega([0,\infty);\ZZZ)$ of $\CC([0,\infty);\ZZZ)$. 
We cannot exclude existence of fixed points of $\mathcal{T}$ outside $\CC_\omega([0,\infty);\ZZZ)$.
To that end, let us define 
\begin{equation}\label{def:ZZ}
\ZZZ_\omega:=\big(\CC_\omega([0,\infty);\ZZZ),\|\cdot\|_{\ZZZ_\omega}\big)\quad \textrm{and}\quad \ZZZ^T_\omega:=\big(\CC_\omega([0,T];\ZZZ),\|\cdot\|_{\ZZZ_\omega}\big).
\end{equation}
Because of Corollary~\ref{cly:bound_nu}, we find that for any $\nu_\bullet \in\CC([0,T];\ZZZ)$
\begin{equation}\label{eq:T_bound}
\|\mathcal{T}(\nu_\bullet)(t)\|_\ZZZ\leq \exp\big(t\|m_0^+(k_{\nu_t})\|_\infty\big)\|\mu_0\|_{TV}
\leq
\exp(t\|m_0\|_\infty)\|\mu_0\|_{TV}.
\end{equation}
Now let us assume that $\omega(0)\equiv 1$, weight $\omega$ is bounded on any $[0,T]$ and such that for any $t_0\geq 0$, there exists $\sigma>0$ and a closed interval $[t_1,t_2]$ containing $t_0$ with $t_1<t_2$, such that $\omega(t)\geq\sigma$ on $[t_1,t_2]$. Recall the later condition implies that $\ZZZ_\omega^T$ is complete.
Moreover, assume that 
\begin{equation}\label{eq:def_M_T}
\sup_{0\leq t\leq T}\omega(t)\exp(t\|m_0\|_\infty)=:M_T<\infty.
\end{equation}
Define
\begin{equation}\label{set:B}
\mathcal{B}_\omega^{\mu_0,T}:=\big\{f\in\ZZZ_\omega^T:f(0)=\mu_0 \in\MM(\RRD), \|f\|_{\ZZZ_\omega^T}\leq M_T\|\mu_0\|_{TV}\big\}.
\end{equation}
Let us notice that $\mathcal{B}_\omega^{\mu_0,T}$ is a closed bounded subset of $\ZZZ_\omega^T$, hence complete.
It is non-empty as it contains the constant function  $t\mapsto\mu_0$. Because of~\eqref{eq:T_bound}, $\mathcal{T}$ maps $\mathcal{B}_\omega^{\mu_0,T}$ into itself. 
 \begin{proposition}\label{prop:unique_00}
For any finite $T>0$ and any $0\leq c<1$, there exists a constant $g\geq \|m_0\|_\infty>0$, such that the weight $\omega(t)=e^{-g t}$ makes the operator $\mathcal{T}=S \circ B: \mathcal{B}_\omega^{\mu_0,T} \to \mathcal{B}_\omega^{\mu_0,T}$ a contraction, where the Lipschitz constant of contraction of $\mathcal{T}$ is less or equal to $c$. 
\end{proposition}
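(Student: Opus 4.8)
The plan is to run a Bielecki‑type contraction argument. Given $T>0$ and $c\in(0,1)$ (we may assume $c>0$), take the weight $\omega(t)=e^{-gt}$ with a constant $g\ge\|m_0\|_\infty$ to be fixed at the end; then $M_T=1$, so by~\eqref{eq:T_bound} $\mathcal{T}$ maps $\mathcal{B}_\omega^{\mu_0,T}$ into itself and $\mathcal{B}_\omega^{\mu_0,T}$ is complete. The core estimate to be proved is that, for all $\nu_\bullet,\overline{\nu}_\bullet\in\mathcal{B}_\omega^{\mu_0,T}$,
\[\big\|\mathcal{T}(\nu_\bullet)(t)-\mathcal{T}(\overline{\nu}_\bullet)(t)\big\|_\ZZZ\ \le\ L\int_0^t\|\nu_s-\overline{\nu}_s\|_\ZZZ\,\dd s,\qquad t\in[0,T],\]
with a constant $L=L(T,\|\mu_0\|_{TV},\|m_0\|_\infty,v_0,m_0,K_{v_0},K_{m_0})$ that does \emph{not} depend on $g$. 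Granting this, the Bielecki substitution $\|\nu_s-\overline{\nu}_s\|_\ZZZ=e^{gs}\big(e^{-gs}\|\nu_s-\overline{\nu}_s\|_\ZZZ\big)\le e^{gs}\|\nu_\bullet-\overline{\nu}_\bullet\|_{\ZZZ_\omega^T}$ gives $\int_0^t\|\nu_s-\overline{\nu}_s\|_\ZZZ\,\dd s\le\frac{e^{gt}}{g}\|\nu_\bullet-\overline{\nu}_\bullet\|_{\ZZZ_\omega^T}$; multiplying the core estimate by $\omega(t)=e^{-gt}$ and taking the supremum over $t\in[0,T]$ yields $\|\mathcal{T}(\nu_\bullet)-\mathcal{T}(\overline{\nu}_\bullet)\|_{\ZZZ_\omega^T}\le\frac{L}{g}\|\nu_\bullet-\overline{\nu}_\bullet\|_{\ZZZ_\omega^T}$, and the choice $g:=\max\{\|m_0\|_\infty,\,L/c\}$ finishes the proof.

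For the core estimate I would argue as in the proof of Corollary~\ref{coro:conti_time}. Put $(b,w):=B(\nu_\bullet)=(v_0(k_{\nu_\bullet}),m_0(k_{\nu_\bullet}))$ and $(\overline{b},\overline{w}):=B(\overline{\nu}_\bullet)$; by Lemma~\ref{lem:repr_form}, $\mathcal{T}(\nu_\bullet)(t)=X_b(t,\cdot)\#(\psi_t\mu_0)$ with $\psi_t(x)=\exp(\int_0^t w(s,X_b(s,x))\,\dd s)$, and similarly for $\overline{\nu}_\bullet$ (with $\overline{\psi}_t$). Testing against $\varphi\in\CCA(\RRD)$ with $\|\varphi\|_{\CCA}\le1$ and splitting
\[\big|\langle\mathcal{T}(\nu_\bullet)(t)-\mathcal{T}(\overline{\nu}_\bullet)(t),\varphi\rangle\big|\le\int_{\RRD}\!\big|\varphi(X_b(t,x))-\varphi(X_{\overline{b}}(t,x))\big|\,\psi_t(x)\,\dd|\mu_0|(x)+\int_{\RRD}\!\big|\varphi(X_{\overline{b}}(t,x))\big|\,\big|\psi_t(x)-\overline{\psi}_t(x)\big|\,\dd|\mu_0|(x),\]
the first term is controlled via $\|X_b(t,\cdot)-X_{\overline{b}}(t,\cdot)\|_\infty$ (Lemma~\ref{lem:X_estim}), $\|\psi_t\|_\infty\le e^{t\|m_0\|_\infty}$, and the $g$‑free Lipschitz bound $\|b(s,\cdot)-\overline{b}(s,\cdot)\|_\infty\le\|v_0'\|_\infty\|k_{\nu_s-\overline{\nu}_s}\|_\infty\le C\|\nu_s-\overline{\nu}_s\|_\ZZZ$ (linearity of $\mu\mapsto k_\mu$ and Lemma~\ref{lem:estim_x_K}); the second term is controlled via $|e^a-e^b|\le e^{\max(a,b)}|a-b|$ with $\max(a,b)\le t\|m_0\|_\infty$ and $|a-b|\le\int_0^t(\|\nabla_x w(s,\cdot)\|_\infty\|X_b(s,\cdot)-X_{\overline{b}}(s,\cdot)\|_\infty+\|w(s,\cdot)-\overline{w}(s,\cdot)\|_\infty)\,\dd s$, again invoking Lemma~\ref{lem:X_estim} and the analogous Lipschitz bound for $w-\overline{w}=m_0(k_{\nu_\bullet})-m_0(k_{\overline{\nu}_\bullet})$. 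Taking the supremum over $\varphi$ and collecting gives the core estimate with a nondecreasing $L(t)$, and one sets $L:=\sup_{0\le t\le T}L(t)$.

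The step I expect to be the main obstacle is showing that this $L$ is finite and independent of $g$. The difficulty is that Lemma~\ref{lem:X_estim} produces the Gr\"onwall factor $\exp(tC_t[b,\overline{b}])$, so one must bound $C_t[b,\overline{b}]\le\sup_{s\le t}\|\nabla_x v_0(k_{\nu_s})\|_\infty$ (and likewise $\sup_{s\le t}\|\nabla_x w(s,\cdot)\|_\infty$) by a constant not involving $g$. This is precisely where the normalization $M_T=1$ in the definition of $\mathcal{B}_\omega^{\mu_0,T}$, the coefficient bounds of Lemma~\ref{lem:nem_char}, and the $\TV$‑estimate of Proposition~\ref{prop:mu_0_TV} (which keeps the iterates $\nu_\bullet^{0,n}$ in a tame subset of $\mathcal{B}_\omega^{\mu_0,T}$) must be combined, producing an a~priori bound on $\|v_0(k_{\nu_t})\|_{\CCA}$ and $\|m_0(k_{\nu_t})\|_{\CCA}$ over the relevant invariant set in terms of $T$, $\|\mu_0\|_{TV}$, $\|m_0\|_\infty$ and the data only — hence uniform control of the flows $X_b,X_{\overline{b}}$, of $\psi_t,\overline{\psi}_t$, and of all the Lipschitz constants above. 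With this control in hand $L$ is $g$‑free, and $L/g\le c<1$ for $g$ large; the self‑mapping property of $\mathcal{T}$ on $\mathcal{B}_\omega^{\mu_0,T}$ is already recorded in~\eqref{eq:T_bound}.
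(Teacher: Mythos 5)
Your proposal is correct and follows essentially the same route as the paper's proof: compose a stability estimate for the linear solution operator $S$ (your splitting of the test of $\varphi$ against the push-forward difference into a flow part and an exponential-weight part, via Lemma~\ref{lem:repr_form} and Lemma~\ref{lem:X_estim}, is exactly the content of Lemma~\ref{lem:S_estim_00}) with the Lipschitz bound $\|B(\nu_\bullet)(t)-B(\overline{\nu}_\bullet)(t)\|_\infty\leq 2L_0M_0\|\nu_t-\overline{\nu}_t\|_\ZZZ$ of Lemma~\ref{lem:estimB_00}, and then run the Bielecki trick to gain the factor $1/g$. The ``main obstacle'' you flag — $g$-free control of $C_t[b,\overline{b}]$ and of the gradients of the coefficients over $\mathcal{B}_\omega^{\mu_0,T}$, which enter $A_1(t)$ and $A_2(t)$ — is precisely what the paper records in Lemma~\ref{lem:4_3_new} and then cites at the final step, so you may close your argument simply by invoking that lemma rather than re-deriving the a-priori bound.
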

To prove above lemma we need to first show some properties of operators $S$ and $B$. The proof of Proposition~\ref{prop:unique_00} will be conducted on page~\pageref{proof:prop_unique_00}.
Let us argue that indeed operators $B$ and $S$ act between proper function spaces.

\begin{lemma}\label{cl:B_prop}
Under the assumptions {\bf A1}, {\bf A2} and {\bf A3} of Theorem~\ref{main_hille} for any weight $\omega(t)>0$ that is bounded on $[0,T]$
the operator $B$ maps $\ZZZ_\omega^T$ into $\CC_\omega([0,T];\CCA(\RRD;\RR^{d+1}))$. Moreover, there exists $C>0$ independent of $\omega$ or $T$ such that for every $\mu_\bullet\in\ZZZ_\omega^T$ the following estimate holds
\begin{equation}\label{eq:B_proper_space}
\|B(\mu_\bullet)\|_{\CC_\omega\left([0,T];\:\CCA(\RRD;\RR^{d+1})\right)}\leq C(1+\sup_{0\leq s\leq T}\|\mu_s\|_{\ZZZ})\sup_{0\leq s\leq T}\omega(s).
\end{equation}
\end{lemma}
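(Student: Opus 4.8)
The plan is to reduce the claim to the pointwise (in $t$) estimate already available from Section~\ref{sec:nemy_0}. Recall that for each fixed $\mu\in\ZZZ$ the mapping $\mu\mapsto u(k_\mu)$ lands in $\CCA(\RRD;\RRD)$ (resp. $\CCA(\RRD;\RR)$) with the polynomial bound~\eqref{eq:estim_coeff} from Lemma~\ref{lem:nem_char}, i.e. $\|u(k_\mu)\|_{\CCA(\RRD)}\leq C_{u,K_u}(1+\|\mu\|_\ZZZ+\|\mu\|_\ZZZ^2)$, and that the map $\mu\mapsto u(k_\mu)$ is continuous $\ZZZ\to\CCA(\RRD)$. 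Since $B(\mu_\bullet)(t)=(v_0(k_{\mu_t}),m_0(k_{\mu_t}))$, the value $B(\mu_\bullet)(t)$ is simply $u(k_{\mu_t})$ evaluated coordinatewise, so the bound at time $t$ follows by applying~\eqref{eq:estim_coeff} with $\mu=\mu_t$ to each of the $d+1$ components and taking the maximum of the finitely many constants $C_{u,K_u}$.

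First I would fix $\mu_\bullet\in\ZZZ_\omega^T$ and note that, since $t\mapsto\mu_t$ is continuous into $\ZZZ$ and $\mu\mapsto u(k_\mu)$ is continuous $\ZZZ\to\CCA(\RRD)$, the composition $t\mapsto u(k_{\mu_t})$ is continuous into $\CCA(\RRD)$; doing this for $u\in\{v_0,m_0\}$ gives $B(\mu_\bullet)\in\CC([0,T];\CCA(\RRD;\RR^{d+1}))$. Then for the weighted norm, writing $C:=\max_{u\in\{v_0,m_0\}}C_{u,K_u}$ (a constant depending only on the coefficients and kernels, not on $\omega$ or $T$), one has for every $t\in[0,T]$
\begin{equation*}
\omega(t)\,\|B(\mu_\bullet)(t)\|_{\CCA(\RRD;\RR^{d+1})}\leq \omega(t)\,C\bigl(1+\|\mu_t\|_\ZZZ+\|\mu_t\|_\ZZZ^2\bigr).
\end{equation*}
Bounding $\|\mu_t\|_\ZZZ\leq\sup_{0\leq s\leq T}\|\mu_s\|_\ZZZ=:R<\infty$ (finite because $\mu_\bullet\in\ZZZ_\omega^T$ and $\omega$ is bounded below on compact subintervals, or more directly since $\mu_\bullet$ is continuous on the compact $[0,T]$), and using $1+R+R^2\leq (1+R)^2$ — or simply absorbing into a constant — gives $\omega(t)\|B(\mu_\bullet)(t)\|\leq C(1+R+R^2)\,\omega(t)$, hence taking the supremum over $t\in[0,T]$ yields~\eqref{eq:B_proper_space}, perhaps after mildly adjusting the shape of the polynomial factor to match the statement. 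Since the statement already writes $C(1+\sup\|\mu_s\|_\ZZZ)\sup\omega(s)$, I would either present it with the quadratic term and remark the constant can be rewritten, or keep the clean form $C(1+R+R^2)\sup\omega$; I will follow whichever the paper uses downstream and note that $C$ is independent of $\omega$ and $T$ because the constants in Lemma~\ref{lem:nem_char} are.

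Honestly there is no real obstacle here: the lemma is essentially bookkeeping, transporting the pointwise estimate~\eqref{eq:estim_coeff} through the definition of the weighted sup-norm. The only mild point to be careful about is the continuity claim — that $t\mapsto B(\mu_\bullet)(t)$ is continuous into $\CCA(\RRD;\RR^{d+1})$ and not merely bounded — which follows because $\mu\mapsto k_\mu$ is continuous $\ZZZ\to\CCA(\RRD)$ (Lemma~\ref{lem:estim_x_K}) and $f\mapsto u\circ f$ is continuous $\CCA(\RRD)\to\CCA(\RRD;\RRD)$ when $u\in\CC^{2+\alpha}$ (Lemma~\ref{lem:lem_2_2502}), so their composition with the continuous curve $t\mapsto\mu_t$ is continuous. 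This places $B(\mu_\bullet)$ in the space $\CC_\omega([0,T];\CCA(\RRD;\RR^{d+1}))$ as required, completing the proof.
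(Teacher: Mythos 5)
Your proof follows essentially the same route as the paper: apply the pointwise bound~\eqref{eq:estim_coeff} from Lemma~\ref{lem:nem_char} to each component of $B(\mu_\bullet)(t)$ and then take the weighted supremum over $t\in[0,T]$. You add an explicit verification (via Lemmas~\ref{lem:estim_x_K} and~\ref{lem:lem_2_2502}) that $t\mapsto B(\mu_\bullet)(t)$ is continuous, which the paper leaves implicit, and you correctly flag that the natural bound obtained this way is quadratic in $\sup_s\|\mu_s\|_\ZZZ$, a mild mismatch with the linear factor written in~\eqref{eq:B_proper_space} that is also present in the paper's own last displayed line.
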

\begin{proof}
Using Lemma~\ref{lem:nem_char} and estimation~\eqref{eq:estim_coeff} one has
\begin{align*}
&\sup_{0\leq t\leq T}\omega(t)\|B(\mu_t)\|_{\CCA(\RRD;\:\RR^{d+1})}\leq \sup_{0\leq t\leq T}(C_v+C_m)\:\omega(t)(1+\|\mu_t\|_\ZZZ+\|\mu_t\|_\ZZZ^2)\\
&\qquad\leq \sup_{0\leq t\leq T}(C_v+C_m)\:\omega(t)\big(1+\|\mu_t\|_\ZZZ+\|\mu_t\|_\ZZZ\sup_{0\leq s\leq T}\|\mu_s\|_\ZZZ\big)\\
&\qquad\leq \sup_{0\leq t\leq T}(C_v+C_m)(1+\sup_{0\leq s\leq T}\|\mu_s\|_\ZZZ)\sup_{0\leq s\leq T}\omega(s)(1+\|\mu_t\|_\ZZZ).
\end{align*}
\end{proof}
 Let us  characterize the operator $S$. Observe that for a given~$\nu_\bullet^{0,n}$ the operators $v_0(k_{\nu_\bullet^{0,n}})$ and $m_0(k_{\nu_\bullet^{0,n}})$ can be seen as functions $b(t,x)$ and $w(t,x)$ in linear problem. According to Lemma~\ref{tw:B_C1} coefficients $b$ and $w$ satisfy assumptions of Theorem~\ref{tw:ciagla_pocho}. 

\begin{lemma}\label{lem:4_3_new}
Under the assumptions {\bf A1}-{\bf A3} of Theorem~\ref{main_hille} and the conditions on the weight~$\omega$ as in Lemma~\ref{cl:B_prop}, for any $\nu_\bullet\in\CC_\omega([0,T];\ZZZ)$, the operator $B(\nu_\bullet)(t,x)=\big(b(t,x),w(t,x)\big)$ satisfies 
\begin{align*}
\|b(t,\cdot)\|_\infty&\leq \|v_0\|_\infty,  \qquad \|w(t,\cdot)\|_\infty\leq \|m_0\|_\infty\\
\|\nabla_x b(t,\cdot)\|_\infty & \leq \|v_0'\|_\infty\sup_{x\in\RRD}\|\nabla_xK_{v_0}(\cdot,x)\|_{\CCA(\RRD)}\|\nu_t\|_\ZZZ,\\
\|\nabla_x w(t,\cdot)\|_\infty &\leq \|m_0'\|_\infty \sup_{x\in\RRD}\|\nabla_xK_{m_0}(\cdot,x)\|_{\CCA(\RRD)}\|\nu_t\|_\ZZZ.\\
\end{align*}
In particular, the mapping $v_0\mapsto \|B(\nu_\bullet)\|_\infty$ and $v_0\mapsto \|\nabla B(\nu_\bullet)\|_\infty$ are bounded on $\mathcal{B}_\omega^{\mu_0,T}$.
\end{lemma}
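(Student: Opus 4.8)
The plan is to unwind the definition $B(\nu_\bullet)(t,\cdot)=\big(v_0(k_{\nu_t}),m_0(k_{\nu_t})\big)$ and estimate the four quantities one at a time by the chain rule, recycling the kernel estimates already established in Lemma~\ref{lem:estim_x_K}. First I would dispose of the two $L^\infty$ bounds: since by {\bf A1} and {\bf A2} the functions $v_0$ and $m_0$ are in particular bounded, $\|b(t,\cdot)\|_\infty=\sup_{x\in\RRD}\big|v_0\big(k_{\nu_t}(x)\big)\big|\leq\|v_0\|_\infty$ and likewise $\|w(t,\cdot)\|_\infty\leq\|m_0\|_\infty$; note that these two constants are independent of $\nu_\bullet$ and of $t$.

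Next I would treat the gradient bounds. By Lemma~\ref{lem:estim_x_K} the map $x\mapsto k_{\nu_t}(x)$ is differentiable with $\nabla_x k_{\nu_t}(x)=\big\langle\nabla_x K_{v_0}(\cdot,x),\nu_t\big\rangle$, so the chain rule gives $\partial_{x_j}b_i(t,x)=v_{0,i}'\big(k_{\nu_t}(x)\big)\,\partial_{x_j}k_{\nu_t}(x)$, with $v_{0,i}$ the $i$-th component of $v_0$, and hence $\|\nabla_x b(t,\cdot)\|_\infty\leq\|v_0'\|_\infty\,\|\nabla_x k_{\nu_t}\|_\infty$. To bound the last factor, for each $x$ one has $\big|\nabla_x k_{\nu_t}(x)\big|\leq\|\nabla_x K_{v_0}(\cdot,x)\|_{\CCA(\RRD)}\,\|\nu_t\|_\ZZZ$, and $\sup_{x\in\RRD}\|\nabla_x K_{v_0}(\cdot,x)\|_{\CCA(\RRD)}<\infty$ by {\bf A3}, exactly as in the estimate leading to~\eqref{estim:bound11}. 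Combining these gives the stated bound for $b$, and the computation for $w$ is identical with $m_0'$ and $K_{m_0}$ in place of $v_0'$ and $K_{v_0}$.

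For the closing ``in particular'' assertion the $L^\infty$ part is already uniform in $\nu_\bullet$ and $t$, so only the gradient needs an argument: every $\nu_\bullet\in\mathcal{B}_\omega^{\mu_0,T}$ is continuous, hence $\sup_{0\leq t\leq T}\|\nu_t\|_\ZZZ<\infty$, and uniformity over the whole set follows from $\omega(t)\|\nu_t\|_\ZZZ\leq M_T\|\mu_0\|_{TV}$ together with $\inf_{[0,T]}\omega>0$ --- the latter being a consequence of the local positivity condition on $\omega$ (the one ensuring completeness of $\ZZZ_\omega^T$) and compactness of $[0,T]$. Then $\sup_{\nu_\bullet}\sup_{0\leq t\leq T}\|\nabla_x B(\nu_\bullet)(t,\cdot)\|_\infty$ is controlled by the gradient estimate just proved. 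I do not expect a genuine obstacle here: the statement is essentially a bookkeeping repackaging of the chain rule and the duality estimate of Lemma~\ref{lem:estim_x_K}, the only points deserving a word of justification being the differentiation of $k_{\nu_t}$ under the $\CCA(\RRD)$--$\ZZZ$ pairing and the uniform-in-$x$ finiteness of $\|\nabla_x K(\cdot,x)\|_{\CCA(\RRD)}$, both handled by that lemma and by assumption {\bf A3}.
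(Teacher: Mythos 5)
Your proposal is correct and takes essentially the same route as the paper: the $L^\infty$ bounds follow from boundedness of $v_0,m_0$, the gradient bounds follow from the chain rule combined with the kernel estimate~\eqref{estim:bound11} from Lemma~\ref{lem:estim_x_K}, and the uniform bound over $\mathcal{B}_\omega^{\mu_0,T}$ is obtained from $\|\nu_t\|_\ZZZ\leq\omega(t)^{-1}\|\nu_\bullet\|_{\ZZZ_\omega^T}\leq M_T(\inf_{[0,T]}\omega)^{-1}\|\mu_0\|_{TV}$. The paper's own proof is terser — it simply points back to the proof of Lemma~\ref{lem:estim_x_K} for the first part and writes out only the weighted boundedness chain — so your write-up actually supplies the chain-rule step that the paper leaves implicit.
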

\begin{proof}
The estimates have been shown in the proof of Lemma~\ref{lem:estim_x_K}. The statement on boundedness results are as follows.
For $\nu_\bullet\in\CC_\omega([0,T];\ZZZ)$ and for $0\leq t\leq T$ the following estimate holds
\begin{align*}
\|\nu_t\|_\ZZZ\leq \frac{1}{\omega(t)}\omega(t)\|\nu_t\|_\ZZZ\leq \Big( \inf_{0\leq t\leq T}\omega(t)\Big)^{-1}\|\nu_\bullet\|_{\ZZZ_\omega^T}
\leq 
M_T\Big(\inf_{0\leq t\leq T}\omega(t)\Big)^{-1}\|\mu_0\|_{TV},
\end{align*}
where $M_T$ is defined by~\eqref{eq:def_M_T}.
\end{proof}
Since $\CC_\omega$ is a~subspace of continuous function, the operator $S$ maps $\CC_\omega([0,\infty);\CCA(\RRD;\RR^{d+1}))$ into $\ZZZ_\omega$. 
\begin{lemma}\label{lem:S_estim_00}
If $\left( t\mapsto b(t, \cdot)\right), \left( t\mapsto \overline{b}(t, \cdot)\right), (t\mapsto w(t,\cdot)), (t\mapsto \overline{w}(t,\cdot))\in \CC([0, T]; \CCA(\RRD))$ and $\mu_0\in\MM(\RRD)$ then for any $t$, such that $0\leq t\leq T<\infty$ the following estimate holds
\begin{align*}\label{S_estim_00}
&\Vert S(b,w)(t)-S(\overline{b},\ww)(t)\Vert_\ZZZ\leq \\ \leq & e^{A_1(t)} A_2(t) \left(\int_0^t \|b(s,\cdot)-\overline{b}(s,\cdot)\|_\infty \:\dd s + \int_0^t \|w(s,\cdot)-\ww(s,\cdot)\|_\infty \:\dd s \right),
\end{align*}
where
\begin{equation}
\begin{split}
A_1(t)&:=tC_t[b, \overline{b}]+t\max\{\|w(\cdot,\cdot)\|_{\infty},\|\ww(\cdot,\cdot)\|_{\infty}\},\\ A_2(t)&:=\|\mu_0\|_{TV} (1+tC_t[w,\ww]),
\end{split}
\end{equation}
and $C_t$ is defined as $
C_t[b, \overline{b}]:=  \min \left\lbrace \sup_{0\leq s \leq t} \Vert \nabla_x b(s, \cdot) \Vert_\infty, \sup_{0\leq s \leq t} \Vert \nabla_x \overline{b}(s, \cdot) \Vert_\infty \right\rbrace.$
\end{lemma}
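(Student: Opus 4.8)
The plan is to establish the estimate by comparing the two explicit representation formulas~\eqref{repr:form} for $S(b,w)(t)$ and $S(\overline{b},\ww)(t)$, splitting the difference into a term that measures the discrepancy in the flow maps and a term that measures the discrepancy in the exponential weights. First I would write, for a test function $\varphi\in\CCA(\RRD)$ with $\|\varphi\|_{\CCA(\RRD)}\leq 1$,
\begin{equation*}
\langle S(b,w)(t)-S(\overline{b},\ww)(t),\varphi\rangle=\int_\RRD\varphi(X_b(t,x))\,e^{\int_0^t w(s,X_b(s,x))\dd s}-\varphi(X_{\overline{b}}(t,x))\,e^{\int_0^t \ww(s,X_{\overline{b}}(s,x))\dd s}\,\dd\mu_0(x),
\end{equation*}
and then insert and subtract the mixed term $\varphi(X_{\overline{b}}(t,x))\,e^{\int_0^t w(s,X_b(s,x))\dd s}$, exactly as in the proof of Corollary~\ref{coro:conti_time}. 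This produces two integrals: one controlled by $\|\varphi(X_b(t,\cdot))-\varphi(X_{\overline{b}}(t,\cdot))\|_\infty\leq\|\nabla\varphi\|_\infty\|X_b(t,\cdot)-X_{\overline{b}}(t,\cdot)\|_\infty$ together with a uniform bound on the exponential weight, and one controlled by $\|\varphi\|_\infty$ times the difference of the two exponentials.

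For the flow-map term I would invoke Lemma~\ref{lem:X_estim} directly, which gives $\|X_b(t,\cdot)-X_{\overline{b}}(t,\cdot)\|_\infty\leq e^{tC_t[b,\overline{b}]}\int_0^t\|b(s,\cdot)-\overline{b}(s,\cdot)\|_\infty\dd s$, and for the uniform bound on $e^{\int_0^t w(s,X_b(s,x))\dd s}$ I would use $e^{t\|w\|_\infty}\leq e^{t\max\{\|w\|_\infty,\|\ww\|_\infty\}}$, which is part of $e^{A_1(t)}$. For the exponential-weight term, I would use the elementary inequality $|e^a-e^b|\leq e^{\max\{a,b\}}|a-b|$ with $a=\int_0^t w(s,X_b(s,x))\dd s$ and $b=\int_0^t \ww(s,X_{\overline{b}}(s,x))\dd s$; the prefactor $e^{\max\{a,b\}}$ is again absorbed into $e^{A_1(t)}$, and the difference of the exponents is estimated by
\begin{equation*}
|a-b|\leq\int_0^t|w(s,X_b(s,x))-\ww(s,X_b(s,x))|\dd s+\int_0^t|\ww(s,X_b(s,x))-\ww(s,X_{\overline{b}}(s,x))|\dd s.
\end{equation*}
The first piece is bounded by $\int_0^t\|w(s,\cdot)-\ww(s,\cdot)\|_\infty\dd s$; the second piece, using that $\ww(s,\cdot)$ is Lipschitz with constant $\sup_{0\leq s\leq t}\|\nabla_x\ww(s,\cdot)\|_\infty$ and the flow-map bound, is bounded by $tC_t[w,\ww]\,e^{tC_t[b,\overline{b}]}\int_0^t\|b-\overline{b}\|_\infty\dd s$. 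Collecting everything and taking the supremum over $\varphi$ in the unit ball of $\CCA(\RRD)$, together with $\int_\RRD\dd|\mu_0|=\|\mu_0\|_{TV}$, yields the claimed estimate with $A_1$ and $A_2$ as stated (the $1+tC_t[w,\ww]$ factor in $A_2$ combining the two sources of the $\|b-\overline{b}\|$ contribution).

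The main obstacle, as in Corollary~\ref{coro:conti_time}, is bookkeeping rather than a genuine difficulty: one must be careful that the constant $C_t[b,\overline{b}]$ is the minimum of the two gradient sups (so that it is symmetric in $b,\overline{b}$, as Lemma~\ref{lem:X_estim} permits), and that in the Lipschitz estimate for $\ww$ along the perturbed flow $X_b$ one uses the bound on $\|X_b(s,\cdot)-X_{\overline{b}}(s,\cdot)\|_\infty$ at time $s\leq t$, which by monotonicity of the right-hand side of Lemma~\ref{lem:X_estim} in $t$ is again dominated by the bound at time $t$. A minor point requiring attention is that the exponential-weight term carries a factor $\|\varphi\|_\infty\leq 1$ while the flow-map term carries $\|\nabla\varphi\|_\infty\leq 1$, so both are controlled by $\|\varphi\|_{\CCA(\RRD)}\leq 1$; no Hölder-seminorm information on $\varphi$ beyond this is needed, since $X_b(t,\cdot)$ and $\psi(t,\cdot)$ are already estimated in $L^\infty$ and $W^{1,\infty}$.
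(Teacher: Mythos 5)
Your proposal is correct and follows essentially the same route as the paper: both start from the representation formula~\eqref{repr:form}, split the test-function pairing into a flow-map difference and an exponential-weight difference, control the former via Lemma~\ref{lem:X_estim} and the latter via a mean-value bound on the exponential combined with the Lipschitz estimate for $\ww$ along the two flows. The only cosmetic difference is that the paper splits the exponential contribution into two separate terms ($I_2$ and $I_3$) by inserting the intermediate exponential $e^{\int_0^t\ww(s,X_b(s,\cdot))\dd s}$, while you insert the intermediate quantity at the level of the exponent inside $|e^a-e^b|\le e^{\max\{a,b\}}|a-b|$; the resulting constants are identical, and both arguments implicitly symmetrize to obtain the minimum in $C_t[w,\ww]$.
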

\begin{proof}
It will be used $\nu_t^{b,w}$ for shorthand for $S(b,w)(t)$. Take into consideration difference of two solutions $\nu_t^{b,w}$ and~$\nu_t^{\overline{b},\ww}$.
By the representation formula, Lemma \ref{lem:repr_form}, for $f\in \CCA(\RRD)$ one has
\begin{equation*}\begin{split}
&\left|\int_\RRD f\:\dd(\nu_t^{b,w}-\nu_t^{\overline{b},\ww})(x)\right|=\left|\int_\RRD f\:\dd\nu_t^{b,w}(x)-\int_\RRD f\:\dd\nu_t^{\overline{b},\ww}(x)\right|\\
=& \left|\int_{\RRD}f(X_b(t,x)) \exp\! \left(\int_0^t w(s, X_b(s,x))\:\dd s\right)\dd \nu_0(x)\right.\\
 &\quad - \left. \int_{\RRD}f(X_{\overline{b}}(t,x)) \exp\! \left(\int_0^t \ww(s, X_{\overline{b}}(s,x))\:\dd s\right)\dd \nu_0(x)\right|\\
=& \left|\int_{\RRD} \bigg[\Big( f(X_b(t,x))-f(X_{\overline{b}}(t,x))\Big)\exp\! \left(\int_0^t w(s, X_b(s,x))\:\dd s\right) \right.\\
&\quad -  \left( \exp \Big(\int_0^t \overline{w}(s, X_{\overline{b}}(s,x))\:\dd s\Big)- \exp \Big(\int_0^t \overline{w}(s, X_b(s,x))\:\dd s\Big)\right)f(X_{\overline{b}}(t,x)) \\
&\quad -  \left( \exp \Big(\int_0^t \overline{w}(s, X_b(s,x))\:\dd s\Big)- \exp \Big(\int_0^t w(s, X_b(s,x))\:\dd s\Big)\right)f(X_{\overline{b}}(t,x))\bigg] \:\dd \nu_0(x)\Big|
 \end{split}\end{equation*}
 \begin{equation*}\begin{split}
 \leq &\| \nabla_x f\|_{\infty}  \int_{\RRD} \underbrace{\Big| X_b(t,x)-X_{\overline{b}}(t,x)\Big|}_{I_1}\exp \Big(\int_0^t w(s, X_{b}(s,x))\:\dd s\Big)\:\dd \nu_0(x)\\
&  \quad+\|f\|_{\infty}\int_{\RRD} \underbrace{\left| \exp \Big(\int_0^t \overline{w}(s, X_{\overline{b}}(s,x))\:\dd s\Big)- \exp \Big(\int_0^t \overline{w}(s, X_b(s,x))\:\dd s\Big)\right|}_{I_2} \:\dd \nu_0(x)\\
&  \quad+\|f\|_{\infty}\int_{\RRD} \underbrace{\left| \exp \Big(\int_0^t \overline{w}(s, X_b(s,x))\:\dd s\Big)- \exp \Big(\int_0^t w(s, X_b(s,x))\:\dd s\Big)\right|}_{I_3} \:\dd \nu_0(x).
\end{split}
\end{equation*}
{\bf Estimate for $I_1$. } According to Lemma~\ref{lem:X_estim}, one immediately obtains
\begin{equation*}
I_1 \leq  \exp(tC_t[b, \overline{b}]) \int_0^t \|b(s, \cdot)- \overline{b}(s,\cdot)\|_\infty\dd s.
\end{equation*}
{\bf Estimate for $I_2$. }Since $\big(t\mapsto \ww(t,\cdot)\big)\in\CC([0,T];\CCA(\RRD))$, then for $t\leq T$ one has\\$\left|\int_0^t \overline{w}(s, X_{\overline{b}}(s,x))\:\dd s\right| < t \|\overline{w}(\cdot,\cdot)\|_{\infty} $
and moreover  $|\ww(s,y)|\leq \|\ww(s, \cdot)\|_{\infty}\leq \|\ww(\cdot,\cdot)\|_{\infty}$ and \[|\ww(s, y)-\ww(s, \overline{y})|\leq \|\nabla_x \ww(s, \cdot)\|_{\infty} \cdot \|y-\overline{y}\|_{\RRD}.\]
Thus one can estimate 
\begin{equation*}
\begin{split}
I_2\leq& e^{ t\|\ww(\cdot,\cdot)\|_{\infty} }\left| \int_0^t \ww(s, X_{\overline{b}}(s,x))\:\dd s - \int_0^t \ww(s, X_{b}(s,x))\:\dd s \right|\\
\leq&  e^{t \|\ww(\cdot,\cdot)\|_{\infty}}  \| \nabla_x \ww\|_{t, \infty}  
\int_0^t |X_b(s,x)-X_{\overline{b}}(s,x)|\:\dd s
\\
&\textrm{\footnotesize(according to Lemma~\ref{lem:X_estim})}
\\
\leq&\exp\Big(t\|\ww(\cdot,\cdot)\|_{\infty}\Big) \| \nabla_x \ww\|_{t, \infty}
\int_0^t \exp(s  C_s[b, \overline{b}]) \left(\int_0^s \|b(\overline{s}, \cdot)- \overline{b}(\overline{s}, \cdot)\|_\infty\dd \overline{s}\right) \:\dd s.
\end{split}
\end{equation*}
Since $\|b(\overline{s}, \cdot)- \overline{b}(\overline{s}, \cdot)\|_\infty$ is non-decreasing function and  $s\leq t$, we can further estimate\\ $\int_0^s \|b(\overline{s}, \cdot)- \overline{b}(\overline{s}, \cdot)\|_\infty\dd \overline{s}\leq \int_0^t \|b(s, \cdot)- \overline{b}(s, \cdot)\|_\infty\dd s$. Afterwards one obtains
\begin{equation*}
\begin{split}
I_2\leq\exp\Big(t \|\ww(\cdot,\cdot)\|_{\infty}\Big) \| \nabla_x \ww\|_{t, \infty} 
\int_0^t \exp(s  C_s[b, \overline{b}])\:\dd s \int_0^t \|b(s, \cdot)- \overline{b}(s, \cdot)\|_\infty\dd s.
\end{split}
\end{equation*}
Function $C_s[b,\overline{b}]$ is also non-deceasing, thus $C_s[b,\overline{b}]\leq C_t[b,\overline{b}]$, and hence
\begin{equation*}
\begin{split}
I_2\leq\exp\left(t\|\ww(\cdot,\cdot)\|_{\infty}\right)   \| \nabla_x \ww\|_{t, \infty}  
 t\exp(t  C_t[b, \overline{b}]) \int_0^t \|b(s, \cdot)- \overline{b}(s, \cdot)\|_\infty\dd s.
\end{split}
\end{equation*}
%
{\bf Estimate for $I_3$.} One gets
\begin{align*}
I_3&\leq \max\{e^{t\|w(\cdot,\cdot)\|_\infty},e^{t\|\ww(\cdot,\cdot)\|_\infty}\} \int_0^t\Big( \ww(s,X_b(s,x))-w(s,X_b(s,x))\Big)\:\dd s\\
&\leq \max\{e^{t\|w(\cdot,\cdot)\|_\infty},e^{t\|\ww(\cdot,\cdot)\|_\infty}\} \int_0^t\Big\| \ww(s,\cdot)-w(s,\cdot)\Big\|_\infty\dd s.
\end{align*}
Combining estimates for $I_1$, $I_2$ and $I_3$ and taking
supremum over $f\in \CCA(\RRD)$, such that $\|f\|_{\CCA(\RRD)}\leq 1$ one obtains
\begin{equation*}\label{estim:mu_b-b_00}
\begin{split}
\big\|\nu_t^{b,w}-&\nu_t^{\overline{b},\ww}\big\|_\ZZZ \leq  \int_{\RRD}\left(\exp(t  (\|w(\cdot,\cdot)\|_{\infty} +C_t[b, \overline{b}]))  \int_0^t\|b(s,\cdot) - \overline{b}(s,\cdot)\|_\infty\dd s  \right)\dd \nu_0(x)\\
&+ \int_{\RRD} \left( \exp(t  (\|\ww(\cdot,\cdot)\|_{\infty} +C_t[b, \overline{b}]))   t\|\nabla_x \ww\|_{t,\infty} \int_0^t \|b(s, \cdot)-\overline{b}(s, \cdot)\|_\infty\dd s\right)\dd \nu_0(x)\\
&+ \int_{\RRD} \left( \max\{\exp(t\|w(\cdot,\cdot)\|_{\infty}, \exp(t \|\ww(\cdot,\cdot)\|_{\infty})\}
   \int_0^t \|w(s, \cdot)-\ww(s, \cdot)\|_\infty\dd s\right)\dd \nu_0(x)
   \end{split}
\end{equation*}
\begin{equation*}
\begin{split}
\leq&\exp\Big(t  (\max\{\|w\|_{\infty},\|\ww\|_\infty\}+C_t[b, \overline{b}])\Big)(1+tC_t[w,\ww])   \|\nu_0\|_{TV} \\
&\quad \cdot \left(\int_0^t \|b(s, \cdot)-\overline{b}(s, \cdot)\|_\infty\dd s+\int_0^t \|w(s, \cdot)-\ww(s, \cdot)\|_\infty\dd s\right).
\end{split}
\end{equation*}
This finishes the proof.
\end{proof}
Let us go back to operator $B$.

\begin{lemma}\label{lem:estimB_00}
Assume that $M_0:=\max\{ \Vert  K_{v_0}\Vert_{\CC_b(\RRD;\:\CCA(\RRD))},\Vert K_{m_0}\Vert_{\CC_b(\RRD;\CCA(\RRD))}\}<\infty$.
Then  for any  $\nu_\bullet, \overline{\nu}_\bullet\in \CC([0,+\infty); \ZZZ)$ the following estimate holds
\[\Vert B(\nu_\bullet)(t) - B(\overline{\nu}_\bullet)(t)\Vert_{\infty} \leq 2 L_0 M_0 \Vert \nu_t - \overline{\nu}_t\Vert_\ZZZ,\]
where $L_0:=\max\{\| \nabla v_0\|_{\CC_b(\RRD)},\|\nabla m_0\|_{\CC_b(\RRD)}\}$. 
\end{lemma}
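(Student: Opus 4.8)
The plan is to reduce the statement to the Lipschitz continuity of the scalar maps $v_0$ and $m_0$ composed with the kernel functionals, exploiting the continuity of $\mu\mapsto k_\mu$ from $\ZZZ$ into $\CCA(\RRD)$ recorded in Lemma~\ref{lem:estim_x_K}.

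First I would fix $t\ge 0$ and $x\in\RRD$ and use that $K_{v_0}(\cdot,x)\in\CCA(\RRD)$ (assumption {\bf A3}) while $\nu_t-\overline{\nu}_t\in\ZZZ\subset(\CCA(\RRD))^{\ast}$. Then the duality pairing is well-defined (cf. the discussion preceding Lemma~\ref{lem:estim_x_K}), and
\[
\bigl|k_{\nu_t}(x)-k_{\overline{\nu}_t}(x)\bigr|
=\bigl|\langle K_{v_0}(\cdot,x),\,\nu_t-\overline{\nu}_t\rangle\bigr|
\le \|K_{v_0}(\cdot,x)\|_{\CCA(\RRD)}\,\|\nu_t-\overline{\nu}_t\|_\ZZZ
\le M_0\,\|\nu_t-\overline{\nu}_t\|_\ZZZ ,
\]
uniformly in $x$, since $\sup_x\|K_{v_0}(\cdot,x)\|_{\CCA(\RRD)}\le\|K_{v_0}\|_{\CC_b(\RRD;\CCA(\RRD))}\le M_0$; the same bound holds with $K_{m_0}$ in place of $K_{v_0}$.

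Next, since $v_0\in\CC^1(\RR;\RRD)$ with $\|v_0'\|_\infty\le L_0$ (which follows from assumption {\bf A1}), the mean value theorem gives, for every $x$,
\[
\bigl|v_0(k_{\nu_t}(x))-v_0(k_{\overline{\nu}_t}(x))\bigr|
\le L_0\,\bigl|k_{\nu_t}(x)-k_{\overline{\nu}_t}(x)\bigr|
\le L_0 M_0\,\|\nu_t-\overline{\nu}_t\|_\ZZZ ,
\]
and taking the supremum over $x$ yields $\|v_0(k_{\nu_t})-v_0(k_{\overline{\nu}_t})\|_\infty\le L_0M_0\|\nu_t-\overline{\nu}_t\|_\ZZZ$; the analogous estimate holds for $m_0$. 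Finally, $B(\nu_\bullet)(t)-B(\overline{\nu}_\bullet)(t)$ is the $\RR^{d+1}$-valued map $x\mapsto\bigl(v_0(k_{\nu_t}(x))-v_0(k_{\overline{\nu}_t}(x)),\,m_0(k_{\nu_t}(x))-m_0(k_{\overline{\nu}_t}(x))\bigr)$, and bounding its sup-norm by summing the velocity and production-rate contributions produces the claimed constant $2L_0M_0$.

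The only delicate point here is bookkeeping rather than analysis: one must make sure the pairing $\langle K_u(\cdot,x),\cdot\rangle$ is meaningful on all of $\ZZZ$ and not merely on $\MM(\RRD)$ (this is exactly the point settled in Section~\ref{sec:nemy_0}), and fix the convention for the norm on $\RR^{d+1}$ so that the factor $2$ is accounted for. Beyond that, every step above is a one-line estimate.
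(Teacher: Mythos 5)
Your proposal is correct and follows essentially the same route as the paper: first split the sup-norm of $B(\nu_\bullet)(t)-B(\overline{\nu}_\bullet)(t)$ into the $v_0$- and $m_0$-contributions, then use the Lipschitz bound $L_0$ on the superposition operators, and finally control $\|k_{\nu_t}-k_{\overline{\nu}_t}\|_\infty$ via the duality pairing estimate from Lemma~\ref{lem:estim_x_K} (i.e.\ \eqref{estim:bound1}), giving $2L_0M_0\|\nu_t-\overline{\nu}_t\|_\ZZZ$. The only cosmetic difference is that you phrase the estimate pointwise in $x$ before taking suprema, while the paper works with sup-norms throughout; the substance is identical.
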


\begin{proof} One can estimate
\begin{equation}\label{esim_b_00}
\begin{split}
\Big\Vert \Big(B(\nu_\bullet)-&B(\overline{\nu}_\bullet)\Big)(t)\Big\Vert_{\infty} \leq
\left\Vert v_0\!\left(\int_{\RRD}K_{v_0}(y, \cdot)\:\dd \nu_t(y)\right)- v_0\!\left(\int_{\RRD}K_{v_0}(y, \cdot)\:\dd \overline{\nu}_t(y)\right)\right\Vert_{\infty}\\
&\qquad + \left\Vert m_0\!\left(\int_{\RRD}K_{m_0}(y, \cdot)\:\dd \nu_t(y)\right)- m_0\!\left(\int_{\RRD}K_{m_0}(y, \cdot)\:\dd \overline{\nu}_t(y)\right)\right\Vert_{\infty},
\end{split}
\end{equation}
by definition of $L_0$ one obtains
\begin{align*}\label{estimbh_00}
\Big\Vert \Big(B(\nu_\bullet)-&B(\overline{\nu}_\bullet)\Big)(t)\Big\Vert_{\infty}\leq L_0\left\|\int_{\RRD}K_{v_0}(y,\cdot) \:\dd\nu_t(y)-\int_{\RRD}K_{v_0}(y,\cdot) \:\dd\overline{\nu}_t(y)\right\|_\infty\\
&+L_0\left\|\int_{\RRD}K_{m_0}(y,\cdot) \:\dd\nu_t(y)-\int_{\RRD}K_{m_0}(y,\cdot) \:\dd\overline{\nu}_t(y)\right\|_\infty.
\end{align*}
Finally, according to Lemma~\ref{lem:estim_x_K} for $u\in\{v_0,m_0\}$
\[\left\|\int_\RRD K_u(y,x)\:\dd\nu_t(y)-\int_\RRD K_u(y,x)\overline{\nu}_t(y)\right\|_\infty\leq \Vert  K_u(\cdot, x)\Vert_{\CCA(\RRD)}\Vert \nu_t-\overline{\nu}_t\Vert_\ZZZ,\]
what finishes the proof.\end{proof}

Now we are going back to the operator $\mathcal{T}$ as a composition of $S$ and $B$ and we want to show that it is a contraction on $\mathcal{B}_\omega^{\mu_0,T}$ for a suitable weight $\omega(t)$. 

\begin{proof}[Proof of Proposition~\ref{prop:unique_00}]\label{proof:prop_unique_00}
Let $\nu_\bullet, \overline{\nu}_\bullet \in \mathcal{B}^{\mu_0, T}_\omega$. Then for any $g>0$
\begin{equation*}
\begin{split}
e^{-g t}\|(\mathcal{T}(\nu_\bullet)(t)-\mathcal{T}(\overline{\nu}_\bullet)(t)\|_\ZZZ &= e^{-g t} \|(S\circ B)(\nu_\bullet)-(S\circ B)(\overline{\nu}_\bullet))(t)\|_\ZZZ\\
& \textrm{\footnotesize(according to Lemma~\ref{lem:S_estim_00})}
\\
&
\leq e^{-g t} e^{A_1(t)}  A_2(t) \int_0^t \Big\|(B(\nu_\bullet)-B(\overline{\nu}_\bullet))(s)\Big\|_\infty \dd s.
\end{split}
\end{equation*}
Use Lemma~\ref{lem:estimB_00} and denote $2L_0 M_0=:D$ (occurs in Lemma~\ref{lem:estimB_00})
\begin{equation*}
\begin{split}
e^{-g t}\|(\mathcal{T}(\nu_\bullet)(t)-\mathcal{T}(\overline{\nu}_\bullet)(t)\|_\ZZZ &
\leq e^{-g t+A_1(t)}  A_2(t) D \int_0^t \|\nu_s - \overline{\nu}_s\|_\ZZZ \dd s\\
& = e^{-g t+A_1(t)}  A_2(t) D \int_0^t e^{g s} e^{-g s}\|\nu_s - \overline{\nu}_s\|_\ZZZ \dd s\\
& \leq e^{A_1(t)}  A_2(t) D \int_0^t e^{-g(t- s)} \sup_{0\leq s\leq t}e^{-g s}\|\nu_s-\overline{\nu}_s\|_\ZZZ \dd s\\
& \leq e^{A_1(t)}  A_2(t) D  \|\nu_s-\overline{\nu}_s\|_{\ZZZ_\omega} \int_0^t e^{-g s}  \:\dd s\\
&\leq \frac{A_2(t)D}{g}e^{A_1(t)}\|\nu_t-\overline{\nu}_t\|_{\ZZZ_\omega}.
\end{split}
\end{equation*}
Notice that for any $t\in[0,T]$  functions $A_2(t)$ and $e^{A_1(t)}$ are bounded because of Lemma~\ref{lem:4_3_new}. This bound is independent of~$\nu_\bullet$ and $\overline{\nu}_\bullet$ (but dependent on $\|\mu_0\|_{TV}$). The~constant~$D$ is independent of time. Thus for any finite~$T$ one can establish appropriate value of~$g\geq \|m_0\|_\infty$ providing $\frac{A_2(t)D}{g}e^{A_1(t)}\leq c<1$ for all $t\in[0,T]$. Therefore one obtains
  \[\left\Vert\mathcal{T}(\nu_{\bullet})-\mathcal{T}(\overline{\nu}_{\bullet})\right\Vert_{\ZZZ_\omega^T}\leq c \Vert \nu_\bullet-\overline{\nu}_\bullet\Vert_{\ZZZ_\omega^T}.\]
That is, the operator $\mathcal{T}$ is contraction on $\mathcal{B}_\omega^{\mu_0,T}$ with Lipschitz constant less than $c$.
\end{proof}
\begin{remark}
A conclusion from Proposition~\ref{prop:unique_00} is that for any bounded interval $[t_1,t_2]$ there exists a~constant $g>0$, such that $\omega(t)=e^{-gt}$ and the operator $\mathcal{T}$ is contraction on $\CC_\omega([t_1,t_2];\ZZZ)$.
\end{remark}

Now we would like to extend Proposition~\ref{prop:unique_00} on the interval~$[0,+\infty)$.
It is impossible to state that $\mathcal{T}$ is contraction on $[0,\infty)$ with weight $\omega(t)=e^{-gt}$ for a single fixed $g\geq \|m_0\|_\infty$. Let us take
\begin{equation}\label{eq:final_weight_00}
{\omega_1}(t)=\left\{ \begin{array}{ll}
e^{-g_1 t}& \textrm{for } 0\leq t< 1,\\
e^{-g_2 t}& \textrm{for } 1\leq t< 2,\\
\dots\\
e^{-g_N t}& \textrm{for } N-1\leq t< N,\\
\dots
\end{array} \right.
\end{equation}
where $g_n\geq \|m_0\|_\infty$ is a positive constant for $n\in\NN$ such that for fixed $0<c<1$ and $M_n\equiv 1$ for all $n$, the following estimate holds
\begin{equation}\label{eq:sup_sup}
\sup_{t\in[N-1,N)}e^{-g_nt}\left\|\big(\mathcal{T}\nu_\bullet-\mathcal{T}\overline{\nu}_\bullet\big)(t)\right\|_{\ZZZ}\leq c \sup_{t\in[N-1,N)}e^{-g_nt}\|\nu_t-\overline{\nu}_t\|_{\ZZZ}\qquad \textrm{ on }[N-1,N)
\end{equation}
for all
\begin{equation}\label{eqn:notat_B}
\nu_\bullet,\overline{\nu}_\bullet\in\widehat{\mathcal{B}}_\omega^{\mu_0}:=\left\{f\in\ZZZ_\omega:f(0)=\mu_0\in\MM(\RRD), \|f\|_{\ZZZ_\omega}\leq\|\mu_0\|_{TV}\right\};
\end{equation}
see notation~\eqref{set:B}. Thus, summarizing:
\begin{proposition}\label{prop:T_H_0_infty_00}
Let $0\leq c<1$ and the weight $\omega_1:\RR_+\to\RR_+$ be defined by~\eqref{eq:final_weight_00}. Then the operator $\mathcal{T}=S \circ B: \widehat{\mathcal{B}}_{\omega_1}^{\mu_0} \to \widehat{\mathcal{B}}_{\omega_1}^{\mu_0}$ is a contraction 
\[\|\mathcal{T}(\nu_\bullet)-\mathcal{T}(\overline{\nu}_\bullet)\|_{\ZZZ_{\omega_1}}\leq c\|\nu_\bullet-\overline{\nu}_\bullet\|_{\ZZZ_{\omega_1}}.\]
\end{proposition}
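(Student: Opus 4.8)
The plan is to run the fixed-point estimate of Proposition~\ref{prop:unique_00} on each window $[N-1,N)$ separately, with its own exponential rate $g_N$, and then to glue the windows by a supremum.

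\textbf{Invariance and control of the constants.} First I would check $\mathcal{T}(\widehat{\mathcal{B}}_{\omega_1}^{\mu_0})\subseteq\widehat{\mathcal{B}}_{\omega_1}^{\mu_0}$: since every $g_n\geq\|m_0\|_\infty$, estimate~\eqref{eq:T_bound} gives $\omega_1(t)\|\mathcal{T}(\nu_\bullet)(t)\|_\ZZZ\leq e^{-g_N t}e^{t\|m_0\|_\infty}\|\mu_0\|_{TV}\leq\|\mu_0\|_{TV}$ for $t\in[N-1,N)$, while $\mathcal{T}(\nu_\bullet)(0)=\mu_0$ and $t\mapsto\mathcal{T}(\nu_\bullet)(t)$ is continuous by Corollary~\ref{coro:conti_time}. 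Moreover, on the $\mathcal{T}$-invariant subset where in addition $\|\nu_t\|_\ZZZ\leq e^{t\|m_0\|_\infty}\|\mu_0\|_{TV}$ holds --- which contains the constant curve $\mu_0$ and all iterates $\nu_\bullet^{0,n}$, and in which the fixed point is sought --- one has $\sup_{0\leq s\leq N}\|\nu_s\|_\ZZZ\leq e^{N\|m_0\|_\infty}\|\mu_0\|_{TV}$, a bound \emph{independent of the rates} $g_1,g_2,\dots$; hence, by Lemma~\ref{lem:4_3_new} and the estimates in the proof of Lemma~\ref{lem:estim_x_K}, the quantities $C_t[b,\overline{b}]$, $C_t[w,\overline{w}]$ and $\max\{\|w\|_\infty,\|\overline{w}\|_\infty\}$ of Lemma~\ref{lem:S_estim_00} --- with $b=v_0(k_{\nu_\bullet})$, $\overline{b}=v_0(k_{\overline{\nu}_\bullet})$, $w=m_0(k_{\nu_\bullet})$, $\overline{w}=m_0(k_{\overline{\nu}_\bullet})$ --- are bounded on $[0,N]$ uniformly in $g_1,g_2,\dots$, so that $\sup_{t\in[0,N]}e^{A_1(t)}A_2(t)D\leq P_N<\infty$ for some $P_N$ independent of the rates, where $D=2L_0M_0$.

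\textbf{Per-window estimate.} Fix $N$ and $t\in[N-1,N)$. Combining Lemma~\ref{lem:S_estim_00} with Lemma~\ref{lem:estimB_00} exactly as in the proof of Proposition~\ref{prop:unique_00},
\[\omega_1(t)\,\|(\mathcal{T}\nu_\bullet-\mathcal{T}\overline{\nu}_\bullet)(t)\|_\ZZZ\leq e^{-g_N t}\,e^{A_1(t)}A_2(t)\,D\int_0^t\|\nu_s-\overline{\nu}_s\|_\ZZZ\,\dd s.\]
I would then insert $1=\omega_1(s)^{-1}\omega_1(s)$ under the integral and split it at $N-1$:
\[\int_0^t\|\nu_s-\overline{\nu}_s\|_\ZZZ\,\dd s\leq\Big(\Sigma_{N-1}+\int_{N-1}^t e^{g_N s}\,\dd s\Big)\,\|\nu_\bullet-\overline{\nu}_\bullet\|_{\ZZZ_{\omega_1}},\qquad \Sigma_{N-1}:=\int_0^{N-1}\frac{\dd s}{\omega_1(s)},\]
where $\Sigma_{N-1}$ depends only on $g_1,\dots,g_{N-1}$. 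Since $\int_{N-1}^t e^{g_N s}\dd s\leq g_N^{-1}e^{g_N t}$ and $t\geq N-1$, multiplying by $e^{-g_N t}$ gives
\[\omega_1(t)\,\|(\mathcal{T}\nu_\bullet-\mathcal{T}\overline{\nu}_\bullet)(t)\|_\ZZZ\leq P_N\Big(e^{-g_N(N-1)}\Sigma_{N-1}+g_N^{-1}\Big)\,\|\nu_\bullet-\overline{\nu}_\bullet\|_{\ZZZ_{\omega_1}}.\]

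\textbf{Choice of rates and gluing.} Choose the $g_n$ inductively: with $g_1,\dots,g_{N-1}$ fixed (so $\Sigma_{N-1}$ and $P_N$ are fixed finite numbers), take $g_N\geq\|m_0\|_\infty$ so large that $P_N(e^{-g_N(N-1)}\Sigma_{N-1}+g_N^{-1})\leq c$; this is possible because the bracket tends to $0$ as $g_N\to\infty$ (for $N=1$, $\Sigma_0=0$ and one only needs $P_1/g_1\leq c$). This is precisely the choice underlying~\eqref{eq:sup_sup}. With these rates the last display holds for every $N$ and every $t\in[N-1,N)$, whence
\[\|\mathcal{T}(\nu_\bullet)-\mathcal{T}(\overline{\nu}_\bullet)\|_{\ZZZ_{\omega_1}}=\sup_{N\geq1}\ \sup_{t\in[N-1,N)}\omega_1(t)\,\|(\mathcal{T}\nu_\bullet-\mathcal{T}\overline{\nu}_\bullet)(t)\|_\ZZZ\leq c\,\|\nu_\bullet-\overline{\nu}_\bullet\|_{\ZZZ_{\omega_1}}.\]
The main obstacle is the uniform-in-$g_n$ bound $\sup_{[0,N]}e^{A_1(t)}A_2(t)\leq P_N$ used in the first step: $C_t[b,\overline{b}]$ sits \emph{inside} the exponential $e^{A_1(t)}$ and is controlled by $\sup_{[0,t]}\|\nu_s\|_\ZZZ$, so if one only used the bound $\|\nu_s\|_\ZZZ\leq\|\mu_0\|_{TV}/\omega_1(s)$ available on all of $\widehat{\mathcal{B}}_{\omega_1}^{\mu_0}$, the resulting constant would grow exponentially in $g_N$ and overwhelm the factor $e^{-g_N(N-1)}$; it is therefore essential to restrict to the $\mathcal{T}$-invariant sub-ball where the sharper a priori estimate $\|\nu_t\|_\ZZZ\leq e^{t\|m_0\|_\infty}\|\mu_0\|_{TV}$ (Corollary~\ref{cly:bound_nu}, cf.~\eqref{eq:T_bound}) holds. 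The remaining steps repeat the bookkeeping of the proof of Proposition~\ref{prop:unique_00}.
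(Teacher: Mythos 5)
Your construction is essentially the one the paper gestures at: it does not write out a proof of this proposition at all, but simply postulates the existence of constants $g_n$ for which~\eqref{eq:sup_sup} holds and declares the proposition to be the resulting summary. Your sketch fills in precisely what is left implicit: the per-window estimate obtained from Lemma~\ref{lem:S_estim_00} and Lemma~\ref{lem:estimB_00} (as in the proof of Proposition~\ref{prop:unique_00}), the split of $\int_0^t\|\nu_s-\overline{\nu}_s\|_\ZZZ\,\dd s$ at $N-1$ into a ``past'' part $\Sigma_{N-1}$ and a ``current window'' part bounded by $g_N^{-1}e^{g_Nt}$, and the inductive tuning of $g_N$. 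This is correct bookkeeping.

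What deserves emphasis is the observation you bury at the end: if one uses only the bound $\|\nu_s\|_\ZZZ\leq\|\mu_0\|_{TV}/\omega_1(s)$ available on all of $\widehat{\mathcal{B}}_{\omega_1}^{\mu_0}$, then $\sup_{[0,N]}\|\nu_s\|_\ZZZ$ grows like $e^{g_NN}$, so $C_t[b,\overline{b}]$ (via Lemma~\ref{lem:4_3_new}) and hence $e^{A_1(t)}$ blow up doubly exponentially in $g_N$, overwhelming the factor $e^{-g_N(N-1)}+g_N^{-1}$; increasing $g_N$ makes things worse, not better. This circularity is already present in the paper's own proof of Proposition~\ref{prop:unique_00}, which appeals to Lemma~\ref{lem:4_3_new}'s $(\inf_{[0,T]}\omega)^{-1}$-bound and then asserts that one can nevertheless ``establish an appropriate value of~$g$'' --- an assertion that is not justified as written. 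Your repair --- pass to the $\mathcal{T}$-invariant sub-ball of curves satisfying the $g$-independent pointwise bound $\|\nu_t\|_\ZZZ\leq e^{t\|m_0\|_\infty}\|\mu_0\|_{TV}$, which is a consequence of~\eqref{eq:T_bound} and hence contains $\mathcal{T}(\widehat{\mathcal{B}}_{\omega_1}^{\mu_0})$ and all iterates from $\mu_0$ --- is exactly what is needed and is the correct way to read the definition of $\mathcal{B}_\omega^{\mu_0,T}$ in~\eqref{set:B}, whose constant $M_T$ was introduced precisely so that the ball encodes the $TV$-type bound rather than the raw $\ZZZ_\omega$-ball bound. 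Strictly speaking, with this repair you prove the contraction on a proper $\mathcal{T}$-invariant subset of $\widehat{\mathcal{B}}_{\omega_1}^{\mu_0}$ rather than on the full set as the proposition literally states; but since $\mathcal{T}$ maps all of $\widehat{\mathcal{B}}_{\omega_1}^{\mu_0}$ into that subset, the fixed-point conclusion and the convergence of the approximation scheme are unaffected, and this is arguably the statement the paper should have made.
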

%
%
%
\begin{proposition}
Let coefficients $v_0$, $m_0$ satisfy assumptions of Theorem~\ref{main_hille} and let weight be defined by~\eqref{eq:final_weight_00}. Then according to Proposition~\ref{prop:T_H_0_infty_00} and Banach Fixed-Point Theorem one has, that the approximate scheme described by~\eqref{egn:non-pertur0_0000} and~\eqref{egn:non-pertur0_01} is convergent. It means that there exists $\overline{\nu}_\bullet:[0,\infty)\to\ZZZ$ such that
\[\lim_{n\to \infty}\|\nu_\bullet^{0,n}-\overline{\nu}_\bullet^{0}\|_{\ZZZ_{\omega_1}}=0.\]
\end{proposition}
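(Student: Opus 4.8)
The plan is to recognise that this statement is an immediate consequence of Proposition~\ref{prop:T_H_0_infty_00} combined with the Banach Fixed-Point Theorem, so that essentially all of the analytic work has already been done; what remains is bookkeeping. First I would observe that the approximation scheme~\eqref{egn:non-pertur0_0000}--\eqref{egn:non-pertur0_01} is exactly the Picard iteration for the operator $\mathcal{T}=S\circ B$ started from the constant curve $t\mapsto\mu_0$. Setting $\nu_\bullet^{0,-1}:=(t\mapsto\mu_0)$, one checks $\nu_\bullet^{0,-1}\in\widehat{\mathcal{B}}_{\omega_1}^{\mu_0}$ because $0<\omega_1(t)\leq\omega_1(0)=1$ forces $\|\nu_\bullet^{0,-1}\|_{\ZZZ_{\omega_1}}=\|\mu_0\|_\ZZZ\leq\|\mu_0\|_{TV}$ (Corollary~\ref{cly:bound_nu}); and for each $n\geq 0$ the frozen coefficients $b(t,\cdot)=v_0(k_{\nu_t^{0,n-1}})$, $w(t,\cdot)=m_0(k_{\nu_t^{0,n-1}})$ appearing in~\eqref{egn:non-pertur0_01} satisfy the hypotheses of Lemma~\ref{lem:repr_form} by Lemma~\ref{lem:3_26} and Corollary~\ref{cry:311}, applied inductively to the narrowly continuous curve $\nu_\bullet^{0,n-1}$ (the base case $\nu_\bullet^{0,-1}$ being constant, hence trivially narrowly continuous). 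So $\nu_\bullet^{0,n}$ exists, is a narrowly continuous $\MM(\RRD)$-valued curve, and by Corollary~\ref{coro:conti_time} a continuous $\ZZZ$-valued curve; by the definitions of $S$ and $B$ this means $\nu_\bullet^{0,n}=S\bigl(B(\nu_\bullet^{0,n-1})\bigr)=\mathcal{T}(\nu_\bullet^{0,n-1})$, hence $\nu_\bullet^{0,n}=\mathcal{T}^{\,n+1}(\nu_\bullet^{0,-1})$ for all $n\geq 0$.

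Next I would invoke the Banach Fixed-Point Theorem on $\widehat{\mathcal{B}}_{\omega_1}^{\mu_0}$. This set is a complete metric space: $\ZZZ_{\omega_1}=\CC_{\omega_1}([0,\infty);\ZZZ)$ is complete because $\omega_1$ is bounded below by a positive constant on every bounded subinterval (on $[N-1,N)$ it equals $e^{-g_N t}\geq e^{-g_N N}>0$), and $\widehat{\mathcal{B}}_{\omega_1}^{\mu_0}$ is a closed, hence complete, subset — it is the intersection of the closed norm ball $\{\|f\|_{\ZZZ_{\omega_1}}\leq\|\mu_0\|_{TV}\}$ with the closed affine condition $f(0)=\mu_0$ (closed since evaluation at $t=0$ is continuous on $\ZZZ_{\omega_1}$, as $\omega_1(0)=1$); it is nonempty since it contains $\nu_\bullet^{0,-1}$. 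Proposition~\ref{prop:T_H_0_infty_00} then gives that $\mathcal{T}$ maps $\widehat{\mathcal{B}}_{\omega_1}^{\mu_0}$ into itself and is a contraction with constant $c<1$. The Banach Fixed-Point Theorem furnishes a unique fixed point $\overline{\nu}_\bullet^0\in\widehat{\mathcal{B}}_{\omega_1}^{\mu_0}$ together with the a~priori rate $\|\mathcal{T}^{\,k}(f)-\overline{\nu}_\bullet^0\|_{\ZZZ_{\omega_1}}\leq\frac{c^{\,k}}{1-c}\|\mathcal{T}(f)-f\|_{\ZZZ_{\omega_1}}$ for every $f\in\widehat{\mathcal{B}}_{\omega_1}^{\mu_0}$; taking $f=\nu_\bullet^{0,-1}$ and $k=n+1$ and using the identification of the first step yields $\|\nu_\bullet^{0,n}-\overline{\nu}_\bullet^0\|_{\ZZZ_{\omega_1}}\leq\frac{c^{\,n+1}}{1-c}\|\mathcal{T}(\nu_\bullet^{0,-1})-\nu_\bullet^{0,-1}\|_{\ZZZ_{\omega_1}}\to 0$, which is the assertion.

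I do not expect a genuine obstacle here: all the difficulty is concentrated in Proposition~\ref{prop:T_H_0_infty_00} (the contraction estimate and the invariance of $\widehat{\mathcal{B}}_{\omega_1}^{\mu_0}$), which is assumed. The only two points needing a little care are (i) that the recursive scheme is literally Picard iteration for $\mathcal{T}$, which requires the well-posedness of each linear step — already guaranteed by Lemma~\ref{lem:repr_form} together with Lemma~\ref{lem:3_26} and Corollary~\ref{cry:311}; and (ii) that $\ZZZ_{\omega_1}$, and hence $\widehat{\mathcal{B}}_{\omega_1}^{\mu_0}$, is complete, for which the piecewise-exponential form of $\omega_1$ in~\eqref{eq:final_weight_00} is precisely what is needed. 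It is worth recording, for use in steps \textbf{B}--\textbf{D}, that the construction also shows $\overline{\nu}_\bullet^0$ is the \emph{unique} fixed point of $\mathcal{T}$ in $\widehat{\mathcal{B}}_{\omega_1}^{\mu_0}$.
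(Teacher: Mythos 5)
Your argument is correct and follows essentially the route the paper implicitly takes — the paper states this proposition without proof precisely because it is an application of Proposition~\ref{prop:T_H_0_infty_00} together with the Banach Fixed-Point Theorem, and you fill in exactly the right details: identifying the scheme~\eqref{egn:non-pertur0_0000}--\eqref{egn:non-pertur0_01} as Picard iteration for $\mathcal{T}$ started from the constant curve $t\mapsto\mu_0$, and checking that $\widehat{\mathcal{B}}_{\omega_1}^{\mu_0}$ is a nonempty complete metric space on which $\mathcal{T}$ is an invariant contraction. The small bookkeeping points you flag (that $\omega_1(0)=1$ keeps the constant curve inside $\widehat{\mathcal{B}}_{\omega_1}^{\mu_0}$, that each linear step is well-posed via Lemma~\ref{lem:repr_form}/Lemma~\ref{lem:3_26}/Corollary~\ref{cry:311}, and that the piecewise-exponential $\omega_1$ has the positive-lower-bound property needed for completeness) are all correct and are the substance of what the paper leaves to the reader.
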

 Notice that the weight defined in~\eqref{eq:final_weight_00} is the same as in~\eqref{eq:final_weig_final_3}. Recall that the weight used in Theorem~\ref{main_hille} is defined by~\eqref{eq:final_weig_final}. 
 Since $\tilde{\omega}(t)\leq\omega_1(t)$ for all $t\in[0,\infty)$, above argumentation stays true for $\tilde{\omega}$.  


\begin{corollary}\label{cry:measure}
Note that by the fixed point of $\mathcal{T}$, the mapping $t\mapsto\overline{\nu}_t^0$ in $\ZZZ_{\omega_1}$ solves the problem~\eqref{def:prob_init}, where  $t\mapsto\overline{\nu}_t^0$ is the limit of approximation scheme given by~\eqref{egn:non-pertur0_0000} and~\eqref{egn:non-pertur0_01}. 
{\cblue Being a fixed point of $\mathcal{T}$, the~mapping $\overline{\nu}_\bullet^0$ solves the linear equation with coefficients $b(t,x)=v_0(k_{\overline{\nu}_t})$, $w(t,x)=m_0(k_{\overline{\nu}_t})$, hence $\overline{\nu}_t^0\in\mathcal{M}(\RRD)$ for all $t\geq 0$. Moreover $t\mapsto \overline{\nu}_t^0$ is narrowly continuous.}
\end{corollary}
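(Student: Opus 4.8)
The plan is to exploit that $\overline{\nu}_\bullet^0$, being the limit of the iteration~\eqref{egn:non-pertur0_0000}--\eqref{egn:non-pertur0_01}, is the (unique) fixed point of $\mathcal{T}=S\circ B$ produced by Proposition~\ref{prop:T_H_0_infty_00} and the Banach Fixed-Point Theorem, and then to read off the three assertions from the definition of $S$: a fixed point equals the value of $S$ on $B$ of itself, and $S$ was defined through the representation formula~\eqref{repr:form}, which manifestly yields bounded Radon measures forming a narrowly continuous curve. Thus everything reduces to checking that the coefficients $B(\overline{\nu}_\bullet^0)$ are admissible in Lemma~\ref{lem:repr_form}.

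First I would apply $B$ to $\overline{\nu}_\bullet^0$, obtaining $b(t,\cdot):=v_0(k_{\overline{\nu}_t^0})$ and $w(t,\cdot):=m_0(k_{\overline{\nu}_t^0})$, and verify that these satisfy the hypotheses of Lemma~\ref{lem:repr_form} on every $[0,T]$. The delicate point is that at this stage $\overline{\nu}_\bullet^0$ is only known to lie in $\CC_{\omega_1}([0,\infty);\ZZZ)$, not yet in $\CC([0,\infty);\MM(\RRD))$, so Lemma~\ref{lem:3_26} and Corollary~\ref{cry:311} cannot be invoked verbatim. Instead I would argue directly: the kernel operator $\mu\mapsto k_\mu$ is well-defined and continuous on all of $\ZZZ$ by Lemma~\ref{lem:estim_x_K}, so $t\mapsto k_{\overline{\nu}_t^0}$ is continuous into $\CCA(\RRD)$; composing with $v_0,m_0\in\CC^{2+\alpha}$ and using Lemma~\ref{lem:lem_2_2502} makes $t\mapsto b(t,\cdot)$ continuous into $\CCA(\RRD;\RRD)\subset W^{1,\infty}(\RRD;\RRD)$ and $t\mapsto w(t,\cdot)$ continuous into $\CCA(\RRD)$. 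Since $\omega_1$ is strictly positive and bounded below on every compact interval, $\sup_{0\leq t\leq T}\|\overline{\nu}_t^0\|_\ZZZ<\infty$, so Lemma~\ref{lem:4_3_new} gives $\|b(t,\cdot)\|_\infty\leq\|v_0\|_\infty$, $\|w(t,\cdot)\|_\infty\leq\|m_0\|_\infty$ and a bound on $\|\nabla_x b(t,\cdot)\|_\infty$ uniform over $[0,T]$; hence $t\mapsto b(t,\cdot)\in L^1([0,T];W^{1,\infty})$. Moreover $w$ is bounded, for each $t$ Lipschitz in $x$ (being in $\CCA(\RRD)$), and jointly Borel measurable by separate continuity in $t$ and $x$, exactly as in the proof of Corollary~\ref{cry:311}. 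So $(b,w)$ is admissible for Lemma~\ref{lem:repr_form}.

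Then, by construction $S(b,w)$ is the weak solution to the linear equation~\eqref{eq:def_linear} with coefficients $(b,w)$ and initial datum $\mu_0$, which Lemma~\ref{lem:repr_form} identifies with the unique narrowly continuous family of finite Borel measures given by~\eqref{repr:form}. Because $\overline{\nu}_\bullet^0$ is a fixed point of $\mathcal{T}=S\circ B$, we get $\overline{\nu}_t^0=S(b,w)(t)$ for every $t\geq 0$, so $\overline{\nu}_t^0\in\MM(\RRD)$ for all $t$ and $t\mapsto\overline{\nu}_t^0$ is narrowly continuous. Finally, knowing now that $\overline{\nu}_\bullet^0\in\CC([0,\infty);\MM(\RRD))$ (weighted continuity in $\ZZZ_{\omega_1}$ together with positivity of $\omega_1$ gives ordinary continuity into $\ZZZ$, and its values are measures), that $\overline{\nu}_0^0=\mu_0$, and that it is a fixed point of $\mathcal{T}$, Proposition~\ref{prop:san_260220} yields that $\overline{\nu}_\bullet^0$ is a weak solution to~\eqref{def:prob_init} in the sense of Definition~\ref{def:weak_solu_22}.

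The only genuine obstacle is the apparent circularity just noted: the representation formula is the device that makes the solution a measure, yet invoking it for the coefficients built from $\overline{\nu}_\bullet^0$ seems to require $\overline{\nu}_\bullet^0$ to be measure-valued in the first place. The resolution -- which is the crux of this corollary -- is that verifying the hypotheses of Lemma~\ref{lem:repr_form} needs only $\ZZZ$-valued continuity of $\overline{\nu}_\bullet^0$ and local boundedness of $\|\overline{\nu}_\bullet^0\|_\ZZZ$, both guaranteed by membership in $\widehat{\mathcal{B}}_{\omega_1}^{\mu_0}$, precisely because the kernel operator and its composition with $v_0,m_0$ extend continuously to all of $\ZZZ$. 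Everything else is bookkeeping.
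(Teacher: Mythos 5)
Your proof is correct and follows the same route the paper intends: a fixed point of $\mathcal{T}=S\circ B$ equals $S(b,w)$ with $b,w = B(\overline{\nu}_\bullet^0)$, and $S$ produces, via the representation formula of Lemma~\ref{lem:repr_form}, a narrowly continuous family of bounded Radon measures. You also correctly pinpoint the one genuine subtlety the paper's terse statement glides over -- that verifying admissibility of the coefficients must not presuppose $\overline{\nu}_\bullet^0$ is measure-valued -- and resolve it exactly as the surrounding machinery intends, namely that Lemma~\ref{lem:estim_x_K}, Lemma~\ref{lem:lem_2_2502}, Lemma~\ref{cl:B_prop} and Lemma~\ref{lem:4_3_new} are all phrased for $\ZZZ$-valued curves, so $B(\overline{\nu}_\bullet^0)$ lies in $\CC_{\omega_1}([0,T];\CCA(\RRD;\RR^{d+1}))$ and feeds into Lemma~\ref{lem:repr_form} with no circularity.
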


\section{Aproximation scheme for perturbed equation}\label{sec:well}
Recall the perturbed equation~\eqref{def:prob_init2} and its solution $\mu_\bullet^h$ for $h\in(-\tfrac{1}{2},\tfrac{1}{2})$. The existence of a~unique weak solution in the sense of Definition~\ref{def:weak_solu_22} has been established in the previous section for $h=0$ by means of exhibiting $\mu_\bullet^0$ as the limit in $\ZZZ_{\omega_1}$ of a~sequence of approximate solutions $\{\nu_\bullet^{0,n}\}_n$. In a~similar way one can show the existence of a~unique weak solution $\mu_\bullet^h$ for general $h\in(-\tfrac{1}{2},\tfrac{1}{2})$ by changing the velocity fields and production rates. The resulting sequence of approximation to $\mu_\bullet^h$ is not convenient for showing the continuous differentiability of $h\mapsto\mu_\bullet^h$ in $\ZZZ_{\omega_1}$. For reaching this main objective we prefer another approximation scheme that we shall now introduce.


Problem~\eqref{def:prob_init2} can be written as
 \begin{equation}\label{eqn:pertur0}
\left\{ \begin{array}{l}
\partial_t \mu_t^{h} +\div_x \Big(\big(v_0\big(k_{\mu_t^h}\big)+h\:v_1\big(k_{\mu_t^h}\big)\big) \mu_t^{h}\Big)= \Big(m_0\!\left(k_{\mu_t^h}\right)+h\:m_1\!\left(k_{\mu_t^h}\right)\Big) \mu_t^{h}\\
\mu_{t=0}^{h}=\mu_0\in\MM(\RRD).
\end{array} \right.
\end{equation} 
We assume that coefficients satisfy the assumptions of Theorem~\ref{main_hille}.
A solution $\mu_\bullet^{h}$ can be approximated by a sequence $\lbrace\nu_\bullet^{h,n}\rbrace_{n}$ defined by the following scheme. Recall that $\MM(\RRD)$ is equipped  with the topology of $\ZZZ$.

\noindent {\bf Step 0:} A unique continuous mapping $\mu_\bullet^{0}:[0,\infty)\to\MM(\RRD)$ solves the  non-perturbed, {\bf non-linear} equation
\begin{equation}\label{egn:non-pertur0}
\left\{ \begin{array}{l}
\partial_t \mu_t^{0} +\div_x \Big(v_0\!\left(k_{\mu_t^0}\right) \mu_t^{0}\Big)= m_0\!\left(k_{\mu_t^0}\right) \mu_t^{0}\\
\mu_{t=0}^{0}=\mu_0\in\MM(\RRD).
\end{array} \right. 
\end{equation} 
From previous section one has that there exists a unique weak solution to~\eqref{egn:non-pertur0} such that $\mu_t^0$ is a bounded Radon measure; see Corollary~\ref{cry:measure}. Recall that one has $\lim_{n\to\infty}\|\mu_\bullet^{0}-\nu_\bullet^{0,n}\|_{\ZZZ_{\omega_1}}=0$. Define $\nu_t^{h,0}:=\mu_t^0$.\\

\noindent {\bf Step 1:} For $n \geq 1$, a unique continuous mapping $\nu_\bullet^{h,n}:[0,\infty)\to\MM(\RRD)$ solves the {\bf linear} equation
\begin{equation}\label{approx:sch}
\left\{ \begin{array}{ll}
\partial_t \nu_t^{h,n}\! +\!\div_x \Big[\left(v_0\!\left(k_{\nu_t^{h,n-1}}\right)\!+\!h\:v_1\!\left(k_{\nu_t^{h,n-1}}\right)\right) \nu_t^{h,n}\Big]\!=\! \Big[m_0\!\left(k_{\nu_t^{h,n-1}}\right)\!+\!h\:m_1\!\left(k_{\nu_t^{h,n-1}}\right)\!\Big]\nu_t^{h,n}&\\
  \nu_0^{h,n}=\mu_0.
\end{array} \right.
\end{equation}

Notice that for $h\in(-\frac{1}{2},\frac{1}{2})$ and for every solution to the linear equation $\nu_\bullet^{h,n}$,  the coefficients 
\[v_0\left(k_{\nu_\bullet^{h,n}}\right)+h\:v_1\left(k_{\nu_\bullet^{h,n}}\right)
\quad \textrm{ and } \quad 
m_0\left(k_{\nu_\bullet^{h,n}}\right)+h\:m_1\left(k_{\nu_\bullet^{h,n}}\right)\]
satisfy the assumptions of Lemma~\ref{lem:repr_form}; see Corollary~\ref{cry:311}. By Corollary~\eqref{cry:measure} one has that~$\mu_t^0$ is bounded Radon measure. For $n=1$, the problem~\eqref{approx:sch} is the linear equation with measure initial condition $\mu_0\in\MM(\RRD)$. Hence by Lemma~\ref{lem:repr_form} solution $\nu_t^{h,1}$ also is a bounded Radon measure and $t\mapsto \nu_t^{h,1}$ is the unique solution in the class of narrowly continuous mapping. Identically for every~$\nu_t^{h,n}$, where $n\in\NN$.
Also note that $\nu_t^{0,0}=\mu_t^0$ for all $n$ in this approximation scheme, which differs from that in Section~\ref{sec:well_00} for $h=0$.

Let us consider the following problem, where $\mu_\bullet:[0,\infty)\to\MM(\RRD)$ is given:
\begin{equation}\label{eq:T_h_nemy}
\left\{ \begin{array}{l}
\partial_ t \widehat{\mu}_t +\div_x \Big((v_0(k_{\mu_t})+h\:v_1(k_{\mu_t}))\widehat{\mu}_t\Big)= \Big(m_0(k_{\mu_t})+h\:m_1(k_{\mu_t})\Big) \widehat{\mu}_t\\
\widehat{\mu}_{t=0}=\mu_0\in\MM(\RRD).
\end{array}\right. 
\end{equation} 
Define an operator $\mathcal{T}_h(\mu_\bullet):= \widehat{\mu}_\bullet$.  
Similar to Section~\ref{sec:well_00} and Proposition~\ref{prop:san_260220} for the case $h=0$ any fixed point of this operator is a solution to problem~\eqref{eqn:pertur0}.
Similarly to previous reasoning (i.e. as in the case for $\mathcal{T}$) the operator ~$\mathcal{T}_h$ can be regarded as a composition of two operators $\mathcal{T}_h= B_h\circ S$; see page~\pageref{set:B}. Operator $S$ 
is identical to the previously considered operator.
Second operator~$B_h$  describes how the approximated solution influences the velocity field $v_0(k_{\nu_t^{h,m}})$ and the function $m_0(k_{\nu_t^{h,m}})$, which will be used in the $(m{+}1)$-th step of approximation scheme (as before like in the case of operator $B$).
Now, the operator~$B_h$ depends also on parameter~$h$, hence
\[B_h:\Big(t \mapsto \mu_t\Big)\mapsto \Big(t \mapsto\big( v_0(k_{\mu_t})+h\:v_1(k_{\mu_t}), m_0(k_{\mu_t})+h\:m_1(k_{\mu_t})\big)\Big).\] 
Since $h\in(-\frac{1}{2},\frac{1}{2})$, thus the domain and values of operator $B_h$ are identical as for operator $B$
\[B_h:\ZZZ_\omega^T\to \mathcal{C}_{\omega}\left([0,T]; \CCA(\RRD;\RR^{d+1})\right);\]
see page~\pageref{def:ZZ}.
All reasoning concerning operator $B$ described in Section~\ref{sec:well_00} can be conducted also for the operator~$B_h$. In particular, the direct analogy of Proposition~\ref{prop:T_H_0_infty_00} is the following proposition.
\begin{proposition}\label{prop:T_H_0_infty}
Let $0<c<1$ and the weight $\omega_1$ be defined by~\eqref{eq:final_weight_00} with $\mathcal{T}_h=S\circ B_h$ replacing $\mathcal{T}$, such that~\eqref{eq:sup_sup} holds for all $h\in(-\tfrac{1}{2},\tfrac{1}{2})$.
Then $T_h:\widehat{\mathcal{B}}_{\omega_1}^{\mu_0} \to \widehat{\mathcal{B}}_{\omega_1}^{\mu_0}$
 is contraction with contraction constant $c$ for any $h\in(-\frac{1}{2}; \frac{1}{2})$; see notation~\eqref{eqn:notat_B}. 

The unique fixed point of operator $\mathcal{T}_h$ in $\widehat{\mathcal{B}}_{\omega_1}^{\mu_0}$ (which is $\mu_\bullet^h$) has such the property that 
\[
\lim_{n\to\infty}\|\mu_\bullet^{h}-\nu_\bullet^{h,n}\|_{\ZZZ_{\omega_1}}=0.
\]
Moreover $t\mapsto \mu_t^h$ is the unique weak solution to problem~\eqref{def:prob_init2}.
\end{proposition}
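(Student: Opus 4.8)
The plan is to mirror the proofs of Propositions~\ref{prop:unique_00} and~\ref{prop:T_H_0_infty_00}, the only new ingredient being that every constant can be chosen uniformly in $h\in(-\tfrac12,\tfrac12)$. First I would check that $\mathcal{T}_h=S\circ B_h$ maps $\widehat{\mathcal{B}}_{\omega_1}^{\mu_0}$ into itself. Since $\mathcal{T}_h(\nu_\bullet)$ solves the linear equation whose production rate is $m_0(k_{\nu_t})+h\,m_1(k_{\nu_t})$, Corollary~\ref{cly:bound_nu} yields
\[
\|\mathcal{T}_h(\nu_\bullet)(t)\|_\ZZZ\ \leq\ \exp\!\big(t\,(\|m_0\|_\infty+\tfrac12\|m_1\|_\infty)\big)\,\|\mu_0\|_{TV}\qquad\text{for all }h\in(-\tfrac12,\tfrac12),
\]
a bound uniform in $h$; combined with $\omega_1\leq 1$ and with each $g_N$ in~\eqref{eq:final_weight_00} taken $\geq\|m_0\|_\infty+\tfrac12\|m_1\|_\infty$, this gives $\|\mathcal{T}_h(\nu_\bullet)\|_{\ZZZ_{\omega_1}}\leq\|\mu_0\|_{TV}$, while $\mathcal{T}_h(\nu_\bullet)(0)=\mu_0$ by the initial condition in~\eqref{approx:sch}; hence $\mathcal{T}_h(\widehat{\mathcal{B}}_{\omega_1}^{\mu_0})\subseteq\widehat{\mathcal{B}}_{\omega_1}^{\mu_0}$.

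Next I would establish the block-wise contraction. Fixing $h$ and $N\in\NN\setminus\{0\}$, I apply Lemma~\ref{lem:S_estim_00} to the two linear problems with coefficients $b=v_0(k_{\nu_\bullet})+h\,v_1(k_{\nu_\bullet})$, $w=m_0(k_{\nu_\bullet})+h\,m_1(k_{\nu_\bullet})$ (and their $\overline{\nu}_\bullet$-counterparts), together with the evident $B_h$-analogue of Lemma~\ref{lem:estimB_00},
\[
\|B_h(\nu_\bullet)(t)-B_h(\overline{\nu}_\bullet)(t)\|_\infty\ \leq\ D_h\,\|\nu_t-\overline{\nu}_t\|_\ZZZ,
\]
in which $D_h$ is bounded by a constant independent of $h$, since $|h|<\tfrac12$ and the kernels $K_u$ and derivatives $\nabla u$, $u\in\{v_0,v_1,m_0,m_1\}$, are fixed. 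Carrying out the computation from the proof of Proposition~\ref{prop:unique_00} on the interval $[N-1,N)$ then gives
\[
\sup_{t\in[N-1,N)}e^{-g_N t}\,\big\|(\mathcal{T}_h\nu_\bullet-\mathcal{T}_h\overline{\nu}_\bullet)(t)\big\|_\ZZZ\ \leq\ \frac{A_2\,D_h}{g_N}\,e^{A_1}\,\|\nu_\bullet-\overline{\nu}_\bullet\|_{\ZZZ_{\omega_1}},
\]
where $A_1,A_2$ are bounded on $[0,N]$ by Lemma~\ref{lem:4_3_new}, again with bounds independent of $h$ after the same substitution of norms. Thus $g_N$ may be chosen large enough that the prefactor is $\leq c$ for every $h\in(-\tfrac12,\tfrac12)$ simultaneously, which is precisely the requirement~\eqref{eq:sup_sup}; since $\omega_1=e^{-g_N t}$ on $[N-1,N)$, passing to the supremum over $N$ yields $\|\mathcal{T}_h(\nu_\bullet)-\mathcal{T}_h(\overline{\nu}_\bullet)\|_{\ZZZ_{\omega_1}}\leq c\,\|\nu_\bullet-\overline{\nu}_\bullet\|_{\ZZZ_{\omega_1}}$ for every such $h$.

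Finally I would invoke Banach's fixed-point theorem. The set $\widehat{\mathcal{B}}_{\omega_1}^{\mu_0}$ is closed in $\ZZZ_{\omega_1}$, which is complete because $\omega_1$ satisfies the condition in the Remark following Corollary~\ref{prop:exp_omega_1}; hence there is a unique fixed point $\mu_\bullet^h\in\widehat{\mathcal{B}}_{\omega_1}^{\mu_0}$, and since $\nu_\bullet^{h,0}=\mu_\bullet^0\in\widehat{\mathcal{B}}_{\omega_1}^{\mu_0}$ and $\nu_\bullet^{h,n}=\mathcal{T}_h^{\,n}(\nu_\bullet^{h,0})$, the iterates converge: $\lim_{n\to\infty}\|\mu_\bullet^{h}-\nu_\bullet^{h,n}\|_{\ZZZ_{\omega_1}}=0$. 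That $t\mapsto\mu_t^h$ is the \emph{unique} weak solution to~\eqref{def:prob_init2} I would obtain from the $h$-analogue of Proposition~\ref{prop:san_260220}: a fixed point of $\mathcal{T}_h$ solves the linear equation with coefficients $v_0(k_{\mu_\bullet^h})+h\,v_1(k_{\mu_\bullet^h})$ and $m_0(k_{\mu_\bullet^h})+h\,m_1(k_{\mu_\bullet^h})$, which by Lemma~\ref{lem:3_26} and Corollary~\ref{cry:311} satisfy the hypotheses of Lemma~\ref{lem:repr_form}; hence $\mu_t^h$ is a bounded Radon measure, $t\mapsto\mu_t^h$ is narrowly continuous, and $\mu_\bullet^h$ is a weak solution to~\eqref{def:prob_init2}, while conversely any weak solution is, by the uniqueness in Lemma~\ref{lem:repr_form}, exactly the fixed point. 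The main obstacle throughout is this uniformity in $h$: one must ensure that a single weight $\omega_1$ — equivalently, a single admissible sequence $(g_N)$ — serves all $h$ in the bounded interval $(-\tfrac12,\tfrac12)$ at once, which rests on the perturbed coefficients $v^h$, $m^h$ and their derivatives being bounded in $\CCA$ uniformly in $h$; cf.\ the Remark following Theorem~\ref{main_hille}. Everything else is a verbatim repetition of Section~\ref{sec:well_00}.
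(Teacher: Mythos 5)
Your proposal is correct and takes essentially the same approach as the paper: the paper states this proposition as a direct analogue of Proposition~\ref{prop:T_H_0_infty_00} (noting only that "all reasoning concerning operator $B$ … can be conducted also for the operator $B_h$"), and your write-up faithfully reconstructs that analogy — the invariance of $\widehat{\mathcal{B}}_{\omega_1}^{\mu_0}$, the block-wise contraction with constants uniform in $h\in(-\tfrac12,\tfrac12)$, Banach's fixed-point theorem, and the $h$-analogue of Proposition~\ref{prop:san_260220} for uniqueness of the weak solution. You also correctly isolate the only genuinely new ingredient relative to Section~\ref{sec:well_00}, namely that all constants (in particular $D_h$, $A_1$, $A_2$, and the exponent in the $TV$-bound) must be bounded uniformly in $h$, which the paper handles by restricting $h$ to a bounded interval.
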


\section{Proof of Theorem~\ref{main_hille}}\label{sec:linear_tr}
In this section we finally provide a proof of the main result, Theorem~\ref{main_hille}. Thus, under assumption {\bf A1}-{\bf A4} of that theorem  we now want to show that $\lim_{\lambda\to 0}\frac{\mu_\bullet^{h+\lambda}-\mu_\bullet^{h}}{\lambda}$ exists in  $\ZZZ_{\tilde{\omega}}$ for any fixed $h\in(-\frac{1}{2},\frac{1}{2})$,
where the weight $\tilde{\omega}$ is defined by~\eqref{eq:final_weig_final}.
Moreover, we prove that the mapping $h\mapsto\mu_\bullet^h:(-\frac{1}{2};\frac{1}{2})\to\ZZZ_{\tilde{\omega}}$ is $\CC^1$.
 First, we consider differentiability at $h=0$, hence the limit 
\begin{equation}\label{eq:lim_0}
\lim_{\lambda\to 0}\frac{\mu_\bullet^{\lambda}-\mu_\bullet^0}{\lambda}
\end{equation}
is investigated.
Using Proposition~\ref{prop:T_H_0_infty} we know that for fixed $\lambda\in(-\tfrac{1}{2},\tfrac{1}{2})$ one has $\lim_{n\to\infty}\|\mu_\bullet^{\lambda}-\nu_\bullet^{\lambda,n}\|_{\ZZZ_{\tilde{\omega}}}=0$. Hence, instead of~\eqref{eq:lim_0} the double limit
 \[\lim_{\lambda\to 0}\left(\lim_{n\to\infty}\frac{\nu_\bullet^{\lambda,n}-\mu_\bullet^{0}}{\lambda}\right)\]
is considered. 
 We proceed by arguing that the order of limits can be changed. Then we can make use of results concerning the linear transport equation, in particular Theorem~\ref{tw:ciagla_pocho}.
The following theorem will be involved.
\begin{theorem}\label{lem:rudin}
Let $E$ be a set in  a metric space and $\lambda_0$ be a limit point of $E$. Suppose that
$(Y, d)$ is a complete metric space. Let $f_n, f: E \to Y$ such that $f_n \to f$ uniformly on $E$ and
\[\lim_{\lambda\to \lambda_0} f_n(\lambda)=A_n, \qquad (n=1,2,3, \ldots)\]
exists. Then the sequence $A_n$ converges in $Y$ and 
$
\lim_{\lambda\to \lambda_0} f(\lambda)=\lim_{n\to \infty} A_n.
$
\end{theorem}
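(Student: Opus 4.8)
The plan is to follow the classical double–limit argument (this is essentially Theorem~7.11 in Rudin's \emph{Principles of Mathematical Analysis}). The proof splits into two parts: first one shows that $(A_n)_n$ is a Cauchy sequence in $Y$, hence convergent by completeness to some $A$; then one shows that this same $A$ is the limit of $f(\lambda)$ as $\lambda\to\lambda_0$.

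For the first part, fix $\varepsilon>0$. Since $f_n\to f$ uniformly on $E$, the sequence $(f_n)$ is uniformly Cauchy, so there is $N$ with $d\big(f_n(\lambda),f_m(\lambda)\big)\le\varepsilon$ for all $\lambda\in E$ and all $n,m\ge N$. Now let $\lambda\to\lambda_0$: this is legitimate because $\lambda_0$ is a limit point of $E$, both limits $\lim_{\lambda\to\lambda_0}f_n(\lambda)=A_n$ and $\lim_{\lambda\to\lambda_0}f_m(\lambda)=A_m$ exist by hypothesis, and the metric $d$ is (jointly) continuous; one obtains $d(A_n,A_m)\le\varepsilon$ for all $n,m\ge N$. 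Thus $(A_n)$ is Cauchy, and by completeness of $(Y,d)$ there exists $A\in Y$ with $A_n\to A$.

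For the second part, I would use the triangle inequality: for any $\lambda\in E$ and any index $n$,
\[
d\big(f(\lambda),A\big)\le d\big(f(\lambda),f_n(\lambda)\big)+d\big(f_n(\lambda),A_n\big)+d(A_n,A).
\]
Given $\varepsilon>0$, uniform convergence $f_n\to f$ together with $A_n\to A$ allow one to fix an $n$ so large that the first term is $<\varepsilon/3$ for \emph{every} $\lambda\in E$ and the third term is $<\varepsilon/3$. With this $n$ now fixed, the hypothesis $\lim_{\lambda\to\lambda_0}f_n(\lambda)=A_n$ yields a punctured neighbourhood of $\lambda_0$ (in the ambient metric space, intersected with $E$) on which the middle term is $<\varepsilon/3$. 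Combining the three bounds gives $d\big(f(\lambda),A\big)<\varepsilon$ on that punctured neighbourhood, i.e. $\lim_{\lambda\to\lambda_0}f(\lambda)=A=\lim_{n\to\infty}A_n$.

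There is no genuine obstacle here; the only point deserving a word of care is the interchange of the $\varepsilon/3$ estimates — the order in which one chooses $n$ (uniformly in $\lambda$) and then $\delta$ (depending on that fixed $n$) — which is exactly where uniformity of the convergence $f_n\to f$ is used, and it is the reason the analogous statement fails for merely pointwise convergence. This lemma will then be applied in the sequel with $\lambda_0=0$, $Y=\ZZZ_{\tilde{\omega}}$, $f_n=\big(\lambda\mapsto(\nu_\bullet^{\lambda,n}-\mu_\bullet^{0})/\lambda\big)$ and $f=\big(\lambda\mapsto(\mu_\bullet^{\lambda}-\mu_\bullet^{0})/\lambda\big)$, the required uniform convergence $f_n\to f$ following from the contraction estimates behind Proposition~\ref{prop:T_H_0_infty} (the contraction constant $c<1$ there being independent of $h$, hence of $\lambda$).
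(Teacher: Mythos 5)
Your proof is correct and is precisely the classical Rudin argument (Theorem~7.11 of \emph{Principles of Mathematical Analysis}) transplanted to the complete-metric-space setting, which is what the paper itself indicates ("the proof remains essentially the same as in [Rudin]") without spelling it out. Both parts — the uniformly-Cauchy argument to get $A_n\to A$ by completeness, and the three-term $\varepsilon/3$ estimate with $n$ chosen first uniformly in $\lambda$ and $\delta$ chosen afterwards — match the intended proof, so no discrepancy to report.
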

Above theorem is an analog of Theorem 7.11 in \cite[page 149]{Rudi:1976}. The proof remains essentially the same as in \cite{Rudi:1976}. 

 In our case $E=(-\frac{1}{2}, \frac{1}{2})\setminus \lbrace 0 \rbrace$. Set $Y=\ZZZ_{\tilde{\omega}}$, let $\lambda_0=0$  and \[f_n(\lambda):=\frac{\nu_{\bullet}^{\lambda,n}-\mu_{\bullet}^{0}}{\lambda}\qquad \textrm{ and }\quad 
f(\lambda):=\lim_{n\to \infty} f_n(\lambda)=\frac{\mu_{\bullet}^{\lambda}-\mu_{\bullet}^{0}}{\lambda}.\] Each $f_n(\lambda)$ is a $\MM(\RRD)$-valued function on $[0,\infty)$ which can be evaluated at $t$.
In other words, to state that 
\[
\lim_{\lambda\to \lambda_0}\lim_{n\to \infty} f_n(\lambda) =\lim_{n\to \infty} \lim_{\lambda\to \lambda_0} f_n(\lambda),
\]
we need to argue that $f_n$ converges uniformly to $f$ in $E$ ({\bf 1st Step} of the proof) and also show that for all $n$ the limit $\lim_{\lambda\to \lambda_0} f_n(\lambda)$ exists in $\ZZZ_{\tilde{\omega}}$ ({\bf 2nd Step}). Both limits are considered in $\ZZZ_{\tilde{\omega}}$.
In~{\bf 3rd Step} we will show that the result remains true for any $h\in(-\tfrac{1}{2},\tfrac{1}{2})$. 



\noindent {\bf Step 1.} Recall that the space $\ZZZ_{\tilde{\omega}}$ is complete. We investigate uniform convergence $\left\{\frac{\nu_{\bullet}^{\lambda,n}-\mu_{\bullet}^{0}}{\lambda}\right\}_n$ in $\ZZZ_{\tilde{\omega}}$ for $\lambda\in E$ to $\frac{\mu_\bullet^\lambda-\mu_\bullet^0}{\lambda}$. 
We want to argue that it is a Cauchy sequence, then as a sequence in complete space~$\ZZZ_{\tilde{\omega}}$ its limit also is in~$\ZZZ_{\tilde{\omega}}$. Using a property of approximating scheme this limit will be then $\frac{\mu_\bullet^\lambda-\mu_\bullet^0}{\lambda}$. To show that the norm $\left\|\frac{\nu_\bullet^{\lambda,n}-\mu_\bullet^0}{\lambda}-\frac{\nu_\bullet^{\lambda,m}-\mu_\bullet^0}{\lambda}\right\|_{\ZZZ_{\tilde{\omega}}}$ can be made arbitrary small when $n,m\to \infty$ uniformly for $\lambda\in E$, we will need a few additional estimates.

\begin{proposition}\label{prop:pochodna_0}
If the coefficients satisfy the assumptions of Theorem~\ref{main_hille}, then
the limit \[\lim_{\lambda\to 0}\frac{\nu_\bullet^{\lambda,1}-\mu_\bullet^{0}}{\lambda}\] 
exists and is an element of $\ZZZ_{\tilde{\omega}}$. Moreover, the following estimate holds
\begin{equation}\label{eq:final_final_B_0}
\left\|\frac{\nu_t^{\lambda,1}-\mu_t^{0}}{\lambda}\right\|_\ZZZ\leq C(t) \big\|\partial_\lambda\nu^{\lambda,1}_{t}\big|_{\lambda=0}\big\|_\ZZZ,
\end{equation}
where $C(t)=\mathcal{O}(te^{gt})$ and $g>0$ can be chosen to hold for all $\lambda\in E$.
\end{proposition}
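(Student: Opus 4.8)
The starting point is to notice that, with $\mu_\bullet^0$ now held fixed, the curve $\nu_\bullet^{\lambda,1}$ produced by~\eqref{approx:sch} with $n=1$ is nothing but the weak solution of the \emph{linearly} perturbed \emph{linear} transport equation~\eqref{def:prob_init2_lin} with the $\lambda$-independent coefficients
\[
b_0(t,\cdot):=v_0\big(k_{\mu_t^0}\big),\ \ b_1(t,\cdot):=v_1\big(k_{\mu_t^0}\big),\ \
w_0(t,\cdot):=m_0\big(k_{\mu_t^0}\big),\ \ w_1(t,\cdot):=m_1\big(k_{\mu_t^0}\big),
\]
with $\lambda$ playing the role of the perturbation parameter. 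Moreover $\nu_\bullet^{0,1}=\mu_\bullet^0$: at $\lambda=0$ equation~\eqref{approx:sch} becomes~\eqref{egn:non-pertur0}, whose unique solution (by Lemma~\ref{lem:repr_form}) is $\mu_\bullet^0$. Hence the difference quotient in the statement is exactly $\tfrac{1}{\lambda}\big(\nu_\bullet^{\lambda,1}-\nu_\bullet^{0,1}\big)$ for the linear problem, and the whole proposition reduces to the parameter-differentiability results for the linear equation, Theorem~\ref{th:main} and Theorem~\ref{tw:ciagla_pocho}, applied to these particular coefficients — once their hypotheses are checked.

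\textbf{Verifying the hypotheses and passing to $\ZZZ_{\tilde\omega}$.} First I would record, using Corollary~\ref{cry:measure} and Proposition~\ref{prop:mu_0_TV}, that $\mu_\bullet^0$ is a narrowly continuous $\MM(\RRD)$-valued curve with $\|\mu_t^0\|_\ZZZ\le\|\mu_t^0\|_{TV}\le e^{\|m_0\|_\infty t}\|\mu_0\|_{TV}$. By Lemma~\ref{lem:3_26} and Corollary~\ref{cry:311} the coefficients $b_i,w_i$ satisfy the assumptions of Lemma~\ref{lem:repr_form} on every $[0,T]$, so $\nu_\bullet^{\lambda,1}$ is a well-defined, unique, narrowly continuous measure-valued solution; and by Lemma~\ref{lem:nem_char} with~\eqref{eq:estim_coeff} the maps $t\mapsto b_i(t,\cdot),w_i(t,\cdot)$ are continuous into $\CCA$ and bounded on each bounded interval, with $\CCA$-norm of order $e^{gt}$ for a $g>0$ depending only on $\|m_0\|_\infty$, $\|\mu_0\|_{TV}$ and the data. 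Thus the argument behind Theorem~\ref{tw:ciagla_pocho} applies — once the time-dependence of its constants is tracked (the refinement over~\cite{lycz:2018} announced in Section~\ref{sec:linear_perturb}), it accommodates coefficients that are bounded in $\CCA$ only on bounded time intervals — and yields that $\lambda\mapsto\nu_\bullet^{\lambda,1}$ is differentiable in $\ZZZ$ at every fixed $t$ with $\tfrac1\lambda\big(\nu_\bullet^{\lambda,1}-\mu_\bullet^0\big)\to\partial_\lambda\nu_\bullet^{\lambda,1}\big|_{\lambda=0}$ in $\CC_{\widehat\omega}([0,\infty);\ZZZ)$ as $\lambda\to0$, with $\widehat\omega(t)=\mathcal{O}(t^{-1}e^{-gt})$. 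Since $\tilde\omega$ decays at least as fast as $\widehat\omega$ up to a positive constant (compare with~\eqref{eq:final_weig_final_3}, enlarging the exponent in $\omega_2$ if necessary), $\ZZZ_{\widehat\omega}$ embeds continuously into $\ZZZ_{\tilde\omega}$, so the limit exists and belongs to $\ZZZ_{\tilde\omega}$.

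\textbf{The estimate~\eqref{eq:final_final_B_0}.} Fix $t$. As $\sigma\mapsto\nu_t^{\sigma,1}\in\ZZZ$ is $\CC^1$ on $(-\tfrac12,\tfrac12)$, the fundamental theorem of calculus in $\ZZZ$ gives
\[
\frac{\nu_t^{\lambda,1}-\mu_t^0}{\lambda}=\int_0^1\partial_\lambda\nu_t^{\sigma,1}\big|_{\sigma=s\lambda}\,\dd s,
\qquad\text{hence}\qquad
\Big\|\frac{\nu_t^{\lambda,1}-\mu_t^0}{\lambda}\Big\|_\ZZZ\le\sup_{|\sigma|\le|\lambda|}\big\|\partial_\lambda\nu_t^{\sigma,1}\big\|_\ZZZ,
\]
so it remains to dominate the right-hand side by $C(t)\,\big\|\partial_\lambda\nu_t^{\sigma,1}\big|_{\sigma=0}\big\|_\ZZZ$. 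For this I would write out the explicit representation of $\partial_\lambda\nu_t^{\sigma,1}$ obtained by differentiating~\eqref{repr:form} in the parameter (as in the proof of Theorem~\ref{th:main}): the $\sigma$-dependence enters only through the flow maps $X_{b^\sigma}$ (with $b^\sigma=b_0+\sigma b_1$) and the exponential weights $e^{\int_0^t w^\sigma(s,X_{b^\sigma}(s,\cdot))\,\dd s}$, and Lemma~\ref{lem:X_estim} together with Gronwall shows that for $|\sigma|\le\tfrac12$ these stay within a factor of exponential type in $t$ of their values at $\sigma=0$. Substituting this, with $\|\mu_0\|_{TV}<\infty$ and the $e^{gt}$-bounds on $\|b_i(t,\cdot)\|_{\CCA},\|w_i(t,\cdot)\|_{\CCA}$, into that representation produces $\sup_{|\sigma|\le|\lambda|}\|\partial_\lambda\nu_t^{\sigma,1}\|_\ZZZ\le C(t)\,\|\partial_\lambda\nu_t^{\sigma,1}|_{\sigma=0}\|_\ZZZ$ with $C(t)=\mathcal{O}(te^{gt})$, the constant $g$ taken large enough to be valid for every $\lambda\in E$.

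\textbf{Main obstacle.} The delicate part is the bookkeeping in the last step: one must track how every constant in the parameter-differentiation of~\eqref{repr:form} depends on $t$ (through $\sup_{[0,t]}\|\nabla_x b^\sigma\|_\infty$, $\sup_{[0,t]}\|w^\sigma\|_\infty$ and $\sup_{[0,t]}\|\mu_s^0\|_\ZZZ$) and on $\sigma$, and then confirm that the $\sigma$-dependence can be collected into a single multiplicative factor $C(t)$ in front of the $\sigma=0$ derivative; this is precisely the quantitative strengthening of~\cite{lycz:2018} that forces the shape of the weights $\omega_1,\omega_2$ in~\eqref{eq:final_weig_final_3}.
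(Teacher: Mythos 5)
Your reduction and choice of tools match the paper's proof exactly: with $\mu_\bullet^0$ frozen, the coefficients $\overline b_i(t,\cdot)=v_i(k_{\mu_t^0})$ and $\overline w_i(t,\cdot)=m_i(k_{\mu_t^0})$ are $\lambda$-independent, so $\nu_\bullet^{\lambda,1}$ is the solution of the \emph{linear} problem~\eqref{def:prob_init2_lin} with the linear perturbation~\eqref{def:pert} and with $\nu_\bullet^{0,1}=\mu_\bullet^0$; you then check the hypotheses via Lemma~\ref{lem:nem_char} and Lemma~\ref{lem:3_26}, invoke Theorem~\ref{tw:ciagla_pocho}, and compare the weights $\widehat\omega$ and $\tilde\omega$. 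This is precisely the paper's argument. Your added caveat — that the $\CC_b$-in-time hypothesis of Theorem~\ref{tw:ciagla_pocho} is not literally met, since $\|\nabla_x\overline b_i(t,\cdot)\|_\infty$ scales with $\|\mu_t^0\|_\ZZZ$ which is only controlled by $e^{\|m_0\|_\infty t}\|\mu_0\|_{TV}$ — is correct and is exactly why $\omega_1$ in~\eqref{eq:final_weight_00} is built piecewise with increasing $g_N$; the paper leaves this gluing implicit, and you are right to flag that the constants must be tracked on bounded intervals and absorbed by the weight.

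Where you genuinely depart from the paper is the derivation of~\eqref{eq:final_final_B_0}, which the paper dispatches in one sentence ("Hence estimate~\eqref{eq:final_final_B_0} is true"). Your FTC identity $\frac{1}{\lambda}\bigl(\nu_t^{\lambda,1}-\mu_t^0\bigr)=\int_0^1\partial_\sigma\nu_t^{\sigma,1}\big|_{\sigma=s\lambda}\,\dd s$ is clean, but the multiplicative comparison $\sup_{|\sigma|\le|\lambda|}\bigl\|\partial_\sigma\nu_t^{\sigma,1}\bigr\|_\ZZZ\le C(t)\bigl\|\partial_\sigma\nu_t^{\sigma,1}\big|_{\sigma=0}\bigr\|_\ZZZ$ that you assert next is the weak link: a constant $C(t)$ independent of the data cannot dominate a quantity that may be positive by one that may vanish, and differentiating~\eqref{repr:form} in $\sigma$ together with Proposition~\ref{claim:esti_X}, Lemma~\ref{lem:X_estim} and Gronwall does not produce such a ratio bound. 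What that computation \emph{does} produce, uniformly in $|\sigma|\le\tfrac12$, is the additive bound $\bigl\|\partial_\sigma\nu_t^{\sigma,1}\bigr\|_\ZZZ\le C(t)\,\|\mu_0\|_{TV}$ with $C(t)=\mathcal{O}(te^{gt})$, and that is the estimate you should record: everything downstream (the telescoping estimate~\eqref{eq:estim_final_A}, the Cauchy estimate~\eqref{eq:final_final}, and the design of $\tilde\omega$) only uses the time-growth order $\mathcal{O}(te^{gt})$, and the factor $\bigl\|\partial_\lambda\nu_t^{\lambda,1}\big|_{\lambda=0}\bigr\|_\ZZZ$ on the right of~\eqref{eq:final_final_B_0} is effectively a name for this growth rate (consistent with $\partial_\lambda\nu_\bullet^{\lambda,1}\big|_{\lambda=0}\in\ZZZ_{\widehat\omega}$ and $\widehat\omega=\mathcal{O}(t^{-1}e^{-gt})$), not a quantity the left side should be dominated by pointwise in $t$.
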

Notice that in estimation~\eqref{eq:final_final_B_0} on the one hand the weight  $\tilde{\omega}$ needs to reduce $C(t)$ but also it reduces the growth of $\partial_\lambda\nu_t^{\lambda,1}$ in time. That is why $\omega_2$ occurs in the construction of weight $\tilde{\omega}$ in~\eqref{eq:final_weig_final}.
\begin{proof}
The mapping $\mu_\bullet^{0}$ is a solution to
\begin{equation}\label{egn:non-pertur0_000}
\left\{ \begin{array}{l}
\partial_t \mu_t^{0} +\div_x \Big(v_0\!\left(k_{\mu_t^0}\right) \mu_t^{0}\Big)= m_0\!\left(k_{\mu_t^0}\right) \mu_t^{0}\\
\mu_0^{0}=\mu_0,
\end{array} \right. 
\end{equation}
and $\nu_\bullet^{\lambda,1}$ solves
\begin{equation}\label{approx:sch_000}
\left\{ \begin{array}{ll}
\partial_t \nu_t^{\lambda,1} +\div_x \Big[\left(v_0\!\left(k_{\mu_t^0}\right)+\lambda \:v_1\!\left(k_{\mu_t^0}\right)\right) \nu_t^{\lambda,1}\Big]= \Big[m_0\!\left(k_{\mu_t^0}\right)+\lambda\: m_1\!\left(k_{\mu_t^0}\right)\Big]\nu_t^{\lambda,1}&\\
 \nu_0^{\lambda,1}=\mu_0.
\end{array} \right.
\end{equation}
Solution $\mu_\bullet^{0}$ can be obtained using approximating scheme. 
The coefficients $v_0\!\left(k_{\mu_t^0}\right)$, $v_1\!\left(k_{\mu_t^0}\right)$, $m_0\!\left(k_{\mu_t^0}\right)$, $m_1\!\left(k_{\mu_t^0}\right)$ in the above problems are fixed i.e. coefficients does not depend on solution. It means that the problem~\eqref{egn:non-pertur0_000} can be seen as the linear one
\begin{equation}\label{egn:non-pertur0_00_prim}
\left\{ \begin{array}{l}
\partial_t \mu_t^{0} +\div_x \Big(\overline{b}_0(t,x)\; \mu_t^{0}\Big)= \overline{w}_0(t,x)\; \mu_t^{0}\\
\mu_0^{0}=\mu_0,
\end{array} \right. \tag{\ref{egn:non-pertur0_000}'}
\end{equation}
and similarly~\eqref{approx:sch_000} can be rewritten into a form
\begin{equation*}\label{approx:sch_00_prim}
\left\{ \begin{array}{ll}
\partial_t \nu_t^{\lambda,1} +\div_x \Big((\overline{b}_0(t,x)+\lambda \overline{b}_1(t,x))\; \nu_t^{\lambda,1}\Big)= \Big(\overline{w}_0(t,x)+\lambda \overline{w}_1(t,x)\Big)\;\nu_t^{\lambda,1}&\\
 \nu_0^{\lambda,1}=\mu_0,
\end{array} \right.\tag{\ref{approx:sch_000}'}
\end{equation*}
where $\overline{b}_i(t,x):=v_i\!\left(k_{\mu_t^0}\right)$ and $\overline{w}_i(t,x):=m_i\!\left(k_{\mu_t^0}\right)$ for $i=0,1$. By Lemma~\ref{lem:nem_char} we know that these coefficients satisfy assumptions of Theorem~\ref{th:main}.
Thus, the limit $\lim_{\lambda\to 0}\frac{\nu_t^{\lambda,1}-\mu_t^{0}}{\lambda}$ is the derivative of the solution to linear equation~\eqref{def:prob_init2_lin} with perturbation~\eqref{def:pert}.
Theorem~\ref{tw:ciagla_pocho} states that this derivative exists and is an element of~$\ZZZ$. Hence estimate~\eqref{eq:final_final_B_0} is true for fixed~$t$. By Theorem~\ref{tw:ciagla_pocho} one has $\lim_{\lambda\to 0}\frac{\nu_\bullet^{\lambda,1}-\mu_\bullet^0}{\lambda}\in\ZZZ_{\widehat{\omega}}$. Since $\widehat{\omega}(t)<\tilde{\omega}(t)$ for all $t\in[0,\infty)$, hence this limit is an element of $\ZZZ_{\tilde{\omega}}$.
\end{proof}

\begin{proposition}\label{lem:estim_z_teleskop}
For any fixed~$t$ and any $n$ the following estimates holds on $(-\tfrac{1}{2},\tfrac{1}{2})\setminus\{0\}$
\[
\left\|\frac{\nu_t^{\lambda,n+1}-\nu_t^{\lambda,n}}{\lambda}\right\|_\ZZZ\leq C(t)\: c^n\Big\|\partial_\lambda\nu_t^{\lambda,1}|_{\lambda=0}\Big\|_\ZZZ,\]
where constant $c<1$, and $C(t)=\mathcal{O}(te^{gt})$ are independent of $n$ and can be chosen to hold uniformly for $\lambda\in E$. 
\end{proposition}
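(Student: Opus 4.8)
The plan is to prove the telescoping estimate by induction on $n$, reducing each step to a comparison between two solutions of a \emph{linear} transport equation whose coefficients are frozen at the previous approximants $\nu_\bullet^{\lambda,n}$ and $\nu_\bullet^{\lambda,n-1}$. The base case $n=0$ is precisely Proposition~\ref{prop:pochodna_0} (reading $\nu_t^{\lambda,0}=\mu_t^0$), which already gives $\|\frac{\nu_t^{\lambda,1}-\mu_t^0}{\lambda}\|_\ZZZ\leq C(t)\|\partial_\lambda\nu_t^{\lambda,1}|_{\lambda=0}\|_\ZZZ$ with $C(t)=\mathcal{O}(te^{gt})$ uniformly in $\lambda\in E$. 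For the inductive step, observe that both $\nu_\bullet^{\lambda,n+1}$ and $\nu_\bullet^{\lambda,n}$ solve, by~\eqref{approx:sch}, linear transport equations of the form~\eqref{eq:def_linear} with the \emph{same} initial datum $\mu_0$, but with coefficient pairs
\[
\big(b^{(n)},w^{(n)}\big):=\big(v_0(k_{\nu_t^{\lambda,n}})+\lambda v_1(k_{\nu_t^{\lambda,n}}),\; m_0(k_{\nu_t^{\lambda,n}})+\lambda m_1(k_{\nu_t^{\lambda,n}})\big)
\]
and $\big(b^{(n-1)},w^{(n-1)}\big)$ respectively. Hence $\nu_\bullet^{\lambda,n+1}=S(b^{(n)},w^{(n)})$ and $\nu_\bullet^{\lambda,n}=S(b^{(n-1)},w^{(n-1)})$, and Lemma~\ref{lem:S_estim_00} applies directly.

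The key steps, in order, are as follows. First, apply Lemma~\ref{lem:S_estim_00} to bound $\|\nu_t^{\lambda,n+1}-\nu_t^{\lambda,n}\|_\ZZZ$ by $e^{A_1(t)}A_2(t)\int_0^t(\|b^{(n)}-b^{(n-1)}\|_\infty+\|w^{(n)}-w^{(n-1)}\|_\infty)\dd s$; the functions $A_1(t),A_2(t)$ are bounded on $[0,T]$ uniformly in $\lambda\in(-\tfrac12,\tfrac12)$ and in $n$, because by Proposition~\ref{prop:T_H_0_infty} every $\nu_\bullet^{\lambda,n}$ lies in $\widehat{\mathcal{B}}_{\omega_1}^{\mu_0}$, so $\sup_n\sup_{0\le s\le t}\|\nu_s^{\lambda,n}\|_\ZZZ$ is controlled, and then Lemma~\ref{lem:4_3_new} (applied with $v_0,m_0$ replaced by $v_0+\lambda v_1$, $m_0+\lambda m_1$, which is legitimate since $\lambda$ ranges over a bounded interval) bounds the relevant sup-norms and gradient sup-norms of the coefficients. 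Second, use Lemma~\ref{lem:estimB_00} (again with the perturbed coefficients, absorbing the factor $\lambda$ into the constant since $|\lambda|<\tfrac12$) to get $\|b^{(n)}-b^{(n-1)}\|_\infty+\|w^{(n)}-w^{(n-1)}\|_\infty\leq D'\|\nu_s^{\lambda,n}-\nu_s^{\lambda,n-1}\|_\ZZZ$ for a constant $D'$ independent of $n$ and $\lambda$. Dividing through by $|\lambda|$ and combining, we obtain
\[
\left\|\frac{\nu_t^{\lambda,n+1}-\nu_t^{\lambda,n}}{\lambda}\right\|_\ZZZ\leq e^{A_1(t)}A_2(t)D'\int_0^t\left\|\frac{\nu_s^{\lambda,n}-\nu_s^{\lambda,n-1}}{\lambda}\right\|_\ZZZ\dd s.
\]
Third, insert the inductive hypothesis $\|\frac{\nu_s^{\lambda,n}-\nu_s^{\lambda,n-1}}{\lambda}\|_\ZZZ\leq C(s)c^{n-1}\|\partial_\lambda\nu_s^{\lambda,1}|_{\lambda=0}\|_\ZZZ$ and bound $\|\partial_\lambda\nu_s^{\lambda,1}|_{\lambda=0}\|_\ZZZ$ by an increasing function of $s$ (from Theorem~\ref{tw:ciagla_pocho}, its growth is $\mathcal{O}(se^{gs})$); the contraction factor $c<1$ from Proposition~\ref{prop:T_H_0_infty} is produced exactly as in the proof of Proposition~\ref{prop:unique_00}, by choosing $g$ large so that $\frac{A_2(t)D'}{g}e^{A_1(t)}\le c$ on $[0,T]$ — this is what turns the Gronwall-type integral into an extra factor $c$ per step. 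Carrying out the $\int_0^t$ and folding the polynomial/exponential time factors into $C(t)=\mathcal{O}(te^{gt})$ closes the induction.

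The main obstacle I anticipate is bookkeeping the \emph{uniformity} of all constants simultaneously in $n$, in $\lambda\in E$, and in $t\in[0,T]$ — in particular verifying that the contraction constant $c$ and the envelope $C(t)$ can be chosen once and for all, rather than depending on $n$. This hinges on the fact that the entire sequence $\{\nu_\bullet^{\lambda,n}\}_n$ stays inside the fixed ball $\widehat{\mathcal{B}}_{\omega_1}^{\mu_0}$ (Proposition~\ref{prop:T_H_0_infty}), so the a priori $\ZZZ$-bounds feeding Lemma~\ref{lem:4_3_new} and Lemma~\ref{lem:estimB_00} are $n$-independent; once that is in hand, the same $g$ that makes $\mathcal{T}_h$ a contraction makes the integral recursion contract with the same ratio. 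A secondary technical point is that Lemma~\ref{lem:S_estim_00} and Lemma~\ref{lem:estimB_00} were stated for the unperturbed coefficients $v_0,m_0$; one must note explicitly that they extend verbatim to $v_0+\lambda v_1$ and $m_0+\lambda m_1$ with constants enlarged by a factor depending only on $\sup_{|\lambda|<1/2}(1+|\lambda|)$, hence still uniform, since $\lambda$ lives in a bounded interval.
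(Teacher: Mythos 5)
Your proof uses the same fundamental mechanism as the paper: the telescoping structure, the contraction property of $\mathcal{T}_h$ applied to consecutive differences, and Proposition~\ref{prop:pochodna_0} for the base case. The paper's proof is considerably shorter because it invokes Proposition~\ref{prop:T_H_0_infty} as a black box and iterates the contraction directly in the weighted norm,
\[
\left\|\tfrac{\nu_\bullet^{\lambda,n+1}-\nu_\bullet^{\lambda,n}}{\lambda}\right\|_{\ZZZ_{\tilde\omega}}
=\left\|\tfrac{\mathcal{T}_h(\nu_\bullet^{\lambda,n})-\mathcal{T}_h(\nu_\bullet^{\lambda,n-1})}{\lambda}\right\|_{\ZZZ_{\tilde\omega}}
\le c\left\|\tfrac{\nu_\bullet^{\lambda,n}-\nu_\bullet^{\lambda,n-1}}{\lambda}\right\|_{\ZZZ_{\tilde\omega}}
\le\cdots\le c^n\left\|\tfrac{\nu_\bullet^{\lambda,1}-\mu_\bullet^{0}}{\lambda}\right\|_{\ZZZ_{\tilde\omega}},
\]
and only at the very end inserts Proposition~\ref{prop:pochodna_0}; the factor $C(t)=\mathcal{O}(te^{gt})$ appears solely from the base case. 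You instead re-derive the contraction via Lemma~\ref{lem:S_estim_00} and Lemma~\ref{lem:estimB_00}, which is legitimate and equivalent in substance.

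However, the way you formulate the inductive step has a real snag: you state the inductive hypothesis as a pointwise-in-$t$ bound and then feed it into the Gronwall-type integral
\[
\left\|\tfrac{\nu_t^{\lambda,n+1}-\nu_t^{\lambda,n}}{\lambda}\right\|_\ZZZ\leq e^{A_1(t)}A_2(t)D'\int_0^t\left\|\tfrac{\nu_s^{\lambda,n}-\nu_s^{\lambda,n-1}}{\lambda}\right\|_\ZZZ\dd s.
\]
As written this recursion does \emph{not} produce a factor $c$: the prefactor $e^{A_1(t)}A_2(t)D'$ is large, and the crucial $1/g$ that makes $\frac{A_2(t)D'}{g}e^{A_1(t)}\le c$ only appears after rewriting the integrand as $e^{gs}\cdot e^{-gs}\|\cdot\|_\ZZZ$, passing to $\sup_s e^{-gs}\|\cdot\|_\ZZZ$, and integrating $\int_0^t e^{-g(t-s)}\dd s\le 1/g$ — that is, the recursion closes for the weighted sup-norm $\|\cdot\|_{\ZZZ_\omega}$, not pointwise. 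If you literally tried to close a pointwise induction, the integration would accumulate an additional time factor at each step, making $C(t)$ depend on $n$. Your phrase ``folding the polynomial/exponential time factors into $C(t)$'' hints at this danger. Since you do cite the mechanism of Proposition~\ref{prop:unique_00} correctly and acknowledge that it produces ``an extra factor $c$ per step'', the idea is right; you just need to take the weighted norm $\|\cdot\|_{\ZZZ_{\tilde\omega}}$ as the quantity you induct on (or, as the paper does, iterate the abstract contraction $n$ times at once in $\ZZZ_{\tilde\omega}$ and convert to a pointwise bound only once at the end using the base case).
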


\begin{proof}
First, let us recall that 
the operator $\mathcal{T}_h$ is a contraction in $\ZZZ_{{\omega_1}}$ with a contraction constant $c<1$ (cf. Proposition~\ref{prop:T_H_0_infty}). Hence it is contraction also in $\ZZZ_{\tilde{\omega}}$ with the same contraction constant. 
One obtains
\begin{equation}\label{eq:estim_final_A}
\begin{split}
&\left\|\frac{\nu_\bullet^{\lambda,n+1}-\nu_\bullet^{\lambda,n}}{\lambda}\right\|_{\ZZZ_{\tilde{\omega}}}\leq\left\Vert \frac{\mathcal{T}_h(\nu_\bullet^{\lambda,n})-\mathcal{T}_h(\nu_\bullet^{\lambda,n-1})}{\lambda}\right\Vert_{\ZZZ_{\tilde{\omega}}} 
\leq
c\left\Vert \frac{\nu_\bullet^{\lambda,n}-\nu_\bullet^{\lambda,n-1}}{\lambda}\right\Vert_{\ZZZ_{\tilde{\omega}}}\\
&=c\left\Vert \frac{\mathcal{T}_h(\nu_\bullet^{\lambda,n-1})-\mathcal{T}_h(\nu_\bullet^{\lambda,n-2})}{\lambda}\right\Vert_{\ZZZ_{\tilde{\omega}}}
 \!\!\!\!\leq 
c^2\left\Vert \frac{\nu_\bullet^{\lambda,n-1}\!-\nu_\bullet^{\lambda,n-2}}{\lambda}\right\Vert_{\ZZZ_{\tilde{\omega}}}
 \!\!\!\leq\dots \leq
 c^n \left\Vert \frac{\nu_\bullet^{\lambda,1}\!-\mu_\bullet^{0}}{\lambda}\right\Vert_{\ZZZ_{\tilde{\omega}}} 
,
\end{split}
\end{equation}
where $c<1$.
Making use of Proposition~\ref{prop:pochodna_0} and estimates~\eqref{eq:estim_final_A} we have
\[\left\|\frac{\nu_t^{\lambda,n+1}-\nu_t^{\lambda,n}}{\lambda}\right\|_{\ZZZ}\leq
C(t) \: c^n\big\|\partial_\lambda\nu^{\lambda,1}_{t}|_{\lambda=0}\big\|_\ZZZ.\]
Note that $C(t)$ is independent of $n$ and can be established to hold uniformly for $\lambda\in E$. 
The growth of $C(t)$ is described by Theorem~\ref{tw:ciagla_pocho} i.e. it is reduced by~$\omega_2(t)$. Hence, it is reduced also by weight $\tilde{\omega}(t)\leq\omega_2(t)$.
\end{proof}
Now, it will be argued that $\left\{\frac{\nu_{\bullet}^{\lambda,n}-\mu_{\bullet}^{0}}{\lambda}\right\}_n$ is a Cauchy sequence.
\begin{lemma}
The sequence $\left\{\frac{\nu_{\bullet}^{\lambda,n}-\mu_{\bullet}^{0}}{\lambda}\right\}_n$ converges uniformly for $\lambda \in(-\tfrac{1}{2},\tfrac{1}{2})$ to $\frac{\mu_\bullet^\lambda-\mu_\bullet^0}{\lambda}$ in the space~$\ZZZ_{\tilde{\omega}}$, where $\tilde{\omega}(t)$ is defined by~\eqref{eq:final_weig_final}. 
\end{lemma}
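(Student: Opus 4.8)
The plan is to turn the telescoping estimate of Proposition~\ref{lem:estim_z_teleskop} into a uniform Cauchy criterion, and then invoke completeness of $\ZZZ_{\tilde\omega}$ together with the already established convergence $\nu_\bullet^{\lambda,n}\to\mu_\bullet^\lambda$ in $\ZZZ_{\omega_1}$ (hence in $\ZZZ_{\tilde\omega}$, since $\tilde\omega\le\omega_1$). First I would write, for $m>n$, the telescoping decomposition
\begin{equation*}
\frac{\nu_\bullet^{\lambda,m}-\mu_\bullet^0}{\lambda}-\frac{\nu_\bullet^{\lambda,n}-\mu_\bullet^0}{\lambda}
=\sum_{k=n}^{m-1}\frac{\nu_\bullet^{\lambda,k+1}-\nu_\bullet^{\lambda,k}}{\lambda},
\end{equation*}
and bound its $\ZZZ_{\tilde\omega}$-norm by $\sum_{k=n}^{m-1}\bigl\|\tfrac{\nu_\bullet^{\lambda,k+1}-\nu_\bullet^{\lambda,k}}{\lambda}\bigr\|_{\ZZZ_{\tilde\omega}}$. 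Using the estimate from Proposition~\ref{lem:estim_z_teleskop} in its weighted form, namely $\bigl\|\tfrac{\nu_\bullet^{\lambda,k+1}-\nu_\bullet^{\lambda,k}}{\lambda}\bigr\|_{\ZZZ_{\tilde\omega}}\le c^{k}\bigl\|\tfrac{\nu_\bullet^{\lambda,1}-\mu_\bullet^0}{\lambda}\bigr\|_{\ZZZ_{\tilde\omega}}$, the sum is dominated by $\tfrac{c^{n}}{1-c}\bigl\|\tfrac{\nu_\bullet^{\lambda,1}-\mu_\bullet^0}{\lambda}\bigr\|_{\ZZZ_{\tilde\omega}}$.

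The key point for \emph{uniformity} in $\lambda\in E$ is then to bound $\bigl\|\tfrac{\nu_\bullet^{\lambda,1}-\mu_\bullet^0}{\lambda}\bigr\|_{\ZZZ_{\tilde\omega}}$ by a constant independent of $\lambda$. Here I would use Proposition~\ref{prop:pochodna_0}: pointwise in $t$ one has $\bigl\|\tfrac{\nu_t^{\lambda,1}-\mu_t^0}{\lambda}\bigr\|_\ZZZ\le C(t)\,\|\partial_\lambda\nu_t^{\lambda,1}|_{\lambda=0}\|_\ZZZ$ with $C(t)=\mathcal{O}(te^{gt})$, and the map $h\mapsto\nu_\bullet^{h,1}$ is the solution of the linear perturbed problem~\eqref{approx:sch_000} whose coefficients are $v_i(k_{\mu_t^0}),m_i(k_{\mu_t^0})$, independent of $\lambda$; by Theorem~\ref{tw:ciagla_pocho} the derivative $\partial_\lambda\nu_\bullet^{\lambda,1}$ lies in $\CC_{\widehat\omega}([0,\infty);\ZZZ)$ with $\widehat\omega$ of order $\mathcal{O}(t^{-1}e^{-gt})$. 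Multiplying by $\tilde\omega(t)\le\omega_2(t)=\mathcal{O}(t^{-2}e^{-Ct})$ kills the factor $C(t)=\mathcal{O}(te^{gt})$ and the $\widehat\omega^{-1}$-growth of $\partial_\lambda\nu_t^{\lambda,1}$, provided the constants $g,C$ in the definition of $\omega_1,\omega_2$ are chosen large enough — this choice is exactly why $\tilde\omega$ is built as in~\eqref{eq:final_weig_final}. Hence $\sup_{\lambda\in E}\bigl\|\tfrac{\nu_\bullet^{\lambda,1}-\mu_\bullet^0}{\lambda}\bigr\|_{\ZZZ_{\tilde\omega}}=:M<\infty$, and therefore
\begin{equation*}
\left\|\frac{\nu_\bullet^{\lambda,m}-\mu_\bullet^0}{\lambda}-\frac{\nu_\bullet^{\lambda,n}-\mu_\bullet^0}{\lambda}\right\|_{\ZZZ_{\tilde\omega}}\le \frac{c^{n}}{1-c}\,M\xrightarrow[n\to\infty]{}0
\end{equation*}
uniformly in $\lambda\in E$, so $\{\tfrac{\nu_\bullet^{\lambda,n}-\mu_\bullet^0}{\lambda}\}_n$ is uniformly Cauchy in the complete space $\ZZZ_{\tilde\omega}$.

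It remains to identify the uniform limit. For each fixed $\lambda$, Proposition~\ref{prop:T_H_0_infty} gives $\nu_\bullet^{\lambda,n}\to\mu_\bullet^\lambda$ in $\ZZZ_{\omega_1}$, hence in $\ZZZ_{\tilde\omega}$; dividing by the fixed scalar $\lambda$ and subtracting the $n$-independent element $\mu_\bullet^0/\lambda$ shows $\tfrac{\nu_\bullet^{\lambda,n}-\mu_\bullet^0}{\lambda}\to\tfrac{\mu_\bullet^\lambda-\mu_\bullet^0}{\lambda}$ in $\ZZZ_{\tilde\omega}$ for each $\lambda$. By uniqueness of limits, the uniform-in-$\lambda$ limit of the Cauchy sequence coincides with $\tfrac{\mu_\bullet^\lambda-\mu_\bullet^0}{\lambda}$, which is therefore an element of $\ZZZ_{\tilde\omega}$ for every $\lambda\in E$. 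I expect the main obstacle to be purely bookkeeping: verifying that the two competing weights $\omega_1$ (coming from the contraction of $\mathcal{T}_h$, which forces exponentially growing $g_N$) and $\omega_2$ (coming from $C(t)\cdot\|\partial_\lambda\nu_t^{\lambda,1}\|$, Theorem~\ref{lem:main_2}) can be reconciled into a single positive weight $\tilde\omega$ making \emph{both} the contraction estimate~\eqref{eq:estim_final_A} and the bound in Proposition~\ref{prop:pochodna_0} simultaneously finite, and checking that the constant $M$ above is genuinely independent of $\lambda$ because the coefficients of the Step-1 linear problem do not depend on $\lambda$ at all.
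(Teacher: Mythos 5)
Your proposal is correct and follows essentially the same route as the paper's own proof: a telescoping decomposition combined with the contraction bound from Proposition~\ref{lem:estim_z_teleskop} (you correctly invoke its weighted form, which is exactly the inequality~\eqref{eq:estim_final_A} established inside its proof), a geometric series to obtain the uniform Cauchy estimate, completeness of $\ZZZ_{\tilde\omega}$, and identification of the limit via Proposition~\ref{prop:T_H_0_infty} since $\tilde\omega\le\omega_1$. The only cosmetic difference from the paper is where the supremum over $t$ is taken — the paper keeps the $\sup_t\tilde\omega(t)$ outside the telescoping sum of $\|\cdot\|_{\ZZZ}$-norms while you distribute it into each summand; this gives a marginally weaker but equally effective bound. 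Your closing remark that the weights $\omega_1$ and $\omega_2$ must be reconciled so that the contraction survives in $\ZZZ_{\tilde\omega}$ while $\tilde\omega(t)\,C(t)\,\|\partial_\lambda\nu_t^{\lambda,1}|_{\lambda=0}\|_\ZZZ$ stays bounded is indeed the nontrivial bookkeeping point, and it is exactly what the construction~\eqref{eq:final_weig_final} is designed to achieve.
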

\begin{proof}
It will be first shown that $\left\{\frac{\nu_{\bullet}^{\lambda,n}-\mu_{\bullet}^{0}}{\lambda}\right\}_n$ is Cauchy sequence. Let us assume that $n\geq m$. By the definition of the norm $\|\cdot\|_{\ZZZ_{\tilde{\omega}}}$ one obtains
\begin{equation}\label{eq:final_final}
\begin{split}
&\left\| \frac{\nu_{\bullet}^{\lambda,n}-\mu_{\bullet}^{0}}{\lambda}
-
\frac{\nu_{\bullet}^{\lambda,m}-\mu_{\bullet}^{0}}{\lambda}
\right\|_{\ZZZ_{\tilde{\omega}}}=\sup_{t\geq 0} {\tilde{\omega}}(t)\left\Vert \frac{\nu_t^{\lambda,n}-\mu_t^{0}}{\lambda} -\frac{\nu_t^{\lambda,m}-\mu_t^{0}}{\lambda} \right\Vert_\ZZZ\\
&= 
\sup_{t\geq 0} {\tilde{\omega}}(t)\left\Vert \sum_{\ell=m+1}^n\frac{\nu_t^{\lambda,\ell}-\nu_t^{\lambda,\ell-1}}{\lambda} \right\Vert_\ZZZ
\leq
\sup_{t\geq 0} {\tilde{\omega}}(t) \sum_{\ell=m+1}^n\underbrace{\left\Vert\frac{\nu_t^{\lambda,\ell}-\nu_t^{\lambda,\ell-1}}{\lambda} \right\Vert_\ZZZ}_{\textrm{(using Proposition~\ref{lem:estim_z_teleskop})}}
\\
&\leq
\sup_{t\geq 0}{\tilde{\omega}}(t)\sum_{\ell=m+1}^n C(t) c^{\ell-1} \Big\|\partial_\lambda\nu^{\lambda,1}_{t}|_{\lambda=0}\Big\|_\ZZZ =\sup_{t\geq 0}{\tilde{\omega}}(t)C(t)\Big\|\partial_\lambda\nu^{\lambda,1}_{t}|_{\lambda=0}\Big\|_\ZZZ \sum_{\ell=m+1}^n c^{\ell-1}\\
&=
  \sup_{t\geq 0}{\tilde{\omega}}(t)C(t)\Big\|\partial_\lambda\nu^{\lambda,1}_{t}|_{\lambda=0}\Big\|_\ZZZ c^{m} \left(\frac{1-c^{n-m}}{1-c}\right).
\end{split}
\end{equation}
Theorem~\ref{tw:ciagla_pocho} states that the growth of $\partial_\lambda\nu^{\lambda,1}_{t}$ in $\ZZZ$ is controlled in time by a function $\mathcal{O}(te^{gt})$ uniformly for $\lambda\in(-\tfrac{1}{2};\tfrac{1}{2})$. It means that the weight $\tilde{\omega}(t)$ needs to be of order $\mathcal{O}(\tfrac{1}{t^2}e^{-2gt})$. By Proposition~\ref{lem:estim_z_teleskop} we know that $c<1$ and that the constant  $C(t)$ is independent on parameter~$\lambda$. Hence, one has
\begin{equation*}
\begin{split}
&\sup_{t\geq 0}\tilde{\omega}(t)C(t)\Big\|\partial_\lambda\nu^{\lambda,1}_{t}|_{\lambda=0}\Big\|_\ZZZ c^{m} \left(\frac{1-c^{n-m}}{1-c}\right)\xrightarrow{n,m\to\infty}0,
\end{split}
\end{equation*}
since function $\tilde{\omega}(t) C(t)$ is bounded in time and reduce the growth of $\partial_\lambda\mu_t^{\lambda,1}$.
It means that $\left\{\frac{\nu_{\bullet}^{\lambda,n}-\mu_{\bullet}^{0}}{\lambda}\right\}_n$ is a Cauchy sequence in space $\ZZZ_{\tilde{\omega}}$.  Moreover, the convergence is uniform on $\lambda\in(-\tfrac{1}{2},\tfrac{1}{2})\setminus\{0\}$. Since the space~$\ZZZ_{\tilde{\omega}}$ is complete, the limit of this sequence also is in the same space. 
Moreover, the estimate~\eqref{eq:final_final} is independent of $\lambda\in E$, thus
 one obtains
\[\left\{\frac{\nu_\bullet^{\lambda,n}-\mu_\bullet^0}{\lambda}\right\}_n \quad \textrm{converges to}\quad \frac{\mu_\bullet^{\lambda}-\mu_\bullet^0}{\lambda}\quad \textrm{as $n\to\infty$ in space $\ZZZ_{\tilde{\omega}}$ uniformly on $\lambda\in E$ },\]
what finishes the {\bf 1st Step}.\end{proof}
\noindent{\bf 2nd Step.} Now, $\lim_{\lambda\to 0} \frac{\nu_\bullet^{\lambda,n}-\mu_\bullet^{0}}{\lambda}$ will be investigated.  
\begin{lemma}\label{lem:granica}
For any $n\in\NN$, the limit $\lim_{\lambda\to 0} \frac{\nu_\bullet^{\lambda,n}-\mu_\bullet^{0}}{\lambda}$ exists in~$\ZZZ_{\tilde{\omega}}$.
\end{lemma}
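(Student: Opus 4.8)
The plan is to prove Lemma~\ref{lem:granica} by induction on $n$. For $n=0$ the scheme gives $\nu_\bullet^{\lambda,0}=\mu_\bullet^0$ for every $\lambda$, so $\tfrac1\lambda(\nu_\bullet^{\lambda,0}-\mu_\bullet^0)\equiv 0$ and the limit is $0\in\ZZZ_{\tilde\omega}$; for $n=1$ the statement is exactly Proposition~\ref{prop:pochodna_0}, since $\nu_\bullet^{\lambda,1}$ solves the linear equation with $\lambda$-independent coefficients $v_i(k_{\mu_\bullet^0})$, $m_i(k_{\mu_\bullet^0})$ perturbed affinely by $\lambda$, so that Theorem~\ref{tw:ciagla_pocho} applies (its hypotheses being verified through Lemma~\ref{lem:nem_char}, after localizing to finite intervals where the coefficients are bounded) and places the limit in $\ZZZ_{\widehat\omega}\subset\ZZZ_{\tilde\omega}$.

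For the inductive step, assume $\eta_\bullet^n:=\lim_{\lambda\to 0}\tfrac1\lambda(\nu_\bullet^{\lambda,n}-\mu_\bullet^0)$ exists in $\ZZZ_{\tilde\omega}$, and recall $\nu_\bullet^{0,n}=\mu_\bullet^0$, so that $\nu_\bullet^{\lambda,n}=\mu_\bullet^0+\lambda\,\eta_\bullet^n+o(\lambda)$ in $\ZZZ_{\tilde\omega}$. The curve $\nu_\bullet^{\lambda,n+1}$ is the weak solution of the linear equation~\eqref{eq:def_linear} with coefficients $b^\lambda(t,\cdot)=v_0(k_{\nu_t^{\lambda,n}})+\lambda\,v_1(k_{\nu_t^{\lambda,n}})$ and $w^\lambda(t,\cdot)=m_0(k_{\nu_t^{\lambda,n}})+\lambda\,m_1(k_{\nu_t^{\lambda,n}})$. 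First I would expand these coefficients in $\lambda$ at $\lambda=0$: using the $\CC^1$-regularity of the superposition operators $\mu_\bullet\mapsto u(k_{\mu_\bullet})$ on weighted spaces (Lemma~\ref{tw:B_C1}), the linearity of $\mu\mapsto k_\mu$ (Lemma~\ref{lem:estim_x_K}) and the induction hypothesis, one gets
\[
v_0(k_{\nu_\bullet^{\lambda,n}})=v_0(k_{\mu_\bullet^0})+\lambda\,\partial[v_0(k_\bullet)](\mu_\bullet^0)(\eta_\bullet^n)+o(\lambda),\qquad \lambda\,v_1(k_{\nu_\bullet^{\lambda,n}})=\lambda\,v_1(k_{\mu_\bullet^0})+o(\lambda),
\]
and analogously for $m_0,m_1$, the remainders being measured in the relevant weighted $\CCA$-norm. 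Hence $b^\lambda$ and $w^\lambda$ have exactly the non-linear perturbation form~\eqref{eq:non_linear_b}, namely $b^\lambda=\overline b_0+\lambda\,\overline b_1+C(t)\mathcal{O}(|\lambda|)$ and $w^\lambda=\overline w_0+\lambda\,\overline w_1+C(t)\mathcal{O}(|\lambda|)$, with $\overline b_0=v_0(k_{\mu_\bullet^0})$, $\overline w_0=m_0(k_{\mu_\bullet^0})$, $\overline b_1=\partial[v_0(k_\bullet)](\mu_\bullet^0)(\eta_\bullet^n)+v_1(k_{\mu_\bullet^0})$ and $\overline w_1=\partial[m_0(k_\bullet)](\mu_\bullet^0)(\eta_\bullet^n)+m_1(k_{\mu_\bullet^0})$.

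Observe that $\overline b_0,\overline w_0$ are precisely the coefficients of the linear equation solved by $\mu_\bullet^0$, so by uniqueness (Lemma~\ref{lem:repr_form}) $\nu_\bullet^{0,n+1}=\mu_\bullet^0$, and therefore $\tfrac1\lambda(\nu_\bullet^{\lambda,n+1}-\mu_\bullet^0)=\tfrac1\lambda(\nu_\bullet^{\lambda,n+1}-\nu_\bullet^{0,n+1})$ is genuinely the difference quotient of $\lambda\mapsto\nu_\bullet^{\lambda,n+1}$ at $0$. I would then invoke Theorem~\ref{lem:main_2}, applied on each finite interval $[0,T]$ — on which $\overline b_1,\overline w_1$ are bounded, being continuous $\CCA$-valued curves built from products of such (using that $\CCA$ is a Banach algebra, see~\eqref{eqn:CCA_fg}) — to conclude that $\lambda\mapsto\nu_t^{\lambda,n+1}$ is differentiable at $\lambda=0$ in $\ZZZ$ for every $t$, with the derivative growing in $t$ at the rate controlled by the weight $\omega_2$ of~\eqref{eq:final_weig_final_3}. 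Letting $T\to\infty$ and using $\tilde\omega\le\omega_2$, so that $\ZZZ_{\omega_2}\hookrightarrow\ZZZ_{\tilde\omega}$ with convergence passing along the inclusion, the limit $\lim_{\lambda\to 0}\tfrac1\lambda(\nu_\bullet^{\lambda,n+1}-\mu_\bullet^0)$ exists in $\ZZZ_{\tilde\omega}$, which closes the induction.

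The main obstacle is the bookkeeping in the last step: the linearized coefficient $\overline b_1$ inherits the temporal growth of $\eta_\bullet^n$ through $\partial[v_0(k_\bullet)](\mu_\bullet^0)(\eta_\bullet^n)=(v_0'\circ k_{\mu_\bullet^0})\,k_{\eta_\bullet^n}$ (cf.~\eqref{eq:deriv_direct_N}), so it is not globally bounded and Theorem~\ref{lem:main_2} cannot be applied on $[0,\infty)$ in one shot; one must localize to $[0,T]$ and then reassemble, using the piecewise-exponential structure of $\omega_1$ together with the $\mathcal{O}(t^{-2}e^{-Ct})$ decay of $\omega_2$, checking that the error $C(t)\mathcal{O}(|\lambda|)$ — whose prefactor acquires a factor $1/\tilde\omega(t)$ when the $o(\lambda)$ is unwound pointwise in $t$ — still lies in the admissible class $C(t)=\mathcal{O}(te^{gt})$ of Theorem~\ref{lem:main_2}. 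This is precisely what the two-component weight $\tilde\omega$ of~\eqref{eq:final_weig_final} is designed to accommodate.
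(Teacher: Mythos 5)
Your proposal is correct in spirit but takes a genuinely different structural route from the paper. The paper writes $\tfrac1\lambda(\nu_\bullet^{\lambda,n}-\mu_\bullet^0)$ as the telescoping sum $\tfrac1\lambda(\nu_\bullet^{\lambda,1}-\mu_\bullet^0)+\sum_{\ell=2}^n\tfrac1\lambda(\nu_\bullet^{\lambda,\ell}-\nu_\bullet^{\lambda,\ell-1})$ and disposes of each increment separately: the first by Proposition~\ref{prop:pochodna_0}, the rest through the nested induction of Proposition~\ref{prop:lim_final}, whose key step expands the composed coefficient $\overline u^\lambda(k_{\nu_t^{\lambda,\ell-1}})$ in a Taylor series at the $\lambda$-\emph{dependent} base point $k_{\nu_t^{\lambda,\ell-2}}$. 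You instead run a single induction directly on the full difference quotient, expanding at the $\lambda$-independent fixed point $k_{\mu_\bullet^0}$; the observation $\nu_\bullet^{0,n}=\mu_\bullet^0$ (which follows from the fixed-point property and uniqueness, as you argue) is what makes this quotient a genuine derivative in the Theorem~\ref{lem:main_2} sense. The gain in your version is that the unperturbed coefficients $\overline b_0=v_0(k_{\mu_\bullet^0})$, $\overline w_0=m_0(k_{\mu_\bullet^0})$ are $\lambda$-independent, exactly as Theorem~\ref{lem:main_2} formally requires, whereas the paper's $b_{0,\ell+1}=\overline v^\lambda(k_{\nu_t^{\lambda,\ell-1}})$ still carries $\lambda$-dependence that is left tacit. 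The cost is that your linearized coefficient $\overline b_1=(v_0'\circ k_{\mu_\bullet^0})\,k_{\eta_\bullet^n}+v_1(k_{\mu_\bullet^0})$ contains the \emph{accumulated} derivative $\eta_\bullet^n$, which in $\ZZZ_{\tilde\omega}$ allows pointwise growth of order $1/\tilde\omega(t)$, rather than the paper's geometrically damped telescoping increment; so the hypothesis $\overline b_1\in\CC_b\big([0,\infty);\CCA\big)$ of Theorem~\ref{lem:main_2} fails on $[0,\infty)$ and your localize-to-$[0,T]$-and-reassemble step carries the real weight of the argument. You flag this correctly as the main obstacle; the paper's own treatment of the analogous time-growth issue in Proposition~\ref{prop:lim_final} is comparably terse (it simply asserts that $\tilde\omega$ makes the relevant coefficient bounded), so neither writeup closes this bookkeeping completely, but your proposal is a legitimate and arguably cleaner organization of the same core argument.
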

A short proof of lemma will appear on page~\pageref{proof:short}.
For this proof, we need the following property.
\begin{proposition}\label{prop:lim_final}
For any $\ell\geq 2$ the limit $\lim_{\lambda\to 0}\frac{\nu_\bullet^{\lambda,\ell}-\nu_\bullet^{\lambda,\ell-1}}{\lambda}$
exists in~$\ZZZ_{\tilde{\omega}}$.
\end{proposition}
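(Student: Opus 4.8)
The plan is to prove, by strong induction on $\ell\ge 0$, the slightly stronger assertion that $\lambda\mapsto\nu_\bullet^{\lambda,\ell}$ is differentiable at $\lambda=0$ in $\ZZZ_{\tilde{\omega}}$, equivalently that $\frac{\nu_\bullet^{\lambda,\ell}-\mu_\bullet^{0}}{\lambda}$ converges in $\ZZZ_{\tilde{\omega}}$ as $\lambda\to0$ (recall $\nu_\bullet^{0,\ell}=\mu_\bullet^{0}$). Granting this, Proposition~\ref{prop:lim_final} follows immediately, since $\frac{\nu_\bullet^{\lambda,\ell}-\nu_\bullet^{\lambda,\ell-1}}{\lambda}=\frac{\nu_\bullet^{\lambda,\ell}-\mu_\bullet^{0}}{\lambda}-\frac{\nu_\bullet^{\lambda,\ell-1}-\mu_\bullet^{0}}{\lambda}$, and it also yields Lemma~\ref{lem:granica}. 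The case $\ell=0$ is trivial because $\nu_\bullet^{\lambda,0}\equiv\mu_\bullet^{0}$, and $\ell=1$ is exactly Proposition~\ref{prop:pochodna_0}; these are the base of the induction. For $\ell\ge2$ the inductive hypothesis supplies $\xi_\bullet^{\ell-1}:=\lim_{\lambda\to0}\frac{\nu_\bullet^{\lambda,\ell-1}-\mu_\bullet^{0}}{\lambda}\in\ZZZ_{\tilde{\omega}}$.

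The core of the inductive step is to exhibit $\nu_\bullet^{\lambda,\ell}=S\bigl(B_\lambda(\nu_\bullet^{\lambda,\ell-1})\bigr)$ as the solution of a \emph{linear} transport equation whose coefficients are perturbed precisely in the form handled by Theorem~\ref{lem:main_2}. Write $(b^\lambda,w^\lambda):=B_\lambda(\nu_\bullet^{\lambda,\ell-1})$; since $\nu_\bullet^{0,\ell-1}=\mu_\bullet^{0}$ the unperturbed coefficients are $v_0(k_{\mu_\bullet^{0}})$ and $m_0(k_{\mu_\bullet^{0}})$. I would then show
\[
b^\lambda(t,\cdot)=v_0(k_{\mu_t^{0}})+\lambda\bigl(\partial[v_0(k_\bullet)](\mu_t^{0})\,\xi_t^{\ell-1}+v_1(k_{\mu_t^{0}})\bigr)+C(t)\,\mathcal{O}(|\lambda|),
\]
and the analogous identity for $w^\lambda$ with $m_0,m_1$. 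The first-order coefficients are continuous in $t$ (by Corollary~\ref{coro:conti_time}, Lemma~\ref{lem:estim_x_K}, Lemma~\ref{lem:lem_2_2502} and continuity of $t\mapsto\xi_t^{\ell-1}$, the latter being a limit of continuous maps), and $C(t)=\mathcal{O}(te^{gt})$. The expansion is obtained by splitting $b^\lambda(t,\cdot)-v_0(k_{\mu_t^{0}})-\lambda(\cdot)$ into: (i) the first-order remainder of $\mu\mapsto v_0(k_\mu)$ at $\mu_t^{0}$, bounded via Lemma~\ref{lem:estim_x_K} and Proposition~\ref{prop:remainder estimate Holder} by $\mathcal{O}(\|\nu_t^{\lambda,\ell-1}-\mu_t^{0}\|_\ZZZ^{1+\alpha})$; (ii) the term $\lambda\bigl(v_1(k_{\nu_t^{\lambda,\ell-1}})-v_1(k_{\mu_t^{0}})\bigr)$, which by the local Lipschitz estimate in Lemma~\ref{lem:nem_char} is $\mathcal{O}(|\lambda|\,\|\nu_t^{\lambda,\ell-1}-\mu_t^{0}\|_\ZZZ)$; and (iii) $\lambda\,\partial[v_0(k_\bullet)](\mu_t^{0})\bigl(\tfrac{\nu_t^{\lambda,\ell-1}-\mu_t^{0}}{\lambda}-\xi_t^{\ell-1}\bigr)$, whose $\CCA(\RRD)$-norm is controlled by the operator-norm bound of Lemma~\ref{lem:nem_char} times $\|\tfrac{\nu_t^{\lambda,\ell-1}-\mu_t^{0}}{\lambda}-\xi_t^{\ell-1}\|_\ZZZ$. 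Here one uses the $\ell$-uniform a~priori bound $\|\nu_t^{\lambda,\ell-1}-\mu_t^{0}\|_\ZZZ\le|\lambda|\,\widetilde C(t)$ with $\widetilde C(t)=\mathcal{O}(t^2e^{2gt})$, obtained by summing Proposition~\ref{lem:estim_z_teleskop} over the first $\ell-1$ steps (the geometric factor $\sum_n c^n\le\frac1{1-c}$ makes the bound independent of $\ell$), together with the $\ell$-uniform bound $\|\nu_t^{\lambda,\ell-1}\|_\ZZZ\le e^{Ct}\|\mu_0\|_{TV}$ coming from Corollary~\ref{cly:bound_nu}.

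Once $b^\lambda,w^\lambda$ are in the required form, Theorem~\ref{lem:main_2} applies (with base fields $v_0(k_{\mu_\bullet^{0}})$, $m_0(k_{\mu_\bullet^{0}})$, the first-order coefficients above, and $C(t)=\mathcal{O}(te^{gt})$) and gives $\lambda\mapsto\nu_\bullet^{\lambda,\ell}=S(b^\lambda,w^\lambda)\in\CC^1\bigl((-\tfrac12,\tfrac12);\CC_{\omega_2}([0,\infty);\ZZZ)\bigr)$; in particular $\lambda\mapsto\nu_\bullet^{\lambda,\ell}$ is differentiable at $\lambda=0$, and since $\tilde{\omega}\le\omega_2$ the derivative lies in $\ZZZ_{\tilde{\omega}}$. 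As $\nu_\bullet^{0,\ell}=\mu_\bullet^{0}$, this says $\frac{\nu_\bullet^{\lambda,\ell}-\mu_\bullet^{0}}{\lambda}\to\partial_\lambda\nu_\bullet^{\lambda,\ell}\big|_{\lambda=0}$ in $\ZZZ_{\tilde{\omega}}$, completing the induction. The step I expect to require the most care is the control of the time growth of the $C(t)\,\mathcal{O}(|\lambda|)$ error term: one must check that all the exponential factors entering it ($e^{Ct}$, $\widetilde C(t)$, the operator norms of $\partial[u(k_\bullet)](\mu_t^{0})$ from Lemma~\ref{lem:nem_char}) are of single-exponential type and $\ell$-uniform, so that the error is genuinely $C(t)\,\mathcal{O}(|\lambda|)$ with $C(t)=\mathcal{O}(te^{gt})$ of the type admitted by $\omega_2$ in~\eqref{eq:final_weig_final_3}; on each bounded interval $[0,T]$ this is automatic (the coefficients are then continuous, hence bounded), and the interval-wise definition of $\omega_1,\omega_2$ is exactly what globalises it.
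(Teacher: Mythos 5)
Your proof is correct and takes a genuinely different route from the paper's. The paper inducts directly on the consecutive difference quotients $\tfrac{\nu_\bullet^{\lambda,\ell}-\nu_\bullet^{\lambda,\ell-1}}{\lambda}$, Taylor-expanding the superposition coefficients at the \emph{running} centre $k_{\nu_t^{\lambda,\ell-2}}$ in the inductive step (see~\eqref{eq:ind_l_1}--\eqref{eq:l_2}); the resulting base and first-order fields $b_{0,\ell},b_{1,\ell},w_{0,\ell},w_{1,\ell}$ therefore still depend on $\lambda$, and Theorem~\ref{lem:main_2} is applied to this $\lambda$-dependent family. You instead induct on the total increment $\tfrac{\nu_\bullet^{\lambda,\ell}-\mu_\bullet^0}{\lambda}$ and always centre the expansion at the fixed state $k_{\mu_t^0}$, which produces $\lambda$-\emph{independent} base and first-order fields $v_0(k_{\mu_t^0})$ and $\partial[v_0(k_\bullet)](\mu_t^0)\,\xi_t^{\ell-1}+v_1(k_{\mu_t^0})$ (and similarly for $m$). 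This matches the literal hypotheses of Theorem~\ref{lem:main_2} more cleanly, identifies the first-order field as exactly what formal differentiation of the fixed-point relation $\nu^\ell=S(B_\lambda(\nu^{\ell-1}))$ yields at $\lambda=0$, and delivers Lemma~\ref{lem:granica} at no extra cost instead of via the paper's separate telescoping step. One delicate point you share with the paper rather than introduce: the first-order field is controlled only through the weighted norm of $\ZZZ_{\tilde\omega}$ and so may grow in $t$, whereas Theorem~\ref{lem:main_2} is stated for $b_1\in\CC_b([0,\infty);\CCA(\RRD;\RRD))$; both arguments rely implicitly on the weight and the elasticity of the $C(t)\mathcal{O}(|\lambda|)$ remainder. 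In your decomposition the contribution~(iii), $\lambda\,\partial[v_0(k_\bullet)](\mu_t^0)\bigl(\tfrac{\nu_t^{\lambda,\ell-1}-\mu_t^0}{\lambda}-\xi_t^{\ell-1}\bigr)$, is genuinely only $o(|\lambda|)$ rather than $\mathcal{O}(|\lambda|^{1+\alpha})$ like~(i) and~(ii), which is compatible with the paper's reading of $C(t)\mathcal{O}(|\lambda|)$ but worth flagging if one wants the estimate to be quantitative.
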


\begin{proof}[Proof of Proposition~\ref{prop:lim_final}]
It will be shown by induction. Introduce the notation $\overline{u}^\lambda(\cdot):=u_0(\cdot)+\lambda u_1(\cdot)$  for $u\in\{v,m\}$.
The mapping $\nu_\bullet^{\lambda,\ell}$ is the unique solution to the linear equation
\begin{equation}\label{eq:pert_non}
\left\{ \begin{array}{ll}
\partial_t\nu_t^{\lambda,\ell}+\div_x\left(\overline{v}^\lambda(k_{\nu_t^{\lambda,\ell-1}})\; \nu_t^{\lambda,\ell}\right)=\overline{m}^\lambda(k_{\nu_t^{\lambda,\ell-1}})\;\nu_t^{\lambda,\ell}\\
\nu_{t=0}^{\lambda,\ell}=\mu_0,
\end{array}\right.
\end{equation}
where for $\nu_t^{\lambda,0}$ we take $\nu_t^0$. Similarly for $\nu_t^{\lambda,\ell-1}$.
In the problem~\eqref{eq:pert_non} expand coefficients $\overline{u}^\lambda\left(k_{\nu_t^{\lambda,\ell-1}}\right)$ into Taylor series at $k_{\nu_t^{\lambda,\ell-2}}$ for $u\in\{v,m\}$ -- from Lemma~\ref{lem:nem_char}, we know that coefficients can be expanded. Then problem~\eqref{eq:pert_non} has the form
\begin{small}
\begin{equation}\label{eq:ind_2}
\left\{ \begin{array}{ll}
\partial_t\nu_t^{\lambda,\ell}+\div_x\!\left[\left(\overline{v}^\lambda(k_{\nu_t^{\lambda,\ell-2}})\!+\!\nabla_\nu \overline{v}^\lambda(k_{\nu_t^{\lambda,\ell-2}})\!\cdot\!\big(k_{\nu_t^{\lambda,\ell-1}}\!-\!k_{\nu_t^{\lambda,\ell-2}}\big)\!+\!o(\|k_{\nu_t^{\lambda,\ell-1}}\!-\!k_{\nu_t^{\lambda,\ell-2}}\|_\infty)\right)  \nu_t^{\lambda,\ell}\right]=\\
\qquad=\left[\overline{m}^\lambda(k_{\nu_t^{\lambda,\ell-2}})+\nabla_\nu \overline{m}^\lambda(k_{\nu_t^{\lambda,\ell-2}})\cdot\big(k_{\nu_t^{\lambda,\ell-1}}-k_{\nu_t^{\lambda,\ell-2}}\big) + o(\|k_{\nu_t^{\lambda,\ell-1}}-k_{\nu_t^{\lambda,\ell-2}}\|_\infty)\right]\nu_t^{\lambda,\ell}\\
\nu_{t=0}^{\lambda,\ell}=\mu_0.
\end{array}
\right.\tag{\ref{eq:pert_non}'}
\end{equation}
\end{small}
\noindent {\bf Base case}. For $\ell=2$ the problem~\eqref{eq:ind_2} can be written as
\begin{equation}\label{eq:ind_2_1}
\left\{ \begin{array}{ll}
\partial_t\nu_t^{\lambda,2}+\div_x\left[\left(\overline{v}^\lambda(k_{\mu_t^{0}})+\nabla_\nu \overline{v}^\lambda(k_{\mu_t^{0}})\cdot\big(k_{\nu_t^{\lambda,1}}-k_{\mu_t^{0}}\big) + o(\|k_{\nu_t^{\lambda,1}}-k_{\mu_t^{0}}\|_\infty)\right)  \nu_t^{\lambda,2}\right]=\\
\qquad \qquad \qquad \qquad =\left[\overline{m}^\lambda(k_{\mu_t^{0}})+\nabla_\nu \overline{m}^\lambda(k_{\mu_t^{0}})\cdot\big(k_{\nu_t^{\lambda,1}}-k_{\mu_t^{0}}\big) + o(\|k_{\nu_t^{\lambda,1}}-k_{\mu_t^{0}}\|_\infty)\right]\nu_t^{\lambda,2}\\
\nu_{t=0}^{\lambda,2}=\mu_0.
\end{array}
\right.
\end{equation}
The value $\|k_{\nu_t^{\lambda,1}}-k_{\mu_t^{0}}\|_\infty$ can be estimated by
\begin{equation}\label{eq:estim_diff}
\begin{split}
\left\|k_{\nu_t^{\lambda,1}}-k_{\mu_t^{0}}\right\|_\infty&=\left\|\int_\RRD K(y,x)\:\dd(\nu_t^{\lambda,1}-\mu_t^0)\right\|_\infty\leq \|K\|_{\CCA}\|\nu_t^{\lambda,1}-\mu_t^0\|_\ZZZ\\
&\leq \|K\|_{\CCA}\lambda\: \underbrace{\left\|\frac{\nu_t^{\lambda,1}-\mu_t^0}{\lambda}\right\|_\ZZZ}_{\textrm{(using~\eqref{eq:final_final_B_0})}}
\leq C(t)\|K\|_{\CCA}\|\partial_\lambda\nu_t^{\lambda,1}|_{\lambda=0}\|_\ZZZ \lambda.
\end{split}
\end{equation}
By Theorem~\ref{lem:main_2} the growth of $\big\|\partial_\lambda\nu_t^{\lambda,1}|_{\lambda=0}\big\|_\ZZZ$ in time is controlled. By assumption~{\bf A3} of Theorem~\ref{main_hille} one has, that value $\|K\|_{\CCA}$ is bounded. Hence, one has \[o\left(\left\|k_{\nu_t^{\lambda,2}}-k_{\nu_t^{\lambda,1}}\right\|_\infty\right)=C(t)\mathcal{O}(|\lambda|).\]
The problem~\eqref{eq:ind_2_1} can be written as
\begin{equation}\label{eq:ind_2_2}
\left\{ \begin{array}{ll}
\partial_t\nu_t^{\lambda,2}+\div_x\left[\left(\overline{v}^\lambda(k_{\mu_t^{0}})+\lambda\nabla_\nu \overline{v}^\lambda(k_{\mu_t^{0}})\frac{k_{\nu_t^{\lambda,1}}-k_{\mu_t^{0}}}{\lambda} + C(t)\mathcal{O}(|\lambda|)\right)  \nu_t^{\lambda,2}\right]=\\
\qquad \qquad \qquad \qquad =\left[\overline{m}^\lambda(k_{\mu_t^{0}})+\lambda\nabla_\nu \overline{m}^\lambda(k_{\mu_t^{0}})\frac{k_{\nu_t^{\lambda,1}}-k_{\mu_t^{0}}}{\lambda} +C(t) \mathcal{O}(|\lambda|)\right]\nu_t^{\lambda,2}\\
\nu_{t=0}^{\lambda,2}=\mu_0.
\end{array}
\right.\tag{\ref{eq:ind_2_1}'}
\end{equation}
The problem~\eqref{eq:pert_non} for $\ell=1$ has the form
\begin{equation}\label{eq:pert_non_3}
\left\{ \begin{array}{ll}
\partial_t\nu_t^{\lambda,1}+\div_x\left(\overline{v}^\lambda(k_{\mu_t^{0}})\; \nu_t^{\lambda,1}\right)=\overline{m}^\lambda(k_{\mu_t^{0}})\;\nu_t^{\lambda,1}\\
\nu_{t=0}^{\lambda,1}=\mu_0.
\end{array}\right.
\end{equation}
Introduce the notation
\begin{align*}
b_{0,2}(t,x)&:=\overline{v}^\lambda(k_{\mu_t^{0}})&\quad b_{1,2}(t,x)&:=\nabla_\nu \overline{v}^\lambda(k_{\mu_t^{0}})\:\frac{k_{\nu_t^{\lambda,1}}-k_{\mu_t^{0}}}{\lambda},\\
w_{0,2}(t,x)&:=\overline{m}^\lambda(k_{\mu_t^{0}})&\quad w_{1,2}(t,x)&:=\nabla_\nu \overline{m}^\lambda(k_{\mu_t^{0}})\:\frac{k_{\nu_t^{\lambda,1}}-k_{\mu_t^{0}}}{\lambda}.
\end{align*}
The second number in lower index in $b$ and $w$ informs about the current value of  $\ell$.
Under assumptions of Theorem~\ref{main_hille} we can make use of Theorem~\ref{lem:main_2}. We know that for $i=0,1$ coefficients $b_{i,2}$ and $w_{i,2}$ satisfy assumptions of Theorem~\ref{lem:main_2}.
Using above notation, the problem~\eqref{eq:pert_non_3} can be written as
\begin{equation}\label{eq:11}
\left\{ \begin{array}{ll}
\partial_t\nu_t^{\lambda,1}+\div_x\left(b_{0,2}(t,x)\; \nu_t^{\lambda,1}\right)&=w_{0,2}(t,x)\;\nu_t^{\lambda,1}\\
\nu_{t=0}^{\lambda,1}=\mu_0,
\end{array}\right.\tag{\ref{eq:pert_non_3}'}
\end{equation}
and the problem~\eqref{eq:ind_2_2} has the form
\begin{small}
\begin{equation}\label{eq:22}
\left\{ \begin{array}{ll}
&\partial_t\nu_t^{\lambda,2}+\div_x\!\left[\left(b_{0,2}(t,x)\!+\!\lambda b_{1,2}(t,x)\! + \!C(t)\mathcal{O}(|\lambda|)\right)  \nu_t^{\lambda,2}\right]=\\
&\qquad\qquad\qquad=\left[w_{0,2}(t,x)\!+\!\lambda w_{1,2}(t,x)\! + \!C(t)\mathcal{O}(|\lambda|)\right]\nu_t^{\lambda,2}\\
&\nu_{t=0}^{\lambda,2}=\mu_0.
\end{array}
\right.\tag{\ref{eq:ind_2_1}''}
\end{equation}
\end{small}
It means that~\eqref{eq:22} is nothing else than~\eqref{eq:11} with perturbation
\[b_{0,2}(t,x)+h\:b_{1,2}(t,x)+C(t)\mathcal{O}(|h|)\quad \textrm{ and }\quad w_{0,2}(t,x)+h\: w_{1,2}(t,x)+C(t)\mathcal{O}(|h|).\]
%
By Theorem~\ref{lem:main_2} we know that $\nu_t^{\lambda,2}$ is differentiable with this perturbation, it means that the limit
\begin{equation}\label{eq:lim_2}
\lim_{\lambda\to 0} \frac{\nu_\bullet^{\lambda,2}-\nu_\bullet^{\lambda,1}}{\lambda}=\partial_\lambda\nu_\bullet^{\lambda,2}\big|_{\lambda=0}\quad \textrm{ exists and is an element of }\ZZZ_{\tilde{\omega}}.
\end{equation}
\medskip

\noindent{\bf Induction step.} Let us assume that for some $\ell=l$ it holds that 
\begin{equation}\label{eq:zal_induk}
\lim_{\lambda\to 0}\frac{\nu_\bullet^{\lambda,l}-\nu_\bullet^{\lambda, l-1}}{\lambda}\quad\textrm{exists in}\quad \ZZZ_{\tilde{\omega}}.\end{equation}
We will show, that it implies $
\lim_{\lambda\to 0}\frac{\nu_\bullet^{\lambda,l+1}-\nu_\bullet^{\lambda, l}}{\lambda}$ exists in $\ZZZ_{\tilde{\omega}}$.\\

\noindent For $\ell=l+1$ a problem~\eqref{eq:ind_2} on page~\pageref{eq:ind_2} has the form\begin{small}
\begin{equation}\label{eq:ind_l_1}
\left\{ \begin{array}{ll}
\partial_t\nu_t^{\lambda,l+1}+\div_x\!\left[\left(\overline{v}^\lambda(k_{\nu_t^{\lambda,l-1}})+\nabla_\nu \overline{v}^\lambda(k_{\nu_t^{\lambda,l-1}})(k_{\nu_t^{\lambda,l}}-k_{\nu_t^{\lambda,l-1}}) + o(\|k_{\nu_t^{\lambda,l}}-k_{\nu_t^{\lambda,l-1}}\|_\infty)\right)  \nu_t^{\lambda,l+1}\right]=\\
\qquad\qquad \qquad =\left[\overline{m}^\lambda(k_{\nu_t^{\lambda,l-1}})+\nabla_\nu \overline{m}^\lambda(k_{\nu_t^{\lambda,l-1}})(k_{\nu_t^{\lambda,l}}-k_{\nu_t^{\lambda,l-1}}) + o(\|k_{\nu_t^{\lambda,l}}-k_{\nu_t^{\lambda,l-1}}\|_\infty)\right]\nu_t^{\lambda,l+1}\\
\nu_{t=0}^{\lambda,l+1}=\mu_0.
\end{array}
\right.
\end{equation}
\end{small}

\noindent Analogously to~\eqref{eq:estim_diff}, by assumption~\eqref{eq:zal_induk} one has $o\left(\left\|k_{\nu_t^{\lambda,l}}-k_{\nu_t^{\lambda,l-1}}\right\|_\infty\right)=C(t)\mathcal{O}(|\lambda|)$.
The problem~\eqref{eq:ind_l_1} can be written as
\begin{equation}\label{eq:ind_l_2}
\left\{ \begin{array}{ll}
\partial_t\nu_t^{\lambda,l+1}+\div_x\left[\left(\overline{v}^\lambda(k_{\nu_t^{\lambda,l-1}})+\lambda\nabla_\nu \overline{v}^\lambda(k_{\nu_t^{\lambda,l-1}})\frac{k_{\nu_t^{\lambda,l}}-k_{\nu_t^{\lambda,l-1}}}{\lambda} + C(t)\mathcal{O}(|\lambda|)\right)  \nu_t^{\lambda,l+1}\right]=\\
\qquad \qquad \qquad \qquad =\left[\overline{m}^\lambda(k_{\nu_t^{\lambda,l-1}})+\lambda\nabla_\nu \overline{m}^\lambda(k_{\nu_t^{\lambda,l-1}})\frac{k_{\nu_t^{\lambda,l}}-k_{\nu_t^{\lambda,l-1}}}{\lambda} + C(t)\mathcal{O}(|\lambda|)\right]\nu_t^{\lambda,l+1}\\
\nu_{t=0}^{\lambda,l+1}=\mu_0.
\end{array}
\right.\tag{\ref{eq:ind_l_1}'}
\end{equation}
For $\ell=l$ the problem~\eqref{eq:pert_non} has the form
\begin{equation}\label{eq:pert_non_l}
\left\{ \begin{array}{ll}
\partial_t\nu_t^{\lambda,l}+\div_x\left(\overline{v}^\lambda(k_{\nu_t^{\lambda,l-1}}) \;\nu_t^{\lambda,l}\right)=\overline{m}^\lambda(k_{\nu_t^{\lambda,l-1}})\;\nu_t^{\lambda,l}\\
\nu_{t=0}^{\lambda,l}=\mu_0.
\end{array}\right.
\end{equation}
Let us use the notation
\begin{align*}
b_{0,l+1}(t,x)&:=\overline{v}^\lambda\!\left(k_{\nu_t^{\lambda,l-1}}\right)&\quad b_{1,l+1}(t,x)&:=\nabla_\nu \overline{v}^\lambda\!\left(k_{\nu_t^{\lambda,l-1}}\right)\:\frac{k_{\nu_t^{\lambda,l}}-k_{\nu_t^{\lambda,l-1}}}{\lambda},\\
w_{0,l+1}(t,x)&:=\overline{m}^\lambda\!\left(k_{\nu_t^{\lambda,l-1}}\right)&\quad w_{1,l+1}(t,x)&:=\nabla_\nu \overline{m}^\lambda\!\left(k_{\nu_t^{\lambda,l-1}}\right)\:\frac{k_{\nu_t^{\lambda,l}}-k_{\nu_t^{\lambda,l-1}}}{\lambda}.
\end{align*}
Analogously as in  {\bf base case}, by induction assumption~\eqref{eq:zal_induk} one has  $\left\|\frac{k_{\nu_\bullet^{\lambda,l}}-k_{\nu_\bullet^{\lambda,l-1}}}{\lambda}\right\|_\infty\leq C(t)\lambda \left\|\partial_\lambda\nu_\bullet^{\lambda,l}|_{\lambda=0}\right\|_{\ZZZ_{\tilde{\omega}}}$, where $C(t)$ and $\tilde{\omega}(t)$ are such that this value is bounded. Hence, one has that for $i=0,1$ coefficients $b_{i,l+1}$ and $w_{i,l+1}$ satisfy assumption of Theorem~\ref{lem:main_2}.
Using above notation, the problem~\eqref{eq:pert_non_l} can be written in the following form
\begin{equation}\label{eq:1_l}
\left\{ \begin{array}{ll}
\partial_t\nu_t^{\lambda,l}+\div_x\left(b_{0,l+1}(t,x) \;\nu_t^{\lambda,l}\right)&=w_{0,l+1}(t,x)\;\nu_t^{\lambda,l}\\
\nu_{t=0}^{\lambda,l}=\mu_0,
\end{array}\right.\tag{\ref{eq:pert_non_l}'}
\end{equation}
and the problem~\eqref{eq:ind_l_2} can be written as
\begin{equation}\label{eq:l_2}
\left\{ \begin{array}{ll}
\partial_t\nu_t^{\lambda,l+1}+\div_x\left[\left(b_{0,l+1}(t,x)+\lambda b_{1,l+1}(t,x) + C(t)\mathcal{O}(|\lambda|)\right)  \nu_t^{\lambda,l+1}\right]=\\
\qquad\qquad\qquad \qquad=\Big[w_{0,l+1}(t,x)+\lambda w_{1,l+1}(t,x) + C(t)\mathcal{O}(|\lambda|)\Big]\nu_t^{\lambda,l+1}\\
\nu_{t=0}^{\lambda,l+1}=\mu_0.
\end{array}
\right.\tag{\ref{eq:ind_l_1}''}
\end{equation}
It means, that the problem~\eqref{eq:l_2} is nothing else than~\eqref{eq:1_l} with the following perturbation in coefficients
\[b_{0,l+1}(t,x)+\lambda\: b_{1,l+1}(t,x)+C(t)\mathcal{O}(|\lambda|)\quad \textrm{and}\quad w_{0,l+1}(t,x)+\lambda\: w_{1,l+1}(t,x)+C(t)\mathcal{O}(|\lambda|).\]
By Theorem~\ref{lem:main_2} one has that $\nu_\bullet^{\lambda,l+1}$ is differentiable with respect to such perturbation, i.e. the limit $\lim_{\lambda\to 0} \frac{\nu_\bullet^{\lambda,l+1}-\nu_\bullet^{\lambda,l}}{\lambda}$ exists in $\ZZZ_{\tilde{\omega}}$.
Finally, one has that $\lim_{\lambda\to 0} \frac{\nu_\bullet^{\lambda,\ell}-\nu_\bullet^{\lambda,\ell-1}}{\lambda}$ exists in $\ZZZ_{\tilde{\omega}}$ for any $\ell\in\NN$, such that $\ell\geq 2$.
\end{proof}
Now we can show that $\lim_{\lambda\to 0}\frac{\nu_\bullet^{\lambda,n}-\mu_\bullet^0}{\lambda}$ exists and is an element of $\ZZZ_{\tilde{\omega}}$ for any $n\in\NN$.
\begin{proof}[Proof of Lemma~\ref{lem:granica}]\label{proof:short}
Let us notice that \begin{equation}\label{eq:lim_sum}
\begin{split}
\lim_{\lambda\to 0} \frac{\nu_\bullet^{\lambda,n}-\mu_\bullet^{0}}{\lambda}
=&
\lim_{\lambda\to 0}\left( \frac{\nu_\bullet^{\lambda,1}-\mu_\bullet^{0}}{\lambda}+\sum_{\ell=2}^n \frac{\nu_\bullet^{\lambda,\ell}-\nu_\bullet^{\lambda,\ell-1}}{\lambda}\right)\\
=&
\lim_{\lambda\to 0}\frac{\nu_\bullet^{\lambda,1}-\mu_\bullet^{0}}{\lambda}+\sum_{\ell=2}^n \lim_{\lambda\to 0} \frac{\nu_\bullet^{\lambda,\ell}-\nu_\bullet^{\lambda,\ell-1}}{\lambda}.
\end{split}
\end{equation}
By Proposition~\ref{prop:pochodna_0} we know that the limit $\lim_{\lambda\to 0}\frac{\nu_\bullet^{\lambda,1}-\mu_\bullet^{0}}{\lambda}$ is an element of $\ZZZ_{\tilde{\omega}}$. 
By Proposition~\ref{prop:lim_final} one has that $\lim_{\lambda\to 0} \frac{\nu_\bullet^{\lambda,\ell}-\nu_\bullet^{\lambda,\ell-1}}{\lambda}\in\ZZZ_{\tilde{\omega}}$ for any $\ell\geq 2$.
Hence~\eqref{eq:lim_sum} also is an element of~$\ZZZ_{\tilde{\omega}}$, as a finite sum of elements from space $\ZZZ_{\tilde{\omega}}$.
\end{proof}

This finishes the {\bf 2nd Step} of the proof of Theorem~\ref{main_hille}, which together with {\bf 1st Step} guarantees, that assumptions of Theorem~\ref{lem:rudin} are satisfied and the order of limits can be changed. 
Thus, all limits below exist in $\ZZZ_{\tilde{\omega}}$
\[\lim_{\lambda\to 0}  \frac{\nu_\bullet^{\lambda}-\mu_\bullet^{0}}{\lambda}
=
\lim_{\lambda\to 0}\left( \lim_{n\to \infty} \frac{\mu_\bullet^{\lambda,n}-\mu_\bullet^{0}}{\lambda}\right)
=
\lim_{n\to \infty}\left(\lim_{\lambda\to 0}  \frac{\nu_\bullet^{\lambda,n}-\mu_\bullet^{0}}{\lambda}\right)
=
\lim_{n\to\infty} \partial_\lambda \nu_\bullet^{\lambda,n}|_{\lambda=0}.
\]

\noindent{\bf 3rd Step.} The same argumentation can be applied for $h\neq 0$. 
\begin{theorem}
The limit $\lim_{\lambda\to 0}\frac{\mu_\bullet^{h+\lambda}-\mu_\bullet^{h}}{\lambda}$
exists in $\ZZZ_{\tilde{\omega}}$.
\end{theorem}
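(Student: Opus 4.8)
The plan is to rerun, with $\mu_\bullet^h$ in place of the base point $\mu_\bullet^0$ and with $h+\lambda$ in place of $\lambda$, exactly the three‑step argument just carried out for $h=0$; the only genuinely new issue is that the constants produced in Sections~\ref{sec:linear}--\ref{sec:well} must stay valid, and uniform in $\lambda$, after this shift of base point. Fix $h\in(-\tfrac12,\tfrac12)$, pick $\xi>0$ with $[h-\xi,h+\xi]\subset(-\tfrac12,\tfrac12)$, and put $E:=(-\xi,\xi)\setminus\{0\}$. For $\lambda\in E$ define the recentred scheme by $\nu_\bullet^{h,\lambda,0}:=\mu_\bullet^h$ and $\nu_\bullet^{h,\lambda,n}:=\mathcal{T}_{h+\lambda}(\nu_\bullet^{h,\lambda,n-1})$ for $n\geq1$. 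Since $\mu_\bullet^h\in\widehat{\mathcal{B}}_{\omega_1}^{\mu_0}$ (Proposition~\ref{prop:T_H_0_infty} at parameter $h$) and $\mathcal{T}_{h+\lambda}$ is a contraction of $\widehat{\mathcal{B}}_{\omega_1}^{\mu_0}$ with a constant $c<1$ that can be chosen uniformly in $\lambda\in E$ — here the boundedness of the parameter interval is essential, cf. the Remark following Theorem~\ref{main_hille} — the Banach fixed point theorem gives $\lim_{n\to\infty}\|\mu_\bullet^{h+\lambda}-\nu_\bullet^{h,\lambda,n}\|_{\ZZZ_{\tilde{\omega}}}=0$, because the unique fixed point of $\mathcal{T}_{h+\lambda}$ is $\mu_\bullet^{h+\lambda}$.

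\textbf{Step 1} (uniform convergence of difference quotients). Being a fixed point of $\mathcal{T}_h$, the curve $\mu_\bullet^h$ is the weak solution of the linear equation with frozen coefficients $b_0(t,x):=v_0(k_{\mu_t^h})+h\,v_1(k_{\mu_t^h})$, $w_0(t,x):=m_0(k_{\mu_t^h})+h\,m_1(k_{\mu_t^h})$, whereas $\nu_\bullet^{h,\lambda,1}=\mathcal{T}_{h+\lambda}(\mu_\bullet^h)$ solves the same equation perturbed linearly by $\lambda\,v_1(k_{\mu_\bullet^h})$, $\lambda\,m_1(k_{\mu_\bullet^h})$. By Lemma~\ref{lem:nem_char}, Lemma~\ref{lem:3_26} and Corollary~\ref{cry:311} these coefficients satisfy the hypotheses of Theorem~\ref{tw:ciagla_pocho}, so, word for word as in Proposition~\ref{prop:pochodna_0}, $\lim_{\lambda\to0}\tfrac{\nu_\bullet^{h,\lambda,1}-\mu_\bullet^h}{\lambda}$ exists in $\ZZZ_{\tilde{\omega}}$ and
\[
\left\|\frac{\nu_t^{h,\lambda,1}-\mu_t^h}{\lambda}\right\|_\ZZZ\leq C(t)\,\big\|\partial_\lambda\nu_t^{h,\lambda,1}\big|_{\lambda=0}\big\|_\ZZZ,\qquad C(t)=\mathcal{O}(te^{gt}),
\]
with $g>0$ independent of $\lambda\in E$. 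Feeding this into the contraction estimate $\big\|\tfrac{\nu_\bullet^{h,\lambda,n+1}-\nu_\bullet^{h,\lambda,n}}{\lambda}\big\|_{\ZZZ_{\tilde{\omega}}}\leq c\big\|\tfrac{\nu_\bullet^{h,\lambda,n}-\nu_\bullet^{h,\lambda,n-1}}{\lambda}\big\|_{\ZZZ_{\tilde{\omega}}}\leq c^{\,n}\big\|\tfrac{\nu_\bullet^{h,\lambda,1}-\mu_\bullet^h}{\lambda}\big\|_{\ZZZ_{\tilde{\omega}}}$ (Proposition~\ref{prop:T_H_0_infty}, exactly as in Proposition~\ref{lem:estim_z_teleskop}) and summing the geometric series as in the Lemma concluding the original \textbf{1st Step}, one gets that $\big\{\tfrac{\nu_\bullet^{h,\lambda,n}-\mu_\bullet^h}{\lambda}\big\}_n$ is Cauchy in $\ZZZ_{\tilde{\omega}}$ uniformly in $\lambda\in E$, with limit $\tfrac{\mu_\bullet^{h+\lambda}-\mu_\bullet^h}{\lambda}$; the decisive point, as before, is that $\tilde{\omega}(t)\,C(t)\,\|\partial_\lambda\nu_t^{h,\lambda,1}|_{\lambda=0}\|_\ZZZ$ is bounded in $t$ because $\tilde{\omega}\leq\omega_2$ and $\omega_2$ is designed (via Theorem~\ref{lem:main_2}) precisely to absorb this growth.

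\textbf{Step 2} (existence of $\lim_{\lambda\to0}$ for each iterate). For fixed $n$ write $\tfrac{\nu_\bullet^{h,\lambda,n}-\mu_\bullet^h}{\lambda}=\tfrac{\nu_\bullet^{h,\lambda,1}-\mu_\bullet^h}{\lambda}+\sum_{\ell=2}^n\tfrac{\nu_\bullet^{h,\lambda,\ell}-\nu_\bullet^{h,\lambda,\ell-1}}{\lambda}$; the first term converges by Step~1, and each summand converges by the induction of Proposition~\ref{prop:lim_final}, in which expanding $v_0(k_{\nu_t^{h,\lambda,\ell-1}})+(h+\lambda)v_1(k_{\nu_t^{h,\lambda,\ell-1}})$ (and likewise for $m$) in Taylor series about $k_{\nu_t^{h,\lambda,\ell-2}}$ and using the inductive estimate $\|k_{\nu_t^{h,\lambda,\ell-1}}-k_{\nu_t^{h,\lambda,\ell-2}}\|_\infty\leq C(t)\mathcal{O}(|\lambda|)$, obtained as in \eqref{eq:estim_diff}, puts the $\ell$‑th equation in the form $b_{0,\ell}+\lambda b_{1,\ell}+C(t)\mathcal{O}(|\lambda|)$, $w_{0,\ell}+\lambda w_{1,\ell}+C(t)\mathcal{O}(|\lambda|)$ with $C(t)=\mathcal{O}(te^{gt})$, to which Theorem~\ref{lem:main_2} applies; the base case $\ell=2$ is the \textbf{base case} of Proposition~\ref{prop:lim_final} with $\mu_\bullet^h$ replacing $\mu_\bullet^0$. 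As a finite sum of elements of $\ZZZ_{\tilde{\omega}}$, the limit $\lim_{\lambda\to0}\tfrac{\nu_\bullet^{h,\lambda,n}-\mu_\bullet^h}{\lambda}=\partial_\lambda\nu_\bullet^{h,\lambda,n}|_{\lambda=0}$ lies in $\ZZZ_{\tilde{\omega}}$.

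Finally I would invoke Theorem~\ref{lem:rudin} with $E=(-\xi,\xi)\setminus\{0\}$, $\lambda_0=0$, $Y=\ZZZ_{\tilde{\omega}}$ (complete), $f_n(\lambda):=\tfrac{\nu_\bullet^{h,\lambda,n}-\mu_\bullet^h}{\lambda}$, $f(\lambda):=\tfrac{\mu_\bullet^{h+\lambda}-\mu_\bullet^h}{\lambda}$: Step~1 gives $f_n\to f$ uniformly on $E$, Step~2 gives that $A_n:=\lim_{\lambda\to0}f_n(\lambda)$ exists in $\ZZZ_{\tilde{\omega}}$, hence $\lim_{\lambda\to0}f(\lambda)=\lim_{n\to\infty}A_n$ exists in $\ZZZ_{\tilde{\omega}}$, which is the assertion. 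The main obstacle is the verification of \emph{uniformity} over $\lambda\in E$ of all constants ($c$, $g$, the $\mathcal{O}(te^{gt})$ bounds) once the base point is moved from $\mu_\bullet^0$ to $\mu_\bullet^h$: this is exactly where one uses that $h+\lambda$ stays in a fixed bounded interval, so that $\|v_0+(h+\lambda)v_1\|_\infty$, $\|m_0+(h+\lambda)m_1\|_\infty$ and all induced growth and contraction constants are controlled uniformly, and that $\mu_\bullet^h$ shares with $\mu_\bullet^0$ every qualitative property (narrow continuity, membership in $\widehat{\mathcal{B}}_{\omega_1}^{\mu_0}$, the coefficient regularity of Lemma~\ref{lem:3_26}) that Steps~1--2 rely on; everything else is a line‑by‑line repetition of the $h=0$ case.
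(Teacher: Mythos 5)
Your proposal is correct and is essentially the paper's proof: both reduce differentiability at $h\neq 0$ to the previously established $h=0$ case by promoting $\mu_\bullet^h$ to the role of the base point and $\lambda$ to the role of the small parameter. The paper achieves this through the notational reparametrization $\overline{u}_0^h:=u_0+h\,u_1$ (so that $u_0+(h+\lambda)u_1=\overline{u}_0^h+\lambda\,u_1$) and then declares the reduction to the $h=0$ argument; your version spells out exactly the same reduction by explicitly re-running the three-step scheme started from $\mu_\bullet^h$ under $\mathcal{T}_{h+\lambda}$, together with the uniformity-of-constants check on the shrunken interval $E$, which is precisely the verification the paper's shorter formulation leaves to the reader.
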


\begin{proof}
For fixed $h$ it can be written
\begin{equation}\label{eqn:last}
\begin{split}
u_0\!\left(k_{\mu_t^{h+\lambda}}\right)+(h+\lambda)\: u_1\!\left(k_{\mu_t^{h+\lambda}}\right)&= \Big(u_0\!\left(k_{\mu_t^{h+\lambda}}\right)+h u_1\!\left(k_{\mu_t^{h+\lambda}}\right)\Big)+\lambda u_1\!\left(k_{\mu_t^{h+\lambda}}\right)\\
 &=: \overline{u}_0^h\!\left(k_{\mu_t^{h+\lambda}}\right)+\lambda u_1\!\left(k_{\mu_t^{h+\lambda}}\right)
 \end{split}
\end{equation}
for $u\in\{v,m\}$.
%
Hence, instead of solving the problem
\begin{equation*}\label{eq:final}
\left\{ \begin{array}{l}
 \partial_t \mu_t^{h+\lambda} +\div_x\left[\left(v_0\!\left(k_{\mu_t^{h+\lambda}}\right)+(h+\lambda)\: v_1\!\left(k_{\mu_t^{h+\lambda}}\right)\right) \mu_t^{h+\lambda}\right]=\\
\qquad  \qquad \qquad \qquad \qquad =\left[m_0\!\left(k_{\mu_t^{h+\lambda}}\right)+(h+\lambda)\: m_1\!\left(k_{\mu_t^{h+\lambda}}\right)\right]\mu_t^{h+\lambda}\\ 
 \mu_0^{h+\lambda}=\mu_0,
\end{array} \right. 
\end{equation*}
the following can be considered
\begin{equation*}
\left\{ \begin{array}{l}
 \partial_t \overline{\mu}_t^{\lambda} +\div_x \left[\left(\overline{v}_0^h\!\left(k_{\overline{\mu}_t^\lambda}\right)+\lambda v_1\!\left(k_{\overline{\mu}_t^\lambda}\right)\right) \overline{\mu}_t^{\lambda}\right]=\left[\overline{m}_0^h\!\left(k_{\overline{\mu}_t^\lambda}\right)+\lambda m_1\!\left(k_{\overline{\mu}_t^\lambda}\right)\right] \overline{\mu}_t^{\lambda}\\
 \overline{\mu}_0^{\lambda}=\mu_0,
\end{array} \right. 
\end{equation*}
where $\overline{\mu}_\bullet^{\lambda}:=\mu_\bullet^{h+\lambda}$.
Thus, the problem of differentiability at $h\neq 0$ is reduced to differentiability at $h=0$, i.e
\[\lim_{\lambda\to 0}\frac{\mu_\bullet^{h+\lambda}-\mu_\bullet^h}{\lambda}=\lim_{\lambda\to 0}\frac{\overline{\mu}_\bullet^{\lambda}-\overline{\mu}_\bullet^0}{\lambda}.\]
Thus having shown existence of the derivative of $h\mapsto \mu_\bullet^h$ for any $h\in(-\tfrac{1}{2};\tfrac{1}{2})$, now note that the estimates for the steps 1 and 2 involve bounds on $u_0$. The mapping $h\mapsto\overline{u}_0^h$ is bounded on $(-\frac{1}{2};\tfrac{1}{2})$. So convergence is uniform for $h\in(-\tfrac{1}{2};\tfrac{1}{2})$. Hence we have shown the continuity of the derivative with respect to $h$, i.e $h\mapsto\partial_h\mu_t^h\in\CC((-\frac{1}{2},\frac{1}{2});\ZZZ)$.
It means that $h\mapsto\mu_\bullet^h$ is of a~class $\CC^1\big((-\frac{1}{2},\frac{1}{2});\CC_{\tilde{\omega}}\big([0,\infty);\ZZZ\big)\big)$.

This finally finishes the proof of Theorem~\ref{main_hille}.
\end{proof}
\newpage
\appendix

\section{Proof of Lemma~\ref{lem:nem_char}}\label{apx:lem36}
The map $\mu\mapsto u(k_\mu)$ is of a class $\CC^1(\ZZZ;\CCA(\RRD))$ if the Fr\'echet derivative exists (Step 1 of the proof) and both $\mu\mapsto u(k_\mu)$  and $\mu\mapsto\partial u(k_\mu)$ are bounded (Step 2 and Step~3).

{\bf Step 1.} For differentiability of the map $\mu\mapsto u(k_\mu)$  at $\overline{\mu}$ we need to prove that there exists an operator $\partial u(k_{\overline{\mu}})(\cdot) \in \mathcal{L}( \ZZZ; \CCA(\RRD))$, such that
\[u(k_\mu)=u(k_{\overline{\mu}})+ \partial u(k_{\overline{\mu}})(\mu-\overline{\mu})+o\left(\Vert \mu-\overline{\mu} \Vert_{\ZZZ}\right).\]
If $u$ is differentiable at $\overline{\mu}$, then the operator $\partial u(k_{\overline{\mu}})$ will be given by the directional derivative. Let us check the operator~\eqref{eq:deriv_direct_N} i.e. $\partial u(k_{\overline{\mu}})(\mu-\overline{\mu})=(u' \circ k_{\overline{\mu}})\: k_{\mu-\overline{\mu}}$.
Sufficient condition for  $D_{\overline{\mu}} u(k_\mu)$ to be an element of space $\CCA(\RRD)$ is that $u' \in \CCA$, thus $u\in\CC^{2+\alpha}$. This provides the mapping
 $x \mapsto \left(u' \circ k_{\overline{\mu}}\right)\!(x)\in \CCA(\RRD)$.
Let us estimate
\begin{equation}\label{eqn:estim_III_3}
\begin{split}
& \Big\|u(k_\mu)-u(k_{\overline{\mu}})- \partial u(k_{\overline{\mu}})(\mu-\overline{\mu})\Big\|_{\CCA(\RRD)}=\\
=&
\underbrace{\Big\| u(k_\mu)-u(k_{\overline{\mu}})- \partial u(k_{\overline{\mu}})(\mu-\overline{\mu})\Big\|_{\infty}}_{=:II.1}+
\underbrace{\Big\| \nabla_x \Big(u(k_\mu)-u(k_{\overline{\mu}})- \partial u(k_{\overline{\mu}})(\mu-\overline{\mu})  \Big)\Big\|_{\infty}}_{=:II.2}\\
& \qquad+
\underbrace{ \left| \nabla_x \Big(u(k_\mu) - u(k_{\overline{\mu}})- \partial u(k_{\overline{\mu}})(\mu-\overline{\mu})\Big)\right|_\alpha}_{=:II.3}.
\end{split}
\end{equation}
 We estimate $II.1, II.2$ and $II.3$ separately.
Expanding $u(k_{\mu}(x))$ at $k_{\overline{\mu}}(x)$ up to the first order one obtains
\begin{equation*}
\begin{split}
II.1&=\Big\| u(k_\mu)-u(k_{\overline{\mu}})- \partial u(k_{\overline{\mu}})(\mu-\overline{\mu})\Big\|_{\infty}\\
&=
\Big\| u( k_{\overline{\mu}}) + u'( k_{\overline{\mu}})\: k_{\mu-\overline{\mu}} + R_1[u,k_{\overline{\mu}}](k_\mu)-
u (k_{\overline{\mu}})
- u'( k_{\overline{\mu}})\: k_{\mu-\overline{\mu}}\Big\|_{\infty}\\
&= \left\Vert R_1[u,k_{\overline{\mu}}](k_\mu)\right\Vert_{\infty}.
\end{split}
\end{equation*}
Let us first use Proposition~\ref{prop:remainder estimate Holder}, and then estimate~\eqref{estim:bound1}
\begin{equation}\label{eqn:II.1}
II.1=\sup_{x\in\RRD}\big|R_1\big[u,k_{\overline{\mu}}(x)\big](k_{\mu}(x))\big|\leq C_{1,\alpha} |u'|_\alpha \sup_{x\in\RRD}|k_{\mu-\overline{\mu}}(x)|^{1+\alpha}\leq C\: |u'|_\alpha \: \|\mu-\overline{\mu}\|_\ZZZ^{1+\alpha},\end{equation}
where constant $C$ does not depend on time, value $|u'|_\alpha$ neither.

Notice that $\frac{\partial}{\partial x_j}[u\circ k_\mu]=(u'\circ k_\mu(x))\: \frac{\partial k_\mu(x)}{\partial x_j}$ (by chain rule) and let us estimate $II.2$
\begin{equation*}
\begin{split}
&II.2=d\max_j\left\Vert\frac{\partial}{\partial x_j}\left[ u\circ k_\mu \right]
-\frac{\partial}{\partial x_j}\left[u\circ  k_{\overline{\mu}}\right]
- \frac{\partial}{\partial x_j}\left[(u' \circ k_{\overline{\mu}})\: k_{\mu-\overline{\mu}}\right]\right\Vert_{\infty}\\
&=d\max_j\Big\Vert
(u'\circ k_\mu) \frac{\partial k_\mu}{\partial x_j}
- (u'\circ k_{\overline{\mu}}) \frac{\partial k_{\overline{\mu}}}{\partial x_j}
-  (u''\circ k_{\overline{\mu}}) \frac{\partial k_{\overline{\mu}}}{\partial x_j} \: k_{\mu-\overline{\mu}}
- (u' \circ k_{\overline{\mu}}) \frac{\partial k_{\mu-\overline{\mu}}}{\partial x_j}
\Big\Vert_{\infty}
\\
&=d\max_j\Big\Vert
(u'\circ k_\mu) \frac{\partial k_\mu}{\partial x_j}
- (u'\circ k_{\overline{\mu}}) \frac{\partial k_{\mu}}{\partial x_j}
-  (u''\circ k_{\overline{\mu}}) \frac{\partial k_{\overline{\mu}}}{\partial x_j} \: k_{\mu-\overline{\mu}}
\Big\Vert_{\infty}
\\
&\textrm{\footnotesize{(expanding $u'\circ k_\mu(x)$ into Taylor series at $k_{\overline{\mu}}(x)$)}}
\end{split}
\end{equation*}
\begin{equation*}
\begin{split}
&=d\max_j\Big\|\big[(u'\circ k_{\overline{\mu}})
+
(u''\circ k_{\overline{\mu}})\: k_{\mu-\overline{\mu}}+R_1[u',k_{\overline{\mu}}](k_\mu)
-
(u'\circ k_{\overline{\mu}})\big]\: \frac{\partial k_{\mu}}{\partial x_j}\\
&\qquad\qquad\qquad\qquad\qquad\qquad \qquad\qquad\qquad\qquad \qquad\qquad
 - (u''\circ k_{\overline{\mu}})\: \frac{\partial k_{\overline{\mu}}}{\partial x_j}  \: k_{\mu-\overline{\mu}}
\Big\Vert_{\infty}
\\
&=
d\max_j\Big\| (u''\circ k_{\overline{\mu}})\:k_{\mu-\overline{\mu}} \: \frac{\partial k_{\mu-\overline{\mu}}}{\partial x_j}+
R_1[u', k_{\overline{\mu}}]\! \left(k_{\mu}\right)\left.\:
\frac{\partial k_\mu}{\partial x_j} \right\Vert_{\infty}
\\
&\leq
d\max_j\Big\{\left\Vert
u''\right\Vert_\infty \left\|k_{\mu-\overline{\mu}}\right\|_\infty \left\Vert \frac{\partial k_{\mu-\overline{\mu}}}{\partial x_j}\right\Vert_{\infty}
+ 
\left\|R_1[u', k_{\overline{\mu}}]\! \left(k_\mu\right)\right\|_\infty\Big\|
\frac{\partial k_\mu}{\partial x_j} \Big\|_{\infty}\Big\}.
\end{split}
\end{equation*}
Since $k_\mu\in\CCA$, we can use~Proposition~\ref{eq:remainder estimate Holder} for $R_1[u',k_{\overline{\mu}}](k_\mu)$, and estimate the rest in the exactly the same way as in~\eqref{eqn:II.1}
\begin{equation}\label{eq:R_estim_1}
\sup_{x\in\RRD}\Big|R_1[u',k_{\overline{\mu}}(x)](k_\mu(x))\Big|\leq C\: |u''|_\alpha \: \|\mu-\overline{\mu}\|_\ZZZ^{1+\alpha}.
\end{equation}
Additionally, there exists such a constant $C$ that the following estimations hold 
\begin{equation}\label{eq:estim_II.2}
\|u''\circ k_{\overline{\mu}}\|_\infty \leq C \|u''\|_\infty,\;\;\; \underbrace{\|k_{\mu-\overline{\mu}}\|_\infty}_{\textrm{\footnotesize(using~\eqref{estim:bound1})}}\!\!\!\leq C\|\mu-\overline{\mu}\|_\ZZZ\quad \textrm{and}\;\;\; \underbrace{\left\|\frac{\partial k_{\mu-\overline{\mu}}}{\partial x_j}\right\|_\infty}_{\textrm{\footnotesize(using~\eqref{estim:bound11})}}\!\!\! \leq C\|\mu-\overline{\mu}\|_\ZZZ,
\end{equation}
hence one obtains
\begin{equation}\label{eqn:II.2}
\begin{split}
II.2&\leq
C_1 \|u''\|_\infty  \Vert \mu-\overline{\mu}\Vert_\ZZZ^2 + C_2|u''|_\alpha  \: \|\mu-\overline{\mu}\|_\ZZZ^{1+\alpha},
\end{split}
\end{equation}
where constants do not depend on time.
Now, let us take the last component of equation~\eqref{eqn:estim_III_3}
{\cred
\begin{equation*}
\begin{split}
&II.3\leq
d\max_j\left\{\Big| (u''\circ k_{\overline{\mu}})\:k_{\mu-\overline{\mu}} \: \frac{\partial k_{\mu-\overline{\mu}}}{\partial x_j}\Big|_\alpha+
\Big|R_1[u', k_{\overline{\mu}}]\! \left(k_{\mu}\right)\:
\frac{\partial k_\mu}{\partial x_j} \Big|_{\alpha}\right\}
\\
&\leq
d\max_j\left\{|u''\circ k_{\overline{\mu}}|_\alpha\:\|k_{\mu-\overline{\mu}}\|_\infty \: \left\|\frac{\partial k_{\mu-\overline{\mu}}}{\partial x_j}\right\|_\infty
+
\|u''\circ k_{\overline{\mu}}\|_\infty\:|k_{\mu-\overline{\mu}}|_\alpha \: \left\|\frac{\partial k_{\mu-\overline{\mu}}}{\partial x_j}\right\|_\infty
\right.
\\
&\quad +
\|u''\circ k_{\overline{\mu}}\|_\infty\:\|k_{\mu-\overline{\mu}}\|_\infty \left|\frac{\partial k_{\mu-\overline{\mu}}}{\partial x_j}\right|_\alpha\!\!\!
+
\left|R_1[u', k_{\overline{\mu}}]\! \left(k_{\mu}\right)\right|_\alpha
\left\|\frac{\partial k_\mu}{\partial x_j} \right\|_{\infty}
\!\!\!\!+\!\left.
\left\|R_1[u', k_{\overline{\mu}}]\! \left(k_{\mu}\right)\right\|_\infty\:
\left|\frac{\partial k_\mu}{\partial x_j} \right|_{\alpha}\right\}.
\end{split}
\end{equation*}
Using estimations~\eqref{eq:estim_II.2} one obtains
\begin{equation*}
\begin{split}
&II.3\leq\\
&\leq d\max_j\Big\{
|u''\circ k_{\overline{\mu}}|_\alpha \cdot C^2\|\mu-\overline{\mu}\|_\ZZZ^2
+
\|u''\|_\infty\cdot C\|\mu-\overline{\mu}\|_\ZZZ|k_{\mu-\overline{\mu}}|_\alpha 
\\
&
\quad +
\|u''\|_\infty C\|\mu-\overline{\mu}\|_\ZZZ\left|\frac{\partial k_{\mu-\overline{\mu}}}{\partial x_j}\right|_\alpha\!\!+\big|R_1[u',k_{\overline{\mu}}](k_\mu)\big|_\alpha\left\|\frac{\partial k_\mu}{\partial x_j}\right\|_\infty
\!\!+C|u''|_\alpha \|\mu-\overline{\mu}\|_\ZZZ^{1+\alpha}\left|\frac{\partial k_\mu}{\partial x_j}\right|_\alpha\Big\}.
\end{split}
\end{equation*}
}

{\cred
Let us estimate the term $|R_1[u',k_{\overline{\mu}}](k_\mu)|_\alpha$.
In general, if $f\in\CC^{n+1}$, then for some $\xi\in[x_0;x]$ holds
\begin{equation*}\label{taylor1}
R_n[f,x_0](x)=\tfrac{1}{(n+1)!}\Big(f^{(n+1)}(\xi)\Big)(x-x_0)^{n+1}. \end{equation*}
Proof of this fact can be found in~\cite[Theorem
   16, page 139]{Adams:2003}.
In our case \[R_1[u', k_{\overline{\mu}}(x)](k_{\mu}(x))=\frac{1}{2}(u'''\circ \xi(x))
\: \left(k_{\mu-\overline{\mu}}(x)\right)^2,\] where $k_{\overline{\mu}}(x) \leq \xi(x) \leq k_{\mu}(x)$. Since  $u\in\CC^{3+\alpha}$, thus one obtains
\begin{equation}\label{eqn:II.3_3}
\begin{split}
&\Big| R_1[u',k_{\overline{\mu}}](k_\mu)\Big|_{\alpha}= \frac{1}{2}\underbrace{\Big|  (u'''\circ \xi)
\left(k_{\mu-\overline{\mu}}\right)^2 \Big|_{\alpha}}_{\textrm{\footnotesize{(using~\eqref{eqn:alpha_infty})}}}\\
&\leq\tfrac{1}{2}\!\!\underbrace{|u'''\circ \xi|_\alpha}_{\textrm{\footnotesize(using~\eqref{eqn:alpha})}}
\underbrace{\|\left(k_{\mu-\overline{\mu}}\right)^2 \|_{\infty}}_{\textrm{\footnotesize(using~\eqref{estim:bound1})}}\: +\:\:\tfrac{1}{2}\left\|u'''\circ \xi\right\|_\infty \!\!\!\!\!\!\!\!\!\!
\underbrace{\left|\left(k_{\mu-\overline{\mu}}\right)^2 \right|_{\alpha}}_{\textrm{\footnotesize(analogously as in~\eqref{estim:bound3})}}\!\!\!
\leq c\|\mu-\overline{\mu}\|_\ZZZ^2.
\end{split}
\end{equation}
We have
\[|k_\mu|_\alpha\leq \max\{2\|k_\mu\|_\infty,\|\nabla k_\mu\|_\infty\}\leq C_k\|\mu\|_\ZZZ,\qquad \textrm{(see Lemma~\ref{lem:estim_x_K} equations~\eqref{estim:bound1} and~\eqref{estim:bound11})}\]
while
\[\left|\frac{\partial k_\mu}{\partial x_j}\right|_\alpha\leq C_k'\|\mu\|_\ZZZ \qquad \textrm{(see~\eqref{estim:bound3}).}\]
Thus, one obtains
\begin{equation}\label{eqn:IIfinal}
II.3\leq \widehat{C}_1\|\mu-\overline{\mu}\|_\ZZZ^2+\widehat{C}_2\|\mu-\overline{\mu}\|_\ZZZ^{1+\alpha}.
\end{equation}
This concludes Step 1,}
 finishes the proof that $\mu\mapsto u(k_\mu)$ is of class $\CC^1\left(\ZZZ; \CCA(\RRD)\right)$. Notice that in estimate~\eqref{eqn:II.1}, \eqref{eqn:II.2}, and~\eqref{eqn:IIfinal} all constants are independent on time.

{\bf Step 2. }Now we are going to show boundedness of the mapping $\mu\mapsto u(k_\mu)$.
By definition of the norm $\|\cdot\|_{\CCA(\RRD)}$ one has
\begin{align}\label{eq:boun_partial_u}
\|u(k_\mu)\|_{\CCA(\RRD)}=\left\|u(k_\mu)\right\|_\infty+
\left\|\nabla_x\left[u(k_\mu)\right]\right\|_\infty+
\left|\nabla_x\left[u(k_\mu)\right]\right|_\alpha.
\end{align}
Let us first recall that $u\in\CC^{3+\alpha}(\RRD)$ (by assumptions {\bf A1} and {\bf A2}).
For the first term one can observe that
\[
\left\|u\left(\int_\RRD K_u(y,x)\:\dd\mu(y)\right)\right\|_\infty\leq \|u\|_\infty=:  C_1.\]
For the second term on the right-hand side of~\eqref{eq:boun_partial_u} let us observe that according to chain rule one can estimate
\begin{align*}
\left\|\nabla_x\left[u(k_\mu)\right]\right\|_\infty
&\leq 
\left\| u'\!\left(\int_\RRD K_u(y,x)\:\dd\mu(y)\right)\right\|_\infty 
\left\|\nabla_x\int_\RRD K_u(y,x)\:\dd\mu(y)\right\|_\infty\\
&\leq 
\left\| u'\right\|_\infty 
\left\|\int_\RRD \nabla_x K_u(y,x)\:\dd\mu(y)\right\|_\infty\\
&\leq
\big\| u'\big\|_\infty\big\|\nabla_x K\big\|_{\CC^{0+\alpha}\left(\RRD;\:\CCA(\RRD)\right)}\big\|\mu\big\|_\ZZZ \leq C_2\big\|\mu\big\|_\ZZZ.
\end{align*} 
The last term can be estimated by
\begin{align*}
&\big|\nabla_x[u(k_\mu)]\big|_\alpha=\\
=&\sup_{\substack{x_1\neq x_2\\ x_1, x_2 \in \RRD }}\frac{\left|\nabla_x\left[u\left(\int_\RRD K_u(y,x_1)\:\dd\mu(y)\right)\right]-\nabla_x\left[u\left(\int_\RRD K_u(y,x_2)\:\dd\mu(y)\right)\right]\right|}{|x_1-x_2|^\alpha}
\end{align*}
\begin{align*}
\leq &
\sup_{\substack{x_1\neq x_2\\ x_1, x_2 \in \RRD }}\Bigg\{\frac{\big| u'\left(\int_\RRD K_u(y,x_1)\:\dd\mu(y)\right)  \int_\RRD \nabla_xK_u(y,x_1)\:\dd\mu(y)}{|x_1-x_2|^\alpha}\\
&\qquad-\frac{ u'\left(\int_\RRD K_u(y,x_2)\:\dd\mu(y)\right)\big| \int_\RRD \nabla_xK_u(y,x_2)\:\dd\mu(y)}{|x_1-x_2|^\alpha}\Bigg\}\\
= &
\sup_{\substack{x_1\neq x_2\\ x_1, x_2 \in \RRD }}
\Bigg\{\frac{\big|\big[ u'\left(\int_\RRD K_u(y,x_1)\:\dd\mu(y)\right)- u'\left(\int_\RRD K_u(y,x_2)\:\dd\mu(y)\right)\big] \int_\RRD \nabla_xK_u(y,x_1)\:\dd\mu(y)}{|x_1-x_2|^\alpha}\\
&\qquad-\frac{\big[\int_\RRD\nabla_xK_u(y,x_2)-\nabla_xK_u(y,x_1)\:\dd\mu(y)\big] u'\left(\int_\RRD K_u(y,x_2)\:\dd\mu(y)\right)\big|}{|x_1-x_2|^\alpha}\Bigg\}
\\
\leq &
\sup_{\substack{x_1\neq x_2\\ x_1, x_2 \in \RRD }}
\Bigg\{\frac{\| u''\|_\infty\left|\int_\RRD K(y,x_1)-K(y,x_2) \:\dd\mu(y)\right|\|\nabla_x K\|_{\CC^{0+\alpha}(\RRD;\:\CCA(\RRD))}\|\mu\|_\ZZZ}{|x_1-x_2|^\alpha}\\
&\qquad
+\frac{\|\nabla_x\big(K(\cdot,x_1)-K(\cdot,x_2)\big)\|_{\CCA(\RRD)}\|\mu\|_\ZZZ\| u'\|_\infty}{|x_1-x_2|^\alpha}\Bigg\}\\
\leq & 
\sup_{\substack{x_1\neq x_2\\ x_1, x_2 \in \RRD }}
\Bigg\{\frac{\| u''\|_\infty\|K(\cdot,x_1)-K(\cdot,x_2)\|_{\CCA(\RRD)}\|\mu\|_\ZZZ\|\nabla_x K\|_{\CC^{0+\alpha}(\RRD;\:\CCA(\RRD))}\|\mu\|_\ZZZ}{|x_1-x_2|^\alpha}\\
&\qquad
+\frac{\|\nabla_x\big(K(\cdot,x_1)-K(\cdot,x_2)\big)\|_{\CCA(\RRD)}\|\mu\|_\ZZZ\| u'\|_\infty}{|x_1-x_2|^\alpha}\Bigg\}\leq  C_3\Big(\big\|\mu \big\|^2_\ZZZ+\big\|\mu \big\|_\ZZZ\Big). 
\end{align*} 
Combining estimates one has
\[\|u(k_\mu)\|_{\CCA(\RRD)}\leq C_1+C_2\|\mu\|_\ZZZ+C_3(\|\mu\|_\ZZZ^2+\|\mu\|_\ZZZ)\leq C_{u,K_u}(1+\|\mu\|_\ZZZ+\|\mu\|_\ZZZ^2).\]
{\cred
{\bf Step 3.} Now we are going to show the boundedness of the mapping $\mu\mapsto \partial u(k_\mu)$. For $\mu,\overline{\mu}\in\ZZZ$ one has
\begin{align*}
\|\partial u(k_{\mu})\:\overline{\mu}\|_{\CCA(\RRD)}&=\|(u'\circ k_\mu)\:k_{\overline{\mu}}\|_{\CCA(\RRD)}
\leq\underbrace{\|u'\circ k_\mu\|_{\CCA(\RRD)}\|k_{\overline{\mu}}\|_{\CCA(\RRD)}}_{\textrm{\footnotesize (according to~\eqref{eqn:CCA_fg})}}\\
&\leq\|u'\circ k_\mu\|_{\CCA(\RRD)}\underbrace{\|K\|_{\CCA( \RRD;\:\CCA(\RRD))} \|\overline{\mu}\|_\ZZZ}_{\textrm{\footnotesize (according to~\eqref{eq:k_mu_in_CCA})}}
\end{align*}
and hence
\[\|\partial u(k_\mu)\|_{\mathcal{L}(\ZZZ;\:\CCA(\RRD))}\leq\|K\|_{\CCA( \RRD;\:\CCA(\RRD))} \|u'\circ k_\mu\|_{\CCA(\RRD)}.\]
The last factor can be estimated as follows
\begin{align*}
\|u'\circ k_\mu\|_{\CCA(\RRD)}&=\|u'\circ k_\mu\|_\infty+\|\nabla(u'\circ k_\mu)\|_\infty+|\nabla(u'\circ k_\mu)|_\alpha\\
&\leq \|u'\|_\infty+\|(u''\circ k_\mu) \nabla k_\mu\|_\infty+\underbrace{|(u''\circ k_\mu)\: \nabla k_\mu|_\alpha}_{\textrm{\footnotesize(use~\eqref{eqn:alpha_infty})}}
\end{align*}
\begin{align*}
&\leq\|u'\|_\infty+\|u''\|_\infty\|\nabla k_\mu\|_\infty+\|u''\circ k_\mu\|_\infty |\nabla k_\mu|_\alpha+\underbrace{|u''\circ k_\mu|_\alpha}_{\textrm{\footnotesize(use~\eqref{eqn:alpha})}} \|\nabla k_\mu\|_\infty
\\
&\leq
\|u'\|_\infty+\|u''\|_\infty\underbrace{\|\nabla k_\mu\|_\infty}_{\textrm{\footnotesize(use~\eqref{estim:bound11})}}+\|u''\|_\infty \underbrace{|\nabla k_\mu|_\alpha}_{\textrm{\footnotesize(use~\eqref{estim:bound3})}}+|u''|_\alpha \underbrace{\| \nabla k_\mu\|_\infty^\alpha}_{\textrm{\footnotesize(use~\eqref{estim:bound11})}} \underbrace{\|\nabla k_\mu\|_\infty}_{\textrm{\footnotesize(use~\eqref{estim:bound11})}}
\\
&\leq \|u'\|_\infty+\|u''\|_\infty\sup_{x\in\RRD}\|\nabla_xK(\cdot,x)\|_{\CCA(\RRD)}\|\mu\|_\ZZZ
\\
&\qquad
+
\|u''\|_\infty\sup_{\substack{x_1 \neq x_2\\ x_1, x_2 \in \RRD}}\left(\frac{\Vert  \nabla_x \left(K(\cdot,x_1) - K(\cdot, x_2)\right)\Vert_{\CCA(\RRD)}}{|x_1 - x_2|^{\alpha}}\right)  \Vert \mu \Vert_\ZZZ\\
&\qquad
+ |u''|_\alpha\sup_{x\in\RRD}\|\nabla_xK(\cdot,x)\|_{\CCA(\RRD)}^\alpha\|\mu\|_\ZZZ^\alpha
\sup_{x\in \RRD}\Vert \nabla_xK(\cdot, x)\Vert_{\CCA(\RRD)} \Vert \mu \Vert_\ZZZ\\
&\leq C'_{\mu,K_u}(1+\|\mu\|_\ZZZ+\|\mu\|_\ZZZ^{1+\alpha}).
\end{align*}
This finishes the proof of Lemma~\ref{lem:nem_char}.
}

\section{Proof of Theorem~\ref{th:main}}\label{apdx:lin}
 Before giving proofs of above theorems, let us notice a few observations, which will be used further.
The characteristic system for the perturbed problem~\eqref{def:prob_init2_lin} has the form
\begin{equation}\label{def:diff_pert}
\left\{ \begin{array}{ll}
{\dot X}_{h}(t,x)&=\left(b_0+h b_1\right)\left(t, X_{h}(t,x)\right)\\
X_{h}(t_0, x)&=x \in \RRD.
\end{array} \right.
\end{equation}
As before: if $\left(t \mapsto b_i(t, \cdot)\right)\in \CC_b\left([ 0, +\infty); \CCA(\RRD)\right)$ for $i=0,1$, then $x \mapsto X_{h}(t, x)$ is a~diffeomorphism. To underline the dependence of the flow on parameter $h$, we will use notation $X(t, x; h)$ instead of $X_{h}(t, x)$. 
\begin{proposition}\label{claim:esti_X}
Let $\left(t \mapsto b_i(t, \cdot)\right) \in \CC_b([ 0, +\infty); \CCA(\RRD))$ for $i=0,1$. Then for any $(t, x)\in [0,+ \infty) \times \RRD$ the mapping $\left(h \mapsto X (t, x; h)\right)$ is of a class $\CCA\big((-\frac{1}{2}, \frac{1}{2})\big)$.
Moreover, for any $t$ the following estimation holds
\begin{equation}\label{eq:estim_X_time_X}
\left|\partial_h X( t,x;h)\right|\leq t\Vert b_1 \Vert_{\infty} \exp\left(C_h\:t\right),\end{equation}
where $C_h:=\Vert \nabla_xb_0\Vert_{\infty}+ |h|\Vert \nabla_x b_1 \Vert_{\infty}$.
\end{proposition}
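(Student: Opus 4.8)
The plan is to view $h\mapsto X(t,x;h)$ as the solution of the parameter-dependent characteristic ODE~\eqref{def:diff_pert} and to run standard Gronwall arguments in the $h$-variable, using only the spatial $\CCA$-regularity of $b_0,b_1$. \textbf{Step 1 (Lipschitz dependence, boundedness):} since $b^{h}-b^{h'}=(h-h')b_1$, Lemma~\ref{lem:X_estim} applied with $b=b^h$ and $\overline b=b^{h'}$ gives
\begin{equation*}
|X(t,x;h)-X(t,x;h')|\le |h-h'|\,t\,\|b_1\|_\infty\,\exp\!\bigl(C_h\,t\bigr),
\end{equation*}
uniformly for $h,h'$ in a fixed bounded interval; in particular $X(t,x;h+\eta)\to X(t,x;h)$ uniformly on bounded time intervals as $\eta\to0$, and $|X(t,x;h)-x|\le t\bigl(\|b_0\|_\infty+\tfrac12\|b_1\|_\infty\bigr)$ stays bounded for $h\in(-\tfrac12,\tfrac12)$.

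\textbf{Step 2 (candidate derivative, estimate, difference quotients).} Introduce $Y(t,x;h)$ as the (globally defined, linear, bounded-coefficient) solution of
\begin{equation*}
\dot Y=b_1(t,X(t,x;h))+\bigl(\nabla_xb_0+h\,\nabla_xb_1\bigr)(t,X(t,x;h))\,Y,\qquad Y(t_0,x;h)=0.
\end{equation*}
In integral form, using $\|b_1(s,\cdot)\|_\infty\le\|b_1\|_\infty$ and $\|\nabla_xb_0+h\nabla_xb_1\|_\infty\le C_h$, Gronwall gives $|Y(t,x;h)|\le t\|b_1\|_\infty\exp(C_ht)$, which is~\eqref{eq:estim_X_time_X} once $Y=\partial_hX$ is shown. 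For that, set $Z_\eta(t):=\eta^{-1}\bigl(X(t,x;h+\eta)-X(t,x;h)\bigr)-Y(t,x;h)$; a first-order expansion of $b^{h+\eta}(s,\cdot)$ between $X(s;h)$ and $X(s;h+\eta)$, together with $b^{h+\eta}-b^h=\eta b_1$, yields $Z_\eta(t_0)=0$ and
\begin{equation*}
\dot Z_\eta(s)=A_\eta(s)\,Z_\eta(s)+\bigl(A_\eta(s)-\nabla_xb^h(s,X(s;h))\bigr)Y(s;h),
\end{equation*}
with $A_\eta(s):=\int_0^1\nabla_xb^{h+\eta}\bigl(s,X(s;h)+\theta\,(X(s;h+\eta)-X(s;h))\bigr)\dd\theta$, bounded by $C_{h+\eta}$. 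Since $\nabla_xb^{h+\eta}=\nabla_xb^h+\eta\,\nabla_xb_1$ and $\nabla_xb^h(s,\cdot)\in\CC^{0+\alpha}$, Step~1 gives $|A_\eta(s)-\nabla_xb^h(s,X(s;h))|\le|\eta|\,\|\nabla_xb_1\|_\infty+|\nabla_xb^h(s,\cdot)|_\alpha\bigl(|\eta|\,t\|b_1\|_\infty e^{C_ht}\bigr)^\alpha\to0$ uniformly on $[0,T]$; as $Z_\eta(t_0)=0$ and $\sup_{0\le s\le T}|Y(s;h)|<\infty$, Gronwall forces $\sup_{0\le s\le T}|Z_\eta(s)|\to0$, hence $\partial_hX(t,x;h)=Y(t,x;h)$.

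\textbf{Step 3 (Hölder continuity in $h$, conclusion).} For $h_1,h_2\in(-\tfrac12,\tfrac12)$ put $e(t):=|Y(t;h_1)-Y(t;h_2)|$, subtract the two variational equations, and split $\nabla_xb^{h_1}(s,X(s;h_1))-\nabla_xb^{h_2}(s,X(s;h_2))$ into the piece bounded by $|h_1-h_2|\,\|\nabla_xb_1\|_\infty$ coming from $\nabla_xb^{h_1}-\nabla_xb^{h_2}=(h_1-h_2)\,\nabla_xb_1$, and the piece bounded by $|\nabla_xb^{h_2}(s,\cdot)|_\alpha\,|X(s;h_1)-X(s;h_2)|^\alpha\lesssim|h_1-h_2|^\alpha$ (using Step~1); the remaining terms are Lipschitz in $h$, hence $\le|h_1-h_2|^\alpha$ for $|h_1-h_2|<1$. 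One gets $e(t)\le C_T|h_1-h_2|^\alpha+C_h\int_0^te(s)\,\dd s$, so $e(t)\le C_T|h_1-h_2|^\alpha e^{C_ht}$ by Gronwall. With the boundedness of $X(t,x;\cdot)$ and $Y(t,x;\cdot)$ from Steps~1--2, this gives $\bigl(h\mapsto X(t,x;h)\bigr)\in\CCA\bigl((-\tfrac12,\tfrac12)\bigr)$.

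\textbf{Main obstacle.} The delicate point is the convergence of the difference quotients in Step~2: forcing the inhomogeneous term of the $Z_\eta$-equation to vanish requires combining the mere $\CC^{0+\alpha}$-regularity of $\nabla_xb^h$ with the Lipschitz bound on $X$ from Step~1, and the same mechanism — run with two parameter values — produces the Hölder modulus $|h_1-h_2|^\alpha$ in Step~3 and explains why one should not expect better regularity in $h$.
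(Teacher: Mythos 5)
Your argument is correct, and the core mechanism (variational equation for $\partial_h X$, Gronwall to get the bound~\eqref{eq:estim_X_time_X}, then a second Gronwall exploiting the $\alpha$-H\"older continuity of $\nabla_x b^h$ to obtain $|\partial_h X(t,x;h_1)-\partial_h X(t,x;h_2)|\lesssim |h_1-h_2|^\alpha$) is the same as the paper's. Two things you do differently. First, the paper simply cites \cite[Theorem 3.1, formula (3.3)]{ODE:1982} to assert that $\partial_h X$ exists and satisfies the variational equation, and then devotes its whole effort to the H\"older modulus; your Step~2 actually proves differentiability in $h$ from scratch by showing $Z_\eta\to 0$ via the integral-mean matrix $A_\eta$. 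That makes your write-up more self-contained, at the cost of a slightly more delicate uniform estimate on $A_\eta-\nabla_x b^h(\cdot,X(\cdot;h))$ that the paper never needs. Second, you invoke Lemma~\ref{lem:X_estim} (already proved in Section~\ref{sec:linear}) to get the Lipschitz dependence $|X(t,x;h_1)-X(t,x;h_2)|\lesssim|h_1-h_2|$, whereas the paper re-derives that bound inline inside the Appendix~\ref{apdx} proof via the decomposition $(\mathrm{D})+(\mathrm{E})$. Your choice is economical; the paper's makes the appendix self-contained. Both approaches handle the $|h_1-h_2|$-order pieces by observing $|h_1-h_2|\le|h_1-h_2|^\alpha$ for $|h_1-h_2|<1$. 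One small imprecision: the exponent in your Step~1 bound should really be $\min\{C_{h},C_{h'}\}\,t$ (or a uniform $C$ over $h\in(-\tfrac12,\tfrac12)$), not $C_h\,t$ as written, but since you state uniformity over the bounded interval this is cosmetic.
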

The proof of above proposition can be found in Appendix~\ref{apdx} on page~\pageref{apdx}.
\begin{proof}[Proof of Theorem~\ref{th:main}] 
In the following proof we will point out a~dependence of constants on time, to prepare the background for the proof of Theorem~\ref{tw:ciagla_pocho} (see page~\pageref{apdx:ciagla_pocho}).
Knowing that the space $\ZZZ$ is complete, it is enough to show that a proper sequence of quotients is a Cauchy sequence. Let as consider a weak solutions to~\eqref{def:prob_init2_lin} for different values of parameter $h$. By $\nu_t^0$ let us denote the solution for $h=0$ and by $\nu_t^{\lambda_1}$, $\nu_t^{\lambda_2}$ the solution for parameters $\lambda_1\neq \lambda_2$ and such that $\lambda_{1}\neq 0$ and $\lambda_{2}\neq 0$. Solutions $t\mapsto\nu_t^0$, $t\mapsto\nu_t^{\lambda_1}$, $t\mapsto\nu_t^{\lambda_2}$ are unique; see representation formula~\eqref{repr:form}.

Let us notice that for any fixed $h,\lambda\in\mathbb{R}$ and any $t\in[0,+\infty)$ quotient $\frac{\nu_t^{h+\lambda}-\nu_t^h}{ \lambda}$ is an element of space $\mathcal{M}(\RRD) \subseteq \ZZZ$.
First we show differentiability at $h=0$. Differentiability at other $h$ follows from this result; see~\eqref{eq:lin_diff_h_non_0} on page~\pageref{eq:lin_diff_h_non_0}.
 
For $h=0$ it suffices to show that value
 \[I_{{\lambda_1},{\lambda_2}} :=
\left\Vert  \frac{\nu_t^{\lambda_1}-\nu_t^0}{\lambda_1}\right. - \left.\frac{\nu_t^{\lambda_2}-\nu_t^0}{\lambda_2}\right\Vert_{\ZZZ}\]
can be arbitrary small --  when $\lambda_1$ and $\lambda_2$ are sufficiently close to 0. This condition is equivalent to such that: for all $\lambda_n\to 0$, sequence of quotients $\left\{\frac{\nu_t^{\lambda_n}-\nu_t^0}{\lambda_n}\right\}_{n\in\NN}$ is a Cauchy sequence in $\ZZZ$. Hence, converges to a limit that is the same for each sequence $\{\lambda_n\}_{n\in \NN}$ such that $\lambda_n\to 0$. Let us consider
\begin{equation}\label{eqn:final_Cauchy}
\begin{split}
I_{{\lambda_1},{\lambda_2}}& =
\sup_{\Vert \varphi \Vert_{\CCA} \leq 1} \left|\int_{\RRD} \varphi \;\dd\left( \frac{\nu_t^{\lambda_1}-\nu_t^0}{\lambda_1} - \frac{\nu_t^{\lambda_2}-\nu_t^0}{\lambda_2} \right)(x)\right|
\\
&=
\sup_{\Vert \varphi \Vert_{\CCA} \leq 1} \left| \int_{\RRD} \varphi \frac{\dd \nu_t^{\lambda_1}(x)}{\lambda_1} - \int_{\RRD} \varphi \frac{\dd \nu_t^0(x)}{\lambda_1} - \int_{\RRD} \varphi \frac{\dd \nu_t^{\lambda_2}(x)}{\lambda_2} + \int_{\RRD} \varphi \frac{\dd \nu_t^0(x)}{\lambda_2}\right|.
\end{split}
\end{equation}
First we use representation formula (Lemma~\ref{lem:repr_form}, page~\pageref{lem:repr_form}) and the fact that $x \mapsto X(t, x; h)$ is a~diffeomorphism
\begin{equation*}
\begin{split}
&I_{{\lambda_1},{\lambda_2}}=\\
&=\sup_{\Vert \varphi \Vert_{\CCA}\leq 1}
\Bigg|
\int_{\RRD} \varphi(X(t,x; \lambda_1)) \exp\left(\int_0^t w_0(s, X(s, x; \lambda_1))+\lambda_1w_1(s, X(s, x; \lambda_1))\dd s\right) \frac{\dd \nu_0(x)}{\lambda_1}\\
&\quad-
\int_{\RRD} \varphi(X(t,x; 0))\exp\left(\int_0^t w_0(s, X(s, x; 0))\dd s\right)\frac{\dd \nu_0(x)}{\lambda_1}\\
&\quad-
\int_{\RRD} \varphi(X(t,x;\lambda_2))\exp\left(\int_0^t w_0(s, X(s, x; \lambda_2))+\lambda_2w_1(s, X(s, x; \lambda_2))\dd s\right)\frac{\dd \nu_0(x)}{\lambda_2}\\
&\quad+
\int_{\RRD} \varphi(X(t, x; 0))\exp\left(\int_0^t w_0(s, X(s, x; 0))\dd s\right)\frac{\dd \nu_0(x)}{\lambda_2}
\Bigg|.
\end{split}
\end{equation*}
Let us introduce the following notation for convenience \begin{equation}\label{eq:w_defi_linear}
 \overline{w}(s, x; h) :=w_0(s, X(s, x; h))+hw_1(s, X(s, x; h))
\end{equation}
and continue
\begin{equation*}
\begin{split}
&I_{{\lambda_1},{\lambda_2}}=\\
&=\sup_{\Vert \varphi \Vert_{\CCA}\leq 1}
\Bigg|
\int_{\RRD} \varphi(X(t,x; \lambda_1)) e^{\int_0^t \overline{w}(s, x; \lambda_1)\dd s} \frac{\dd \nu_0(x)}{\lambda_1} -
\int_{\RRD} \varphi(X(t,x; 0))e^{\int_0^t \overline{w}(s, x; 0)\dd s}\frac{\dd \nu_0(x)}{\lambda_1}\\
& \qquad \qquad\ -
\int_{\RRD} \varphi(X(t,x;\lambda_2))e^{\int_0^t \overline{w}(s, x; \lambda_2)\dd s}\frac{\dd \nu_0(x)}{\lambda_2}
+
\int_{\RRD} \varphi(X(t, x; 0))e^{\int_0^t \overline{w}(s, x; 0)\dd s}\frac{\dd \nu_0(x)}{\lambda_2}
\Bigg|
\end{split}
\end{equation*}
\begin{equation}\label{eq:estim_jednos}
\begin{split}
&= \sup_{\Vert \varphi \Vert_{\CCA}\leq 1} \underbrace{\Bigg|\int_{\RRD} \Big(\varphi(X(t,x; \lambda_1))-\varphi(X(t,x; 0))\Big)e^{\int_0^t \overline{w}(s, x; \lambda_1)\dd s} \frac{\dd \nu_0(x)}{\lambda_1}}_{ I_{\lambda_1}^{(1)}}\\
&\qquad \qquad\ - \underbrace{\int_{\RRD} \Big(\varphi(X(t,x; \lambda_2))-\varphi(X(t,x; 0))\Big)e^{\int_0^t \overline{w}(s, x; \lambda_2)\dd s} \frac{\dd \nu_0(x)}{\lambda_2}}_{ I_{\lambda_2}^{(1)}}\\
&\qquad \qquad\ - \underbrace{\int_{\RRD} \Big( e^{\int_0^t \overline{w}(s, x; 0)\dd s} -e^{\int_0^t \overline{w}(s, x; \lambda_1)\dd s}\Big)\varphi(X(t,x; 0)) \frac{\dd \nu_0(x)}{\lambda_1}}_{ I_{\lambda_1}^{(2)}}
\\
&\qquad \qquad\ + \underbrace{\int_{\RRD} \left( e^{\int_0^t \overline{w}(s, x; 0)\dd s} -e^{\int_0^t \overline{w}(s, x; \lambda_2)\dd s}\right)\varphi(X(t,x; 0)) \frac{\dd \nu_0(x)}{\lambda_2}\Bigg|}_{I_{\lambda_2}^{(2)}}\\
& \leq \sup_{\Vert \varphi \Vert_{\CCA}\leq 1}\left( | I_{\lambda_1}^{(1)}- I_{\lambda_2}^{(1)}|\right)+ \sup_{\Vert \varphi \Vert_{\CCA}\leq 1}\left(|I_{\lambda_1}^{(2)}- I_{\lambda_2}^{(2)}| \right).
\end{split}
\end{equation}
\noindent Let us consider $| I_{\lambda_1}^{(1)}- I_{\lambda_2}^{(1)}|$ and $|I_{\lambda_1}^{(2)}- I_{\lambda_2}^{(2)}|$ separately.
In $I_{\lambda_1}^{(2)}- I_{\lambda_2}^{(2)}$ expand $e^{\int_0^t \overline{w}(s, x; \lambda_1)\dd s}$ and $e^{\int_0^t \overline{w}(s, x; \lambda_2)\dd s}$ into Taylor series around $h=0$
\begin{equation}\label{eqn:holder_need}
\begin{split}
&| I_{\lambda_1}^{(2)}-I_{\lambda_2}^{(2)}| = \\
&=\Bigg|\int_{\RRD} \varphi(X(t,x; 0))\Bigg[e^{\int_0^t \overline{w}(s, x; 0)\dd s}-e^{\int_0^t \overline{w}(s, x; 0)\dd s} -\lambda_1\Eehz \partial_h\Big(\int_0^t \overline{w}(s, x;h) \dd s\Big)\Big|_{h=0}\\
&\qquad - \mathcal{O}\left(\left|\int_0^t\overline{w}(s,x,\lambda_1)\dd s-\int_0^t\overline{w}(s,x,0)\dd s\right|^{1+\alpha}\right)\Bigg]\frac{\dd \nu_0(x)}{\lambda_1}
\\
& -\int_{\RRD} \varphi(X(t,x; 0))\Bigg[\Eehz-\Eehz
 - \lambda_2\Eehz \partial_h\Big(\int_0^t \overline{w}(s, x;h) \dd s\Big)\Big|_{h=0}\\
 &\qquad - \mathcal{O}\left(\left|\int_0^t\overline{w}(s,x,\lambda_2)\dd s-\int_0^t\overline{w}(s,x,0)\dd s\right|^{1+\alpha}\right)\Bigg]\frac{\dd \nu_0(x)}{\lambda_2}\Bigg|.
\end{split}
\end{equation}
For any $h$ the following holds
\begin{equation*}
\begin{split}
&\mathcal{O}\left(\left|\int_0^t\overline{w}(s,x,h)\dd s-\int_0^t\overline{w}(s,x,0)\dd s\right|^{1+\alpha}\right)=\\
&=\mathcal{O}\left(\left|\int_0^t w_0(s,X(s,x;h))+hw_1(s,X(s,x;h))- w_0(s,X(s,x;0))\dd s\right|^{1+\alpha}\right).
\end{split}
\end{equation*}
Expanding $w_0(s,X(s,x;h))$ into Taylor series at $X(s,h;0)$, and knowing that $w(t,\cdot)\in\CCA(\RRD)$ one can continue estimation
\begin{equation*}
\begin{split}
& \mathcal{O}\left(\left|\int_0^t w_0(s,X(s,x;0))
		+h\nabla_x w_0(s,X(s,x;0))\:\partial_hX(s,x;h)|_{h=0}\right.\right.\\
	&\qquad+\mathcal{O}(|X(s,x;h)-X(s,x;0)|^{1+\alpha})+hw_1(s,X(s,x;h))- w_0(s,X(s,x;0))\dd s\Big|^{1+\alpha}\Big)\\
& =\mathcal{O}\left(\left|\int_0^t h\nabla_x w_0(s,X(s,x;0))\:
\partial_hX(s,x;h)|_{h=0}\right.\right.\\
&\qquad +\mathcal{O}(|X(s,x;h)-X(s,x;0)|^{1+\alpha})+hw_1(s,X(s,x;h))\dd s\Big|^{1+\alpha}\Big)\\
&\leq
\mathcal{O}\Bigg(|h|^{1+\alpha}\underbrace{\big(\int_0^t\|\nabla_x w_0\|_\infty \:s\Vert b_1 \Vert_{\infty} e^{C s}+\|w_1\|_\infty\dd s\big)}_{I_{\mathcal{O}}^{(1)}}\\
&\qquad\qquad\qquad\qquad\qquad\qquad\qquad+\underbrace{\int_0^t\mathcal{O}(|X(s,x;h)-X(s,x;0)|^{1+\alpha})\dd s}_{I_{\mathcal{O}}^{(2)}}\Bigg).
	\end{split}
\end{equation*}
The last inequality follows Proposition~\ref{claim:esti_X} and assumption of Theorem~\ref{th:main}, that $w_0(s,X(s,x;0))$ and $w_1(s,X(s,x;h))$ are globally bounded in time.
Constant $C$ in the above inequality is independent on time and it is defined in Proposition~\ref{claim:esti_X}.
Components $I_{\mathcal{O}}^{(1)}$ and $I_{\mathcal{O}}^{(2)}$ can be estimated as follows
\begin{align*}
I_{\mathcal{O}}^{(1)}&=\|\nabla_x w_0\|_\infty\|b_1\|_\infty\left(\frac{t}{C}\exp(Ct)-\frac{1}{C^2}\exp(Ct)+\frac{1}{C^2}\right)+t\|w_1\|_\infty,\\
I_{\mathcal{O}}^{(2)}&=\int_0^t\mathcal{O}\Big(\Big|X(s;x;0)+h\partial_hX(s;x;h)|_{h=0}+\mathcal{O}(|h|^{1+\alpha})-X(s;x;0)\Big|^{1+\alpha}\Big)\dd s\\
&\leq \int_0^t\mathcal{O}\Big(\Big|h\partial_hX(s;x;h)|_{h=0}+\mathcal{O}(|h|^{1+\alpha})\Big|^{1+\alpha}\Big)\dd s\\
&\leq\int_0^t \mathcal{O}\big((s\Vert b_1 \Vert_{\infty} e^{C_h s})|h|^{1+\alpha}\big)\dd s + t\mathcal{O}(|h|^{1+\alpha})\\
&\leq \|b_1\|_\infty\left(\frac{t}{C}\exp(C_ht)-\frac{1}{C^2}\exp(C_ht)+\frac{1}{C^2}\right)\mathcal{O}(|h|^{1+\alpha})+t\mathcal{O}(|h|^{1+\alpha}).
\end{align*}
Let us denote \begin{equation}\label{eq:holder_XXX}
\overline{C}(t):=\big(\|\nabla_x w_0\|_\infty+1\big)\|b_1\|_\infty\left(\frac{t}{C}\exp(Ct)-\frac{1}{C^2}\exp(Ct)+\frac{1}{C^2}\right),\qquad C_1:=\|w_1\|_\infty.
\end{equation}
Using above notation we have \begin{align*}\label{eq:star_1}
&\mathcal{O}\left(\left|\int_0^t\overline{w}(s,x,h)\dd s-\int_0^t\overline{w}(s,x,0)\dd s\right|^{1+\alpha}\right)\leq \big(2\overline{C}(t)+C_1t\big)\mathcal{O}(|h|^{1+\alpha}).
\end{align*}
Now, we can go back to estimation~\eqref{eqn:holder_need}
\begin{small}
\begin{equation}\label{eq:I2_final}
\begin{split}
&| I_{\lambda_1}^{(2)}-I_{\lambda_2}^{(2)}| \leq\\
&\leq\;\Bigg|
\int_{\RRD} \varphi(X(t,x; 0))\Bigg[
\Eehz \partial_h\left(\int_0^t \overline{w}(s, x;h) \dd s\right)\Big|_{h=0}-
\Eehz \partial_h \left(\int_0^t \overline{w}(s, x;h) \dd s\right)\Big|_{h=0}\\
&\qquad\quad +\big(2\overline{C}(t)+C_1t\big)\left(\frac{\mathcal{O}(|\lambda_1|^{1+\alpha})}{\lambda_1}+ \frac{\mathcal{O}(|\lambda_2|^{1+\alpha})}{\lambda_2}\right)\Bigg]\dd \nu_0(x)\Bigg|
\\
&\leq\; (2\overline{C}(t)+C_1t\big)\Bigg|\int_{\RRD}\varphi(X(t,x; 0)) \left(\frac{\mathcal{O}(|\lambda_1|^{1+\alpha})}{\lambda_1}+ \frac{\mathcal{O}(|\lambda_2|^{1+\alpha})}{\lambda_2}\right)\dd \nu_0(x)\Bigg|\\
& \leq\;  (2\overline{C}(t)+C_1t\big)\left|\int_{\RRD} \left(\frac{\mathcal{O}(|\lambda_1|^{1+\alpha})}{\lambda_1}+ \frac{\mathcal{O}(|\lambda_2|^{1+\alpha})}{\lambda_2}\right)\dd \nu_0(x)\right|,
\end{split}
\end{equation}
\end{small}

\noindent where $\overline{C}(t)$ and $C_1$ are given by~\eqref{eq:holder_XXX}.
We have shown, that for any fixed moment of time $t$, the term
 $\sup_{\Vert \varphi \Vert_{\CCA}\leq 1}\left(|I_{\lambda_1}^{(2)}- I_{\lambda_2}^{(2)}| \right)$ can be arbitrary small. Above can be seen why do we need $w_0$ and $w_1$ to be of a class $\CCA$ with respect to~$x$. 

We now take into consideration $| I_{\lambda_1}^{(1)}- I_{\lambda_2}^{(1)}|$. Because $\varphi\in\CCA(\RRD)$, one has
\begin{equation}\label{taylor_ex}
\varphi(x)=\varphi(x_0)+\nabla_x \varphi(x_0)(x-x_0) + R_1[\varphi;x_0](x),
\end{equation}
with $|R_1[\varphi;x_0](x)| \leq C_2|\nabla_x\varphi|_{\alpha} | x-x_0|^{1+\alpha}$, where $|\nabla_x\varphi|_\alpha$ is an $\alpha$-H\"older constant. Since $\varphi\in\CCA(\RRD)$, the function $\varphi(X(t,x;h))$ is bounded globally in time, hence constant $C_2$ is independent of time.

Thus, expand $\varphi(X(t,x; \lambda_1))
$ and $\varphi(X(t,x; \lambda_2))$ into Taylor series around $X(t,x; 0)$
\begin{equation*}
\begin{split}
|I_{\lambda_1}^{(1)}-I_{\lambda_2}^{(1)}|&\leq
\Big|\int_{\RRD} \left[ \varphi(X(t,x; 0))
+
\nabla_x \varphi(X(t, x; h))|_{h=0} \cdot (X(t, x; \lambda_1) - X(t, x; 0))\right. \\
& \qquad   + \left. \mathcal{O}\left(\left|X(t, x; \lambda_1)-X(t, x; 0)\right|^{1+\alpha}\right)
- \varphi(X(t, x; 0))\right] e^{\int_0^t \overline{w}(s, x; \lambda_1)\dd s}\frac{\dd \nu_0(x)}{\lambda_1}
\\
& \ -\, 
\int_{\RRD}  \left[ \varphi(X(t, x;0))
+
\nabla_x\varphi(X(t, x; h))|_{h=0}(X(t, x; \lambda_2) - X(t, x; 0))\right.\\
& \qquad  + \left. \mathcal{O}\left(\left|X(t, x; \lambda_2)-X(t, x; 0)\right|^{1+\alpha}\right)
- \varphi(X(t, x; 0))\right]e^{\int_0^t \overline{w}(s, x; \lambda_2)\dd s} \frac{\dd \nu_0(x)}{\lambda_2}\Big|.
\end{split}
\end{equation*}
Expanding $X(t, x; \lambda_2)$ and $X(t, x; \lambda_1)$ around $h=0$, by Proposition~\ref{claim:esti_X}  one obtains
\begin{equation}\label{eq:kwadrat}
\begin{split}
| I_{\lambda_1}^{(1)}- I_{\lambda_2}^{(1)}|&=\Bigg|
\int_{\RRD} \Big[
\nabla_x\varphi(X(t,x; h))|_{h=0}\Big(\lambda_1\partial_h X(t, x;h)\big|_{h=0} + \mathcal{O}(|\lambda_1|^{1+\alpha})\Big)\\
&\qquad \qquad
 +
 \mathcal{O}\Big(\left|X(t,x; \lambda_1)-X(t,x; 0)\right|^{1+\alpha}\Big)\Big] e^{\int_0^t \overline{w}(s, x; \lambda_1)\dd s}\frac{\dd \nu_0(x)}{\lambda_1}\\
&\quad-\int_{\RRD} \Big[
\nabla_x\varphi(X(t,x; h))|_{h=0}\Big(\lambda_2\partial_h X(t, x;h)\big|_{h=0} + \mathcal{O}(|\lambda_2|^{1+\alpha})\Big)\\
&\qquad \qquad
+
 \mathcal{O}\Big(\left|X(t,x; \lambda_2)-X(t,x; 0)\right|^{1+\alpha}\Big)\Big] e^{\int_0^t \overline{w}(s, x; \lambda_2)\dd s}\frac{\dd \nu_0(x)}{\lambda_2}\Bigg|.
\end{split}
\end{equation}
The remainder $\mathcal{O} \Big(\left|X(t, x; h)-X(t, x; 0)\right|^{1+\alpha}\Big)$ was estimated at $I_{\mathcal{O}}^{(2)}$; see page~\pageref{eq:holder_XXX}. For each $h\in (-\frac{1}{2}, \frac{1}{2})$ and any fixed $t$ one has 
\[
\mathcal{O} \Big(\left|X(t, x; h)-X(t, x; 0)\right|^{1+\alpha}\Big) \leq (t\Vert b_1 \Vert_{\infty} e^{C_h\: t})\mathcal{O}(|h|^{1+\alpha}),
\] 
where $C_h$ is independent of time; see again Proposition~\ref{claim:esti_X}. The expression for $C_h$ shows that it is bounded for $h\in(-\tfrac{1}{2},\tfrac{1}{2})$. Putting $C:=\sup_{h\in(-\tfrac{1}{2},\tfrac{1}{2})}C_h$ 
we obtain
\begin{equation*}\label{str_28}
\begin{split}
&| I_{\lambda_1}^{(1)}- I_{\lambda_2}^{(1)}|\leq\Big|\int_{\RRD} \left[
\nabla_x \varphi(X(t,x; h))|_{h=0}\Big(\partial_h X(t, x;h)\big|_{h=0}+\mathcal{O}(|\lambda_1|^\alpha)\Big) \right.\\
&\qquad\qquad\qquad \qquad\qquad  + (t\Vert b_1 \Vert_{\infty} e^{C t})\mathcal{O}(|\lambda_1|^\alpha)
\Big]e^{\int_0^t \overline{w}(s, x; \lambda_1)\dd s} \dd \nu_0(x)\\
&\qquad \qquad \qquad - \int_{\RRD} \left[
\nabla_x \varphi(X(t,x; h))|_{h=0}\Big(\partial_h X(t, x;h)\big|_{h=0}+\mathcal{O}(|\lambda_2|^\alpha)\Big)\right. \\
&\qquad \qquad \qquad \qquad \qquad +(t\Vert b_1 \Vert_{\infty} e^{C t})\mathcal{O}(|\lambda_2|^\alpha)
\Big]e^{\int_0^t \overline{w}(s, x; \lambda_2)\dd s} \dd \nu_0(x)\Big|.
\end{split}
\end{equation*}
We consider function $\varphi\in\CCA(\RRD)$ with $\|\varphi\|_{\CCA}\leq 1$. Hence we can further estimate $\nabla_x\varphi(X(t,y;h))|_{h=0}\leq 1$. This yields
\begin{equation}\label{eq:estim_WWW}
\begin{split}
| I_{\lambda_1}^{(1)}&- I_{\lambda_2}^{(1)}| \leq \Big|\int_{\RRD}\Big[\underbrace{\partial_h X(t, x;h)\big|_{h=0}}_{{\bf L1}}\underbrace{\left( e^{\int_0^t \overline{w}(s, x; \lambda_1)\dd s}-e^{\int_0^t \overline{w}(s, x; \lambda_2)\dd s} \right)}_{{\bf L2}} \\
& + (1+t\Vert b_1 \Vert_{\infty} e^{C t})\underbrace{\left( \mathcal{O}(|\lambda_1|^\alpha)e^{\int_0^t \overline{w}(s, x; \lambda_1)\dd s} -\mathcal{O}(|\lambda_2|^\alpha)e^{\int_0^t \overline{w}(s, x; \lambda_2)\dd s} \right)}_{{\bf L3}}
\Big] \dd \nu_0(x)\Big|.
\end{split}
\end{equation}
To summarize estimations:
\begin{description}
\item[L1] $\partial_h X(t, x;h)\big|_{h=0}<\infty$; see Proposition~\ref{claim:esti_X};
\item[L2] $\left( e^{\int_0^t \overline{w}(s, x; \lambda_1)\dd s}-e^{\int_0^t \overline{w}(s, x; \lambda_2)\dd s} \right)\xrightarrow{\lambda_1,\lambda_2\to 0} 0$ argumentation is similar as in $|I_{\lambda_1}^{(2)}- I_{\lambda_2}^{(2)}|$. The function $e^{\int_0^t \overline{w}(s, x; \lambda_1)\dd s}$ and $e^{\int_0^t \overline{w}(s, x; \lambda_2)\dd s}$ needs to be expanded into Taylor series at $h=0$; see~\eqref{eqn:holder_need}.
\item[L3] $\left( \mathcal{O}(|\lambda_1|^\alpha)e^{\int_0^t \overline{w}(s, x; \lambda_1)\dd s} -\mathcal{O}(|\lambda_2|^\alpha)e^{\int_0^t \overline{w}(s, x; \lambda_2)\dd s} \right)\xrightarrow{\lambda_1,\lambda_2\to 0} 0$.
\end{description}
By assumptions of Theorem~\ref{th:main} we have $\nu_0\in\MM(\RRD)$, what means that $\nu_0(\RRD)<\infty$.

Thus for any fixed $t$ values $\sup_{\Vert \varphi \Vert_{\CCA}\leq 1}\left(|I_{\lambda_1}^{(1)}- I_{\lambda_2}^{(1)}| \right)$ and $\sup_{\Vert \varphi \Vert_{\CCA}\leq 1}\left(|I_{\lambda_1}^{(2)}- I_{\lambda_2}^{(2)}| \right)$ can be arbitrary close to $0$.
Therefore we have shown that $\left\{\frac{\nu_t^{h+\lambda_n}-\nu_t^h}{\lambda_n}\right\}_{n\in\NN}$ is a Cauchy sequence for every $\lambda_n\to 0$ in $\ZZZ$ for $h=0$, with the same limit. Hence~$\nu_t^h$ which is the solution to~\eqref{def:prob_init2_lin}
is differentiable with respect $h$ at $h=0$. 

The same argumentation works for $h\neq 0$. Let us consider a sequence $\left\{\frac{\nu_t^{h+\lambda_n}-\nu_t^h}{\lambda_n}\right\}_{n\in\NN}$, where $\lambda_n \to 0$ and $h\neq 0$. 
Let $\nu_t^{h+\lambda_n}$ be a solution to~\eqref{def:prob_init2_lin} with coefficients
\begin{align}\label{eq:lin_diff_h_non_0}
b^{h+\lambda_n}&=b_0+hb_1+\lambda_nb_1 =: \widehat{b}_0 +\lambda_n b_1,\\
w^{h+\lambda_n}&=w_0+hw_1+\lambda_nw_1 =: \widehat{w}_0 +\lambda_n w_1
\end{align}
and with initial condition $\nu_0^{h+\lambda_n}=\nu_0$. Then $\nu_t^{h+\lambda_n}$ is equal (by Lemma~\ref{lem:repr_form}) to the solution $\overline{\nu}_t^{\lambda_n}$ with velocity field $ \widehat{b}_0+\lambda_n b_1$  and scalar function $ \widehat{w}_0+\lambda_n w_1$  and initial condition $\nu_0$. 
A similar statement holds for the $\nu_t^h$ and the solution $\overline{\nu}_t^0$ with velocity field~$\widehat{b}_0$ and scalar function $\widehat{w}_0$. Thus
\[\frac{\nu_t^{h+\lambda_n}-\nu_t^h}{\lambda_n}=\frac{\overline{\nu}_t^{\lambda_n}- \overline{\nu}^0_t}{\lambda_n}\]
and the latter sequence converges in $\ZZZ$ as $h\to\infty$, by the first part of the proof.
\end{proof}

\section{Proof of Theorem~\ref{tw:ciagla_pocho}}\label{apdx:ciagla_pocho}
\begin{proof}[Proof of Theorem~\ref{tw:ciagla_pocho}]
Let us denote the mapping $t\mapsto\nu_t$ by $\nu_\bullet$. The goal is to show that the mapping $(h\mapsto \nu_\bullet^h)$ is of a class \[\CC^1((-\tfrac{1}{2},\tfrac{1}{2});\;\CC_{\widehat{\omega}}([0,\infty);\ZZZ)),\] where $\widehat{\omega}$ is of order $\mathcal{O}\big(|\tfrac{1}{t}\exp(-Ct)|\big)$.

By Theorem~\ref{th:main} we know that for any fixed $h\in(-\tfrac{1}{2},\tfrac{1}{2})$ the limit $\lim_{\lambda\to 0} \frac{\nu_t^{h+\lambda}-\nu_t^h}{\lambda}$ exists and is an element of $\ZZZ$. It means that the derivative $\partial_h\nu_t^h$ exists in all points $h\in(-\tfrac{1}{2},\tfrac{1}{2})$. 

First, it will be shown that $\big(h\mapsto\nu_\bullet^h\big)\in\CC\big((-\tfrac{1}{2},\tfrac{1}{2});\CC_{\widehat{\omega}}([0,\infty);\ZZZ)\big)$, it is the {\bf 0th Step} of the proof. In {\bf 1st Step} we will show that $\lim_{\lambda\to 0} \frac{\nu_\bullet^{h+\lambda}-\nu_\bullet^h}{\lambda}$ converges uniformly in $t$ for fixed $h$ as $\lambda\to 0$, so the mapping $t\mapsto\partial_h \nu_t^h$ is continuous and is bounded in norm $\|\cdot\|_{\ZZZ_{\widehat{\omega}}}$. 
In {\bf 2nd Step} we will show that the limit $\lim_{\lambda\to 0} \frac{\nu_\bullet^{h+\lambda}-\nu_\bullet^h}{\lambda}$ is continuous with respect to~$h$.

We will use a few different weights, proper for a current estimation. 
Based on those weights we will establish the final one; see~\eqref{final_weight_oljee} on page~\pageref{final_weight_oljee}.
Notice that if the term $\sup_{t\geq 0}\omega_1(t)f(t)$ is bounded, then for any weight $\omega_2(t)$, such that $\omega_2(t)\leq\omega_1(t)$ for all $t$, the following estimate is true
\[\sup_{t\geq 0}\omega_2(t)f(t)\leq \sup_{t\geq 0}\omega_1(t)f(t).\]
Thus the final weight $\widehat{\omega}(t)$ will be the minimum of weights used in intermediate estimates.\\

\noindent{\bf 0th Step.} 
Take $h_1,h_2\in(-\tfrac{1}{2},\tfrac{1}{2})$ and $\varphi\in\CCA(\RRD)$. Then
\begin{equation}
\begin{split}
& \sup_{\|\varphi\|_{\CCA}\leq 1}|\langle \nu_t^{h_1},\varphi \rangle-\langle \nu_t^{h_2},\varphi \rangle|=\\
&=\sup_{\|\varphi\|_{\CCA}\leq 1}\left|\int_\RRD\varphi(X(t,x;h_1)) \exp\left(\int_0^{t}w_0(s,X(s,x;h_1))+h_1\:w_1(s,X(s,x;h_1))\:\dd s\right)\dd\nu_0(x)\right.\\
&\qquad -
\left.\int_\RRD\varphi(X(t,x;h_2)) \exp\left(\int_0^{t}w(s,X(s,x;h_2))+h_2\:w_2(s,X(s,x;h_2))\:\dd s\right)\dd\nu_0(x)
\right|.
\end{split}
\end{equation}
This can be estimated as in~\eqref{eq:estim_jednos}. Hence, one has that $\nu_\bullet^h$ is continuous with respect to parameter $h$. By Corollary~\ref{prop:exp_omega_1} one has that for all $t$ holds
\begin{equation*}\label{step_0}
\begin{split}
\|\nu_t^h\|_\ZZZ&\leq \exp\Big(t\:(\underbrace{\|w_0^+(\cdot,\cdot)\|+|h|\|w_1^+(\cdot,\cdot)\|_\infty}_{\textrm{(\footnotesize using fact that $h\in(-\tfrac{1}{2},\tfrac{1}{2})$)}})\Big)\|\nu_0\|_{TV}\\
&\leq \exp\big(t\:(\|w_0^+(\cdot,\cdot)\|+\|w_1^+(\cdot,\cdot)\|_\infty)\big)\|\nu_0\|_{TV} .
\end{split}
\end{equation*}
Hence $\nu_\bullet^h\in\CC_{\widehat{\omega}}([0,\infty);\ZZZ)$,  if only $\widehat{\omega}(t)\leq \exp\big(-t(\|w_0^+\|_\infty+\|w_1^+\|_\infty)\big)$.

\noindent {\bf 1st Step.} Notice that for any fixed $h$ and fixed $\lambda$ mapping $\nu_\bullet^{h+\lambda}$ and $\nu_\bullet^h$ are locally Lipschitz continuous with respect to time. It implies that those mappings are continuous with respect to time. Hence, also $\frac{\nu_\bullet^{h+\lambda}-\nu_\bullet^h}{\lambda}$ is continuous with respect to $t$. We are going to show that for any $h$, the convergence of $\lim_{\lambda\to 0} \frac{\nu_\bullet^{h+\lambda}-\nu_\bullet^h}{\lambda}$ as $\lambda\to 0$ is a uniform limit in $t$. 
Then this limit would be continuous with respect to~$t$.

First, we will show this for $h=0$. Let $\{\lambda_n\}_n$ be a sequence in $\RR$ such that $\lambda_n\to 0$. We show that the sequence of quotients $\left\{\frac{\nu_\bullet^{\lambda_n}-\nu_\bullet^0}{\lambda_n}\right\}_{n\in\NN}$ is a Cauchy sequence in $\CC_{\widehat{\omega}}([0,\infty);\ZZZ)$ for a~suitable weight function $\widehat{\omega}(t)>0$. Let us consider
\[\left\Vert  \frac{\nu_\bullet^{\lambda_1}-\nu_\bullet^0}{\lambda_1}\right. - \left.\frac{\nu_\bullet^{\lambda_2}-\nu_\bullet^0}{\lambda_2}\right\Vert_{\ZZZ_{\widehat{\omega}}}.\]
Use the same notation which was introduced in proof of Theorem~\ref{th:main}; see~\eqref{eqn:final_Cauchy} and~\eqref{eq:estim_jednos}. Hence, we consider
 \[\sup_{t\geq 0}\widehat{\omega}(t)I_{\lambda_1,\lambda_2}\leq\sup_{t\geq 0}\widehat{\omega}(t)\left\{\sup_{\|\varphi\|_{\CCA}\leq 1}|I_{\lambda_1}^{(1)}-I_{\lambda_2}^{(1)}|\right\}  + \sup_{t\geq 0}\widehat{\omega}(t)\left\{ \sup_{\|\varphi\|_{\CCA}\leq 1} |I_{\lambda_1}^{(2)}-I_{\lambda_2}^{(2)}|\right\}.\]
Let us notice that in estimation~\eqref{eq:I2_final} on $|I_{\lambda_1}^{(2)}-I_{\lambda_2}^{(2)}|$ there is given explicit dependence on time. It means that for a~weight $\omega_1(t):=1\slash(\overline{C}(t)+C_1t)$ the following holds
\[\sup_{t\geq 0}\omega_1(t)\left\{\sup_{\|\varphi\|_{\CCA}\leq 1}|I_{\lambda_1}^{(2)}-I_{\lambda_2}^{(2)}|\right\} \leq \Big|\int_{\RRD} \!\!\left(-\frac{\mathcal{O}(|\lambda_1|^{1+\alpha})}{\lambda_1}+ \frac{\mathcal{O}(|\lambda_2|^{1+\alpha})}{\lambda_2}\right)\dd \nu_0(x)\Big|\xrightarrow{\lambda_1,\lambda_2\to 0} 0.\]
We need to estimate $\sup_{t\geq 0}\widehat{\omega}(t)\left\{\sup_{\|\varphi\|_{\CCA}\leq 1} |I_{\lambda_1}^{(1)}- I_{\lambda_2}^{(1)}|\right\}$. From~\eqref{eq:estim_WWW} we have
\begin{equation*}\label{eq:estim_WWW_2}
\begin{split}
&\sup_{t\geq 0}\widehat{\omega}(t)\left\{\sup_{\|\varphi\|_{\CCA}\leq 1} |I_{\lambda_1}^{(1)}- I_{\lambda_2}^{(1)}|\right\}\leq\\
& \leq \sup_{t\geq 0}\widehat{\omega}(t) \Big|\int_{\RRD}\Big[\partial_h X(t, x;h)\big|_{h=0}\left( e^{\int_0^t \overline{w}(s, x; \lambda_1)\dd s}-e^{\int_0^t \overline{w}(s, x; \lambda_2)\dd s} \right) \\
& \quad + (1+t\Vert b_1 \Vert_{\infty} e^{C t})\left( \mathcal{O}_1(|\lambda_1|^\alpha)e^{\int_0^t \overline{w}(s, x; \lambda_1)\dd s} -\mathcal{O}_1(|\lambda_2|^\alpha)e^{\int_0^t \overline{w}(s, x; \lambda_2)\dd s} \right)
\Big] \dd \nu_0(x)\Big|\\
& \leq\sup_{t\geq 0}\widehat{\omega}(t) \Big|\int_{\RRD}\underbrace{\partial_h X(t, x;h)\big|_{h=0}}_{{\bf L1'}}\underbrace{\left( e^{\int_0^t \overline{w}(s, x; \lambda_1)\dd s}-e^{\int_0^t \overline{w}(s, x; \lambda_2)\dd s} \right)}_{{\bf L2'}}\dd\nu_0(x)\Big| \\
& \quad+  \underbrace{\sup_{t\geq 0}\widehat{\omega}(t) \left(1+t\Vert b_1 \Vert_{\infty} e^{C t}\right)\Big|\int_{\RRD}\left( \mathcal{O}_1(|\lambda_1|^\alpha)e^{\int_0^t \overline{w}(s, x; \lambda_1)\dd s} -\mathcal{O}_1(|\lambda_2|^\alpha)e^{\int_0^t \overline{w}(s, x; \lambda_2)\dd s} \right)
 \dd \nu_0(x)\Big|}_{{\bf L3'}}.
\end{split}
\end{equation*}
Analogous arguments  to {\bf L1}, {\bf L2} and {\bf L3 } on page~\pageref{eq:estim_WWW} will also work here.
\begin{description}
\item[\bf L1'] For $\omega_2(t):=1\slash(t\Vert b_1 \Vert_{\infty}  \exp(Ct))$ value $\sup_{t\geq 0}\omega_2(t)\partial_h X(t, x;h)|_{h=0}$ is bounded -- see Proposition~\ref{claim:esti_X}. We stress that the constant $C$ that occurs in $\omega_2$ does not depend on time nor parameter $h$.

\item[\bf L2'] The following estimation holds \[\left( e^{\int_0^t \overline{w}(s, x; \lambda_1)\dd s}-e^{\int_0^t \overline{w}(s, x; \lambda_2)\dd s}\right)\leq (\overline{C}(t)+C_1t\big) \left(\frac{\mathcal{O}(|\lambda_1|^{1+\alpha})}{\lambda_1}+ \frac{\mathcal{O}(|\lambda_2|^{1+\alpha})}{\lambda_2}\right).\] What was obtained by expanding $e^{\int_0^t \overline{w}(s, x; \lambda_1)\dd s}$ and $e^{\int_0^t \overline{w}(s, x; \lambda_2)\dd s}$ into Taylor series at $h=0$; exactly as in estimation for $|I_{\lambda_1}^{(2)}- I_{\lambda_2}^{(2)}|$; see~\eqref{eq:I2_final}. This implies that
\[\sup_{t\geq 0}\omega_3(t)\left( e^{\int_0^t \overline{w}(s, x; \lambda_1)\dd s}-e^{\int_0^t \overline{w}(s, x; \lambda_2)\dd s}\right)\leq  \left(\frac{\mathcal{O}(|\lambda_1|^{1+\alpha})}{\lambda_1}+ \frac{\mathcal{O}(|\lambda_2|^{1+\alpha})}{\lambda_2}\right),\]
when as $\omega_3(t)$ we take $1\slash(2\overline{C}(t)+C_1t)$.

\item[\bf L3'] Recall that $\overline{w}(s,x;h)=w_0(s,X(s,x;h))+hw_1(s,X(s,x;h))$; see~\eqref{eq:w_defi_linear}. Both functions $w_0,$ $w_1$ are bounded and $h\in(-\tfrac{1}{2},\tfrac{1}{2})$. Let us substitute $C_2:=\|w_0\|_\infty+\|w_1\|_\infty$, where the supremum is taken with respect to $t$ and $x$. Constant $C_2$ does not dependent on~$t$ nor parameter~$h$. One has
\begin{equation}\label{eq:final_weight_lin}
\begin{split}
&\sup_{t\geq 0}\omega_4(t)(1+t\Vert b_1 \Vert_{\infty} e^{C t})\left( \mathcal{O}(|\lambda_1|^\alpha)e^{\int_0^t \overline{w}(s, x; \lambda_1)\dd s} -\mathcal{O}(|\lambda_2|^\alpha)e^{\int_0^t \overline{w}(s, x; \lambda_2)\dd s} \right)\leq\\
&\leq\sup_{t\geq 0}\omega_4(t)(1+t\Vert b_1 \Vert_{\infty} e^{C t})\left( \mathcal{O}(|\lambda_1|^\alpha)e^{C_2t} +\mathcal{O}(|\lambda_2|^\alpha)e^{C_2t} \right).\\
&\textrm{\footnotesize{(let $\omega_4(t):=1\slash\big[(1+t\Vert b_1 \Vert_{\infty} e^{C t})e^{C_2 t}\big]$\;)}
}\\
&\leq\sup_{t\geq 0}\Big( \mathcal{O}(|\lambda_1|^\alpha) +\mathcal{O}(|\lambda_2|^\alpha) \Big)=\Big( \mathcal{O}(|\lambda_1|^\alpha) +\mathcal{O}(|\lambda_2|^\alpha) \Big).
\end{split}
\end{equation}
Of course holds $\left(\mathcal{O}(|\lambda_1|^\alpha) +\mathcal{O}(|\lambda_2|^\alpha) \right)\xrightarrow{\lambda_1,\lambda_2\to 0} 0$.
\end{description}
Let us write down all weights used till this moment:
\begin{align*}\label{crazy_wagi}
\begin{split}
\omega_1(t)&=1\slash\Big[\big(\|\nabla_x w_0\|_\infty+1\big)\|b_1\|_\infty\left(\frac{t}{C}\exp(Ct)-\frac{1}{C^2}\exp(Ct)+\frac{1}{C^2}\right)+\|w_1\|_\infty t\Big]\\
&\qquad\qquad\qquad \textrm{see~\eqref{eq:holder_XXX} and~\eqref{eq:I2_final}, weights $\omega_1$ and $\omega_3$ are identical};\\
\omega_2(t)&=1\slash \Big[t\|b_1\|_\infty\exp(C t)\Big], \;\textrm{see Proposition~\ref{claim:esti_X}};\\
\omega_4(t)&=1\slash\Big[(1+t\Vert b_1 \Vert_{\infty} \exp(C t))\cdot\exp\left(\|w_0\|_\infty+|h|\cdot\|w_1\|_\infty t\right)\Big],\; \textrm{see estimation~\eqref{eq:final_weight_lin}};
\end{split}
\end{align*}
where constant $C:=\Vert \nabla_x b_0\Vert_{\infty}+ \Vert \nabla_x  b_1 \Vert_{\infty}$; see Proposition~\ref{claim:esti_X}. It means that for a~final weight we need to take function 
\begin{equation}\label{final_weight_oljee}
\widehat{\omega}(t):=\min\big(\omega_1;\;\omega_2\cdot\omega_3;\;\omega_4\big)(t)=\min\big(\omega_1;\;\omega_2\cdot\omega_1;\;\omega_4\big)(t).
\end{equation}
This weight guarantees that sequence of quotients $\left\{\frac{\nu_\bullet^{\lambda_n}-\nu_\bullet^0}{\lambda_n}\right\}_{n\in\NN}$ is a Cauchy sequence in the complete space ~$\ZZZ_{\widehat{\omega}}$. In other words: for any fixed $h_0$ the mapping $t\mapsto \partial_h\nu_t^{h}\big|_{h=h_0}$ is an element of $\CC_{\widehat{\omega}}([0,\infty);\ZZZ)$. This finishes the {\bf 1st Step} of the proof.\\
\medskip

\noindent{\bf 2nd Step. } Now, we show that $\partial_h\nu_\bullet^h=\lim_{\lambda\to 0} \frac{\nu_\bullet^{h+\lambda}-\nu_\bullet^h}{\lambda}$ is continuous with respect to~$h$. Notice that in every estimation in {\bf 1st Step} we choose constants in such a way, that they are independent of $h$. That was possible because we consider $h$ only in a bounded interval. That is why $\lim_{\lambda\to 0} \frac{\nu_\bullet^{h+\lambda}-\nu_\bullet^h}{\lambda}$ exists not only for any $h\in(-\tfrac{1}{2},\tfrac{1}{2})$, but also the convergence is uniform with respect to~$h$. Since $h\mapsto \nu_\bullet^h$ is continuous ({\bf 0th Step}) (with values in $\CC_{\widehat{\omega}}([0,\infty);\ZZZ)$) for each $\lambda$ we have shown that $h\mapsto\partial_h\nu_\bullet^h$ is continuous function, what finish the proof of Theorem~\ref{tw:ciagla_pocho}.
\end{proof}

\section{Proof of Proposition~\ref{claim:esti_X}}\label{apdx}
\hypertarget{foo}{Proof of Lemma}~\ref{claim:esti_X} from page~\pageref{claim:esti_X}, used in proof of Theorem \ref{th:main}, is presented below. We want to show that $\partial_h X$ is H\"older continuous.

\begin{proof}We analyse the difference $\partial_t (\partial_h X(t,y; h))\big|_{h=h_1} - \partial_t (\partial_h X (t,y; h))\big|_{h=h_2}$, which can be written in the following form using \cite[Theorem 3.1, formula (3.3)]{ODE:1982}
\begin{equation*}
\begin{split}
\partial_t& \left(\partial_h X(t,y; h)\right)\Big|_{h=h_1} - \partial_t \left(\partial_h X (t,y; h)\right)\Big|_{h=h_2}=\\
=&\left[\nabla_x\Big(b(t, X(t,y;h_1)) + h_1 b_1(t, X(t,y;h_1))\Big) \cdot \partial_h X(t,y;h)|_{h=h_1} + b_1 \left(t, X(t,y;h_1)\right)\right]\\
&\qquad - \left[\nabla_x\Big(b(t, X(t,y;h_2)) + h_2 b_1(t, X(t,y;h_2))\Big) \cdot \partial_h X(t,y;h)|_{h=h_2} + b_1 (t, X(t,y;h_2))\right]
\\
=&\nabla_x\Big(b(t, X(t,y;h_1)) + h_1 b_1(t, X(t,y;h_1))\Big) \cdot \partial_h X (t,y;h)|_{h=h_1}\\
&\qquad-\nabla_x \Big(b(t, X(t,y;h_2)) + h_2 b_1(t, X(t,y;h_2))\Big) \cdot \partial_h X(t,y;h)|_{h=h_2}\\
& \qquad +b_1 (t, X(t,y;h_1)) - b_1(t, X(t,y;h_2))
 \end{split}
\end{equation*}
\begin{equation*}
\begin{split}
=&\underbrace{\nabla_x b(t, X(t,y;h_1))\cdot \partial_h X(t,y;h)|_{h=h_1} - 
\nabla_x b(t, X(t,y;h_2))\cdot \partial_h X(t,y;h)|_{h=h_2}}_{(\textrm{A})}
\\
& 
\qquad +\underbrace{h_1 \nabla_x b_1(t, X(t,y;h_1)) \cdot \partial_h X(t,y;h)|_{h=h_1} - 
h_2 \nabla_x b_1(t, X(t,y;h_2))  \cdot \partial_h X(t,y;h)|_{h=h_2}}_{(\textrm{B})}\\
& \qquad +\underbrace{b_1 (t, X(t,y;h_1)) - b_1(t, X(t,y;h_2))}_{(\textrm{C})} 
.
\end{split}
\end{equation*} 

Now we estimate (A), (\textrm{B}) and (C) separately.

An ingredient (A) may be written as
\begin{equation*}
\begin{split}
(\textrm{A}) =  &\partial_h X( t,y;h)|_{h=h_1} \cdot \nabla_x \Big(b(t, X(t,y; h_1))- b(t, X(t,y; h_2))\Big) \\
 & \qquad+ \nabla_x  b(t, X(t,y; h_2))  \cdot \left(\partial_h X( t,y;h)|_{h=h_1} - \partial_h X( t,y;h)|_{h=h_2}\right).
  \end{split}
\end{equation*}

Let us first observe that 
\[\Big|\nabla_x \Big(b(t, X(t,y; h_1))- b(t, X(t,y; h_2))\Big) \Big| \leq c\left|X(t,y; h_1)- X(t,y; h_2)\right|^{\alpha}\] and   \[\Big|\nabla_xb(t, X(t,y; h_2)) \Big| \leq \sup_{y \in \RRD} \left|\nabla_x  b  (t, y)\right|.\] We proceed to show that  $\partial_h X( t,y;h)|_{h=h_1}$ is bounded. 
Since
\[\left| \partial_t\left(\partial_h X(t,y; h)\right)\right|=\left|\nabla_x  \Big(b(t, X(t,y; h))+ h b_1(t, X(t,y; h))\Big) \partial_h X(t,y;h) + b_1(t, X(t,y; h))\right|,
\]
thus
\begin{equation*}
\begin{split}
&\left| \int_{t_0}^t \partial_s\left(\partial_h X(s,y;h)\right) \dd s\right|\\
&\qquad \qquad =\left| \int_{t_0}^t\left(\nabla_x  \Big(b(s, X(s,y; h))+ h b_1(s, X(s,y; h))\Big) \partial_h X(s,y; h) + b_1(s, X(s,y; h)) \right)\dd s\right|.
\end{split}
\end{equation*}

Note that $\partial_h X(t_0,y; h)=0$ (initial condition in system (\ref{def:diff_pert}) does not depend on $h$), therefore
\begin{equation*}
\begin{split}\label{eqn:estim}
\left|\partial_h X( t,y;h)\right|\leq& \int_{t_0}^t \Big|\nabla_x  \Big(b(s, X(s,y; h))+ h b_1(s, X(s,y; h))\Big) \Big| \cdot \left|\partial_h X(s,y; h)\right| \dd s \\
&+ \int_{t_0}^t \big|b_1(s, X(s,y; h))\big| \dd s \\
\leq &\left(\Vert\nabla_x  b \Vert_{\infty}+ |h|\cdot \Vert\nabla_x  b_1 \Vert_{\infty}\right) \cdot \int \limits_{t_0}^{t} \left|\partial_h X(s,y; h)\right|\dd s +(t-t_0)\Vert b_1\Vert_{\infty}.
\end{split}
\end{equation*}
Observe that the supremum norm in the above expression is taken over space and time. 

 Since $(t-t_0)\Vert b_1\Vert_{\infty}$ in non-decreasing on $[t_0, \infty)$, thus by  Gronwall's Inequality we obtain
\[\left|\partial_h X( t,y;h)\right|\leq (t-t_0)\Vert b_1 \Vert_{\infty}e^{\int_{t_0}^t C \dd s} =(t-t_0)\Vert b_1 \Vert_{\infty} e^{C(t-t_0)},\]
where $C=\Vert \nabla_xb\Vert_{\infty}+ |h|\Vert \nabla_x b_1 \Vert_{\infty}.$

We are ready to estimate
\begin{equation*}\label{eqn:A}
\begin{split}
|(\textrm{A})| \leq  &\left|\partial_h X( t,y;h)|_{h=h_1}\right| \cdot \Big|\nabla_x \Big(b(t, X(t,y; h_1))- b(t, X(t,y; h_2))\Big) \big|\\
 & \qquad+ \left|\nabla_x  b(t, X(t,y; h_2)) \right| \cdot \left|\partial_h X( t,y;h)|_{h=h_1} - \partial_h X( t,y;h)|_{h=h_2}\right|
  \end{split}
\end{equation*}
\begin{equation*}
\begin{split}
\leq & c_1 |X(t,y; h_1)- X(t,y; h_2)|^{\alpha} + c_2 \left|\partial_h X( t,y;h)|_{h=h_1} - \partial_h X( t,y;h)|_{h=h_2}\right|,
\end{split}
\end{equation*}
where the constants $c_1, c_2$ can be taken independent of $y$ (but possibly dependent on $t$).

Next we estimate (\textrm{B})
\begin{equation*}
\begin{split}
|(\textrm{B})|= & \Big|h_1 \nabla_x  b_1(t, X(t,y; h_1))  \cdot \left(\partial_h X(t,y; h)|_{h=h_1}  - \partial_h X(t,y; h)|_{h=h_2}\right)
\\
&\qquad+ h_2 \partial_h X(t,y; h)|_{h=h_2}  \cdot \Big(\nabla_x  b_1(t, X(t,y; h_1))  - \nabla_x  b_1(t, X(t,y; h_2)) \Big)\\
&\qquad + \nabla_x  b_1(t, X(t,y; h_1))  \cdot \partial_h X(t,y; h)|_{h=h_2} (h_1 - h_2)\Big|\\
\leq & |h_1|\cdot \Vert\nabla_x  b_1  \Vert_{\infty} \cdot \left|\partial_h X(t,y; h)|_{h=h_1} - \partial_h X(t,y; h)|_{h=h_2}\right| \\
& \qquad + |h_2| \cdot c_3 \cdot \left|X(t,y; h_1) - X(t,y; h_2)\right|^{\alpha} + |h_1-h_2| \cdot \left\Vert\nabla_x  b_1 \right\Vert_{\infty} \cdot c_4 \\
\leq & c_5 \left|\partial_h X(t,y; h)|_{h=h_1} - \partial_h X(t,y; h)|_{h=h_2}\right|+c_6 |X(t,y; h_1) - X(t,y; h_2)|^{\alpha} + c_7 |h_1 - h_2|
\end{split}
\end{equation*}

and finally $(\textrm{C})$
\[
(\textrm{C}) \leq \big|  b_1 (t, X(t,y; h_1)) - b_1(t, X(t,y; h_2)) \big| \leq c_8|X(t,y; h_1) - X(t,y; h_2)|.
\]

Sum (\textrm{A}) + (\textrm{B}) + (\textrm{C}) can be written as
\begin{align*}
\left|(\textrm{A}) + (\textrm{B}) + (\textrm{C})\right| \leq & C_1|X(t,y; h_1) -  X(t,y; h_2)|^{\alpha} + C_2|X(t,y; h_1) - X(t,y; h_2)| \\
&+ C_3\left|\partial_h X( t,y;h)|_{h=h_1} - \partial_h X( t,y;h)|_{h=h_2}\right|+  C_4|h_1-h_2|.
\end{align*}

The following integral can be estimated as below
\[
 \int_{t_0}^t \left(\partial_s \partial_h X(s, y; h)|_{h=h_1} - \partial_s\partial_h X(s, y; h)|_{h=h_1}\right) \dd s = \int_{t_0}^t (\textrm{A})+ (\textrm{B})+ (\textrm{C}) \dd s
 \]
\begin{align*} 
 \partial_h X(t, y; h)|_{h=h_1} - \partial_h X(t, y,;h)|_{h=h_2} - \underbrace{\partial_h X(t_0,y;h)|_{h=h_1}}_{0} &+ \underbrace{\partial_h X(t_0, y;h)|_{h=h_1}}_{0} \\
 &= \int_{t_0}^t (\textrm{A})+ (\textrm{B})+ (\textrm{C}) \dd s.
\end{align*}

Hence
\begin{align*}\label{claim:esti_Xation2}
\left|\partial_h\right.& X(t, y; h)|_{h=h_1}- \left.\partial_h X(t, y; h)|_{h=h_2} \right|  \leq   C_1\int_{t_0}^t|X(s,y;h_1) - X(s,y;h_2)|^{\alpha}\dd s\\
&+  C_2\int_{t_0}^t|X(s,y;h_1)\! - X(s,y;h_2)|\dd s+  C_3\int_{t_0}^t\left|\partial_h X(s,y;h)|_{h=h_1} - \partial_h X(s,y;h)|_{h=h_2}\right|\dd s\\
&+  C_4|h_1-h_2|(t-t_0).
\end{align*}
Our goal is to conclude that $\partial_h X(t, y; h_1) - \partial_h X (t, y; h_2)$ is of order $|h_1 - h_2|^\alpha$. To close these estimations (using again Gronwall's Inequality) we need to estimate $|X(t,y; h_1) - X(t,y; h_2)|$. From (\ref{def:diff_pert})
\begin{align*}
\partial_t X(t, y; h_1) - \partial_t X (t, y; h_2) & = b (t, X(t,y; h_1)) - b(t, X(t,y; h_2)) \\
& + h_1 b_1 (t, X(t,y; h_1)) - h_2 b_1(t, X(t,y; h_2)).
\end{align*}
after integration, we obtain 
\begin{align*}
\int_{t_0}^t\left( \partial_s X (s, y; h_1) - \partial_s X (s, y; h_2)\right) \dd s & = 
\int_{t_0}^t\Big(  b  (s, X(s,y;h_1)) - b(s, X(s,y;h_2))\\
& + h_1 b_1 (s, X(s,y;h_1)) - h_2 b_1(s, X(s,y;h_2)) \Big) \dd s
\\
|X(t,y; h_1) - X(t,y; h_2)| & \leq \int_{t_0}^t  \Big|b  (s, X(s,y;h_1)) - b(s, X(s,y;h_2))\\
& + h_1 b_1 (s, X(s,y;h_1)) - h_2 b_1(s, X(s,y;h_2)) \Big| \dd s 
\\
& \leq \int_{t_0}^t \underbrace{| b  (s, X(s,y;h_1)) - b(s, X(s,y;h_2))|}_{(\textrm{D})} \dd s \\
& + \int_{t_0}^t \underbrace{|h_1 b_1 (s,  X(s,y;h_1)) - h_2 b_1(s, X(s,y;h_2))|}_{(\textrm{E})}  \dd s.
\end{align*}
Now, we need to estimate $(\textrm{D})$ and $(\textrm{E})$ separately
\begin{align*}
(\textrm{D}) &  \leq c_1'| X(s,y;h_1) - X(s,y;h_2)|,\\
(\textrm{E}) & = |h_1\left[ b_1 (s,  X(s,y;h_1)) - b_1(s, X(s,y;h_2))\right]  + b_1(s, X(s,y;h_2))(h_1-h_2)| \\
 &\leq |h_1| \cdot | b_1 (s,  X(s,y;h_1)) - b_1(s, X(s,y;h_2))|  +  |b_1(s, X(s,y;h_2)| \cdot |h_1-h_2|\\
 &\leq c_2' \cdot |X(s,y;h_1) -  X(s,y;h_2)|  +  c_3' \cdot |h_1-h_2| .
\end{align*}
Estimations $(\textrm{D})$ and $(\textrm{E})$  combined give
\begin{align*}|X(t,y; h_1) - X(t,y; h_2)| & \leq   \int_{t_0}^t c_3' |h_1-h_2| \dd s +\int_{t_0}^t c_4'| X(s,y;h_1) - X(s,y;h_2)| \dd s  =\\
& = c_3'|h_1 - h_2| (t-t_0) +  \int_{t_0}^t c_4'| X(s,y;h_1) - X(s,y;h_2)| \dd s.
\end{align*}
Using Gronwall's Inequality we obtain
 \[|X(t,y; h_1) - X(t,y; h_2)| \leq c_3'|h_1 - h_2|(t-t_0)e^{c_4'(t-t_0)}.\]
Let us go back to $\partial_t \left(\partial_h X( t,y;h)|_{h=h_1}\right) - \partial_t \left(\partial_h  X( t,y;h)|_{h=h_2}\right)$ 
\[
\left|\int_{t_0}^t \partial_s \left(\partial_h X (s,y;h)|_{h=h_1} - \partial_h X (s,y;h)|_{h=h_2}\right) \dd s\right| \leq \int_{t_0}^t |(\textrm{A}) + (\textrm{B}) + (\textrm{C})| \dd s\]
and thus
\begin{small}
\begin{equation*}
\begin{split}
&\left|\partial_h X( t,y;h)|_{h=h_1} -  \partial_h X( t,y;h)|_{h=h_2}\right|
\leq \int_{t_0}^t \bigg[ C_1| X(s,y;h_1) - X(s,y;h_2)|^{\alpha} \\
&\qquad+ C_2| X(s,y;h_1) - X(s,y;h_2)|+ C_3\left|\partial_h X(s,y;h)|_{h=h_1} - \partial_h X (s,y;h)|_{h=h_2}\right|+  C_4|h_1-h_2|\Big] \dd s \\
&\leq C_1\int_{t_0}^t |h_1 - h_2|^{\alpha} |s-t_0|^{\alpha} e^{\alpha c_4'(s-t_0)} \dd s + C_2\int_{t_0}^t  |h_1 - h_2||s-t_0| e^{c_4'(s-t_0)}\dd s
\\
& \qquad+ C_3\int_{t_0}^t \left|\partial_h X(s,y;h)|_{h=h_1} - \partial_h X(s,y;h)|_{h=h_1} \right| \dd s + C_4\int_{t_0}^t |h_1 - h_2|\dd s\\
&\leq C_1|h_1 - h_2|^{\alpha}\gamma_\alpha(t-t_0)+ C_2|h_1 - h_2| \gamma_1(t-t_0) \\
&\qquad + C_3 \int_{t_0}^t \left| \partial_h X (s,y;h)|_{h=h_1} - \partial_h X(s,y;h)|_{h=h_2}\right| \dd s + C_4|h_1-h_2|(t-t_0),
\end{split}
\end{equation*}
\end{small}
where $\gamma_\alpha(t)\colonequals \int_{t_0}^t s^\alpha e^{c_4' \alpha s}\dd s$. Gronwall's Inequality then shows that $\partial_h X$ is H\"older continuous of order $\alpha$.
\end{proof}

\end{document}